\theoremstyle{definition}
\newtheorem{lemma}{Lemma}
\newtheorem{definition}{Definition}
\newtheorem{theorem}{Theorem}
\newtheorem{corollary}{Corollary}
\newtheorem{proposition}{Proposition}
\newcommand{\cG}{\mathcal{G}}
\newcommand{\cT}{\mathcal{T}}
\begin{document}

\title{\Large \bf Maximizing the number of edges in three-dimensional colored triangulations whose building blocks are balls}

\author{{\bf Valentin Bonzom}}\email{bonzom@lipn.univ-paris13.fr}
\affiliation{LIPN, UMR CNRS 7030, Institut Galil\'ee, Universit\'e Paris 13,
99, avenue Jean-Baptiste Cl\'ement, 93430 Villetaneuse, France, EU}

\date{\small\today}

\begin{abstract}
\noindent
Colored triangulations offer a generalization of combinatorial maps to higher dimensions. Just like combinatorial maps are gluings of polygons, colored triangulations are built as gluings of special, higher-dimensional building blocks, such as octahedra, which we call colored building blocks and known in the dual as bubbles. A colored building block is fully determined by its boundary triangulation, which in the case of polygons is characterized by an integer, its length. In three dimensions, colored building blocks are thus labeled by some two-dimensional triangulations and those homeomorphic to the 3-ball are labeled by the subset of planar triangulations. Similarly to the two-dimensional case where Euler's formula provides an upper bound on the number of vertices at fixed number of polygons with given lengths, we look in three dimensions for a least upper bound on the number of edges at fixed number of given colored building blocks. In this article we solve this problem when all colored building blocks, except possibly one, are homeomorphic to the 3-ball. To do this, we find a combinatorial characterization of the way a building block homeomorphic to the ball has to be glued to other blocks of arbitrary topology in a colored triangulation which maximizes the number of edges. It implies that in the dual 1-skeleton, the building block can be excised from the triangulations by a sequence of 2-edge-cuts. We show that this local characterization can be extended to the whole triangulation as long as there is at most one building block which is not a 3-ball. The triangulations obtained this way are in bijection with trees. The number of edges is given as an independent sum over the building blocks of such a triangulation. Finally, in the case of all colored building blocks being homeomorphic to the 3-ball, we show that the triangulations which maximize the number of edges are homeomorphic to the 3-sphere. Those results were only known for the octahedron and for melonic building blocks before.

\noindent
This article is self-contained and can be used as an introduction to colored triangulations and their bubbles from a purely combinatorial point of view.

\end{abstract}

\medskip

\keywords{Three-dimensional triangulations, colored triangulations}

\maketitle

\section*{Introduction}

Combinatorial maps are gluings of polygons along their edges into connected surfaces. They have a remarkably wide range of applications, since they naturally appear anytime discretizations of two-dimensional manifolds are relevant. This ranges from computer rendering of surfaces to folding and coloring problems \cite{FoldingColoring-DiFrancesco},  RNA foldings \cite{RNAFolding-Orland-Zee} and two-dimensional quantum gravity \cite{MatrixReview}.

From a more mathematically oriented perspective, they have many interesting and intriguing properties. In addition to the traditional setting of discretized surfaces, maps have also been found to appear in various problems of enumerative geometry (albeit all pertaining to two dimensions) \cite{GraphsOnSurfaces} like Kontsevich-Witten intersection numbers \cite{IntersectionNumbers-Kontsevich, CountingSurfaces-Eynard} and Hurwitz numbers \cite{HurwitzGalaxies}. Moreover, they have been studied through a wide range of techniques.

They are classified by a non-negative integer, the genus. Tutte's seminal work on their enumeration unraveled remarkably simple exact counting formulas, e.g. the number of planar quadrangulations with $n$ edges, see any book with chapters devoted to maps like \cite{CountingSurfaces-Eynard}. The simplicity of such formulas has since been explained in numerous cases in terms of bijections \cite{SchaefferBijection, Mobiles}: the combinatorial structures of maps can be re-encoded in different objects whose enumeration is straightforward, e.g. well-labeled trees in the case of planar quadrangulations.

Many other approaches have been developed in the context of map enumeration, bijections being only one of the most modern approaches. The generating functions for different families of planar maps typically satisfy equations with a catalytic variable and some divided differences \cite{CatalyticVariables}. Solving those kinds of equations is an important topic in combinatorics, with relations to quadrant walks in particular. While Tutte wrote those equations first, they were rediscovered and investigated in mathematical physics as saddle point equations for matrix integrals. Furthermore all genera Tutte equations can be rederived as Schwinger-Dyson equations of matrix integrals, i.e. by suitable integration by parts \cite{CountingSurfaces-Eynard, MatrixReview}.

As an alternative to solving those Schwinger-Dyson equations, matrix integrals offer the opportunity to use the method of orthogonal polynomials \cite{MatrixReview}, which for instance provides a simple derivation of Harer-Zagier formula for the generating function of unicellular (1-face) maps of any genus \cite{GraphsOnSurfaces}. Orthogonal polynomials further unraveled the fascinating connections between random maps and integrable hierarchies.

Nowadays, a modern way to solve the Schwinger-Dyson equations for various families of maps is the topological recursion \cite{CountingSurfaces-Eynard}. Designed in the context of matrix models \cite{EynardOrantinReview, AbstractLoopEquations, TR}, the topological recursion has now gone beyond its original frame and motive, i.e. to provide an intrinsic, universal solution to Schwinger-Dyson equations of matrix integrals, and has become a fascinating and powerful framework in enumerative geometry \cite{HurwitzTR, InvariantsOfSpectralCurves-Eynard, WeilPeterssonTR, GromovWitten-Eynard-Orantin, HyperbolicKnotsTR-Borot-Eynard}.


Maps can also be described as factorizations of permutations. This formulation enables the use of methods from the representation theory of the symmetric group, see for instance \cite{UnicellularMaps-Pittel} and more references on unicellular maps therein. It is also used to reformulate other problems in terms of maps, such as Hurwitz numbers, formulated in terms of permutations and then reinterpreted via maps. Factorizations of permutations also provide a general framework to prove the connection to integrable hierarchies \cite{KP-Goulden-Jackson}.

It is thus an exciting perspective to develop a higher-dimensional theory of maps, i.e. random gluings of simplices, and see if all those beautiful mathematics developed for maps or in connections with maps extend to higher dimensions. This however turns out to be difficult. Even laying promising foundations which generalize the setup of combinatorial maps has proved to be challenging, the interplay between combinatorics and topology being highly non-trivial \cite{Benedetti-Ziegler}. Indeed, one might for instance want to work at fixed topology, similarly to fixing the genus in two dimensions. This turns out to be too difficult as it is not even known whether the number of triangulations of the 3-sphere is exponentially bounded \cite{SpheresAreRare}. It is possible to further constrain the set of admissible triangulations so that their number becomes exponentially bounded. This is the case for locally constructible triangulations \cite{LocallyConstructible-Durhuus-Jonsson}, introduced by Durhuus and Jonsson. Benedetti and Ziegler in \cite{Benedetti-Ziegler} proved that not all simplicial 3-spheres are locally constructible and we refer to their article for more results and discussions on that topic.

All in all, one might argue that this kind of constraints enforces too many constraints, topological and/or combinatorial, to provide a genuine generalization of combinatorial maps. The key ingredient in our case which seems to be missing in those earlier studies is the classification of triangulations of arbtirary topology, not necessarily spheres, according to the number of $(d-2)$-simplices (number of edges in three dimensions). In the case of colored triangulations as we will see, Gurau's degree measures the distance to triangulations which maximize the number of $(d-2)$-simplices at fixed number of $d$-simplices. Colored triangulations of fixed Gurau's degree have been proved to be exponentially bounded and described combinatorially \cite{Gurau-Schaeffer}.

Random tensors were proposed in the nineties \cite{Tensors} as a generalization of random matrices. In the same way matrix integrals generate combinatorial maps, tensor integrals generate combinatorial objects some of which can be interpreted as triangulations in dimension $d \geq 3$, \cite{DePietri-Petronio}. Tensor integrals also generate more singular objects whose topological interpretations can be challenging \cite{GurauVsSmerlak}. Maybe it would not be too bad if the combinatorics could be analyzed as for maps, e.g. by defining an analog of the genus and performing enumeration. This also turns out to be difficult\footnote{To briefly describe the objects generated by tensor integrals, recall that maps are graphs with additional data which is needed to reconstruct a surface. For instance, in the context of matrix integrals, this data is a ribbon structure: each edge is a ribbon bounded by two strands and ribbons meet at ``ribbon vertices'' which are homeomorphic to discs. In tensor integrals, the number of strands is equal to the dimension and vertices of those graphs can connect a strand from one edge to a strand of another edge.}, although remarkable progress has been made very recently, in the sense that a combinatorial extension of the genus can be defined in some cases \cite{TwoTensors-Gurau, SymTracelessTensors}.

A much more convenient family of triangulations in dimension $d\geq 3$ is that of {\bf colored triangulations}. They are made of simplices whose $d+1$ boundary $(d-1)$-simplices are labeled by the colors $0, 1, \dotsc, d$ and two simplices can be glued in a unique way which respects the coloring. Colored triangulations were first introduced in topology, \cite{ItalianSurvey, LinsMandel} and references therein. It was found in 2011 that tensor integrals equipped with a suitable unitary invariance generate those colored triangulations and only them \cite{Uncoloring}. From the topological perspective, colored triangulations always represent pseudo-manifolds and every PL-manifold admits a colored triangulation. Moreover, as we will explain in Theorem \ref{thm:ColoredGraphs}, they can be encoded as graphs with colored edges, the graph being the 1-skeleton of the dual, which is promising for combinatorics.

The breakthrough was Gurau's discovery that colored triangulations admit a generalization of the genus, called {\bf Gurau's degree}, in a purely combinatorial (as opposed to topological) way \cite{1/NExpansion,GurauBook,GurauRyanReview}. In two-dimensional triangulations, Euler's formula for the genus can be seen as a least upper bound on the number of vertices at fixed number of triangles. Gurau's theorem in dimension $d$, restated here as Theorem \ref{thm:Gurau}, is then an {\bf upper bound on the number of subsimplices of dimension $d-2$} at fixed number of simplices of dimension $d$, which reduces to Euler's formula at $d=2$. The combinatorial classification of colored triangulations at fixed Gurau's degree has been obtained in \cite{Gurau-Schaeffer} while their topological properties have been analyzed in \cite{TopologyTensor-Casali-Cristofori-Dartois-Grasselli}.

The pair ``tensor integrals/colored triangulations'' provides a promising extension of the framework of matrix integrals and combinatorial maps. In addition to Gurau's degree, the generating functions satisfy a system of Schwinger-Dyson equations \cite{SchwingerDysonTensor, Revisiting} although much more challenging than in two dimensions (e.g. it is not known how to use catalytic variables anymore). Since standard methods of matrix models, like saddle points or orthogonal polynomials, which rely on a distribution for the eigenvalues, do not exist for random tensors (as there are no eigenvalues), one has to resort to combinatorics only. A recent bijection between colored triangulations and some colored, stuffed hypermaps was found in \cite{StuffedWalshMaps}. It also has a matrix model interpretation and in the simplest case \cite{Quartic-Nguyen-Dartois-Eynard} it led to bilinear equations {\it \`a la} Hirota \cite{GiventalTensor-Dartois} and to the first topological recursion in dimension greater than two \cite{QuarticTR}.

Another appealing feature of colored triangulations is that they provide a framework to investigate universality, \cite{SigmaReview, Universality, StuffedWalshMaps, Octahedra, MelonoPlanar, DoubleScaling}. Indeed, maps with various prescribed polygons of bounded boundary length (faces with prescribed bounded degrees, in the usual terminology of maps) exhibit several properties which are universal at large scales. For instance, the asymptotic enumeration of planar maps in such families is \cite{MatrixReview, CountingSurfaces-Eynard, Mobiles}
\begin{equation*}
K\, \rho^{-n} n^{-5/2},
\end{equation*}
where $K$ and $\rho$ are family-dependent while the exponent $5/2$ is universal, and $n$ is the size of the map (for instance the number of $p$-angles in a planar $p$-angulation). Other examples of universal features at large scales is the Hausdorff dimension being 4 and the convergence to the Brownian sphere \cite{LeGall}.

A higher-dimensional extension of maps should be rich enough to allow for the investigation of universal features. This is the case with colored triangulations, as found in \cite{Uncoloring} (in the context of tensor integrals). Generalizing the polygons, which are simply determined by the numbers of boundary edges, there is a natural family of {\bf colored building blocks (CBBs)}, for instance the octahedron being one of them in three dimensions. CBBs are defined like colored triangulations except they have a connected boundary and that all $(d-1)$-simplices of color 0 is on the boundary and the boundary only has $(d-1)$-simplices of color 0. As a consequence of the attachment rule for colored simplices, a CBB is actually determined by its boundary and can be constructed from the latter by taking the cone, thereby generalizing the 2-dimensional case of polygons. A 3-dimensional CBB is thus labeled by a 2-dimensional triangulation.

It is thus natural to study families of colored triangulations made of prescribed CBBs. The set of colored triangulations which contains $n_i>0$ CBB of type $K_i$ for $i=1, \dotsc, N$ is denoted $\cT_{n_1, \dotsc, n_N}(K_1, \dotsc, K_N)$. In three dimensions (the case we will restrict attention to), Gurau's degree theorem provides an {\bf upper bound on the number of edges} which grows linearly with the number of tetrahedra. While this bound holds for all colored triangulations, in particular colored triangulations built from any prescribed CBBs, it is in general not optimal. In fact, it is known to be a least upper bound only for a family of CBBs called {\bf melonic} and which have a rather simple, tree-like structure, see \cite{Uncoloring} and Theorem \ref{thm:Melons} here. For other CBBs, such as a typical block homeomorphic to a ball, we expect the existence of a refined bound improving Gurau's on the number of edges.

In this article we will be interested in the case of 3-dimensional colored triangulations where at least one CBB, say $K_1$, is {\bf constrained to be homeomorphic to a 3-ball}. To improve Gurau's bound and find actual least upper bounds, we will characterize the local (i.e. the gluing around $K_1$) combinatorics of colored triangulations which maximize the number of edges at fixed number of CBBs, which form the subset $\cT^{\max}_{n_1, \dotsc, n_N}(K_1, \dotsc, K_N)$. 

Remarkably our method is a direct case-by-case combinatorial analysis and is completely independent of Gurau's initial proof of his bound \cite{1/NExpansion} (his most recent works \cite{TwoTensors-Gurau, SymTracelessTensors} on more general tensor integrals also use different methods). As already mentioned all CBBs are characterized by their boundaries. It means that CBBs homeomorphic to balls are characterized by 2-dimensional planar (colored) triangulations. This planar property will be key in our main results which are the following.
\begin{itemize}
\item {\bf Theorem \ref{thm:1Planar}} fully characterizes the way a CBB homeomorphic to a ball has to be glued to other CBBs (of arbitrary topology) if the whole triangulation is to maximize the number of edges. This local characterization, which we call the maximal 2-cut property, has the central property that the 1-skeleton dual to the CBB has to be connected to those of the other CBBs by 2-edge-cuts only. The incidence vertices of those 2-edge-cuts is dictated by the triangulations which contain this CBB only and maximize the number of edges, i.e. $\cT^{\max}_{n=1}(K_1)$. This 1-building-block problem is a generalization of unicellular maps in two dimensions. Remarkably, Theorem \ref{thm:1Planar} however does not require any properties of those 1-CBB triangulations (except their existence of course).
\item {\bf Theorem \ref{thm:OnlyPlanar}} uses the previous theorem to fully describe the set $\cT^{\max}_{n_1, \dotsc, n_N}(K_1, \dotsc, K_N)$ in the case all CBBs $K_1, \dotsc, K_N$ are homeomorphic to 3-balls except possibly one of them. The characterization is again based on the maximal 2-cut property. It trivially implies that those triangulations are in bijection with trees and their (rooted) generating functions satisfy polynomial equations which can be straightforwardly written down.
\item Finally we investigate the topology of the triangulations in $\cT^{\max}_{n_1, \dotsc, n_N}(K_1, \dotsc, K_N)$ when all CBBs $K_1, \dotsc, K_N$ are homeomorphic to balls. We prove in {\bf Theorem \ref{thm:Topology}} that they are homeomorphic to the 3-sphere. This theorem makes use of classical theorems on moves which preserve the topology of colored triangulations, Theorem \ref{thm:Dipoles} and Proposition \ref{prop:ConnectedSums} which we state without proof. In contrast with Theorem \ref{thm:1Planar}, Theorem \ref{thm:Topology} requires some combinatorial properties of the 1-CBB triangulations to show that they are spheres.
\end{itemize}

The results presented here go a long way beyond the existing results. In three dimensions, colored triangulations which maximize the number of edges at fixed number of tetrahedra were characterized in only three instances:
\begin{itemize}
\item all CBBs are melonic CBBs, which are special blocks homeomorphic to balls \cite{Uncoloring}. If one considers rooted CBBs, then the melonic ones are in bijection with rooted ternary trees \cite{Melons}, thus far from describing all (rooted) CBBs homeomorphic to the 3-ball which are in bijection with rooted colored planar triangulations which are themselves in bijection with bipartite planar maps. 
\item all CBBs are octahedra (the octahedron is a CBB made of eight colored tetrahedra) \cite{Octahedra}. It was the first instance in three dimensions of a non-melonic CBB for which the triangulations maximizing the number of edges could be fully described. It now becomes just one specific case from the present article. The proof used a bijection with colored hypermaps which can now be completely bypassed.
\item all CBBs are the same one made of six tetrahedra and whose dual 1-skeleton is the complete bipartite graph $K_{3,3}$ (treated as an application of the bjection in \cite{StuffedWalshMaps}, meaning that its boundary is the torus and therefore that its gluings are not manifolds.
\end{itemize}
In all those cases, (rooted) elements of $\cT^{\max}_{n_1, \dotsc, n_N}(K_1, \dotsc, K_N)$ were found to be in bijection with trees, and their generating functions readily found to satisfy polynomial equations.

Some CBBs can be constructed as gluings of melonic ones and octahedra, in which cases we can characterize the gluings of those CBBs which maximize the number of edges \cite{SigmaReview}. This method can be seemingly useful to extend results to much larger sets of bubbles, as done originally in \cite{DoubleScaling} then in \cite{MelonoPlanar}. It however does bring genuinely new results since it relies on the triangulations built from the new CBBs being in bijection with a subset of those with the original (melonic and octahedra) bubbles.



Theorem \ref{thm:1Planar} generalizes the case of melonic CBBs and octahedra to one arbitrary CBB homeomorphic to the ball glued to any CBBs without restriction. The reason why CBBs homeomorphic to the 3-ball form a natural family from the combinatorial point of view is that they can be represented using planar maps and their planarity will be preserved by combinatorial topological moves which decrease the number of tetrahedra and thus make inductions possible.

The present article is thus a major generalization of the melonic case and the octahedron. Only the third case, with $K_{3,3}$ as dual graph, is not a special case of the present result. We however do believe that it should be possible to extend our main theorems to CBBs whose boundaries are tori by combining the results of \cite{StuffedWalshMaps} with the methods of the present article. While our core lemmas rely on properties of planar maps which would not holds as such, they may be generalizable by ``factorizing'' the non-planarity, e.g. decomposing such a CBB as coming from the one with $K_{3,3}$ as dual graph on which the same topological moves as in the planar case are used to make the CBB grow in size. This will be the subject of future investigations.

As for Theorem \ref{thm:OnlyPlanar}, it can be compared with Gurau's theorem in \cite{Universality}. The latter is a central limit theorem for probability distributions on complex tensors of large size. Due to the intimate relationship between tensor integrals and generating functions of colored triangulations with prescribed CBBs, there is an area where \cite{Universality} can be applied in our three-dimensional context. It then implies the exact same result as Theorem \ref{thm:OnlyPlanar} with weaker hypotheses: it requires all CBBs to be melonic except one arbitrary, while Theorem \ref{thm:OnlyPlanar} requires all CBBs to be homeomorphic to the 3-ball except one arbitrary.

The present article aims at being as {\bf self-contained and introductory} as possible. In addition to its new results, it intends to be a reference for researchers in combinatorics who would like to find an {\bf introduction on colored triangulations and CBBs}. Up to now, most foundational results for colored triangulations can only be found either in the tensor integral literature, which comes from mathematical physics and requires a totally different background from combinatorics, or in various articles of topology from the 80s which also happen to be the only source for the proofs of those results. In spite of previous articles studying colored triangulations and their dual colored graphs, it appeared to us that no reference which would contain proofs of the combinatorial foundations, including about CBBs, existed in the combinatorics literature.

Therefore, we have decided to write {\bf independent proofs for important and foundational combinatorial theorems} presented here (leaving however the classical theorems with topological content stated without proof). If our article is successfull as an invitation to the subject, we invite the interested reader more inclined towards topology and combinatorics than tensor integrals to have a look at \cite{TopologyTensor-Casali-Cristofori-Dartois-Grasselli, Gurau-Schaeffer, Carrance} with three different, recent points of view. Reference \cite{TopologyTensor-Casali-Cristofori-Dartois-Grasselli} focuses on applying the theory of crystallization to draw topological conclusions on Gurau's degree. 

The Gurau-Schaeffer classification \cite{Gurau-Schaeffer} is a full classification of colored graphs with respect to Gurau's degree. It is the most profound combinatorial analysis of the whole set of colored triangulations with respect to Gurau's degree, and ignores CBBs. A remarkable extension by Fusy and Tanasa \cite{Fusy-Tanasa} has provided the same classification for a more general set of graphs, 3-stranded graphs called multi-orientable graphs, coming from tensor integrals.

A recent work by Carrance takes a different approach than here, analyzing random gluings of colored simplices through their description as factorizations of permutations. Using probabilistic results on permutations, the author was able to extract various expectations for colored triangulations, such as the mean Gurau's degree. Those results do not apply to our setting since we fix some prescribed CBBs and want to identify the colored triangulations which maximize the number of edges.

The reader interested in modern methods of mathematical physics, related to the tensor integral approach, is invited to consider \cite{QuarticTR}, where the topological recursion is proved to apply to a random tensor model for the first time. This version of the topological recursion is the blobbed one due to Borot \cite{BlobbedTR-Borot}, and Borot and Shadrin \cite{BlobbedTR-Borot-Shadrin}.

It is also worth noting that Stanley introduced balanced simplicial complexes in combinatorics (without relations to colored triangulations in topology as far as we know) as simplicial complexes with colored simplices and the same colored gluing rules as in our case, \cite{Balanced-Stanley} (and \cite{Balanced-Klee-Novik} for a more recent study). A major difference with our objects is that ours are not necessarily simplicial complexes (two $d$-simplices can share more than a single $(d-1)$-simplices, like two triangles sharing their three edges so that each form a hemisphere of the 2-sphere) and a second difference with balanced complexes is that they are typically considered at fixed number of vertices instead of fixed number of $d$-simplices or CBBs. The number of vertices of colored triangulations at fixed number of $d$-simplices is not fixed. We will not give new results on the number of vertices of elements of $\cT^{\max}_{n_1, \dotsc, n_N}(K_1, \dotsc, K_N)$, but we refer to \cite{Carrance} where it is shown that a typical gluing of colored simplices has $d+1$ vertices only.

Section \ref{sec:ColoredTriangulations} is an introduction to colored triangulations including details and proofs on their representation as colored graphs in Section \ref{sec:ColoredGraphs}. This representation as graphs with colored edges will be the preferred representation throughout the article. Section \ref{sec:Bubbles} introduces CBBs and their dual graphs which are called bubbles in the tensor integral literature and also in agreement with the topological literature on colored triangulations. We have included a proof that CBBs are determined by their boundaries.

Section \ref{sec:BicoloredCycles} is the starting point of our focus on maximizing the number of edges at fixed CBBs. In particular, we define melonic building blocks, recall in Theorem \ref{thm:Melons} that only they can saturate Gurau's bound. We then state the main question of the article, i.e. maximizing the number of edges, and its formulation in terms of the objects dual to the edges of the triangulations, which are bicolored cycles.

In Section \ref{sec:BdryBubblesAndFlips} we introduce key tools used in the proofs of the main theorems. Flips (of triangle gluings, or edges in the dual) are described in Section \ref{sec:Flips}. They are transformations of a colored triangulation which only change the gluings between CBBs and does so in a way such that the variations of the number of edges are under control. If a colored triangulation is split into two components, the notion of boundary bubbles defined in Section \ref{sec:BdryBubbles} makes it possible to keep track of the edges shared by the two components using colored graphs. This provides an elementary proof, in Section \ref{sec:4Cuts}, that CBBs in a triangulation of $\cT^{\max}_{n_1, \dotsc, n_N}(K_1, \dotsc, K_N)$ cannot be glued together so that the dual 1-skeleton has a 4-edge-cut. Since the maximal 2-cut property of our main theorems in the case of CBBs homeomorphic to balls relies on eliminating some $k$-edge-cuts for $k\geq 4$, this serves as the initialization for $k=4$. We finally introduce contractions in Section \ref{sec:Contractions}. They allow for decreasing the number of tetrahedra in a CBB while preserving its topology and controlling the variations of the total number of edges. Therefore, contractions make recursions possible.

Finally, all our main theorems are exposed in Section \ref{sec:PlanarBubbles}. Triangulations with a single CBB are introduced in Section \ref{sec:1CBB}. We prove Theorems \ref{thm:1Planar} and \ref{thm:OnlyPlanar} in Section \ref{sec:Max2Cut} and Theorem \ref{thm:Topology} in Section \ref{sec:Topology}.

\section{Colored triangulations as generalization of combinatorial maps} \label{sec:ColoredTriangulations}

\subsection{Colored triangulations as colored graphs} \label{sec:ColoredGraphs}

A $k$-simplex is a $k$-dimensional simplex. Although there are often called $k$-faces in a complex in combinatorics, we will avoid this terminology as it overlaps with the traditional use of ``faces'' for the connected components of the complement of the graph in a combinatorial map. We will use this meaning of faces since planar maps will play a major role.

A colored $d$-simplex is a $d$-simplex whose boundary $(d-1)$-subsimplices have a color from $\{0, 1, \dotsc, d\}$ such that each color appears exactly once.

Colors allow us to define a {\bf canonical attaching rule} between colored simplices. First notice that in a colored $d$-simplex, each $(d-2)$-subsimplex is shared by exactly two $(d-1)$-subsimplices, say with colors $c, c'$, and is thus labeled by a pair of colors $\{c, c'\}$. Similarly, a $(d-k)$-subsimplex is identified by a $k$-uple of colors, for $k=1, \dotsc, d$. The colored attaching rule is to identify a $(d-1)$-subsimplex $\sigma_c$ of color $c\in\{0, \dotsc, d\}$ in a $d$-simplex with a $(d-1)$-subsimplex $\sigma'_c$ of the same color in another $d$-simplex in the only way which identifies all the subsimplices of $\sigma_c$ and $\sigma'_c$ which have the same color labels. In other words, it is the only attaching map which preserves all induced colorings of their $k$-subsimplices for $k=0, \dotsc, d-1$.

\begin{figure}
\subfloat[\label{fig:Triangle}A colored triangle with colors $\{0,1,2\}$]{\includegraphics[scale=0.35]{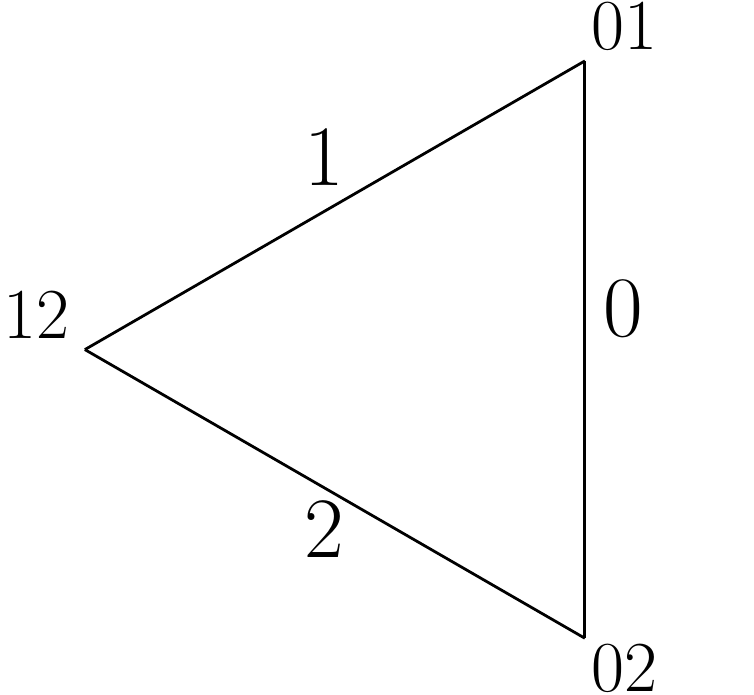}}{\hspace{4cm}}
\subfloat[\label{fig:Tetrahedron}A colored tetrahedron with colors $\{0,1,2,3\}$]{\includegraphics[scale=0.35]{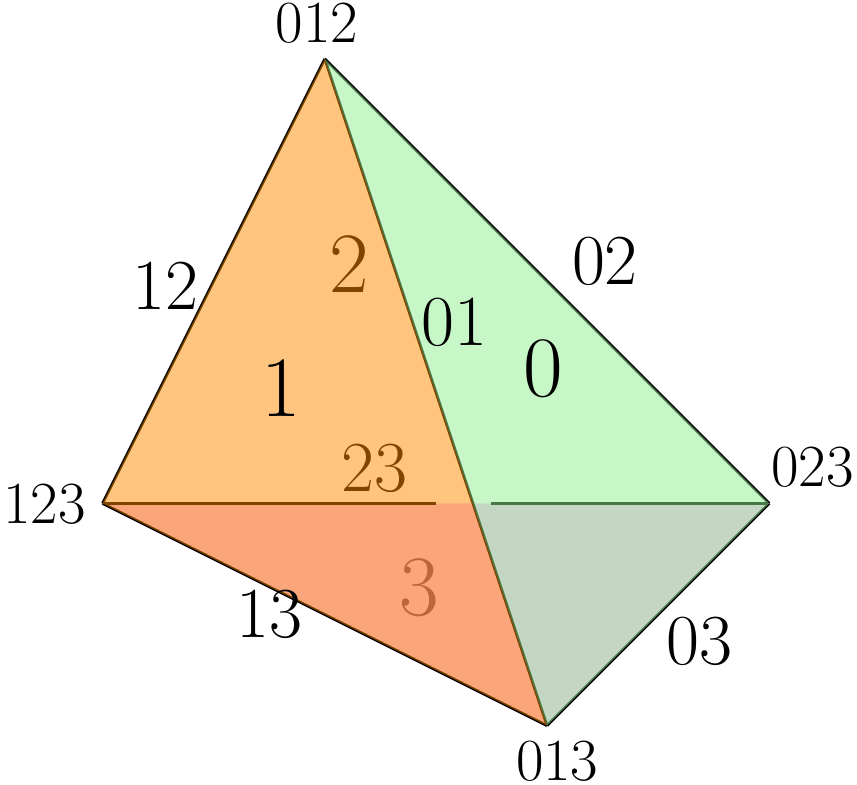}}
\caption{\label{fig:Simplex} A colored triangle and a colored tetrahedron with the induced colorings of their subsimplices by $k$-uple of colors.}
\end{figure}

In two dimensions, a colored triangle has edges with colors 0, 1, 2, and vertices with colors $\{0,1\}, \{1,2\}, \{0,2\}$ where the vertex with colors $\{c, c'\}$ is the one shared by the edges of colors $c, c'$, see Figure \ref{fig:Triangle}. Two triangles can be glued along an edge of say color 0 by identifying the vertices of colors $\{0,1\}$ of both triangles, and similarly identifying the vertices of colors $\{0,2\}$,
\begin{equation}
\begin{array}{c} \includegraphics[scale=.35]{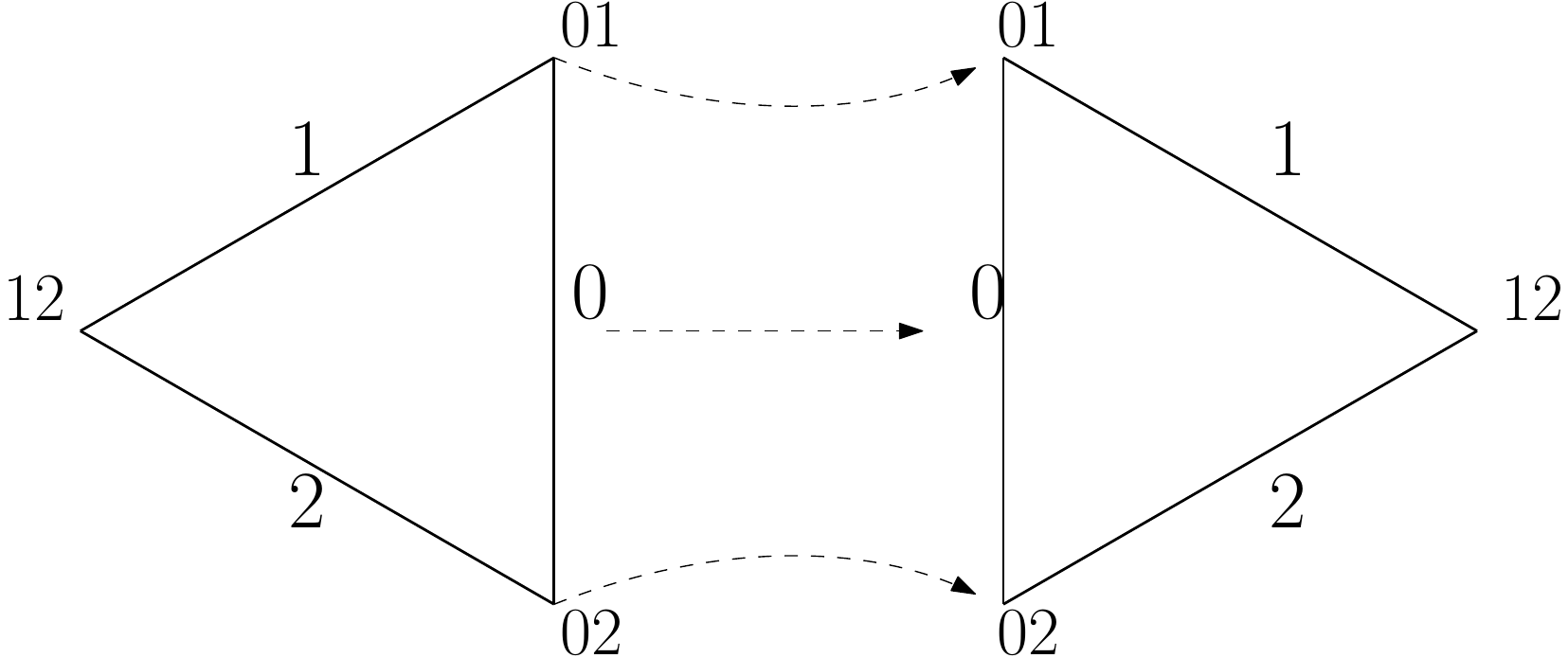} \end{array}
\end{equation}

In three dimensions, a colored tetrahedron has four triangles colored 0, 1, 2, 3, six edges colored $\{a, b\}_{0\leq a<b\leq 3}$ where $\{a, b\}$ labels the edge shared by the triangles of colors $a$ and $b$, and four vertices with labels $\{0,1,2\}$, $\{0, 1, 3\}$, $\{0, 2, 3\}$, $\{1, 2, 3\}$ where the vertex with label $\{a, b, c\}$ is the one shared by the three triangles of colors $a, b, c$, see Figure \ref{fig:Tetrahedron}. Two tetrahedra can be glued along a triangle of color say 0 by identifying pairwise the edges which have 0 in their labels, i.e. both edges of colors $\{0, a\}$ for $a \neq 0$ in the two tetrahedra are identified, and further identifying pairwise the vertices which have 0 in their labels, i.e. both vertices with colors $\{0, a, b\}$ in the two tetrahedra are identified for all $1\leq a<b \leq 3$,
\begin{equation}
\begin{array}{c} \includegraphics[scale=.35]{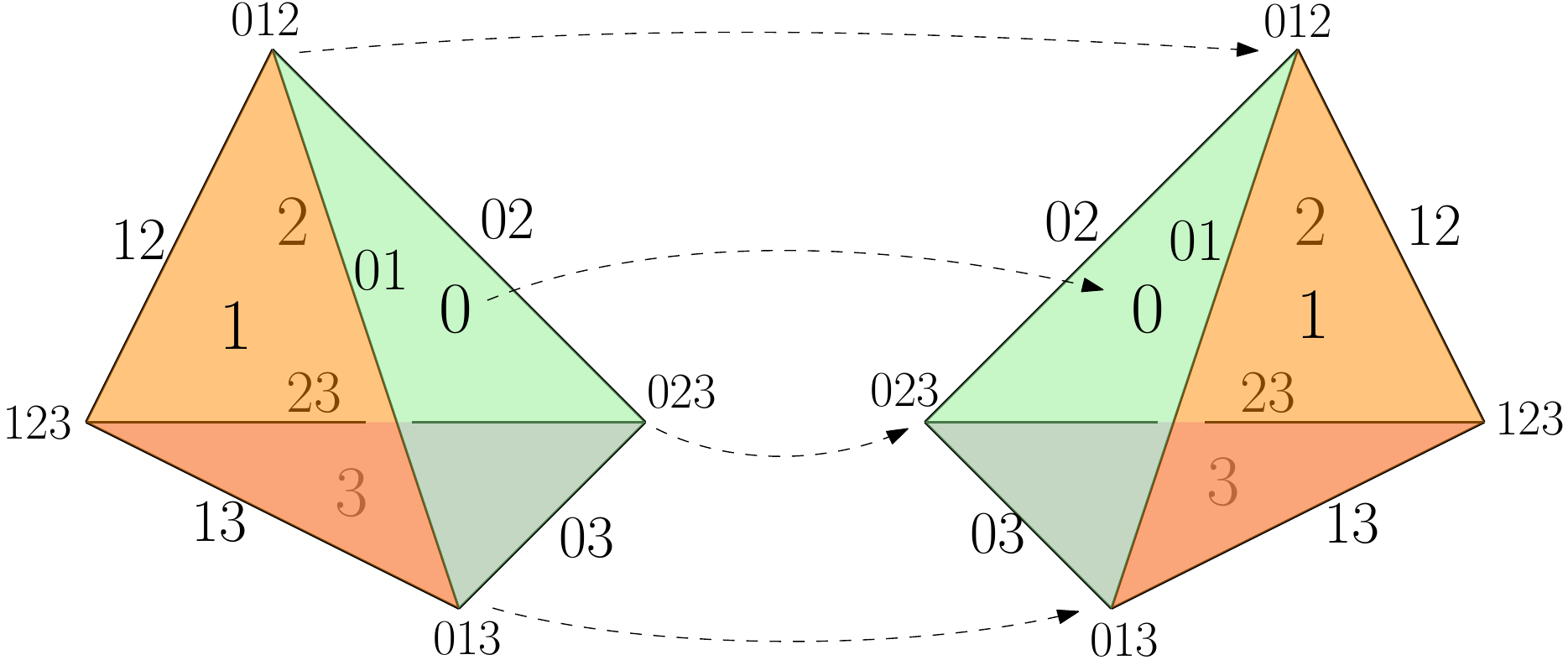} \end{array}
\end{equation}

A {\bf colored triangulation} $T$ is a connected gluing of colored $d$-simplices where all $(d-1)$-subsimplices are shared by two $d$-simplices. The canonical gluing rule for colored simplices ensures that there is a unique gluing between two colored simplices as soon as the color of their common $(d-1)$-subsimplex is specified. Therefore, a colored triangulation is completely determined by the data of which simplex is connected to which other simplex using which color. This data is fully encoded into a graph $G(T)$ with colored edges: each vertex of $G(T)$ corresponds to a $d$-simplex of $T$ and there is an edge of color $c\in\{0, 1, \dotsc, d\}$ between two vertices of $G(T)$ if the two corresponding $d$-simplices of $T$ are glued along a $(d-1)$-subsimplex of color $c$.

There is a well-known equivalence between the orientability of $T$ and the bipartiteness of $G(T)$. We will in this article restrict attention to the bipartite case, considering that $T$ has black and white simplices and $G(T)$ has black and white vertices.

The main reason why colored triangulations were introduced is thus that they encode PL-manifolds in a purely graphical way. The theorem reads

\begin{theorem} \label{thm:ColoredGraphs}
There is a one-to-one correspondence between $d$-dimensional, closed, connected, colored triangulations and {\bf colored graphs} defined as connected graphs whose vertices all have degree $d+1$ and each edge carries a color from $\{0, \dotsc, d\}$ such that the $(d+1)$ edges incident to each vertex have distinct colors. Moreover,
\begin{itemize}
\item If $T$ is a colored triangulation and $G(T)$ the corresponding colored graph, then $G(T)$ is obtained as the {\bf colored 1-skeleton} of the complex dual to $T$, i.e. the 1-skeleton of the dual where the edges of $G(T)$ carry the colors of their dual $(d-1)$-simplices in $T$.
\item $T$ can be reconstructed from $G(T)$ by applying the colored gluing rule to each pair of simplices represented in $G(T)$ by two vertices connected by an edge.
\item For any $k\in\{0, \dotsc, d-1\}$, let $\{c_1, \dotsc, c_{d-k}\} \subset \{0, \dotsc, d\}$ be a subset of colors and $G(c_1, \dotsc, c_{d-k})\subset G(T)$ be the subgraph of $G(T)$ obtained by keeping all the vertices of $G(T)$ and the edges of colors $c_1, \dotsc, c_{d-k}$ while removing the other edges. There is a one-to-one correspondence between the $k$-simplices of $T$ with color label $\{c_1, \dotsc, c_{d-k}\}$ and the connected components of $G(c_1, \dotsc, c_{d-k})$.
\end{itemize}
\end{theorem}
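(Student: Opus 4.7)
The plan is to build the two directions of the bijection, verify they are inverse, and then derive the subsimplex/subgraph correspondence from the canonical gluing rule.

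First I would go through the forward direction. Given a closed, connected, colored triangulation $T$, define $G(T)$ as the colored $1$-skeleton of the dual complex: put one vertex per $d$-simplex and, for each pair of $d$-simplices glued along a $(d-1)$-simplex of color $c$, draw an edge of color $c$ between the corresponding vertices. I would then check that $G(T)$ satisfies the axioms of a colored graph. Connectedness of $G(T)$ is immediate from connectedness of $T$; the degree $d+1$ at each vertex follows from the fact that a $d$-simplex has exactly $d+1$ boundary $(d-1)$-subsimplices, each shared with exactly one neighboring $d$-simplex by the closedness of $T$; and the incident edges at a given vertex carry the distinct colors $0,1,\dotsc,d$ because the boundary $(d-1)$-subsimplices of a colored $d$-simplex carry these $d+1$ distinct colors by definition.

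Next I would construct the inverse map. Given a colored graph $G$, place one abstract colored $d$-simplex $\sigma_v$ at every vertex $v$. For each edge of $G$ of color $c$ between $v$ and $w$, apply the canonical colored gluing rule to identify the color-$c$ subsimplex of $\sigma_v$ with the color-$c$ subsimplex of $\sigma_w$. The key point is that, because the canonical gluing rule is uniquely determined by the coloring (there is exactly one way to identify two $(d-1)$-subsimplices of the same color that preserves all induced subcolorings), the various gluings across different edges commute and do not conflict. Hence the resulting complex $T(G)$ is well defined. Checking $G(T(G))=G$ is tautological from the construction, and $T(G(T))\cong T$ follows because the colored gluing rule used to reconstruct $T(G(T))$ coincides, edge by edge, with the gluings that produced $T$. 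Bipartiteness/orientability enter only to match black and white $d$-simplices with black and white vertices, which is straightforward.

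Finally, for the subsimplex-subgraph correspondence, I would start from the elementary counting fact that inside a colored $d$-simplex, a $k$-subsimplex with color label $I=\{c_1,\dotsc,c_{d-k}\}$ is the intersection of the $d-k$ boundary $(d-1)$-subsimplices whose colors lie in $I$. Consequently, inside $T$, two $d$-simplices $\sigma,\sigma'$ share their (unique) $k$-subsimplex of color label $I$ if and only if they are connected by a chain of $d$-simplices in which consecutive ones are glued along a $(d-1)$-subsimplex whose color belongs to $I$. Translating this to $G(T)$: that is exactly the condition that the corresponding vertices lie in the same connected component of the subgraph $G(c_1,\dotsc,c_{d-k})$ obtained by keeping only the edges of colors in $I$. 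This gives a bijection between $k$-simplices of $T$ with label $I$ and connected components of $G(c_1,\dotsc,c_{d-k})$.

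The main obstacle is the well-definedness of the reconstruction $T(G)$: one must argue carefully that the canonical gluing rule, applied independently at every edge of $G$, produces a coherent simplicial object (in particular, the induced identifications of lower-dimensional subsimplices arising from different edges must be consistent). I would handle this by observing that each $(d-1)$-subsimplex is involved in exactly one gluing (one edge of $G$ incident to the relevant vertex with the matching color) so no two edges compete for the same face, and that the induced identifications on $k$-subsimplices for $k<d-1$ are forced, for each fixed color label $I$, by the above connected-component description — so the third bullet simultaneously certifies the coherence of the reconstruction.
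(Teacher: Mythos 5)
Your proposal is correct and follows essentially the same route as the paper: construct $G(T)$ as the colored dual $1$-skeleton, reconstruct $T$ from $G$ via the uniqueness of the canonical colored gluing, and derive the third item by observing that the identification of $k$-subsimplices with color label $\{c_1,\dotsc,c_{d-k}\}$ is generated exactly by gluings along $(d-1)$-simplices whose color lies in that label set, so the equivalence classes are the connected components of $G(c_1,\dotsc,c_{d-k})$. The only difference is presentational (you phrase the key step via chains of adjacent simplices where the paper phrases it via tuples of half-edges), and you are somewhat more explicit about the coherence of the reconstruction, which the paper treats as obvious.
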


The connected components of $G(c_1, \dotsc, c_{d-k})$ are usually called $(d-k)$-bubbles. We will soon focus on a special case of $d$-bubbles obtained by removing the color 0. We will nevertheless use the full notion of bubbles in Section \ref{sec:Topology} to investigate topology.

\begin{proof}
The first two items are trivial. The correspondence between closed, connected colored triangulations and connected colored graphs is explained above and relies on the fact that the gluing of two colored $d$-simplices is entirely determined by the color of the $(d-1)$-simplex they share. Obviously, representing $d$-simplices by vertices and their connectivity by edges produces the 1-skeleton of the dual. The second item of the theorem is also obvious and just details the correspondence.

The third item is the most interesting and it explains why those colored graphs are relevant to study the combinatorics and topology of colored triangulations. Each $k$-dimensional subsimplex of a $d$-simplex is identified by a $(d-k)$-uple of colors. Say if $\sigma$ is a $d$-simplex, denote $\sigma(c_1, \dotsc, c_{d-k})$ the $k$-simplex with color label $\{c_1, \dotsc, c_{d-k}\}$. In $G(T)$, $\sigma$ is represented as a vertex $v_{\sigma}$. Its boundary $(d-1)$-simplices $\sigma(c)$ are represented as half-edges of color $c=0, \dotsc, d$ incident to $v_{\sigma}$. It comes that $\sigma(c_1, \dotsc, c_{d-k})$ in $\sigma$ is identified by the $(d-k)$-uple of half-edges which carry the colors $c_1,\dotsc, c_{d-k}$.

The gluing rule for colored simplices is that when $\sigma$ and $\sigma'$ are glued along a $(d-1)$-simplex $\sigma(c)$ of color $c$, they identify two by two their subsimplices whose color labels contain $c$. Therefore, when $\sigma(c_1, \dotsc, c_{d-k})\subset \sigma$ is identified with $\sigma'(c_1, \dotsc, c_{d-k})\subset \sigma'$, it translates in $G(T)$ into the fact that when $v_\sigma$ and $v_{\sigma'}$ are connected by an edge of color $c \in\{c_1, \dotsc, c_{d-k}\}$, the half-edges of colors $c_1, \dotsc, c_{d-k}$ incident to $v_{\sigma}$ and $v_{\sigma'}$ represent the same $k$-simplex of $T$.

Denote $G(c_1, \dotsc, c_{d-k})$ the subgraph of $G(T)$ which only retains the edges of colors $c_1, \dotsc, c_{d-k}$. A connected component of $G(c_1, \dotsc, c_{d-k})$ thus represents a $k$-simplex $\sigma(c_1, \dotsc, c_{d-k})$ of $T$. Moreover, two connected components represent different $k$-simplices of $T$. Indeed, the only way for a $k$-simplex with colors $\{c_1, \dotsc, c_{d-k}\}$ to be shared by two $d$-simplices $\sigma, \sigma'$ is that they are glued along a $(d-1)$-simplex of color $c\in\{c_1, \dotsc, c_{d-k}\}$. This is equivalent to $v_{\sigma}$ and $v_{\sigma'}$ being connected by an edge of color $c$ in $G(T)$.
\end{proof}

In two dimensions, the triangles of $T$ are represented as the vertices of $G(T)$, the edges of $T$ as the edges of $G(T)$ and the vertices of $T$, each having a pair of colors, as the bicolored connected subgraphs, i.e. the bicolored cycles of $G(T)$. Notice that in two dimensions, it is more common to consider the dual to $T$ to be a {\bf combinatorial map} $M(T)$. A combinatorial map is a graph equipped with a rotation system, i.e. a cyclic ordering of the edges incident to each vertex. This defines a notion of faces, which in the case of $M(T)$ are polygons dual to the vertices of $T$. The difference between $G(T)$ and $M(T)$ is thus that the vertices of $T$ are represented as bicolored cycles in $G(T)$ and as faces in $M(T)$, but they have the same set of vertices and edges. In fact, $M(T)$ can be obtained by a canonical embedding of $G(T)$ which transforms bicolored cycles to faces.

\begin{corollary} \label{cor:2D}
In two dimensions, the graph $G(T)$ has a canonical embedding as a combinatorial map $M(T)$ such that $M(T)$ is the dual to the colored triangulation $T$, the bicolored cycles of $G(T)$ are the faces of $M(T)$.
\end{corollary}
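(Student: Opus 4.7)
The plan is to construct an explicit rotation system on $G(T)$ from the coloring alone, read off the resulting faces, and then verify that the combinatorial map so obtained coincides with the map dual to $T$ embedded on its surface.

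First I would equip each vertex of $G(T)$ with a cyclic order of its three incident half-edges: at every white vertex take the order $(0,1,2)$, and at every black vertex take the reverse order $(0,2,1)$. By the last item of Theorem~\ref{thm:ColoredGraphs} each vertex of $G(T)$ carries exactly one half-edge of each color, so this rotation is well-defined and intrinsic; it uses no auxiliary data. Denote by $M(T)$ the combinatorial map obtained from $G(T)$ equipped with this rotation.

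Next I would trace the faces of $M(T)$. Starting at a half-edge of color $c$ at a white vertex $w$, one crosses the edge to a black endpoint $b$ and the face-tracing rule selects the successor of $c$ in the rotation at $b$; a short case analysis using $(0,1,2)$ at white and $(0,2,1)$ at black shows that two successive steps (white $\to$ black $\to$ white) return to a half-edge of the same color $c$, while the two edges just traversed have colors $c$ and some $c'\neq c$ determined by the initial step. Hence the face orbit is confined to edges of two fixed colors $\{c,c'\}$, and by connectedness of bicolored subgraphs it sweeps out an entire bicolored cycle of $G(T)$ in colors $\{c,c'\}$. Conversely each bicolored cycle is visited by exactly one face orbit, since each half-edge lies in a unique face. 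Thus the faces of $M(T)$ are in bijection with the bicolored cycles of $G(T)$.

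Finally I would identify $M(T)$ with the dual of $T$. By Theorem~\ref{thm:ColoredGraphs} the vertices of $G(T)$ are the triangles of $T$, the edges are the edges of $T$, and the bicolored cycles, which we just showed are the faces of $M(T)$, are the vertices of $T$. It remains to observe that the rotation at each vertex of $M(T)$ agrees with the cyclic order of the three edges around the corresponding triangle of $T$: bipartiteness of $G(T)$ forces $T$ to be orientable, and the chosen convention $(0,1,2)$ at white versus $(0,2,1)$ at black is precisely the rule that makes white triangles positively and black triangles negatively oriented with respect to a global orientation of the surface. This identifies $M(T)$ with the topological dual of $T$.

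The only delicate step is the face-tracing calculation: one must check that the two half-edges of a given edge of $G(T)$ are swept by two distinct face orbits corresponding to the two endpoints (in $T$) of that edge, which carry the two complementary pairs of colors containing the color of the edge. This is handled by the short case verification sketched above and is the core combinatorial content of the corollary.
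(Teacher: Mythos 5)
Your proposal is correct and follows essentially the same route as the paper: you equip $G(T)$ with the same canonical rotation system (colors in cyclic order $(0,1,2)$ at white vertices and the reverse at black vertices, which is the paper's $(102)$ convention) and observe that the face orbits of the resulting map are exactly the bicolored cycles. The only difference is that you spell out the face-tracing case analysis and the orientability argument explicitly, where the paper simply asserts them.
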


\begin{proof}
Obviously $G(T)$ and $M(T)$ have the same 1-skeleton, which is the dual graph. Each face of $M(T)$ can be labeled unambiguously with a pair of colors $\{0,1\}, \{0, 2\}$ or $\{1,2\}$: it is the pair of colors labeling the dual vertex in $T$. $M(T)$ can thus be obtained by a canonical embedding of $G(T)$: around each white vertex of $G(T)$, set the cyclic order of the three incident edges to be the colors $(012)$ and set the cyclic order around each black vertex to be the colors $(102)$, like
\begin{equation}
\begin{array}{c} \includegraphics[scale=.35]{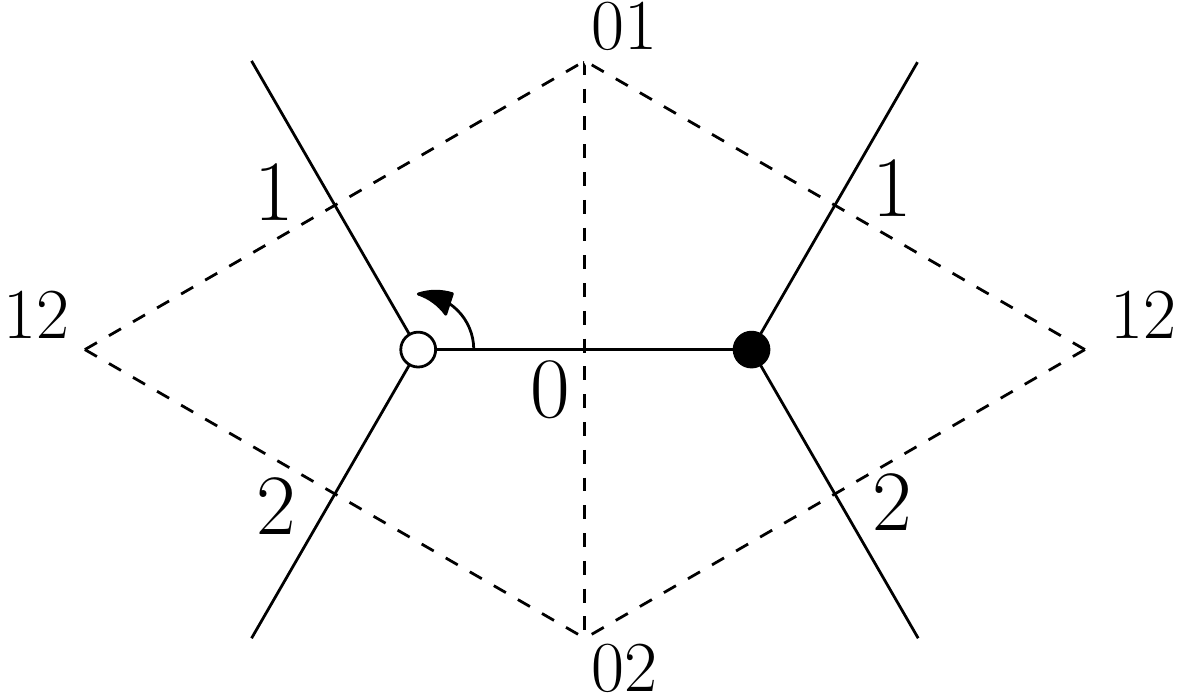} \end{array}
\end{equation}
using the counterclockwise convention. This way, $G(T)$ becomes a map whose faces are the bicolored cycles. Therefore this canonical embedding turns $G(T)$ and its bicolored cycles into $M(T)$.
\end{proof}

In three dimensions, the tetrahedra of $T$ are represented as the vertices of $G(T)$, the triangles of $T$ as the edges of $G(T)$, the edges of $T$ say of colors $\{a, b\}$ as the bicolored cycles with colors $\{a, b\}$ of $G(T)$ and the vertices of $T$ with colors $\{a, b, c\}$ as the connected components of the subgraph of $G(T)$ with the colors $a, b, c$.

The fundamental theorem of colored triangulations in topology is that they represent PL-pseudomanifolds. In two dimensions, there are only manifolds, but in three dimensions, if a connected component of a subgraph $H\subset G(T)$ with 3 colors has its canonical embedding which is a surface of non-zero genus, it means that it represents a vertex in $T$ whose neighborhood is not a 3-ball and has a conical singularity. The second important theorem for topology is that every manifold admits a representation as a colored triangulation (e.g. by barycentric subdivision of a non-colored one).

At the purely combinatorial level, the fundamental theorem is Gurau's theorem which is a combinatorial extension of the genus of a map to any $d$-dimensional colored triangulations.

\begin{theorem}\cite{1/NExpansion} \label{thm:Gurau}
Gurau's degree defined as
\begin{equation}
\omega(T) = d + \frac{d(d-1)}{4} \Delta_d(T) - \Delta_{d-2}(T)
\end{equation}
is a non-negative integer for any colored triangulation $T$, where $\Delta_k(T)$ is the number of $k$-dimensional simplices of $T$.
\end{theorem}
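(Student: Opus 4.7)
The natural approach is to introduce jackets of the colored graph $G(T)$ and sum Euler's formula over them. To each cyclic permutation $\sigma$ of the $d+1$ colors, taken up to reflection (yielding $d!/2$ of them), I associate the jacket $J_\sigma$: the 2-dimensional combinatorial map sharing the underlying bipartite graph of $G(T)$, equipped with the rotation system that places the incident edges around each white vertex in the cyclic order $\sigma$ and around each black vertex in $\sigma^{-1}$. Since it is specified by rotation systems at its vertices, $J_\sigma$ is orientable, so its genus $g_\sigma$ is a well-defined non-negative integer. The crucial property, verified by directly tracing a face using the face permutation $\rho\circ\alpha$, is that the faces of $J_\sigma$ are precisely the bicolored cycles of $G(T)$ whose color pair is consecutive in $\sigma$: from an edge of color $c$ arriving at a white vertex one leaves on color $\sigma(c)$, then from color $\sigma(c)$ at a black vertex one returns to $c$, so the face boundary alternates between the two colors $\{c,\sigma(c)\}$.

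Because $V_{J_\sigma} = \Delta_d(T)$ and $E_{J_\sigma} = (d+1)\Delta_d(T)/2$ are independent of $\sigma$, Euler's formula $V - E + F_\sigma = 2 - 2g_\sigma$ becomes a single linear relation for the face count $F_\sigma$. Summing over all $d!/2$ jackets and using that each unordered pair $\{c,c'\}$ is consecutive in exactly $(d-1)!$ of them (merging $c$ and $c'$ into a super-element gives $(d-1)!/2$ reflection-classes of $d$-element necklaces, doubled by the two internal orders of the pair that reflection exchanges), the total of face counts equals $(d-1)!\,\Delta_{d-2}(T)$, and a straightforward rearrangement yields
\begin{equation*}
\omega(T) \;=\; d + \frac{d(d-1)}{4}\Delta_d(T) - \Delta_{d-2}(T) \;=\; \frac{2}{(d-1)!}\sum_\sigma g_\sigma.
\end{equation*}
Non-negativity of $\omega(T)$ is then immediate, since each $g_\sigma \ge 0$.

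Integrality does not follow from the jacket identity alone, because of the prefactor $1/(d-1)!$. I would instead deduce it from the defining formula together with the observation that $G(T)$ is bipartite and $(d+1)$-regular: edges of any fixed color yield a perfect matching between the black and white vertices, forcing $\Delta_d(T)$ to be even. Consequently $\tfrac{d(d-1)}{4}\Delta_d(T) = \tfrac{d(d-1)}{2}\cdot\tfrac{\Delta_d(T)}{2}$ is an integer, and hence so is $\omega(T)$. The main technical obstacle I expect is the face-structure analysis of jackets in the first step, which must be done carefully to ensure that every face boundary is indeed a full bicolored cycle of the advertised pair; once that is settled, the rest reduces to a sum of two-dimensional Euler identities plus a short parity argument.
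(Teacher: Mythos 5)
Your argument is correct and is essentially Gurau's original jacket proof from the cited reference \cite{1/NExpansion}; the paper itself states Theorem \ref{thm:Gurau} without proof and deliberately avoids jackets in its own arguments. The face-tracing of $J_\sigma$, the count of $(d-1)!$ jackets containing each color pair, the resulting identity $\omega(T)=\frac{2}{(d-1)!}\sum_\sigma g_\sigma$, and the parity argument for integrality are all sound.
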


It is easy to check that $\omega(T)$ is equivalent to the genus of $M(T)$ in two dimensions. In higher dimensions, it is not a topological invariant, but still a genuine extension of the genus in combinatorial terms. It is a bound on the number of $(d-2)$-simplices which grows linearly with the number of $d$-simplices. This way, Gurau's degree classifies colored triangulations and the classification has been performed by Gurau and Schaeffer in \cite{Gurau-Schaeffer}.

In any dimensions, triangulations which maximize the number of $(d-2)$-simplices at fixed number of $d$-simplices are those of vanishing Gurau's degree. In two dimensions, they are the planar ones, homeomorphic to the sphere. For any $d\geq 3$ however, only melonic triangulations satisfy $\omega(T) = 0$, \cite{Melons}. Melonic triangulations are easily described as melonic graphs using the correspondence with colored graphs. They are also in bijection with $(d+1)$-ary trees. They converge as metric spaces to continuous random tree \cite{MelonsBP}. We will explain what they look like below.

\subsection{Colored building blocks and bubbles} \label{sec:Bubbles}

Combinatorial maps can be defined as gluings of polygons along their edges. The polygons are the faces of the map. Universality can be studied by comparing the asymptotic properties of families with different sets of allowed polygons. It is well-known that planar triangulations (all faces have degree three), $p$-angulations (all faces have degree $p$) and more generally all families of planar maps with a finite set of allowed polygons lie in the same universality class known as the universality class of pure 2D quantum gravity \cite{MatrixReview}.

Let us first show that the use of colored triangulations in 2D, as opposed to non-colored, allows for studying universality with the only additional constraint of bipartiteness. First, we build a bipartite map from a colored triangulation $T$. Consider a vertex $v_{12}$ of colors $\{1,2\}$ in $T$. It is the intersection of say $2p$ triangles which are glued along their edges of color 1 and color 2. This gluing of $2p$ triangles is homeomorphic to a disc and has the edges of color 0 as the boundary edges. We can associate to this disc a $2p$-gon and do so for all vertices of colors $\{1, 2\}$,
\begin{equation} \label{Polygon12}
\begin{array}{c} \includegraphics[scale=.4]{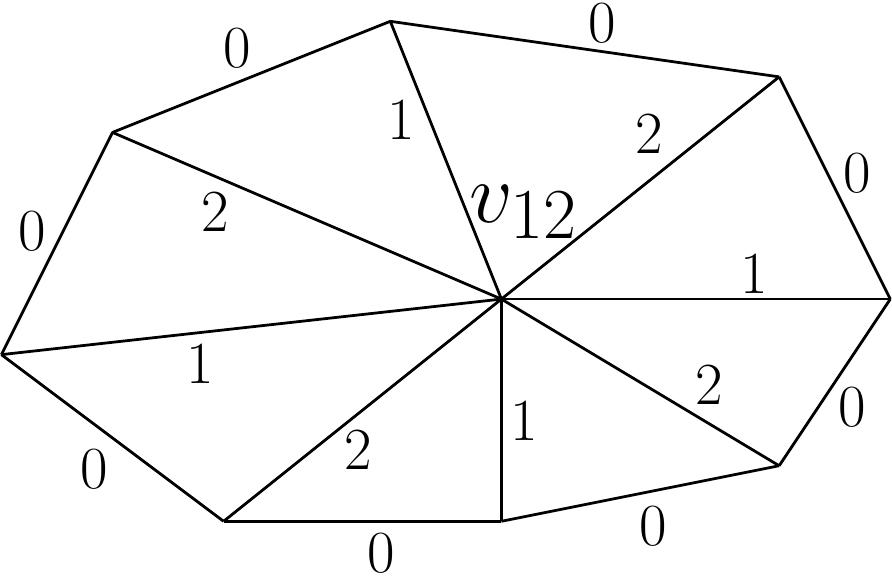} \end{array} \hspace{2cm} \leftrightarrow \hspace{2cm} \begin{array}{c} \includegraphics[scale=.4]{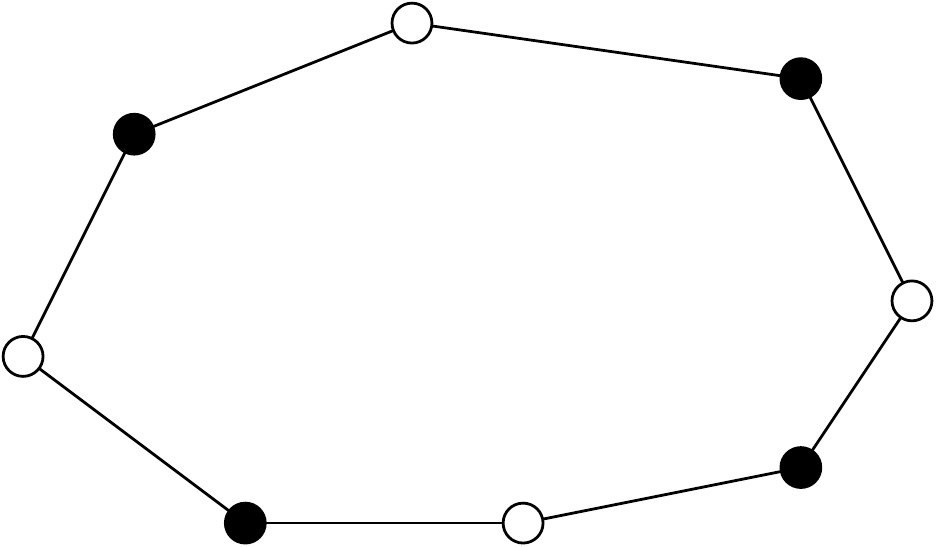} \end{array}
\end{equation}
where the vertices with colors $\{0,1\}$ become white vertices and those with colors $\{0,2\}$ become black vertices.  Since the edges of those polygons are exactly the edges of color 0 of $T$, it means that $T$ encodes a unique gluing between the edges of those polygons. We get a map which is bipartite because its vertices are the vertices of $T$ with colors $\{0,1\}$ and $\{0,2\}$. The inverse operation is simply to label the vertices of the bipartite map with colors $\{0,1\}$ and $\{0,2\}$ and its edges with the color 0, then divide each face of degree $2p$ into $2p$ colored triangles thus adding the edges of colors 1 and 2.

In the colored graph representation, the $2p$-gon is represented by a cycle alternating colors 1 and 2: the bicolored cycle dual to $v_{12}$. The color 0 lies on the boundary of the $2p$-gon: it is not necessary to describe the structure of the polygon, but only to describe the gluings between the polygons. Equivalent to building a bipartite map by gluing polygons, we can thus think dually of the colored graph as a collection of cycles with colors 1, 2 which are connected together by adding the edges of color 0, such that each vertex has exactly the three colors, see Figure \ref{fig:Cycle12}.
\begin{figure}
\includegraphics[scale=.45]{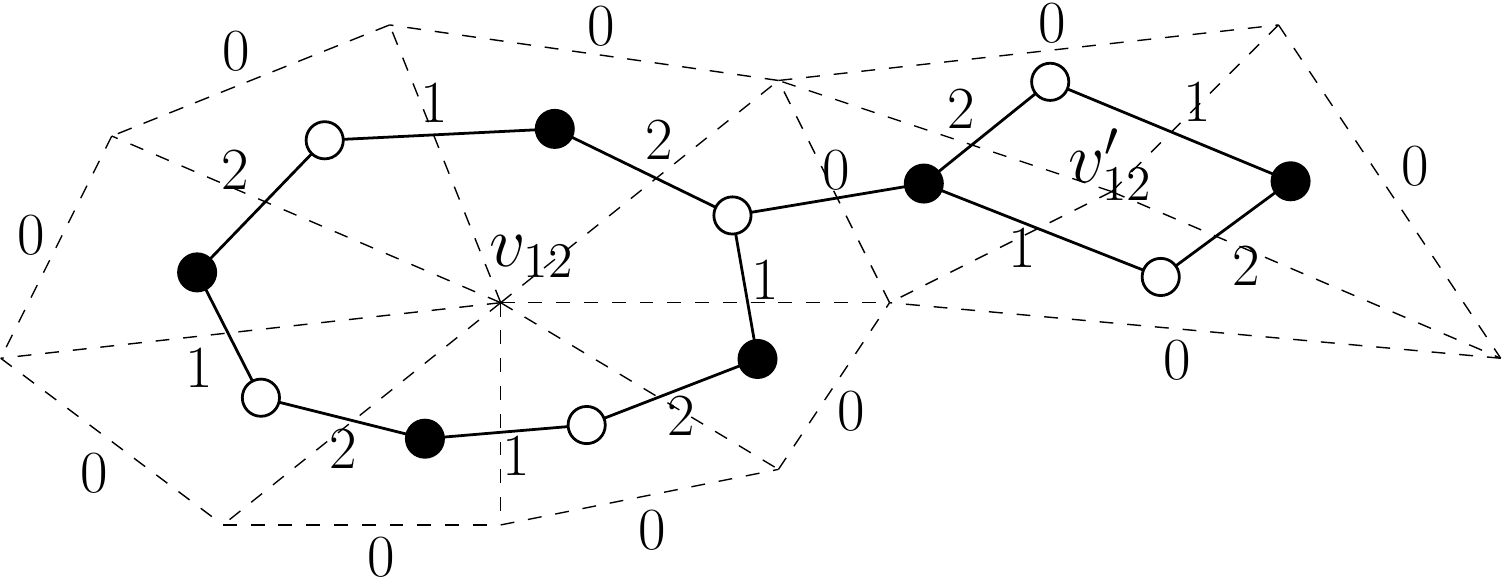}
\caption{\label{fig:Cycle12} The bicolored cycles dual to vertices with colors $\{1,2\}$ also represent polygons. The dual edges of color 0 perform the gluings of those polygons.}
\end{figure}

In higher dimensions, instead of allowing only some polygons, we want to only allow some building blocks. Colored triangulations offer a very natural set of building blocks in any dimension $d\geq 2$.

\begin{definition} [Colored building blocks]
A colored building block (CBB) is a connected colored triangulation with a boundary which consists only in $(d-1)$-simplices of color 0 and such that all $(d-1)$-simplices of color 0 lie on the boundary.
\end{definition}

\begin{figure}
\subfloat[The octahedron as a CBB]{\includegraphics[scale=.5]{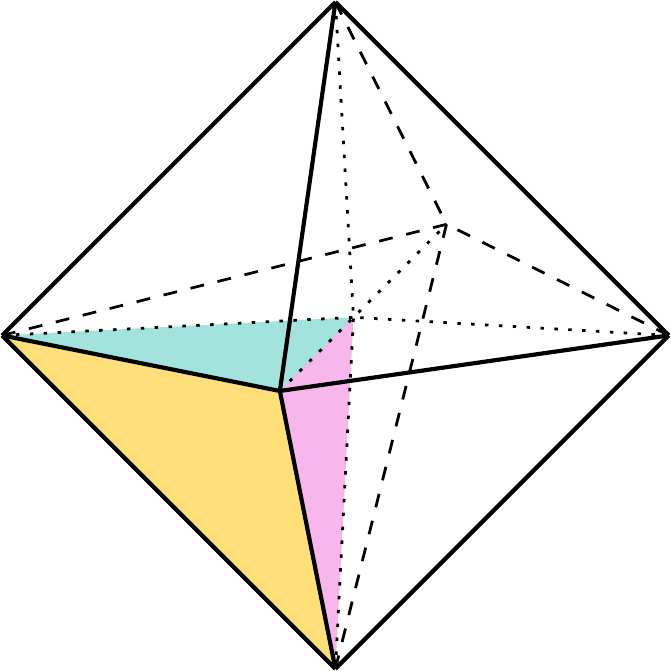}}
\hspace{4cm}
\subfloat[The bubble dual to the octahedron]{\includegraphics[scale=.6]{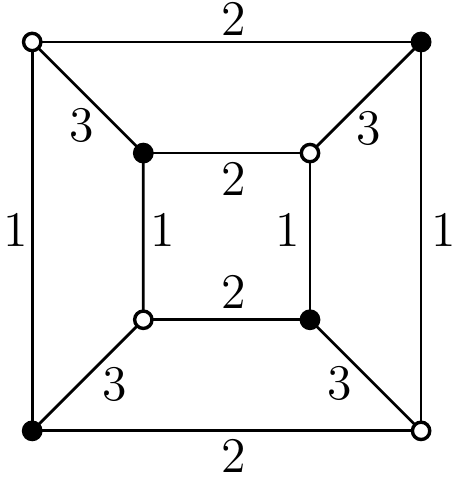}}
\caption{\label{fig:Octahedron} The octahedron is a CBB made of eight tetrahedra, and eight boundary triangles of color 0. Its bubble is the colored 1-skeleton of the dual with the color 0 removed, or equivalently the colored 1-skeleton of the dual to the boundary triangulation. The color 0 is only necessary for the gluing to other bubbles.}
\end{figure}

\begin{proposition} [Bubbles] \label{prop:CBB}
Colored building blocks satisfy the following properties.
\begin{itemize}
\item CBBs are in bijection with colored graphs with $d$ colors which we call {\bf bubbles} (as opposed to graphs with $d+1$ colors). The bubble corresponding to a CBB is the colored 1-skeleton of the dual with the edges of color 0 removed and also is the colored 1-skeleton of the dual to the boundary triangulation.
\item A CBB is the cone over its boundary triangulation.
\item Any closed colored triangulation can be obtained by gluing CBBs along their boundary subsimplices. In terms of dual colored graphs, any colored graph with colors $\{0, \dotsc, d\}$ can be obtained from a collection of bubbles $B_1, \dotsc, B_N$ with colors $\{1, \dotsc, d\}$ connected by adding edges of color 0, respecting bipartiteness and such that each vertex of the graph has an incident edge of color 0.
\end{itemize}
\end{proposition}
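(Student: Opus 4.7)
The plan is to prove the three items in order, relying throughout on Theorem \ref{thm:ColoredGraphs} for the correspondence between colored triangulations and their colored dual graphs, adapted to the case with boundary by allowing dangling half-edges.

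For the first item, I would begin by observing that in a CBB, by definition every $(d-1)$-simplex of color $0$ lies on the boundary, so no two $d$-simplices are glued along a color-$0$ face. Consequently, in the dual $1$-skeleton each vertex carries exactly one dangling color-$0$ half-edge (representing the unique boundary face of the corresponding $d$-simplex) together with $d$ internal edges of colors $1, \dotsc, d$. Erasing the half-edges produces a connected $d$-regular graph with edges colored $1, \dotsc, d$, meeting distinct colors at each vertex, that is, a bubble. Conversely, from any bubble $B$ I would attach a dangling color-$0$ half-edge at every vertex and apply the reconstruction of Theorem \ref{thm:ColoredGraphs} to recover a CBB with $B$ as its underlying $d$-colored graph. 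For the identification with the dual of the boundary, I would note that the boundary triangulation is $(d-1)$-dimensional with color labels $\{1, \dotsc, d\}$ inherited from the CBB's $(d-2)$-subsimplices of color $\{0, c\}$; its $(d-1)$-simplices are in bijection with the $d$-simplices of the CBB, and adjacency across a color-$c$ boundary $(d-2)$-subsimplex corresponds to a color-$c$ gluing in the CBB, which gives the same $1$-skeleton as the bubble.

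For the second item, I would exploit the fact that every internal $(d-1)$-subsimplex has color in $\{1, \dotsc, d\}$. In each $d$-simplex $\sigma$ there is a unique vertex with color label $\{1, \dotsc, d\}$, namely the one opposite to its boundary color-$0$ face. Whenever two $d$-simplices $\sigma, \sigma'$ are glued along a $(d-1)$-simplex of color $c \in \{1, \dotsc, d\}$, the colored gluing rule identifies all their subsimplices whose color label contains $c$, and in particular it identifies their $\{1, \dotsc, d\}$-vertices. Since the CBB is connected and every internal gluing uses a color in $\{1, \dotsc, d\}$, a transitivity argument collapses all these vertices into a single apex $v^{\ast}$ present in every $d$-simplex. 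Each $d$-simplex is then the cone over its unique boundary color-$0$ face with apex $v^{\ast}$, so the whole CBB is the cone over its boundary triangulation.

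For the third item, starting from a closed colored triangulation $T$ with dual graph $G(T)$, I would consider the subgraph $G(1, \dotsc, d)$ obtained by removing the color-$0$ edges. By Theorem \ref{thm:ColoredGraphs}, its connected components are in bijection with the $0$-simplices of $T$ with color label $\{1, \dotsc, d\}$, and each component is a connected $d$-regular colored graph with colors $\{1, \dotsc, d\}$, hence a bubble which by item 1 defines a CBB. Restoring the removed color-$0$ edges prescribes, via the colored gluing rule applied to the dual color-$0$ $(d-1)$-simplices, how these CBBs are assembled along their boundaries to reconstruct $T$. In the converse direction, given bubbles $B_1, \dotsc, B_N$ with colors $\{1, \dotsc, d\}$ together with color-$0$ edges added so that the resulting graph is bipartite and each vertex receives exactly one color-$0$ edge, Theorem \ref{thm:ColoredGraphs} ensures that this graph with $d+1$ colors encodes a closed colored triangulation. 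The main obstacle I anticipate is in item 2, where one must carefully justify that the relevant internal gluing graph of a connected CBB, which uses only colors $1, \dotsc, d$, is itself connected so that the transitivity argument identifying apex vertices applies globally; this holds precisely because color-$0$ faces contribute no internal gluings, so the CBB's connectedness already forces connectedness via colors $\{1, \dotsc, d\}$ alone.
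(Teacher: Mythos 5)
Your proposal is correct and follows essentially the same route as the paper: half-edges of color $0$ for the bubble/boundary identification, the unique vertex with label $\{1,\dotsc,d\}$ (forced by connectedness through the colors $1,\dotsc,d$) as the cone apex, and removal/reinsertion of color-$0$ edges for the decomposition into bubbles. The only presentational difference is in item 2, where you argue from the CBB down to the cone (identifying a common apex) while the paper builds the cone from the boundary and verifies the induced colorings; the verification of coloring consistency that you gloss over is exactly the part the paper spells out, but the underlying facts are identical.
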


\begin{proof}
We start with the first item. We can construct the 1-skeleton of the dual to a CBB just like for closed colored triangulations, except for the boundary. It has a vertex $v_\sigma$ for every $d$-simplex $\sigma$ and an edge of color $c\in\{1, \dotsc, d\}$ between $v_\sigma$ and $v_{\sigma'}$ if $\sigma$ and $\sigma'$ are glued along a $(d-1)$-simplex of color $c$. Because the $(d-1)$-simplices of color 0 are not glued, they become half-edges of color 0 incident to each vertex of the 1-skeleton and those half-edges are not connected. Clearly, those half-edges of color 0 are irrelevant and can be removed without losing information. This way, we get a connected colored graph $B$ with colors in $\{1, \dotsc, d\}$ called a bubble.

Let us compare $B$ with the 1-skeleton $B'$ of the dual to the boundary triangulation and show that $B=B'$. Each $d$-simplex has a single $(d-1)$-subsimplex of color 0, and the latter must lie on the boundary of the CBB. The other way around, each boundary simplex is a $(d-1)$-subsimplex of color 0 of a $d$-simplex. Therefore there is a bijection between the $d$-simplices of the CBB and the $(d-1)$-simplices of its boundary triangulation. Both objects are represented by vertices in their dual graphs, which thus have the same set of vertices. If $\sigma$ is a $d$-simplex, we denote $\sigma_0$ its $(d-1)$-subsimplex of color 0 on the boundary and $v_\sigma$ the vertex dual to either one of them.

If $\sigma_c$ is a $(d-1)$-simplex of color $c\in\{1, \dotsc, d\}$ glued between $\sigma$ and $\sigma'$, it is represented in $B$ as an edge $e_c$ of color $c$ between $v_{\sigma}$ and $v_{\sigma'}$. Moreover, $\sigma_c$ intersects a $(d-1)$-simplex $\sigma_0$ of color 0 in $\sigma$ and a $(d-1)$-simplex $\sigma'_0$ of color 0 in $\sigma'$. This intersection is a $(d-2)$-simplex of colors $\{0, c\}$ which lies on the boundary of the CBB between $\sigma_0$ and $\sigma'_0$. It is thus also represented by an edge of color $c$ between $v_{\sigma}$ and $v_{\sigma'}$ in $B'$, just like $e_c$ in $B$. This shows $B=B'$ and also that the boundary is connected if and only if the CBB is. This is illustrated in 3D in Figure \ref{fig:GluedTetDuals}.

\begin{figure}
\includegraphics[scale=.55]{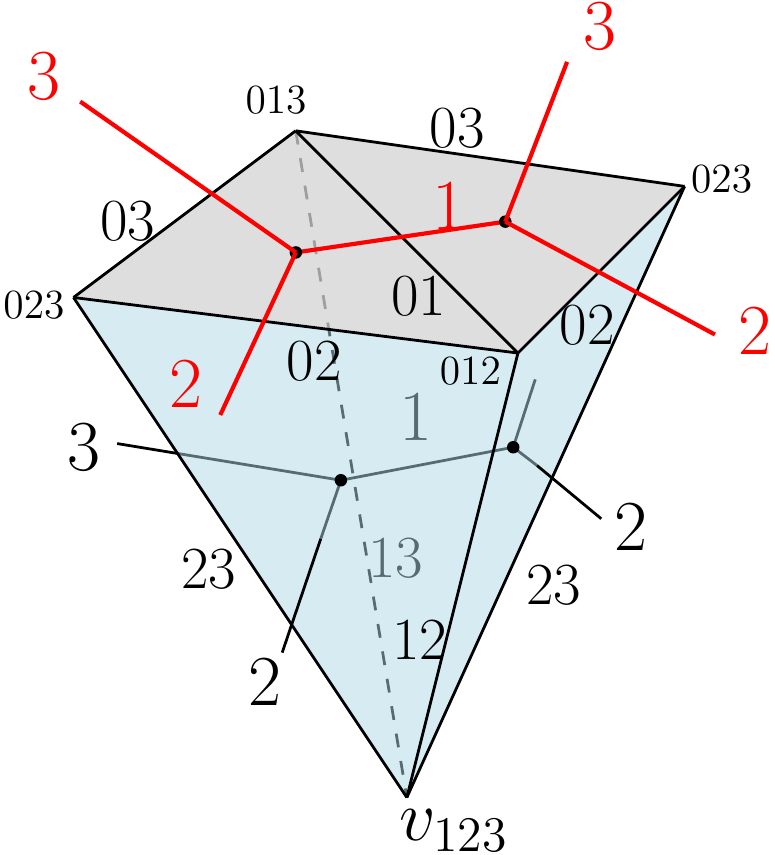}
\caption{\label{fig:GluedTetDuals} Two tetrahedra glued along their triangle of color 1. The triangles of color 0 are in gray and those of color 2 in blue (while those of color 3 are in the back). The 1-skeleton of the dual is the same as the 1-skeleton in red which is that of the dual to the boundary formed by the triangles of color 0. Clearly, one can also start from the boundary triangles in gray, take the cone and label a simplex of $k$ with the same colors as the simplex of dimension $k-1$ it comes from, add the color 0 to all color labels on the boundary and obtain this way a CBB.}
\end{figure}

We now prove the second item. As a preliminary, notice that there is a single connected component with all the colors except 0, which means (third item of Theorem \ref{thm:ColoredGraphs}) that the CBB has a single vertex $v_{1\dotsb d}$ with label $\{1, \dotsc, d\}$. All other vertices of the CBB have the color 0 in their label and thus sit on the boundary.

Let us use the notation $\sigma(c_1, \dotsc, c_k)$ for simplices with colors $\{c_1, \dotsc, c_k\}$ of a $d$-dimensional triangulation and $\sigma_0(c_1, \dotsc, c_k)$ for simplices with colors $\{c_1, \dotsc, c_k\}$ of a $(d-1)$-dimensional triangulation with colors in $\{1, \dotsc, d\}$. First, observe that the topological cone over a colored $(d-1)$-simplex $\sigma_0$ is a colored $d$-simplex $\sigma$. Indeed, $\sigma$ is clearly a $d$-simplex and its coloring is as follows. $\sigma_0$ is its boundary $(d-1)$-simplex of color 0. The $(d-2)$-simplices of $\sigma_0$ with colors $c=1, \dotsc, d$ give rise to the $(d-1)$-simplices of $\sigma$ with the same colors. $\sigma$ is thus a simplex with colored $(d-1)$-simplices, but we also need to make sure that the colorings induced on its subsimplices are consistent with the colorings of the subsimplices of $\sigma_0$. This might seem quite obvious, see Figure \ref{fig:GluedTetDuals} for instance, but let us make it formal. 

First add the color 0 to each color label of the $d$-dimensional triangulation. The subsimplices $\sigma_0(c_1, \dotsc, c_k)$ are thus given the color label $\{0, c_1, \dotsc, c_k)$ in $\sigma$ to identify them uniquely. If $\sigma_0(c, c_1, \dotsc, c_k)$ and $\sigma_0(c',c_1, \dotsc, c_k)$ are two subsimplices of $\sigma_0$, they intersect on $\sigma_0(c, c',c_1, \dotsc, c_k)$ -- for example, the edges of colors $\{0,1\}$ and $\{0,2\}$ intersect on a vertex of colors $\{0,1,2\}$ in a triangle of color 0 of Figure \ref{fig:GluedTetDuals}. Moreover they give rise upon coning to $\sigma(c,c_1, \dotsc, c_k)$ and $\sigma(c',c_1, \dotsc, c_k)$ with one more dimension in $\sigma$ -- two triangles with colors 1 and 2 in the example -- while $\sigma_0(c,c',c_1, \dotsc, c_k)$ gives rise to $\sigma(c,c',c_1, \dotsc, c_k)$ also with one more dimension in $\sigma$ -- an edge of colors $\{1,2\}$ in the example. The latter is obviously the intersection of $\sigma(c,c_1, \dotsc, c_k)$ and $\sigma(c',c_1, \dotsc, c_k)$ in $\sigma$ -- the intersection of the triangles of colors 1 and 2 in the example.

Due to the bijection between the $d$-simplices of a CBB and its boundary $(d-1)$-simplices, the coning accounts for all $d$-simplices of a CBB.
%
Therefore, we just have to make sure that the colored gluing of two $(d-1)$-simplices induces a consistent colored gluing of their cones. Again this might seem obvious from the Figure \ref{fig:GluedTetDuals} in three dimensions but we can make it formal for all dimensions.

Consider two $(d-1)$-simplices $\sigma_0$ and $\sigma'_0$ which are glued together along a $(d-2)$-simplex of color $c\in\{1, \dotsc, d\}$. It means that they share all their subsimplices $\sigma_0(c, c_1, \dotsc, c_k)$ whose color labels contain the color $c$. In the topological cone, $\sigma_0$ and $\sigma'_0$ give rise to colored $d$-simplices $\sigma$ and $\sigma'$. Moreover, their common subsimplices $\sigma_0(c, c_1, \dotsc, c_k)$ correspond to subsimplices $\sigma(0, c, c_1, \dotsc, c_k)$ on the boundary of the CBB with the additional label 0. In the bulk, each $\sigma(0, c, c_1, \dotsc, c_k)$ gives rise by coning to a subsimplex $\sigma(c, c_1, \dotsc, c_k)$ with one more dimension. This implies that $\sigma$ and $\sigma'$ share all their subsimplices whose color labels contain $c$, except possibly for the one subsimplex which is not on the boundary, i.e. the vertex $v_{1\dotsc d}$. The latter is in fact the ``tip'' of the cone. The colored gluing rule is thus reproduced by the coning.

Finally, let us prove the last item of the proposition using the colored graph representation. We just have to show that each colored graph with $d+1$ colors has a (unique) set of bubbles. Consider the 1-skeleton $G$ dual to a closed, connected, colored triangulation. It is a connected, bipartite, colored graph with edges colored in $\{0, 1, \dotsc, d\}$ and such that each vertex has all colors incident exactly once. Removing the edges of color 0, one gets a collection of connected colored subgraphs with all the colors except 0, and which inherits its bipartiteness from that of the whole triangulation. Those connected subgraphs are the bubbles. The graph $G$ can then be reconstructed by considering the bubbles and adding the edges of color 0, such that each vertex has an incident edge of color 0 and respecting the bipartiteness of the bubbles.
\end{proof}

We here illustrate the coning in two and three dimensions.
 
In two dimensions, a CBB is a set of $2p$ triangles glued as in the left hand side of \eqref{Polygon12}. Dropping the color 0 on the boundary, we see that we get a 1-dimensional colored triangulation, with vertices of colors 1 and 2 and bipartiteness of the edges is inherited from bipartiteness of the triangles.

The other way around, starting from a 1-dimensional colored triangulation, we can add the color 0 to the vertex labels and to the edges, add a vertex $v_{12}$ and take the cone between the 1-dimensional triangulation and $v_{12}$. This produces a two-dimensional CBB where the edges of color $c=1, 2$ connect the vertices of labels $\{0,c\}$ to $v_{12}$. This is clearly the reverse operation to restricting to the boundary of the $2p$-gon.

In three dimensions, a CBB has a single vertex $v_{123}$ with color label $\{1, 2, 3\}$ and it is shared by all tetrahedra. Edges of color labels $\{a, b\}$ for $a, b =1, 2, 3$ do not lie on the boundary; each has one vertex which is $v_{123}$ and another vertex with label $\{0, a, b\}$ on the boundary. Edges with color labels $\{0, a\}$ lie on the boundary and connect vertices with label $\{0, a, b\}$ to $\{0, a, c\}$. All triangles of color 0 are on the boundary and since each tetrahedron has a unique triangle of color 0, there is a one-to-one correspondence between boundary triangles and tetrahedra of the CBB. Bipartiteness of the boundary triangles thus follows the bipartiteness of the tetrahedra. This way, we see that removing the color 0 from all boundary labels produces a 2-dimensional colored triangulation with colors $1, 2, 3$.

The other way around, we can consider a 2-dimensional colored triangulation $T_2$ with colors 1, 2, 3, add a vertex $v_{123}$ and take the cone between the triangulation and $v_{123}$ to obtain a CBB $T_3$. This way, all vertices of $T_2$ with labels $\{a, b\}$ give rise to edges in $T_3$ with the same labels, all edges of $T_2$ of color $a = 1, 2, 3$ give rise to triangles in $T_3$ with the same colors. Finally, each triangle in $T_2$ gives rise to a tetrahedron in $T_3$.

In the rest of the article we will use the dual colored graph representation of CBBs, i.e. bubbles. The simplest bubble is the one with two vertices connected by all edges with colors $c=1, \dotsc, d$, Figure \ref{fig:2VertexBubble}. Bubbles with four vertices are characterized by an integer $q$ which is the number of parallel edges, with colors $c_1, \dotsc, c_q$ between a white and a black vertex, Figure \ref{fig:4VertexBubble}. By symmetry, $q \in\{1, \dotsc, (d-1)/2\}$ if $d$ is odd and $q\in\{1, \dotsc, d/2\}$ if $d$ is even. Notice that for $d=3$, only $q=1$ is possible, Figure \ref{fig:4VertexBubble3d}. Due to color relabeling, this leaves three distinct bubbles with four vertices at $d=3$.

\begin{figure}
\subfloat[\label{fig:2VertexBubble}]{\includegraphics[scale=.35]{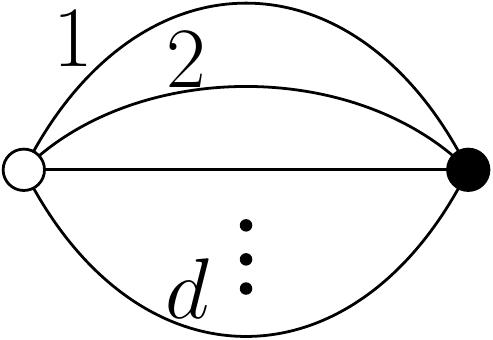}}
\hspace{2cm}
\subfloat[\label{fig:4VertexBubble}]{\includegraphics[scale=.35]{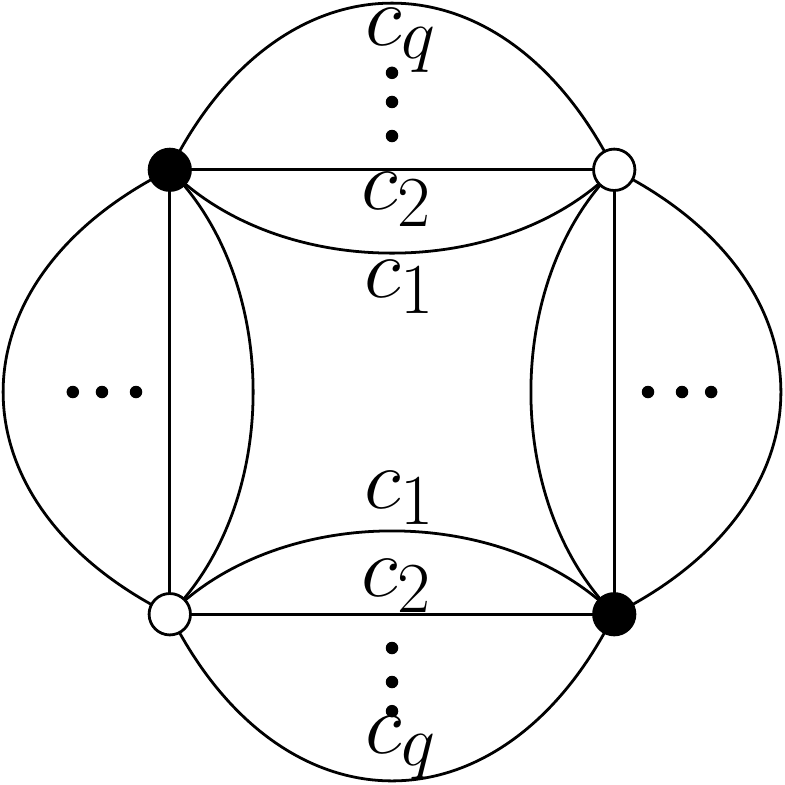}}
\hspace{2cm}
\subfloat[\label{fig:4VertexBubble3d}]{\includegraphics[scale=.35]{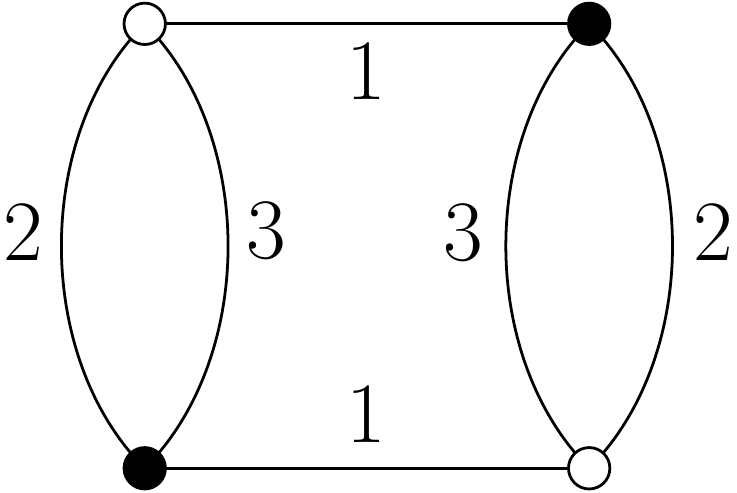}}
\hspace{2cm}
\subfloat[\label{fig:6VertexBubble2d}]{\includegraphics[scale=.35]{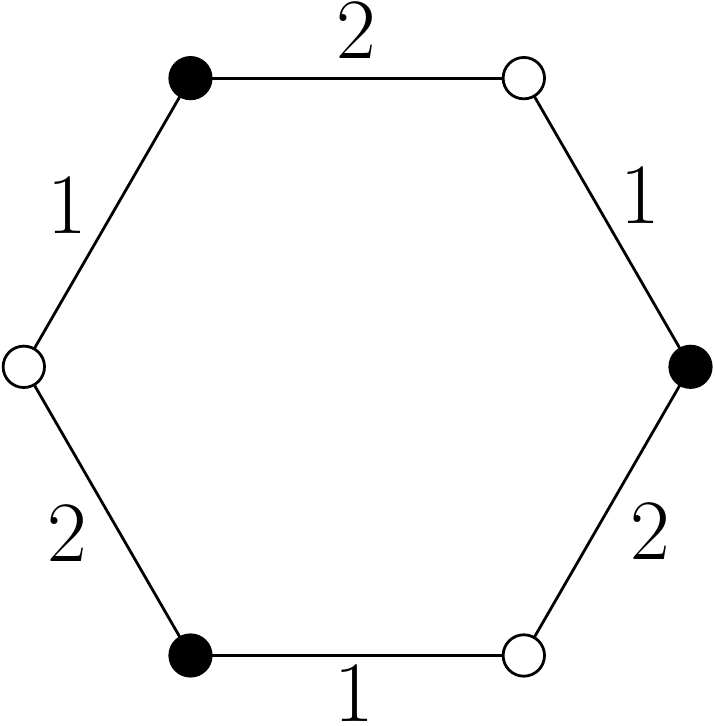}}
\caption{\label{fig:Bubbles}Examples of bubbles: \ref{fig:2VertexBubble} the 2-vertex bubble, representing the $d$-ball; \ref{fig:4VertexBubble} the 4-vertex bubbles characterized by $q\leq d/2$ and representing the $d$-ball; \ref{fig:4VertexBubble3d} only $q=1$ is possible in 3D with 4 vertices, with three possible colorings; \ref{fig:6VertexBubble2d} the 2D bubble of length 6, representing a hexagon}
\end{figure}

At $d=2$, bubbles are cycles alternating the colors 1 and 2 and are thus characterized by a single integer, the length $2p$ of the cycle (which represents a $2p$-gon, as we have seen), e.g. Figure \ref{fig:6VertexBubble2d}. However, at $d=3$, bubbles are labeled by colored boundary triangulations which cannot be characterized by just an integer anymore. In particular, there is not a single bubble at fixed number of vertices like for $d=2$. Up to color relabeling, the three-dimensional bubbles with six vertices are the following.
\begin{equation} \label{6VertexBubbles3d}
\begin{array}{c} \includegraphics[scale=.3]{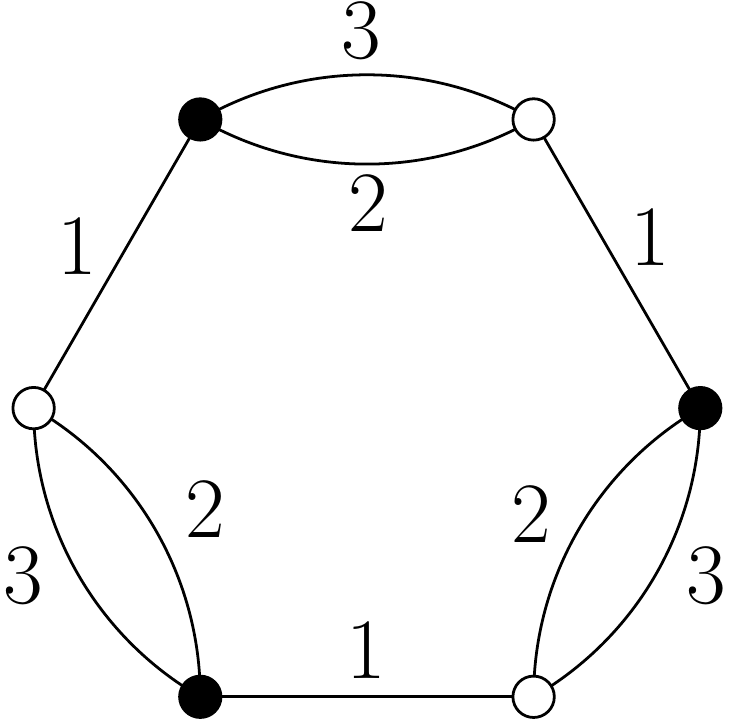} \end{array} \hspace{1.5cm}  \begin{array}{c} \includegraphics[scale=.3]{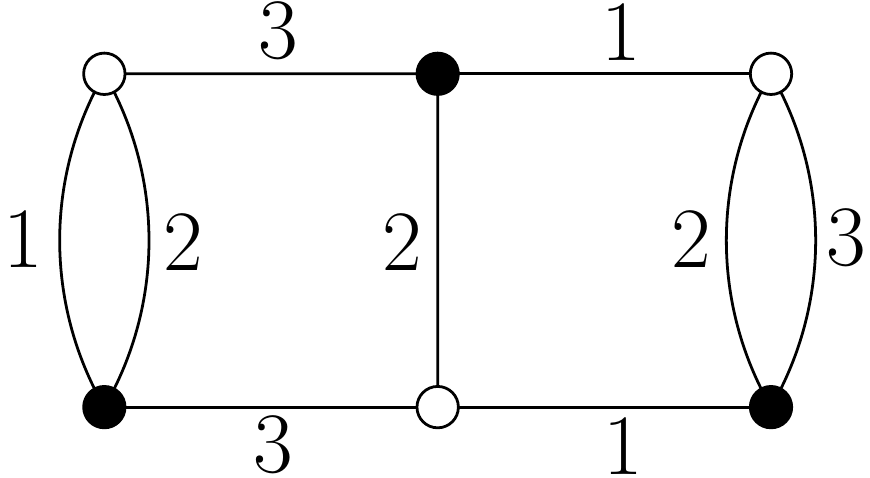} \end{array} \hspace{1.5cm} \begin{array}{c} \includegraphics[scale=.3]{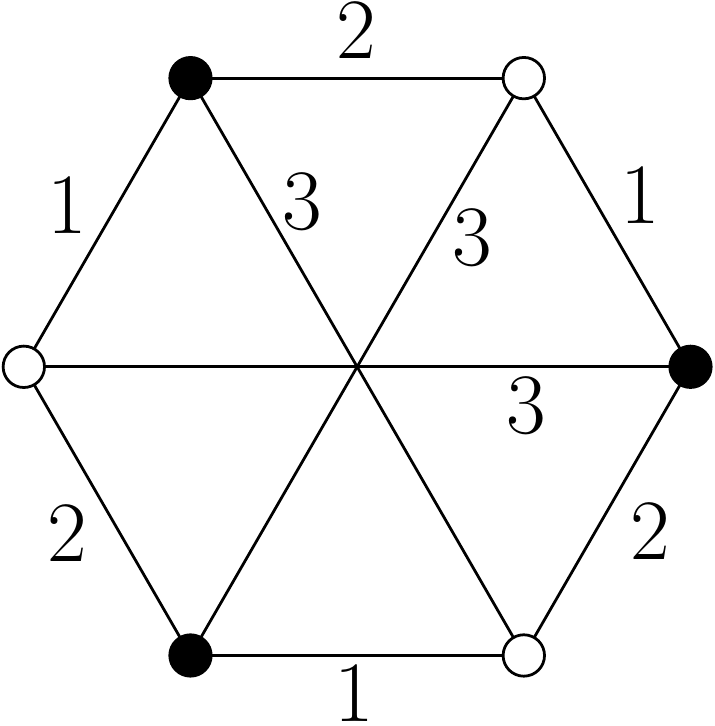} \end{array}
\end{equation}
Since we have used the cyclic order $(123)$ around white vertices and $(132)$ around black vertices, these bubbles should really be seen as maps. In particular, they are planar, except for the one with $K_{3,3}$ as underlying graph which is a torus.

\begin{corollary}[CBBs homeomorphic to 3-balls and planar bubbles] \label{cor:PlanarBubbles}
In three dimensions, a bubble $B(T)$ is a colored graph with 3 colors which is the 1-skeleton dual to a two-dimensional colored triangulation $T$. We say that the bubble $B(T)$ is {\bf planar} if the combinatorial map $M(T)$ dual to $T$ is planar. Then, a three-dimensional CBB is homeomorphic to a 3-ball if and only if its bubble is planar.
\end{corollary}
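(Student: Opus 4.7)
The plan is to reduce the statement to a topological fact about cones over closed surfaces, then translate genus of the boundary into planarity of the bubble via the canonical embedding.

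First, by Proposition \ref{prop:CBB}, a three-dimensional CBB is homeomorphic to the cone over its boundary triangulation $T$, which is a two-dimensional colored triangulation with colors $\{1,2,3\}$. I would first observe that $T$ is automatically a closed connected surface without singularities: applying Theorem \ref{thm:ColoredGraphs} in dimension two, each vertex with color label $\{a,b\}$ corresponds to a bicolored cycle in $B(T)$, whose dual neighborhood is a disc obtained by gluing the triangles incident to that vertex cyclically around it, so the star of every vertex is a disc. The bipartiteness of $G(T)$ that we have imposed makes $T$ orientable, so $T$ is an orientable closed surface of some genus $g \geq 0$.

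Next comes the topological core of the argument. The cone $C(T)$ over a closed surface $T$ is a topological 3-manifold with boundary (in which case it is necessarily a 3-ball) if and only if $T \cong S^2$. Indeed, the link of the cone apex inside $C(T)$ is precisely $T$; for the apex to have a Euclidean neighborhood its link must be a 2-sphere, and conversely the cone over $S^2$ is $B^3$. Thus the CBB is homeomorphic to a 3-ball if and only if its boundary $T$ has genus $0$.

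It remains to connect $g(T) = 0$ to planarity of the bubble. By Corollary \ref{cor:2D}, the canonical embedding of the colored graph $B(T)$ (with cyclic order $(123)$ around white vertices and $(132)$ around black vertices) is a combinatorial map $M(T)$ whose faces are exactly the bicolored cycles of $B(T)$, and $M(T)$ is the map dual to $T$. Since $M(T)$ and $T$ are dual cell decompositions of the same underlying surface, they have the same Euler characteristic and in particular the same genus. Hence $g(T) = 0$ if and only if $M(T)$ is planar, which by the definition given in the statement is the definition of $B(T)$ being planar. Chaining the three equivalences yields the corollary.

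The main obstacle, if any, is the bare topological fact about cones over closed surfaces; this is classical and I would either invoke it directly or sketch the link-based argument as above. Everything else is bookkeeping that follows immediately from Proposition \ref{prop:CBB} and Corollary \ref{cor:2D}.
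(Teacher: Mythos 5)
Your proposal is correct and follows essentially the same route as the paper: reduce to the cone structure of Proposition \ref{prop:CBB} and translate the genus of the boundary into planarity of $M(T)$ via Corollary \ref{cor:2D}. The only difference is that you justify the topological step more carefully (via the link of the cone apex), whereas the paper simply notes that the boundary of a 3-ball is a 2-sphere for one direction and that the cone over a 2-sphere is a 3-ball for the other.
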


\begin{proof}
Following Proposition \ref{prop:CBB}, a three-dimensional CBB is determined by its bubble $B(T)$ which is the colored 1-skeleton of its boundary triangulation $T$. As observed in Corollary \ref{cor:2D}, the bubble $B(T)$ and the map $M(T)$ have the same 1-skeleton and the faces of $M(T)$ are the bicolored cycles of $B(T)$. Therefore, a CBB homeomorphic to a 3-ball, whose boundary is a 2-sphere, has a planar $M(T)$, and thus a planar bubble. The other way around, as shown in Proposition \ref{prop:CBB}, the CBB is the topological cone over its boundary triangulation. If the latter is a planar map, then the cone is the 3-ball.
\end{proof}

Since the bicolored cycles of a bubble $B(T)$ in three dimensions become the faces of the map $M(T)$ upon using the canonical embedding that colors are represented in the cyclic order $(123)$ around white vertices and $(132)$ around black vertices, we will use the terminology of {\bf face of colors $\{a, b\}$ and degree $p$} for a bicolored cycle with colors $\{a,b\}$ and length $p$ for planar bubbles. This is in agreement with the terminology used in the combinatorial maps literature. However, bicolored cycles with the color 0 will not be called faces.

\subsection{Bicolored cycles} \label{sec:BicoloredCycles}

Instead of considering the whole set of colored triangulations, we can focus on those which are built by gluing some CBBs out of a finite set. In two dimensions, this means studying gluings of polygons with prescribed lengths, which allows for universality checks. We will thus use this same strategy to investigate universality in higher dimensions.


Let $\{K_1, \dotsc, K_N\}$ be a finite set of CBBs and $n_1, \dotsc, n_N$ some positive integers. Let $\cT_{n_1, \dotsc, n_N}(K_1, \dotsc, K_N)$ be the set of colored triangulations built from $n_i$ copies of the CBB $K_i$ for $i=1, \dotsc, N$. For $T\in \cT_{n_1, \dotsc, n_N}(K_1, \dotsc, K_N)$ we further denote $\delta_{ab}(T)$ the number of $(d-2)$ simplices of $T$ labeled with the pair of colors $\{a, b\}$ and
\begin{equation}
\cT_{n_1, \dotsc, n_N}(K_1, \dotsc, K_N | \{\delta_{ab}\}) = \left\{T \in \cT_{n_1, \dotsc, n_N}(K_1, \dotsc, K_N), \forall\ a<b \quad \delta_{ab}(T) = \delta_{ab} \right\}.
\end{equation}
Similarly, let $\{B_1, \dotsc, B_N\}$ be the set of bubbles corresponding to $\{K_1, \dotsc, K_N\}$ and $\cG_{n_1, \dotsc, n_N}(B_1, \dotsc, B_N)$ be the set of connected colored graphs with $n_i$ copies of the bubble $B_i$, for $i=1, \dotsc, N$. For $G\in \cG_{n_1, \dotsc, n_N}(B_1, \dotsc, B_N)$, we define the {\bf bicolored cycles with colors $\{a, b\}$} for $a<b$, $a, b\in\{0, \dotsc, d\}$, as the cycles of $G$ whose edges alternate the colors $a$ and $b$. The number of bicolored cycles with colors $\{a, b\}$ of $G$ is denoted $C_{ab}(G)$ and we introduce the subset of colored graphs built from the bubbles $\{B_i\}$ with a prescribed number of bicolored cycles
\begin{equation}
\cG_{n_1, \dotsc, n_N}(B_1, \dotsc, B_N | \{\delta_{ab}\}) = \left\{G \in \cG_{n_1, \dotsc, n_N}(B_1, \dotsc, B_N), \forall\ a<b \quad C_{ab}(T) = \delta_{ab} \right\}.
\end{equation}

\begin{proposition}
There is a bijection between $\cT_{n_1, \dotsc, n_N}(K_1, \dotsc, K_N | \{\delta_{ab}\})$ and $\cG_{n_1, \dotsc, n_N}(B_1, \dotsc, B_N | \{\delta_{ab}\})$ which maps each $(d-2)$-simplex of with label $\{a, b\}$ to a unique bicolored cycle with colors $\{a, b\}$ and the other way around.
\end{proposition}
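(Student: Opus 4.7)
The statement is essentially a specialization to $k=d-2$ of the third item of Theorem \ref{thm:ColoredGraphs}, combined with the CBB/bubble correspondence of Proposition \ref{prop:CBB}. My plan is therefore to first establish the underlying bijection between $\cT_{n_1, \dotsc, n_N}(K_1, \dotsc, K_N)$ and $\cG_{n_1, \dotsc, n_N}(B_1, \dotsc, B_N)$ (ignoring the $(d-2)$-simplex count), then show the count is automatically preserved.

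First, I would invoke Theorem \ref{thm:ColoredGraphs} to obtain a one-to-one correspondence $T \leftrightarrow G(T)$ between closed, connected, colored triangulations and connected colored graphs on $d+1$ colors. Then by the last item of Proposition \ref{prop:CBB}, erasing the edges of color 0 from $G(T)$ yields a disjoint collection of bubbles which is exactly the collection of bubbles dual to the CBBs of $T$. Thus if $T$ uses $n_i$ copies of $K_i$, then $G(T)$ uses $n_i$ copies of $B_i$, and conversely $G$ can be reconstructed from any admissible gluing of the bubbles by edges of color 0. This gives a bijection between $\cT_{n_1, \dotsc, n_N}(K_1, \dotsc, K_N)$ and $\cG_{n_1, \dotsc, n_N}(B_1, \dotsc, B_N)$.

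Second, I would specialize the third item of Theorem \ref{thm:ColoredGraphs} to the case $d-k = 2$, i.e.\ fix a pair of colors $\{a,b\} \subset \{0, \dotsc, d\}$. The subgraph $G(a,b) \subset G(T)$ retains all vertices and only the edges of colors $a$ and $b$. Since each vertex of $G(T)$ has exactly one incident edge of each color, every vertex of $G(a,b)$ has degree exactly $2$, with one edge of color $a$ and one of color $b$. Hence every connected component of $G(a,b)$ is a simple cycle whose edges alternate the colors $a$ and $b$, i.e.\ a bicolored cycle with colors $\{a,b\}$ in the sense of Section \ref{sec:BicoloredCycles}. Conversely, every bicolored cycle with colors $\{a,b\}$ is such a connected component. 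Theorem \ref{thm:ColoredGraphs} then gives a bijection between these bicolored cycles and the $(d-2)$-simplices of $T$ with color label $\{a,b\}$, which yields the equality
\begin{equation*}
\delta_{ab}(T) = C_{ab}(G(T)) \qquad \text{for all } 0 \leq a < b \leq d.
\end{equation*}

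Finally, I would conclude by simply restricting the bijection of the first step to the prescribed values of $\{\delta_{ab}\}$: since $T \in \cT_{n_1, \dotsc, n_N}(K_1, \dotsc, K_N | \{\delta_{ab}\})$ if and only if $G(T) \in \cG_{n_1, \dotsc, n_N}(B_1, \dotsc, B_N | \{\delta_{ab}\})$ by the identity above, the restriction is a bijection. There is no real obstacle here; the only care needed is to check that each bubble component of $G(T) \setminus \{\text{edges of color } 0\}$ is indeed isomorphic (as a colored graph) to the prescribed bubble $B_i$, which is exactly what the duality of Proposition \ref{prop:CBB} guarantees vertex by vertex.
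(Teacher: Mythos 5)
Your proposal is correct and follows essentially the same route as the paper: a direct combination of Theorem \ref{thm:ColoredGraphs} (specialized to $k=d-2$, where the connected components of $G(a,b)$ are exactly the bicolored cycles because each vertex has one incident edge of each color) with Proposition \ref{prop:CBB} to restrict to prescribed CBBs and bubbles. No gaps; the extra care you take in spelling out the degree-two argument and the restriction step matches the paper's reasoning.
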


\begin{proof}
This is a direct combination of Theorem \ref{thm:ColoredGraphs} with Proposition \ref{prop:CBB}. Theorem \ref{thm:ColoredGraphs} establishes the correspondence between colored triangulations and colored graphs in a way which identifies each $k$-simplex with colors $\{c_1, \dotsc, c_{d-k}\}$ with a connected component of the subgraph $G(c_1, \dotsc, c_{d-k})$ which restricts to those colors. Thus, a $(d-2)$-simplex of a colored triangulation, with a pair of colors $\{a, b\}$, corresponds in the colored graph to a connected component of $G(a, b)$. This is the subgraph with only colors $a, b$. Since it is bipartite and both colors $a$ and $b$ must be incident on each vertex exactly once, $G(a, b)$ is a disjoint union of cycles which alternate the colors $a$ and $b$. Its connected components are thus the bicolored cycles with colors $\{a, b\}$. Then Proposition \ref{prop:CBB} allows the restriction of this correspondence to colored triangulations with prescribed CBBs and colored graphs with prescribed bubbles.
\end{proof}

The proposition can obviously be extended to $k$-simplices of colors $\{c_1, \dotsc, c_{d-k}\}$ and connected components of $G(c_1, \dotsc, c_{d-k})$, but we will be exclusively interested in the number of $(d-2)$-simplices. Indeed, in two dimensions, one classifies colored triangulations using the genus. It is a bound on the number of vertices which grows linearly with the number of triangles. Gurau's theorem, i.e. Theorem \ref{thm:Gurau}, is a purely combinatorial extension of the genus formula to $d\geq 2$ which bounds the number of $(d-2)$-simplices linearly with the number of $d$-simplices. We will thus classify $\cT_{n_1, \dotsc, n_N}(K_1, \dotsc, K_N)$ according to the number of $(d-2)$-simplices at fixed numbers of CBBs $K_1, \dotsc, K_N$. This is equivalent to classifying $\cG_{n_1, \dotsc, n_N}(B_1, \dotsc, B_N)$ with respect to the number of bicolored cycles at fixed numbers of bubbles $B_1, \dotsc, B_N$.

The reason why Theorem \ref{thm:Gurau} and the Gurau-Schaeffer classification \cite{Gurau-Schaeffer} of colored graphs with respect to Gurau's degree are not sufficient to classify the colored triangulations of $\cT_{n_1, \dotsc, n_N}(K_1, \dotsc, K_N)$ is that one can only get triangulations of vanishing Gurau's degree for special CBBs called melonic CBBs. The colored triangulations of vanishing Gurau's degree are then called melonic triangulations. It is easier to describe those objects using colored graphs and bubbles.

\begin{definition} [Melonic bubbles and colored graphs]
Melonic bubbles are built recursively, starting from the bubble with two vertices connected by all the colors in $\{1, \dotsc, d\}$, Figure \ref{fig:2VertexBubble}, by inserting on any chosen edge of color $c$ two vertices connected by all the colors except $c$
\begin{equation} \label{MelonicInsertion}
\begin{array}{c}\includegraphics[scale=.4]{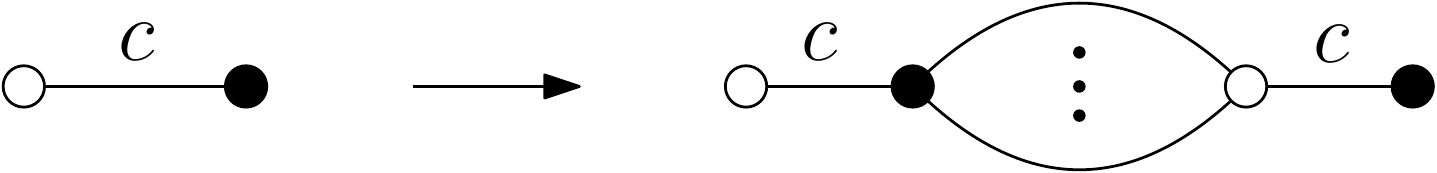} \end{array}
\end{equation}
Melonic colored graphs are defined similarly with the set of colors $\{0, 1, \dotsc, d\}$ instead of $\{1, \dotsc, d\}$.
\end{definition}

The insertion increases the number of vertices by two. In particular, the first insertion turns Figure \ref{fig:2VertexBubble} into Figure \ref{fig:4VertexBubble} with $q=1$. The two bubbles on the left of \eqref{6VertexBubbles3d} are not only planar but actually melonic. Clearly, a melonic bubble is fully encoded in the history of insertions and the latter is a tree with colored edges to remember the color on which each insertion was performed. Melonic bubbles (up to the choice of a root vertex) are just in bijection with $d$-ary trees.

\begin{theorem} \label{thm:Melons}
Colored graphs which maximize the number of bicolored cycles are those of vanishing Gurau's degree. For $d\geq 3$, they are the melonic colored graphs, in bijection with $(d+1)$-ary trees. They can only be built from melonic bubbles.
\end{theorem}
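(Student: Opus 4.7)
The first sentence of the theorem is immediate from Theorem \ref{thm:Gurau}: the total number of bicolored cycles of a colored graph $G(T)$ equals $\Delta_{d-2}(T) = \sum_{a<b} C_{ab}$, so maximizing it at fixed $\Delta_d$ is equivalent to saturating $\omega(T) = 0$. What remains is to identify, for $d\ge 3$, the graphs of vanishing Gurau degree as the melonic ones, to read off the tree bijection, and to check that their bubbles are themselves melonic. I would proceed by induction on $\Delta_d$.

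For the forward direction I would handle the base case $\Delta_d=2$ (two vertices joined by all $d+1$ colors, with $\binom{d+1}{2}$ length-$2$ bicolored cycles, whence $\omega=0$ by direct substitution) and then verify that each insertion as in \eqref{MelonicInsertion} preserves $\omega$. A local count does this at once: the insertion on an edge of color $c$ adds two vertices and exactly $\binom{d}{2}$ new length-$2$ bicolored cycles, one for every pair of colors $\{c',c''\}$ with $c',c''\neq c$, while the $d$ cycles of type $(c,c')$ that previously traversed the subdivided edge are merely lengthened by two and not multiplied. Consequently $\Delta\omega = \tfrac{d(d-1)}{4}\cdot 2 - \binom{d}{2}=0$.

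The whole weight of the proof lies in the converse, whose crux is the following key lemma: any connected colored graph $G$ with $\omega(G)=0$ and $\Delta_d>2$ contains a \emph{melonic pair}, that is, two vertices joined by $d$ parallel edges. Granting the lemma, one excises the pair, splices the two surviving half-edges of the missing color into a single edge, and obtains a graph $G'$ with $\Delta_d-2$ vertices whose degree the forward count shows to equal zero again; the induction hypothesis then gives that $G'$, hence $G$, is melonic. To establish the lemma I would first show that $G$ contains a \emph{dipole} (a length-$2$ bicolored cycle, i.e.\ two vertices joined by two parallel edges of some two colors $\{a,b\}$). This follows from the global identity $\sum_{a<b}(\text{sum of cycle lengths})=\binom{d+1}{2}\Delta_d$ combined with the saturation $\sum_{a<b}C_{ab}=d+\tfrac{d(d-1)}{4}\Delta_d$: if all bicolored cycles had length $\ge 4$, one would need $d+\tfrac{d(d-1)}{4}\Delta_d \le \tfrac{d(d+1)}{8}\Delta_d$, which simplifies to $8 \le (3-d)\Delta_d$ and fails for every $d \ge 3$. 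The second and more delicate step is to promote a dipole of colors $\{a,b\}$ at vertices $v_1,v_2$ to a melonic pair: for each third color $c$, the bicolored $(a,c)$-cycle through $v_1,v_2$ must also have length $2$, for otherwise a local rerouting of the $a$- and $c$-edges incident to $v_1,v_2$ yields a graph with strictly more bicolored cycles, contradicting Gurau's bound. Iterating over the $d-1$ remaining colors forces all $d$ parallel edges and delivers the melonic pair. I expect this propagation step to be the main obstacle, and it is exactly where the hypothesis $d\ge 3$ is essential, since in two dimensions many non-melonic planar colored graphs already saturate $\omega=0$.

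The bijection with rooted $(d+1)$-ary trees is then read off from the canonical decomposition: after rooting the graph at a distinguished vertex, the ordered sequence of excisions of melonic pairs produces a rooted tree in which each internal node carries a label in $\{0,\dots,d\}$ specifying the color of the edge on which the corresponding insertion is performed, encoding exactly a $(d+1)$-ary structure. Finally, the claim that melonic graphs are built only from melonic bubbles follows by tracking bubbles through the insertions: an insertion on a color-$0$ edge creates a fresh two-vertex bubble and leaves the other bubbles untouched, whereas an insertion on a color-$c$ edge with $c\neq 0$ takes place inside one bubble and coincides there with the bubble-level melonic insertion defined above the theorem. By induction on the number of insertions, every bubble of a melonic graph is itself a melonic bubble.
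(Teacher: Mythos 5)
The paper does not actually prove this theorem: it defers to \cite{Melons} (and \cite{Gurau-Schaeffer}), whose argument goes through the jacket decomposition of the degree, so your attempt at a self-contained combinatorial proof is more ambitious than what the text offers. Your forward direction is correct (the local count $\Delta\omega=\tfrac{d(d-1)}{4}\cdot 2-\binom{d}{2}=0$ for the insertion \eqref{MelonicInsertion} is exactly right), and so is the counting argument showing that a degree-zero graph with $d\geq 3$ must contain a bicolored cycle of length $2$.

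The gap is in the promotion step, which is false as stated. It is not true that every $2$-dipole in a degree-zero graph extends to a $d$-dipole. Take $d=3$ and build the $6$-vertex melonic graph by inserting $v,\bar v$ (joined by colors $1,2,3$) on the color-$0$ edge of the $2$-vertex graph $u,\bar u$, and then inserting $w,\bar w$ (joined by colors $0,2,3$) on the color-$1$ edge between $v$ and $\bar v$. This graph has $\omega=0$, and $\{v,\bar v\}$ is a $2$-dipole of colors $\{2,3\}$, yet the bicolored cycle of colors $\{1,2\}$ through $v,\bar v$ has length $4$ (it passes through $w,\bar w$), so no color-$1$ edge joins $v$ to $\bar v$. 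Your proposed justification cannot be repaired in place: a "local rerouting that strictly increases the number of bicolored cycles" is impossible in any degree-zero graph, since by Theorem \ref{thm:Gurau} the cycle count is already maximal; at best a rerouting preserves the count, which yields no contradiction. What is true is that \emph{some} $d$-dipole exists (at a "leaf" of the melonic structure), but extracting it requires a genuinely different argument --- for instance the jacket genus decomposition of \cite{Melons}, the combinatorial scheme of \cite{Gurau-Schaeffer}, or an induction that reduces a carefully chosen (e.g.\ innermost, or maximal-multiplicity) dipole rather than an arbitrary one. Until that existence statement is established, the induction, the tree bijection, and the claim about melonic bubbles all remain conditional.
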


\begin{proof}
The original proof is in \cite{Melons} (see \cite{Gurau-Schaeffer} for a purely combinatorial reference) and uses special surfaces canonically embedded which are called jackets. This result was then applied in the context of graphs built from certain bubbles in \cite{Uncoloring}. In fact, the results of the present paper will include the above theorem for melonic bubbles as a special case and the proof will not rely on jackets at all. While we will restrict to $d=3$ so that our results apply to planar bubbles, all our theorems can be straightforwardly extended to arbitrary $d\geq 3$ in the case of melonic bubbles.
\end{proof}

This means that colored triangulations which are built from non-melonic CBBs cannot have vanishing Gurau's degree, because they cannot grow as many $(d-2)$-simplices. In fact, the Gurau-Schaeffer classification suggests that for non-melonic CBBs there is only a finite number of colored triangulations at fixed value of Gurau's degree. This means that no notion of large scale, continuous limit can be reached, and universality cannot be studied using Gurau's degree.

If $G\in\cG_{n_1, \dotsc, n_N}(B_1, \dotsc, B_N)$, we denote $C(G)$ the total number of bicolored cycles of colors $\{a,b\}$ for $0\leq a<b\leq d$, which is the total number of $(d-2)$-simplices of the corresponding colored triangulation
\begin{equation}
C(G) = \Delta_{d-2}(T).
\end{equation}
Fixing the bubbles $\{B_i\}$ and their numbers $\{n_i\}$ actually fixes the number of bicolored cycles with colors $\{a, b\}$ for $1\leq a<b\leq d$, i.e. those which do not have the color 0. Let us denote $C(B_i)$ the total number of bicolored cycles of $B_i$ and 
\begin{equation}
C_0(G) = \sum_{a=1} C_{0a}(G)
\end{equation}
the total number of bicolored cycles with colors $\{0, a\}$. Therefore
\begin{equation}
C(G) = \sum_{0\leq a<b\leq d} C_{ab}(G) = C_0(G) + \sum_{i=1}^N n_i\, C(B_i).
\end{equation}
Since each $C(B_i)$ is fixed in $\cG_{n_1, \dotsc, n_N}(B_1, \dotsc, B_N)$, the classification with respect to $C(G)$ is equivalent to the classification with respect to $C_0(G)$. This establishes the main question.

{\bf Main question.} We denote 
\begin{equation}
C_{n_1 \dotsc n_N}(B_1, \dotsc, B_N) = \max_{G \in \cG_{n_1, \dotsc, n_N}(B_1, \dotsc, B_N)} C_0(G)
\end{equation}
the maximal number of bicolored cycles with colors $\{0, a\}$, $a=1, \dotsc, d$ and 
\begin{equation}
\cG^{\max}_{n_1, \dotsc, n_N}(B_1, \dotsc, B_N) = \Bigl\{G \in \cG_{n_1, \dotsc, n_N}(B_1, \dotsc, B_N),\ C_0(G) = C_{n_1, \dotsc, n_N}(B_1, \dotsc, B_N)\Bigr\}
\end{equation}
the subset of graphs which have this maximal number of bicolored cycles. The main question is two-fold.
\begin{itemize}
\item Find $C_{n_1 \dotsc n_N}(B_1, \dotsc, B_N)$. This is equivalent to a sharp bound on $C_0(G)$ which, as it turns out, grows linearly with the size of the graph. From examples, we expect
\begin{equation}
C_0(G) \leq d + \alpha(B_1, \dotsc, B_N) V(B),
\end{equation}
where $V(B)$ is the number of vertices of $G$ and $\alpha(B_1, \dotsc, B_N) < d(d-1)/4$ by comparison with Gurau's value in Theorem \ref{thm:Gurau}.
\item Characterize the graphs which maximize the number of bicolored cycles, i.e. $\cG^{\max}_{n_1, \dotsc, n_N}(B_1, \dotsc, B_N)$.
\end{itemize}

Because of the special role played by the color 0, we will, from here on out, draw {\bf edges of color 0 with dashed lines and edges of colors $1, \dotsc, d$ with solid lines}.

\section{Edge flips, boundary bubbles and 4-edge-cuts} \label{sec:BdryBubblesAndFlips}

In this section, we introduce two tools: $i)$ the flips of edges which transform a colored graph into another with the same set of bubbles but a different number of bicolored cycles, $ii)$ the notion of boundary bubble which enables us to keep track of the bicolored cycles which go through two regions (typically one bubble versus the other bubbles) of a graph. This is readily applied to eliminate 4-edge-cuts made of edges of color 0 from $\cG^{\max}_{n_1, \dotsc, n_N}(B_1, \dotsc, B_N)$ for arbitrary bubbles.

\subsection{Edge flips} \label{sec:Flips}

\begin{definition} [Edge flip]
Let $G$ be a colored graph with at least two edges of color 0, $e_1$ between $v_1$ and $\bar{v}_1$, and $e_2$ between $v_2$ and $\bar{v}_2$. The flip of $e_1$ and $e_2$ is the transformation of $G$ into $G'$ where $e_1$ and $e_2$ are removed and replaced in $G'$ with two other edges of color 0, one between $v_1$ and $\bar{v}_2$ and the other between $v_2$ and $\bar{v}_1$,
\begin{equation}
G = \begin{array}{c} \includegraphics[scale=.4]{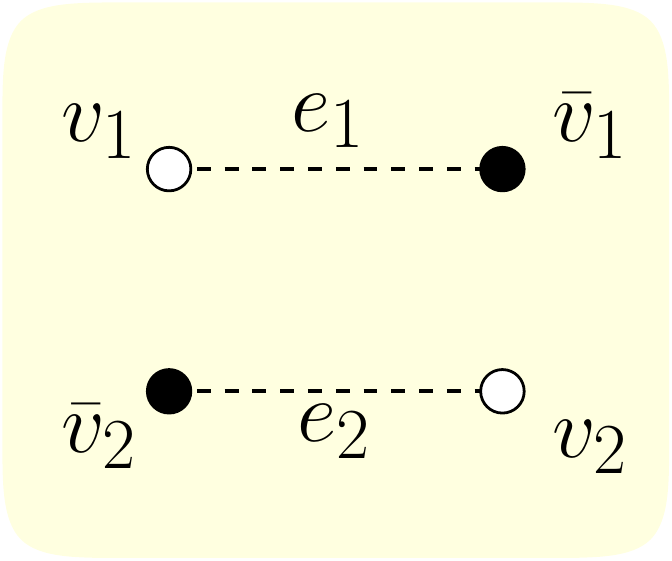} \end{array} \quad \to \quad G' = \begin{array}{c} \includegraphics[scale=.4]{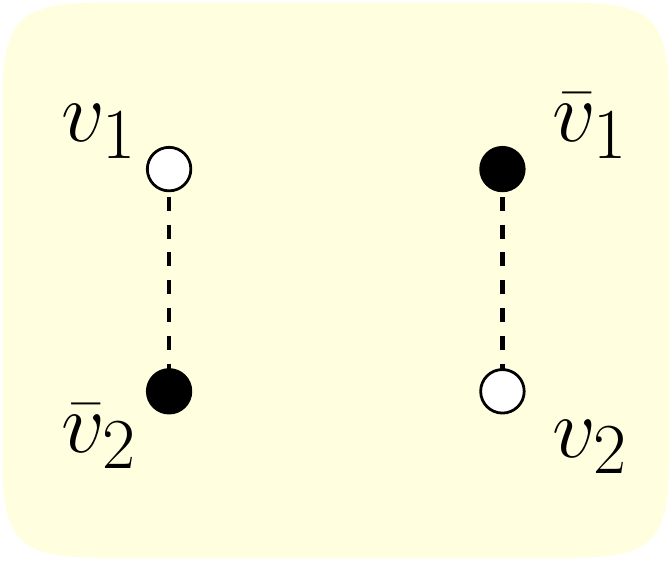} \end{array}
\end{equation}
\end{definition}
A flip may disconnect the graph, i.e. $G'$ may not be connected even when $G$ is.

To control the variation of the number of bicolored cycles through a flip, we introduce the quantity $I_G(e_1, e_2)$.

\begin{definition} \label{def:I}
If $G$ is a colored graph with colors $0, \dotsc, d$ and $e, e'$ are two edges of color 0, we denote $I_G(e, e')\subset \{1, \dotsc, d\}$ the set of colors for which the same bicolored cycle of colors $\{0, c\}$ goes along $e$ and $e'$ in $G$.
\end{definition}

For each color $c\in\{1, \dotsc, d\}$ and each edge of color 0, in particular $e$ and $e'$, there is exactly one bicolored cycle with colors $\{0,c\}$ along that edge. Therefore, for each $c\in\{1, \dotsc, d\}$, it is either the same bicolored cycle of colors $\{0,c\}$ along $e$ and $e'$, then $c\in I_G(e, e')$, or they are distinct cycles of colors $\{0,c\}$ and then $c\not\in I_G(e, e')$.

\begin{lemma} \label{lemma:flip}
Let $e, e'$ be two edges of color 0 in $G$ and assume their flip turns $G$ into the connected graph $G'$. Then 
\begin{equation}
C_0(G') = C_0(G) - d + 2|I_G(e,e')|.
\end{equation}
In particular, at $d=3$
\begin{enumerate}
\item if $e,e'$ are incident to vertices which are connected by exactly one edge of color $c\in \{1, 2, 3\}$, i.e.
\begin{equation}
\begin{array}{c} \includegraphics[scale=.4]{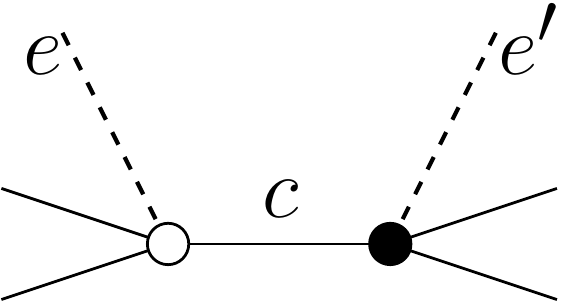} \end{array}\qquad \subset G \hspace{2cm} \text{then} \quad C_0(G') \geq C_0(G) - 1,
\end{equation}
\item if $e,e'$ are incident to vertices which are connected by exactly two edges (we say the latter form a 2-dipole), i.e. 
\begin{equation}
\begin{array}{c} \includegraphics[scale=.4]{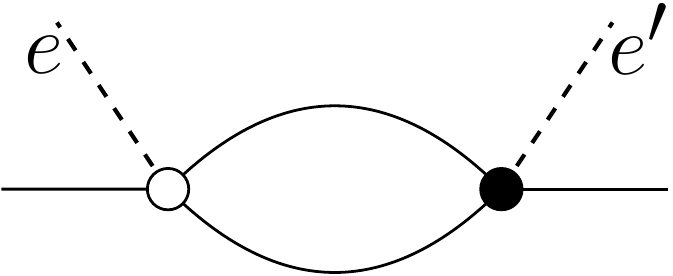} \end{array}\qquad \subset G \hspace{2cm} \text{then} \quad C_0(G') \geq C_0(G) + 1,
\end{equation}
\end{enumerate}
\end{lemma}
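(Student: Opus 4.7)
The plan is to decompose the variation $C_0(G')-C_0(G)$ as a sum over the colors $c\in\{1,\dotsc,d\}$, by tracking how the $\{0,c\}$-cycles through $e$ and $e'$ are reorganized by the flip. The key observation is that for each fixed $c$, only the $\{0,c\}$-cycles passing through $e$ or $e'$ are modified: every other $\{0,c\}$-cycle is an alternating closed walk disjoint from $\{e,e'\}$ and is untouched by the flip.

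Fix $c\in\{1,\dotsc,d\}$ and distinguish the two mutually exclusive cases from Definition \ref{def:I}. If $c\in I_G(e,e')$, a single $\{0,c\}$-cycle $C$ contains both $e$ and $e'$ in $G$. Viewing $C$ as an alternating closed walk and removing $e$ and $e'$ splits it into two alternating paths, one with endpoints $\{\bar v_1,v_2\}$ and the other with endpoints $\{v_1,\bar v_2\}$. In $G'$ the two new color-0 edges $(v_2,\bar v_1)$ and $(v_1,\bar v_2)$ close these two paths into two distinct $\{0,c\}$-cycles, so the single cycle splits into two, contributing $+1$. If instead $c\notin I_G(e,e')$, two distinct $\{0,c\}$-cycles $C_e$ and $C_{e'}$ contain $e$ and $e'$ respectively; cutting each at its color-0 edge and reconnecting the four loose ends via the two new color-0 edges merges them into a single $\{0,c\}$-cycle of $G'$, contributing $-1$. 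Summing over the $d$ colors gives
\begin{equation*}
C_0(G')-C_0(G) = |I_G(e,e')| - \bigl(d-|I_G(e,e')|\bigr) = 2|I_G(e,e')| - d,
\end{equation*}
which is the main identity.

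For the two items at $d=3$, it suffices to bound $|I_G(e,e')|$ from below by exhibiting elements of it. In item 1, an edge of color $c\in\{1,2,3\}$ joins two of the vertices incident to $e$ and $e'$, so starting from $e$ the $\{0,c\}$-cycle traverses this color-$c$ edge and immediately enters $e'$; hence $c\in I_G(e,e')$, $|I_G(e,e')|\geq 1$, and $C_0(G')\geq C_0(G)-1$. In item 2, the two edges of the 2-dipole carry distinct colors $c_1,c_2\in\{1,2,3\}$, and applying the same argument to each gives $\{c_1,c_2\}\subseteq I_G(e,e')$, hence $|I_G(e,e')|\geq 2$ and $C_0(G')\geq C_0(G)+1$.

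The only delicate aspect of the argument is the combinatorial bookkeeping in the case $c\in I_G(e,e')$: one must verify that cutting $C$ at $e$ and $e'$ yields precisely two alternating paths with the endpoint pairs claimed above (rather than, e.g., two paths with endpoints $\{v_1,v_2\}$ and $\{\bar v_1,\bar v_2\}$, which is forbidden by the bipartite alternation of colors along $C$) and that each new color-0 edge closes exactly one of them. Once this is written out, the remainder of the proof is pure arithmetic.
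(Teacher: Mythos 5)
Your proof is correct and follows essentially the same route as the paper: decompose the variation color by color, note that a $\{0,c\}$-cycle splits in two when $c\in I_G(e,e')$ and two cycles merge when $c\notin I_G(e,e')$, then bound $|I_G(e,e')|$ from below in the two special cases by observing that the connecting edges of color $c$ (resp. $c_1,c_2$) force those colors into $I_G(e,e')$. Your extra bookkeeping on the endpoints of the two arcs (fixed by the bipartite alternation) is a correct elaboration of a step the paper simply asserts.
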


\begin{proof}
The bicolored cycles of $G$ which do not go along $e$ or $e'$ are not affected by the flip. We can thus focus on those which go along $e$ or $e'$. For every color $c\in\{1, \dotsc, d\}$ and every edge of color 0 in $G$, there is exactly one bicolored cycle of colors $\{0,c\}$ which goes along this edge. If $c\in I_G(e,e')$, i.e. it is the same bicolored cycle of colors $\{0,c\}$ along both $e$ and $e'$, then the flip splits it into two,
\begin{equation}
\forall c\in I_G(e, e') \qquad C_{0c}(G') = C_{0c}(G) + 1,
\end{equation}
and they are $|I_G(e,e')|$ of them. Conversely, if $c\not\in I_G(e,e')$, it means that two different bicolored cycles of colors $\{0, c\}$ go along $e$ and $e'$ and the flip merges them into one,
\begin{equation}
\forall c\not\in I_G(e, e') \qquad C_{0c}(G') = C_{0c}(G) - 1,
\end{equation}
There are $d-|I_G(e,e')|$ of them. Therefore $C_0(G') = \sum_{c=1}^d C_{0c}(G') = C_0(G) + | I_G(e,e')| - (d - |I_G(e,e')|)$.

Let us prove the special cases at $d=3$. Notice that if the graph has more than two vertices, two adjacent vertices can be connected by either one edge or two parallel edges (forming a 2-dipole). In the case where $e,e'$ are incident to vertices which are connected by exactly one edge of color $c\in \{1, 2, 3\}$, then $c\in I_G(e,e')$ and thus $|I_G(e,e')| \geq 1$. If there is a 2-dipole, say with colors $c, c'$, then they both belong to $I_G(e,e')$ and thus $|I_G(e,e')|\geq 2$.
\end{proof}

\subsection{Boundary bubbles} \label{sec:BdryBubbles}

An crucial notion is that of boundary bubbles, introduced by Gurau in \cite{Universality}. Since then, it has been used intensively in the colored graph literature: to study the Schwinger-Dyson equations (equations on the generating functions) \cite{DoubleScaling}, to find the set of graphs $\cG^{\max}_n(B)$ from the knowledge of the set $\cG^{\max}_n(B')$ for another bubble $B'$ \cite{SigmaReview, MelonoPlanar}. Here, it will be sufficient to introduce it in the context of subgraphs.


\begin{definition} [Colored subgraph and free vertices]
Let $G\in \cG_{n_1, \dotsc, n_N}(B_1, \dotsc, B_N)$ and $H\subset G$ a connected subgraph. We say that $H$ is a colored subgraph if each color $c=1, \dotsc, d$ is incident on all vertices of $H$. The vertices which do not have the color 0 incident are called free vertices.
\end{definition}

A bubble is a colored subgraph which only has free vertices. The subgraph $H$ can also be seen as coming from a collection of bubbles glued along the color 0 but leaving some vertices free. In terms of colored triangulations, it means that bubbles are glued together to form an object which still has a boundary: those $(d-1)$-simplices of color 0 represented by the free vertices of $H$.

The paths which alternate edges of color 0 and edges of a fixed color $c\in\{1, \dotsc, d\}$ in $H$ are thus either closed, counted as bicolored cycles of colors $\{0, c\}$, or not closed and then join two free vertices.

\begin{definition} [Boundary bubble]
Let $H$ be a colored subgraph. Its boundary bubble $\partial H$ is defined as follows. Its vertices are the free vertices of $H$. It has an edge of color $c\in\{1, \dotsc, d\}$ between two vertices if there is an open path of colors $\{0, c\}$ between the corresponding two free vertices of $H$.
\end{definition}

\begin{proposition} \label{prop:BdryBubble}
The boundary bubble $\partial H$ is either a connected bubble or a disjoint union of connected bubbles. In particular at $d=3$, all boundary bubbles with up to six vertices are either melonic, union of melonic bubbles, or a bubble whose colored graph is $K_{3, 3}$.
\end{proposition}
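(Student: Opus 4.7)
The plan is to prove the two assertions in turn: first the general structural claim that $\partial H$ is a disjoint union of bubbles, and then the finite classification at $d=3$ when $\partial H$ has at most six vertices.

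For the structural claim, I will show that every free vertex $v$ of $H$ has exactly one incident edge of each color $c \in \{1,\dotsc,d\}$ in $\partial H$, and that $\partial H$ inherits a bipartition from $G$. The key observation is that in the ambient colored graph $G$ the subgraph on colors $\{0,c\}$ is a disjoint union of even cycles alternating the two colors, since each vertex of $G$ has exactly one $0$-edge and one $c$-edge. Passing to the subgraph $H_{0c}$ of $H$ on those two colors, each non-free vertex retains degree two (its $c$-edge and $0$-edge both lie in $H$), while each free vertex has degree one (it keeps its $c$-edge but loses its $0$-edge). Hence $H_{0c}$ is a disjoint union of cycles on non-free vertices together with paths whose endpoints are exactly the free vertices, and each free vertex is the endpoint of exactly one such path. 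By definition these paths are the $c$-edges of $\partial H$, so every vertex of $\partial H$ has exactly one incident edge of each color $c \in \{1,\dotsc,d\}$. Each path has odd length since it begins and ends with a $c$-edge, and therefore pairs a white free vertex with a black free vertex of $G$, giving a bipartition of $\partial H$. This identifies $\partial H$ as a disjoint union of bubbles.

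For the classification at $d=3$ on at most six vertices, I enumerate connected bubbles of sizes $2$, $4$, $6$, and handle disconnected bubbles inductively. The unique connected bubble on two vertices is melonic by definition. A connected bubble on four vertices has two white and two black vertices; each color gives a perfect matching between them, so only finitely many combinations need to be checked, and each connected outcome coincides, up to color relabeling, with a single melonic insertion on the $2$-vertex bubble. On six vertices, each color is a perfect matching between three white and three black vertices and can be encoded as an element of $S_3$. After fixing color $1$ as the identity and quotienting by the natural group of color-preserving automorphisms (the diagonal $S_3$ permutations on whites and blacks, together with the swap of colors $2$ and $3$), the connected pairs of matchings in $S_3 \times S_3$ fall into exactly three orbits, which are the three bubbles displayed in \eqref{6VertexBubbles3d}: two melonic ones and the $K_{3,3}$ one. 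Disconnected cases of size at most six split as $2+2$, $2+4$, or $2+2+2$, each of whose components is already classified as melonic.

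The delicate step is the enumeration of connected $6$-vertex bubbles, since there are $36$ raw cases after fixing color $1$ and one must verify that the claimed automorphism group acts as described and that the three orbits listed are exhaustive and pairwise non-isomorphic. The $K_{3,3}$ orbit is singled out as the only one containing no $2$-dipole, i.e.\ no pair of adjacent vertices joined by more than one edge; since each melonic insertion creates a new $2$-dipole, this distinguishes $K_{3,3}$ from every melonic bubble and confirms that the enumeration is complete.
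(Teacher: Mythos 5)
Your proof is correct in substance and follows the same two-step outline as the paper, but it supplies details the paper omits. For the structural claim the paper simply asserts that each free vertex has exactly one incident edge of each color in $\partial H$; your degree count in the subgraphs $H_{0c}$ (cycles supported on non-free vertices, plus paths whose endpoints are exactly the free vertices, with odd path length giving the bipartition) is a clean justification of that assertion, including the bipartiteness that the paper does not even mention. For the classification at $d=3$ the paper merely points to its displayed catalogue of bubbles with at most six vertices, whereas you actually attempt the enumeration via pairs of matchings in $S_3\times S_3$. That route works, but the orbit count as you state it is off: under the group you name (diagonal conjugation on whites and blacks together with the swap of colors $2$ and $3$) the connected pairs fall into \emph{five} orbits, not three. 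For instance $(\sigma_2,\sigma_3)=(\mathrm{id},(123))$ and $((123),(123))$ describe isomorphic bubbles (a hexagon alternating a bicolored double edge with a single edge of the third color) yet lie in different orbits of that group, since diagonal conjugation preserves the cycle type of each coordinate; identifying them requires a color permutation moving color $1$, which acts on normalized pairs by the twisted rule $(\sigma_2,\sigma_3)\mapsto(\sigma_2^{-1},\sigma_3\sigma_2^{-1})$. Likewise your closing remark that the $2$-dipole criterion ``confirms that the enumeration is complete'' only establishes that $K_{3,3}$ is not melonic; exhaustiveness still requires checking all connected pairs. Neither point damages the proposition, because all five orbits are melonic or $K_{3,3}$, which is all that is needed, but the intermediate claim of ``exactly three orbits'' under the stated group should be corrected.
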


\begin{proof} 
A vertex of $\partial H$ is a free vertex of $H$ which by definition has exactly one incident edge of color $c$ for all $c\in\{1, \dotsc, d\}$. Moreover $\partial H$ has no edges of color 0. It thus satisfies the definition of a bubble except possibly for connectedness.

At $d=3$, all bubbles with up to six vertices are displayed in Figures \ref{fig:2VertexBubble} (melonic), \ref{fig:4VertexBubble3d} (melonic too) and in \eqref{6VertexBubbles3d}. A boundary bubble with $V$ vertices can be any disjoint union of them with $V$ total vertices. Therefore the only non-melonic connected component which can arise is the one in \eqref{6VertexBubbles3d} with $K_{3,3}$ as underlying graph, for $V=6$.
\end{proof}

The purpose of the boundary bubble is to simplify the potentially intricate bicolored paths through subgraphs $H$, of arbitrary lengths alternating colors $0$ and $c$, and to replace with single edges.

\begin{proposition} \label{prop:BdryCycles}
Consider a graph $G\in\cG_{n_1, \dotsc, n_N}(B_1, \dotsc, B_N)$ and a colored subgraph $H\subset G$ with free vertices $v_1, \dotsc, v_k$. Denote $e_1, \dotsc, e_k$ the edges of color 0 incident to them in $G$. Let $G_{/H}$ be the graph obtained from $G$ by replacing $H$ with its boundary bubble $\partial H$. We still denote $v_1, \dotsc, v_k$ the free vertices of $\partial H$ and $e_1, \dotsc, e_k$ their incident edges of color 0 in $G_{/H}$. Then for each pair of edges $\{e_i, e_j\}$
\begin{equation}
I_G(e_i, e_j) = I_{G_{/H}}(e_i, e_j)
\end{equation}
\end{proposition}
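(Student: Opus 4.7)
The plan is to fix a color $c\in\{1,\dotsc,d\}$ and show the equivalence $c \in I_G(e_i,e_j) \iff c \in I_{G_{/H}}(e_i,e_j)$ by tracing the bicolored cycle of colors $\{0,c\}$ through $e_i$ in each graph and matching up the edges of color $0$ it visits.

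First, I would describe the structure of the bicolored cycle of colors $\{0,c\}$ through $e_i$ in $G$. Starting along $e_i$ and proceeding inside $H$, the cycle follows an alternating path of edges of colors $0$ and $c$; since each vertex of $H$ has a unique incident edge of each color, this path is deterministic. It continues until it hits a vertex of $H$ whose adjacent edge of color $0$ is not in $H$, i.e. a free vertex $v_l$, exiting via $e_l$. By the very definition of $\partial H$, this internal alternating path corresponds to a single edge of color $c$ between $v_i$ and $v_l$ in $\partial H$. Conversely, alternating paths in the complement of $H$ are unchanged when $H$ is replaced by $\partial H$, because $G$ and $G_{/H}$ coincide outside $H$.

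Next I would combine these two observations to set up a bijection between the successive traversals of edges of color $0$ by the bicolored cycle of colors $\{0,c\}$ through $e_i$ in $G$ and in $G_{/H}$. In $G_{/H}$, starting at $e_i$, the cycle traverses the edge of color $c$ from $v_i$ to $v_l$ in $\partial H$ (in one step) and then exits via $e_l$, after which it proceeds through the unchanged region outside $H$ exactly as in $G$ until it re-enters $H$ at some free vertex, and so on. Thus the ordered list of color-$0$ edges visited by the cycle is identical in $G$ and in $G_{/H}$.

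In particular, this ordered list contains $e_j$ in $G$ if and only if it contains $e_j$ in $G_{/H}$. Equivalently, the bicolored cycle of colors $\{0,c\}$ through $e_i$ passes through $e_j$ in $G$ if and only if it does so in $G_{/H}$, which is precisely the statement $c\in I_G(e_i,e_j) \iff c\in I_{G_{/H}}(e_i,e_j)$. Since this holds for every $c$, the equality $I_G(e_i,e_j)=I_{G_{/H}}(e_i,e_j)$ follows.

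The main conceptual point to handle carefully is the deterministic tracing of the bicolored path inside $H$ and the observation that the exit vertex $v_l$ is precisely the endpoint of the edge of color $c$ attached to $v_i$ in $\partial H$; this is essentially a restatement of the definition of the boundary bubble, but needs to be spelled out to guarantee that the argument does not lose track of which free vertex the cycle emerges from. Once this is articulated, the rest is a straightforward bookkeeping of how the cycle alternates between the inside of $H$ (shortcut by a single edge in $G_{/H}$) and the outside (unchanged), so I do not anticipate further difficulty.
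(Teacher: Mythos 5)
Your proposal is correct and follows essentially the same route as the paper's proof: the boundary bubble is by definition the encoding of the open alternating $\{0,c\}$-paths through $H$ as single edges of color $c$, so the bicolored cycle through $e_i$ visits the same ordered list of color-$0$ edges in $G$ and in $G_{/H}$, and hence passes through $e_j$ in one if and only if it does in the other. Your version merely spells out the deterministic tracing of the path inside $H$ in more detail than the paper does.
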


\begin{proof}
Notice that by construction of $\partial H$, the free vertices of $\partial H$ are the same as those of $H$. We recall that $c\in I_G(e_i, e_j)$ if it is the same bicolored cycle of colors $\{0, c\}$ which goes along $e_i$ and $e_j$ in $G$, and similarly in $G_{/H}$. The proposition follows directly from the definition of the boundary bubble as an encoding of the bicolored paths through $H$. The bicolored cycle of colors $\{0, c\}$ which goes along $e_i$ in $G_{/H}$ is exactly the same as in $G$ except for its parts which go through $H$ which are replaced with single edges. It thus goes along $e_j$ in $G$ if and only if it does so in $G_{/H}$.
\end{proof}

There are three types of bicolored cycles in $G$. Those restricted to vertices and edges of $H$, $C_0(H)$ of them; those restricted to edges and vertices of $G\setminus H$, $C_0(G\setminus H)$ of them, and the others. The latter are those going through $H$ but not restricted to it. In particular, they go along the edges of color 0 of $G$ which are incident to the free vertices of $H$. Denote their number by $C_0(G, H)$. Then
\begin{equation}
C_0(G) = C_0(H) + C_0(G\setminus H) + C_0(G, H).
\end{equation}
In the graph $G_{/H}$, the complement to $H$ has not changed, hence $C_0(G\setminus H) = C_0(G_{/H} \setminus \partial H)$. Moreover
\begin{equation}
C_0(G, H) = C_0(G_{/H}, \partial H),
\end{equation}
which follows from Proposition \ref{prop:BdryCycles}.


\subsection{4-edge-cuts} \label{sec:4Cuts}

Edge flips and boundary bubbles will combine to allow for comparing the bicolored cycles of colored graphs and check if they are in $G^{\max}_{n_1, \dotsc, n_N}(B_1, \dotsc, B_N)$. Our first application is the following.

\begin{proposition} \label{prop:4EdgeCuts}
For $d$ odd, for any bubbles $B_1, \dotsc, B_N$, graphs in $\cG^{\max}_{n_1, \dotsc, n_N}(B_1, \dotsc, B_N)$ have no 4-edge-cuts with four edges of color 0.
\end{proposition}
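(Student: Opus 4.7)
The argument is by contradiction. Assume $G \in \cG^{\max}_{n_1,\dotsc,n_N}(B_1,\dotsc,B_N)$ admits a 4-edge-cut of color 0, with cut edges $e_i = v_i w_i$ ($i=1,\dotsc,4$) separating $G$ into two colored subgraphs $H\ni v_i$ and $H^c\ni w_i$. I will combine Lemma~\ref{lemma:flip} with the boundary-bubble reduction of Proposition~\ref{prop:BdryCycles}. As a first step, $\partial H$ is a bubble on the four free vertices $v_1,\dotsc,v_4$ whose color-$c$ edges form a perfect matching for each $c\in\{1,\dotsc,d\}$, and this matching must respect the bipartition of $G$. Hence exactly two of the $v_i$ are white and two are black, and I relabel so that $v_1,v_2$ are white and $v_3,v_4$ are black; then $w_1,w_2$ are black and $w_3,w_4$ are white.

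The crux is to classify, for each color $c$, how the four cut edges are partitioned among the $\{0,c\}$-cycles that cross the cut. By Proposition~\ref{prop:BdryCycles} this partition depends only on the two bipartite matchings $M_H^c\subset\partial H$ and $M_{H^c}^c\subset\partial H^c$, and each matching has only two possibilities. Tracing the cycles through the four resulting combinations will yield exactly three admissible partition types:
\begin{equation*}
\{e_1,e_2,e_3,e_4\}, \qquad \{\{e_1,e_3\},\{e_2,e_4\}\}, \qquad \{\{e_1,e_4\},\{e_2,e_3\}\}.
\end{equation*}
The fourth a priori partition $\{\{e_1,e_2\},\{e_3,e_4\}\}$ is excluded because bipartiteness forces any $\{0,c\}$-cycle to alternate between a cut edge with a white $H$-side endpoint and one with a black $H$-side endpoint. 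Letting $n_4$, $n_{13}$, $n_{14}$ denote the numbers of colors producing each of the three admissible types, so that $n_4+n_{13}+n_{14}=d$, a direct reading of the classification gives
\begin{equation*}
|I_G(e_1,e_3)| = n_4 + n_{13}, \qquad |I_G(e_1,e_4)| = n_4 + n_{14}.
\end{equation*}

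To conclude, I perform the flips $(e_1,e_3)$ and $(e_1,e_4)$. Each pairs a white endpoint with a black endpoint, so defines a valid bipartite-preserving flip in the sense of Section~\ref{sec:Flips}, and after flipping the two remaining cut edges still join $H$ to $H^c$, so the flipped graph stays connected (its new color-0 edges land inside $H$ and inside $H^c$). Lemma~\ref{lemma:flip} together with the maximality of $G$ then force $|I_G(e_1,e_3)|,\,|I_G(e_1,e_4)|\leq d/2$; since these are integers and $d$ is odd, each is in fact $\leq(d-1)/2$. Summing the two bounds and using $n_{13}+n_{14}=d-n_4$ yields $d+n_4\leq d-1$, i.e.\ $n_4\leq -1$, the desired contradiction.

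The most delicate part is the partition classification: it requires an explicit trace of the cycles in each of the four $(M_H^c,M_{H^c}^c)$ combinations and an appeal to bipartiteness to rule out the pairing $\{\{e_1,e_2\},\{e_3,e_4\}\}$. The hypothesis that $d$ is odd enters only at the very last step, sharpening the non-strict bound $|I|\leq d/2$ into the strict integer bound $|I|\leq(d-1)/2$ that produces the contradiction.
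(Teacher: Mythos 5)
Your proof is correct and follows essentially the same route as the paper's: both reduce the problem to the four-vertex boundary bubble of one side of the cut, observe that each color $c$ contributes a bipartite perfect matching of the four free vertices (two white, two black), and use the oddness of $d$ together with Lemma~\ref{lemma:flip} to produce a flip that strictly increases $C_0$. The only cosmetic difference is that the paper performs a single flip on the pair of cut edges selected by the majority matching class, obtaining $|I_G(e_1,e_2)|\geq q>d/2$ directly, whereas you perform two flips and sum the resulting bounds $|I|\leq (d-1)/2$ to reach the contradiction $n_4\leq -1$; both arguments are sound.
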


We recall that a $k$-edge-cut is a set of $k$ edges whose removal disconnects the graph but removing only a subset of them does not.

\begin{proof}
Consider a graph $G\in\cG_{n_1, \dotsc, n_N}(B_1, \dotsc, B_N)$ in odd dimension with a 4-edge-cut on four edges of color 0. It is thus made of of two colored subgraphs $H_L, H_R$ connected together by these four edges of color 0, $e_1$, $e_2$, $e_3$, $e_4$, as follows
\begin{equation}
G = \begin{array}{c} \includegraphics[scale=.4]{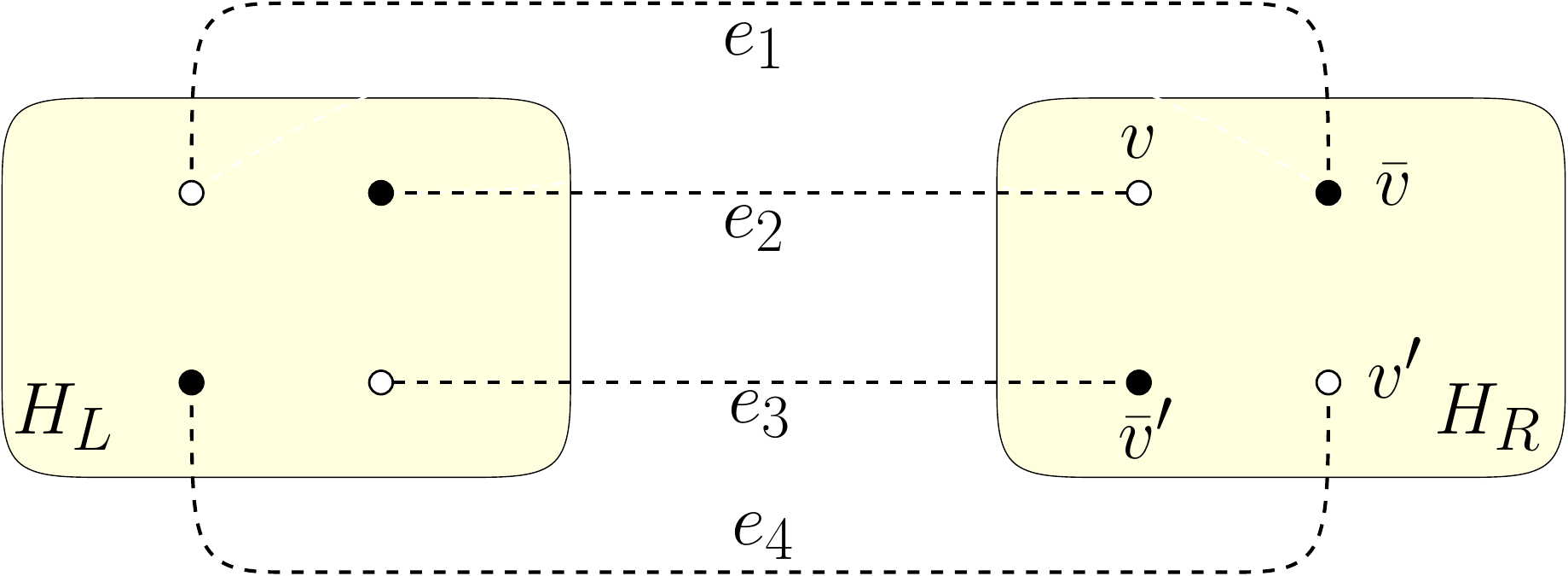} \end{array}
\end{equation}
where $v, v', \bar{v}, \bar{v}'$ are the free vertices of $H_R$.

We would like to see if there is a flip which would increase the number of bicolored cycles so that $G$ could not be in $\cG^{\max}_{n_1, \dotsc, n_N}(B_1, \dotsc, B_N)$. To see if flipping $e_1$ with $e_2$ or with $e_4$ increases the number of bicolored cycles, we need to know the sizes of $I_G(e_1, e_2)$ and $I_G(e_1, e_4)$, according to Lemma \ref{lemma:flip}. Proposition \ref{prop:BdryCycles} states that
\begin{equation}
I_G(e_1, e_j) = I_{G_{/H_R}}(e_1, e_j)
\end{equation}
where the graph $G_{/H_R}$ is obtained from $G$ by replacing $H_R$ with its boundary bubble $\partial H_R$.

The (possibly not connected) bubble $\partial H_R$ has only four vertices, $v, v', \bar{v}, \bar{v}'$. It can only be either like in Figure \ref{fig:4VertexBubble}, i.e. $1\leq q \leq d-1$ edges of colors $c_1, \dotsc, c_q$ connecting $v$ to $\bar{v}$ and $v'$ to $\bar{v}'$, and $d-q$ edges with the complementary colors connecting $v$ to $\bar{v}'$ and $v'$ to $\bar{v}$, or the cases $q=0, d$, i.e. two copies of Figure \ref{fig:2VertexBubble}. Since $d$ is odd, $q=d/2$ cannot happen. Up to exchanging the roles of $e_2$ and $e_4$, we can assume that $q>d/2$. At $d=3$ and $q=2$ for instance
\begin{equation}
G_{/H_R} = \begin{array}{c} \includegraphics[scale=.4]{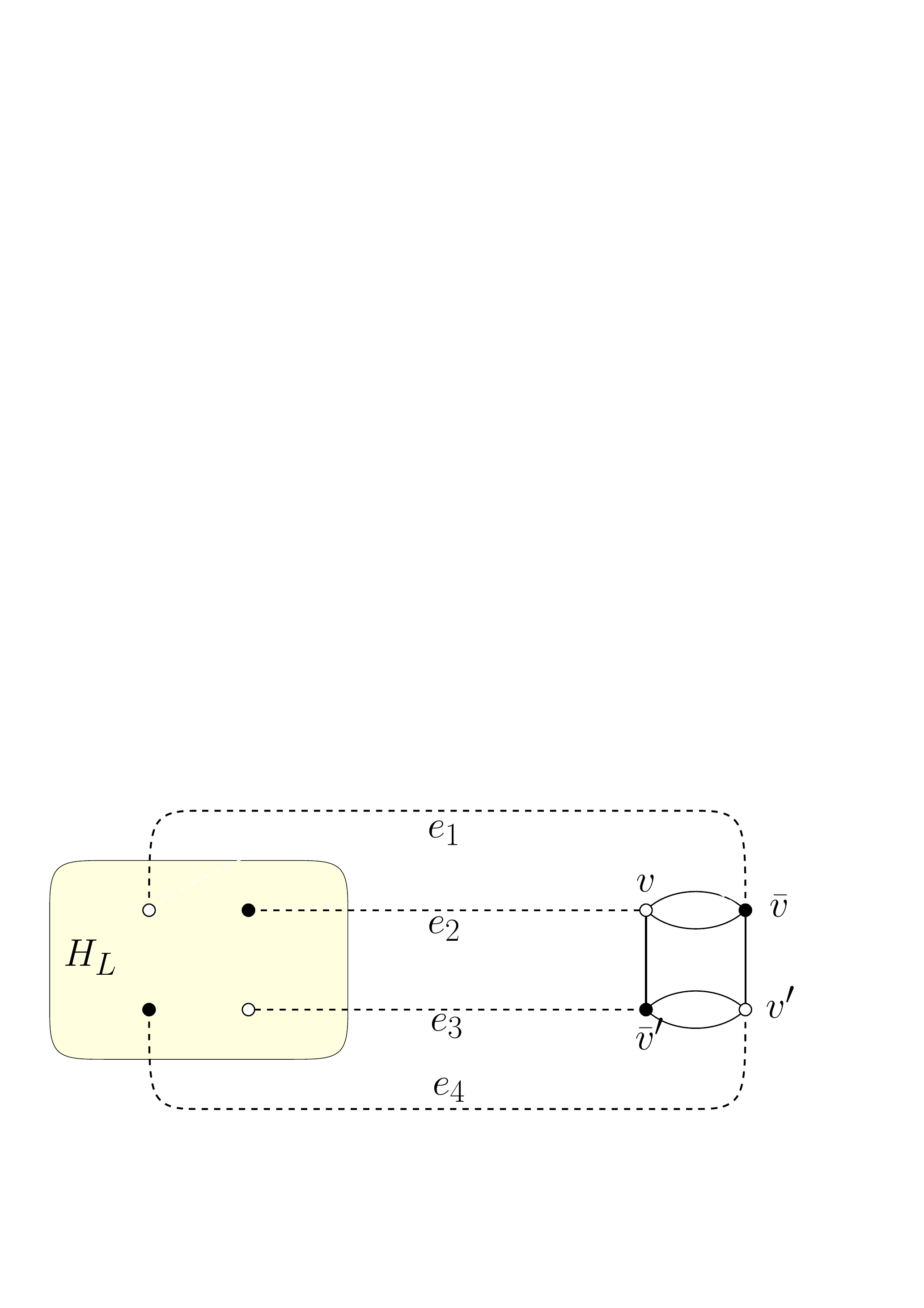} \end{array}
\end{equation}
This implies that $I_{G_{/H_R}}(e_1, e_2)$ contains at least $q$ colors and so does $I_G(e_1, e_2)$, yielding
\begin{equation}
|I_G(e_1, e_2)| > d/2.
\end{equation}
Let $G'$ be the graph $G$ with $e_1$ and $e_2$ flipped,
\begin{equation}
\begin{array}{c} \includegraphics[scale=.4]{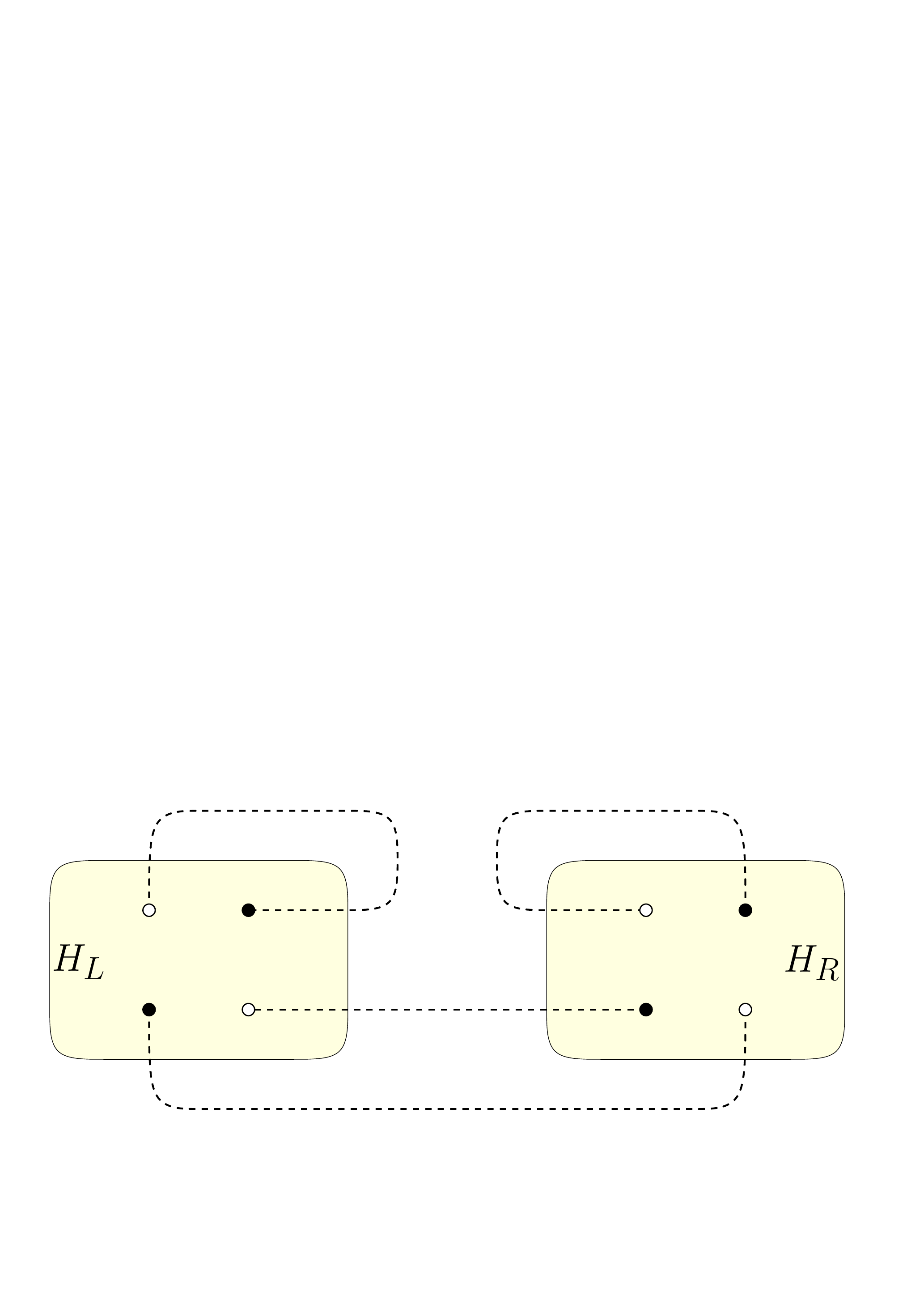} \end{array}
\end{equation}
From Lemma \ref{lemma:flip}, the number of bicolored cycles is
\begin{equation}
C_0(G') = C_0(G) -d +2|I_G(e_1, e_2)| > C_0(G),
\end{equation}
meaning that $G\not\in \cG^{\max}_{n_1, \dotsc, n_N}(B_1, \dotsc, B_N)$.
\end{proof}

\subsection{Contractions} \label{sec:Contractions}

\begin{definition} [Contractions] \label{def:Contraction} 
Let $G$ be a colored graph and $e$ an edge of color 0 in $G$, incident to $v$ and $v'$. Denote $e_c$ the edge of color $c$ incident to $v$ and $e'_c$ incident to $v'$, for $c=1, \dotsc, d$ (they may be the same). The contraction of $e$ is the graph $G/e$ obtained by removing $e, v, v'$ and joining $e_c$ with $e'_c$. Edges parallel to $e$ are removed, i.e. if $e_c=e'_c$,
\begin{equation}
\begin{array}{c} \includegraphics[scale=.4]{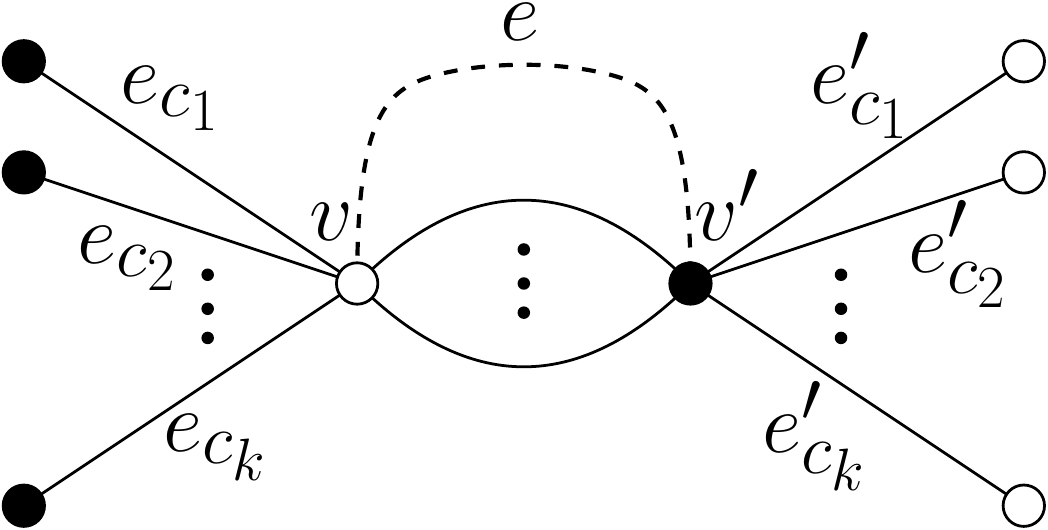} \end{array} \qquad \subset G \hspace{1cm} \underset{\text{contraction}}{\to} \hspace{1cm} \begin{array}{c} \includegraphics[scale=.4]{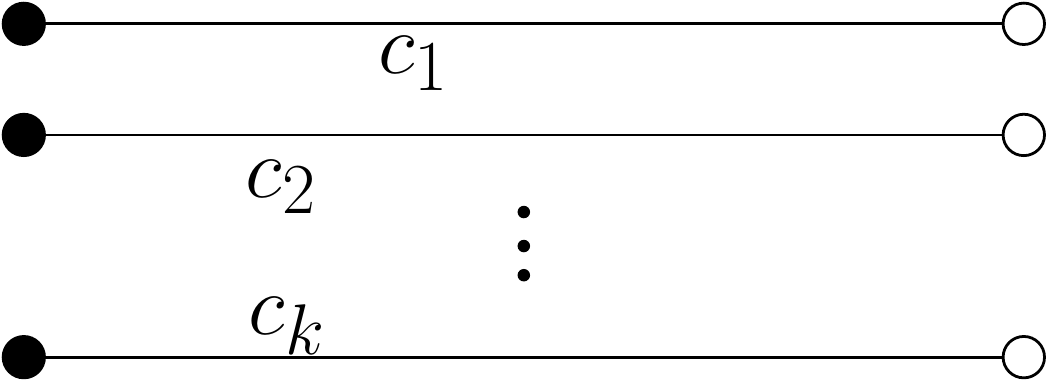} \end{array} \qquad \subset G/e
\end{equation}
\end{definition}

We will use contractions repeatedly in the proof of Theorem \ref{thm:1Planar}. Since they decrease the total number of vertices, they seem designed for inductions. However, a contraction changes some bubbles of the graph and it is thus necessary to make sure that the induction hypotheses can still apply. This requires the simple proposition below.

\begin{proposition} \label{prop:Contraction}
Let $G\in \cG_{n_1, \dotsc, n_N}(B_1, \dotsc, B_N)$ at $d=3$. The following contractions preserve the topologies of the bubbles,
\begin{itemize}
\item when $e$ is parallel to two edges of color $c_1, c_2\in\{1, 2, 3\}$,
\begin{equation} \label{2DipoleMove}
\begin{array}{c} \includegraphics[scale=.4]{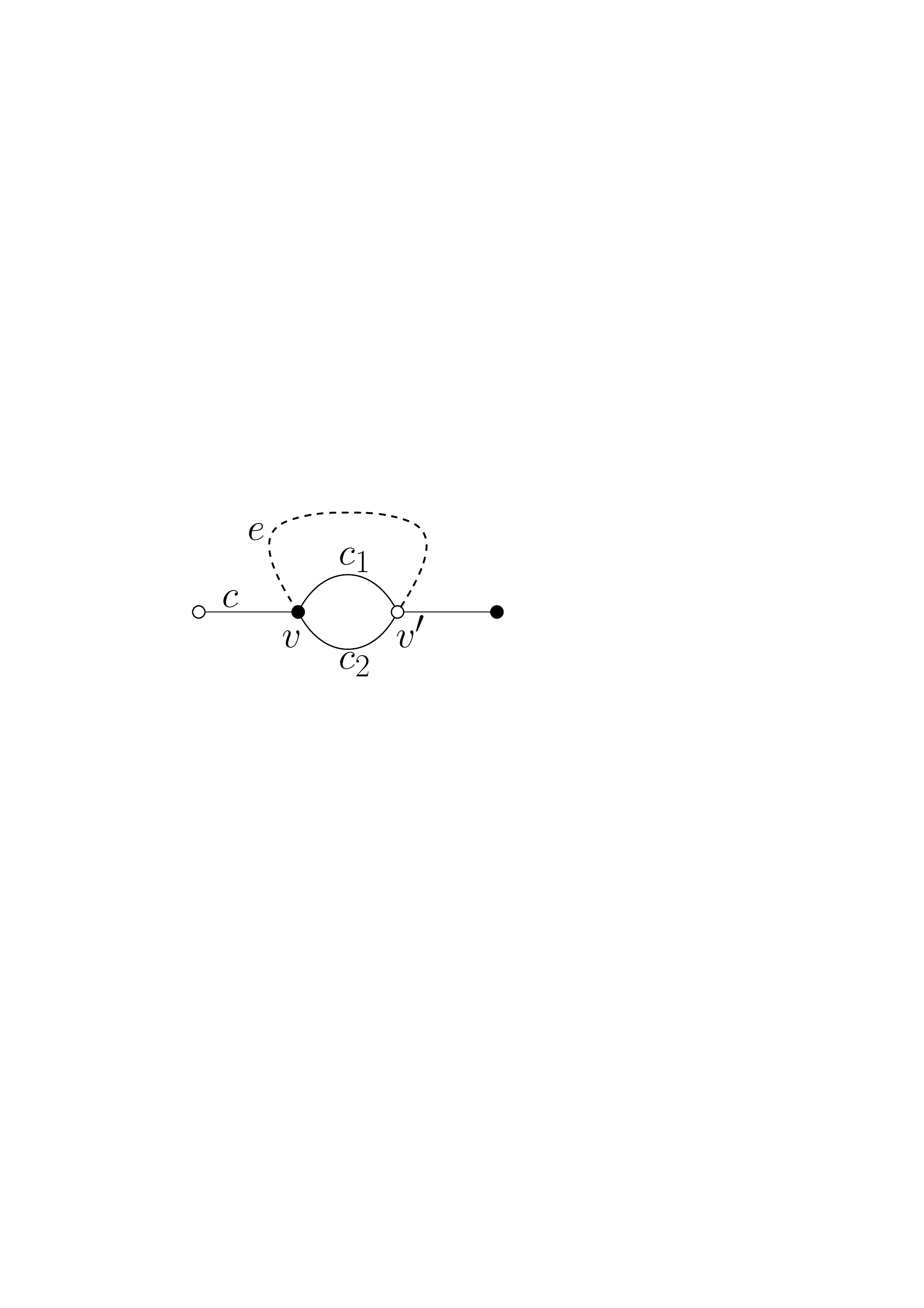} \end{array} \quad \to \quad \begin{array}{c} \includegraphics[scale=.4]{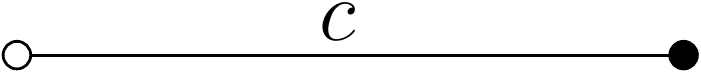} \end{array}
\end{equation}
\item when $e$, incident to the vertices $v, v'$, is parallel to an edge of color $c\in\{1, 2, 3\}$ and the bicolored cycles of colors $\{c_1,  c_2\} = \{1, 2, 3\}\setminus \{c\}$ incident to $v$ and $v'$ are distinct, i.e. $f_{c_1 c_2}\neq f'_{c_1 c_2}$,
\begin{equation} \label{1DipoleMove}
\begin{array}{c} \includegraphics[scale=.4]{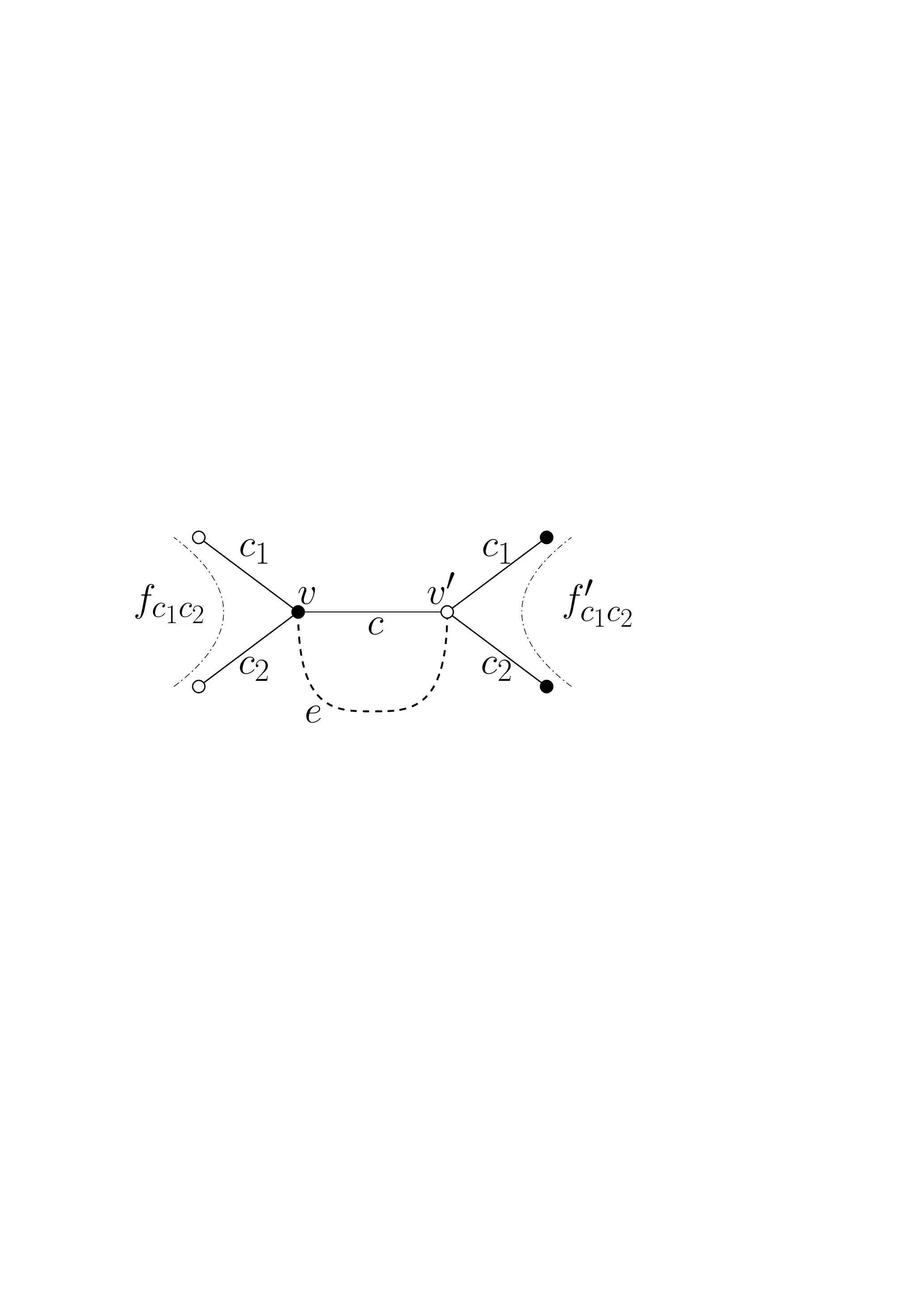} \end{array} \quad \to \quad \begin{array}{c} \includegraphics[scale=.4]{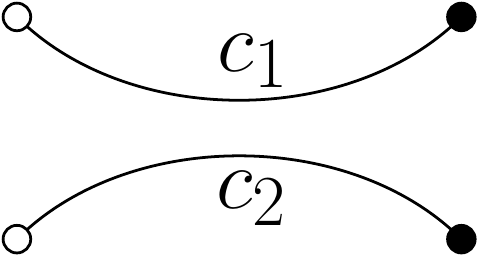} \end{array}
\end{equation}
\end{itemize}
\end{proposition}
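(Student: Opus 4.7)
The plan is to observe that the contraction of a color-$0$ edge $e=(v,v')$ in $G$ affects only the bubble $B$ of $G$ containing $v$ and $v'$, since all other bubbles share no vertex with $e$ and are left intact. In each of the two cases, the induced local modification of $B$ is a standard dipole removal, and it suffices to show that the Euler characteristic of the 2D colored triangulation $T$ dual to $B$ is preserved. Since bubbles are bipartite, by Corollary \ref{cor:2D} their dual maps are orientable and the homeomorphism type of $T$ is determined by $\chi(T)$ alone. Using Theorem \ref{thm:ColoredGraphs}, $\chi(T) = C(B) - E(B) + V(B)$, and since every bubble vertex has degree $3$, $E(B) = 3V(B)/2$, so $\chi(T) = C(B) - V(B)/2$.

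\textbf{Case 1 (2-dipole contraction).} In $B$ the vertices $v,v'$ are joined by the two edges of colors $c_1$ and $c_2$, and each carries a single edge of color $c_3 = \{1,2,3\}\setminus\{c_1,c_2\}$ to vertices $w$ and $w'$ respectively. The contraction deletes $v,v'$ together with the $c_1$- and $c_2$-edges between them, and joins the two $c_3$-edges into a single edge $ww'$. The $\{c_1,c_2\}$-cycle supported on $v,v'$ is a digon and is destroyed, while the $\{c_1,c_3\}$- and $\{c_2,c_3\}$-cycles through $v,v'$ simply shortcut through the new $c_3$-edge. Hence $\Delta V = -2$ and $\Delta C = -1$, giving $\Delta\chi = -1 - (-2)/2 = 0$.

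\textbf{Case 2 (1-dipole contraction).} In $B$ the vertices $v,v'$ are joined only by the edge of color $c$; denote $c_1,c_2$ the two other colors, with $v$ carrying $c_1$- and $c_2$-edges to $w_1,w_2$ and $v'$ to $w'_1,w'_2$. The contraction removes $v,v'$ and the $c$-edge, and merges the two pairs of outer edges into a new $c_1$-edge $w_1w'_1$ and a new $c_2$-edge $w_2w'_2$. The $\{c_1,c\}$- and $\{c_2,c\}$-cycles through $v,v'$ shortcut through these new edges, while the distinctness hypothesis $f_{c_1 c_2}\neq f'_{c_1 c_2}$ ensures that the two $\{c_1,c_2\}$-cycles through $v$ and $v'$ are merged into a single cycle. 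Hence again $\Delta V = -2$, $\Delta C = -1$, and $\Delta\chi = 0$.

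\textbf{Main obstacle.} The only non-routine verification is that the modified bubble stays connected, so that $\chi$ alone pins down the topology. In case 1 this is immediate: any path in $B$ through $v$ or $v'$ must enter via a $c_3$-edge at $w$ or $w'$ and is re-routed through the new $c_3$-edge $ww'$. In case 2, the distinctness hypothesis $f_{c_1 c_2}\neq f'_{c_1 c_2}$ is exactly what prevents the bubble from splitting: were these cycles equal, the two $\{c_1,c_2\}$-cycles would instead split under contraction and $\chi$ would jump by $+2$, signalling a change in topology (or in connectedness of the surface). Under the hypothesis, every path through $v$ or $v'$ is rerouted through the new $c_1$- or $c_2$-edge, so connectivity and therefore the topology of $B$ are preserved. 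An alternative, shorter route is to recognize the two contractions as $2$-dipole and $1$-dipole removals in $B$ and invoke the classical dipole theorem (Theorem \ref{thm:Dipoles}).
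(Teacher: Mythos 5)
Your proof is correct and follows essentially the same route as the paper's: pass to the canonical embedding of the bubble as a map, observe that the first move deletes a degree-2 face and the second merges the two distinct faces $f_{c_1c_2}\neq f'_{c_1c_2}$, and check that the Euler characteristic is unchanged. Your extra paragraph making explicit that the contracted bubble remains connected (so that $\chi$, together with orientability from bipartiteness, really does determine the topology) is a welcome refinement of a point the paper leaves implicit, but it is not a different argument.
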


\begin{proof}
This is part of a general theory of moves, known as dipole moves, which are topology preserving. The main theorem is stated in Section \ref{sec:Topology} without proof. The combinatorial proof of Theorem \ref{thm:1Planar} however only need the topology of the two-dimensional bubbles to be preserved, in which case a simple proof can be given.

We recall from Corollaries \ref{cor:2D} and \ref{cor:PlanarBubbles} that a bubble $B$ has a canonical embedding as a combinatorial map (cyclic order $(123)$ around white vertices and $(132)$ around black vertices) where the faces (bicolored cycles) of colors $\{a,b\}$ of the bubble are the faces of the map and the topology of the bubble is that of the map.

The first move thus removes a face of degree two of the map which does not change its topology obviously. As for the second move, the bicolored cycles of colors $\{c_1,  c_2\}$ incident to $v$ and $v'$ being distinct means that the corresponding faces are distinct in the map. The move thus merges them into a single face. It also removes two vertices and three edges (one of each color). The Euler's characteristic of the map is thus unchanged.
\end{proof}

Contractions will play a major role and will combine with inductions thanks to the following lemma, which will be used in the proof of Theorem \ref{thm:1Planar} repeatedly.

\begin{lemma} \label{lemma:NotMax}
Set $d=3$ again. If $G\in\cG_{n_1, \dotsc, n_N}(B_1, \dotsc, B_N)$ has an edge $e$ of color 0 involved in a bicolored cycle of length two and colors $\{0, c\}$ for some $c\in\{1, 2, 3\}$, and such that its contraction $G/e$ does not maximize the number of bicolored cycles for its set of bubbles, then $G\not\in \cG^{\max}_{n_1, \dotsc, n_N}(B_1, \dotsc, B_N)$.
\end{lemma}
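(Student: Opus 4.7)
My plan is to invert the contraction: starting from a hypothetical maximizer $G^*$ of the bubble class of $G/e$, I will construct a graph $G^{**}$ in the bubble class of $G$ whose number of bicolored cycles exceeds $C_0(G)$, contradicting $G\in\cG^{\max}_{n_1,\dotsc,n_N}(B_1,\dotsc,B_N)$.

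First I set up notation. Let $v,v'$ be the endpoints of $e$, and let $k\geq 1$ be the number of colors $c'\in\{1,2,3\}$ for which $v$ and $v'$ are connected by a $c'$-edge, so that $e$ lies in a length-two $\{0,c'\}$-cycle for each such color. The first step is to show that
\begin{equation*}
C_0(G/e) = C_0(G) - k.
\end{equation*}
For each of the $k$ \emph{parallel} colors $c'$, the only $\{0,c'\}$-cycle touching $v$ or $v'$ has length two and disappears under the contraction, which according to Definition \ref{def:Contraction} removes the parallel $c'$-edge. For each of the $3-k$ remaining colors $c''$ the $\{0,c''\}$-cycles through $v$ and through $v'$ actually coincide (both traverse $e$), and joining $e_{c''}$ with $e'_{c''}$ only shortens this cycle by two edges without splitting it. All other bicolored cycles are unaffected.

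Next I invert the contraction. Let $B$ be the bubble of $G$ containing $v,v'$ and $B^\circ$ the (possibly disconnected) bubble replacing it in $G/e$. Since $G^*$ lies in the bubble class of $G/e$, it contains a copy of $B^\circ$. Fix any isomorphism between that copy and the contracted $B^\circ$ and transport through it the $3-k$ distinguished edges that arose from the joins $e_{c''}\cup e'_{c''}$. Subdivide each of these edges in $G^*$ by inserting two new vertices playing the roles of $v$ and $v'$, and add between them the $k+1$ parallel edges of color $0$ and of the $k$ parallel colors, producing a graph $G^{**}$. By construction the colored-graph constraints hold at the new vertices, the bubble around them equals $B$, and every other bubble of $G^*$ is left intact, so $G^{**}\in\cG_{n_1,\dotsc,n_N}(B_1,\dotsc,B_N)$.

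Running the cycle accounting of the second paragraph in reverse on $G^{**}$ gives $C_0(G^{**}) = C_0(G^*) + k$, which combined with $C_0(G^*) > C_0(G/e) = C_0(G) - k$ yields $C_0(G^{**}) \geq C_0(G) + 1$, the required contradiction. I expect the main obstacle to be ensuring that the bubble class of $G^{**}$ is actually $(B_1,\dotsc,B_N)$: one must subdivide exactly those edges of $G^*$'s copy of $B^\circ$ that are prescribed by the inverse contraction, for otherwise a different bubble would be reconstructed. This is achievable because the contraction is local to a single bubble, so its inverse is determined up to the choice of an isomorphism between the two copies of $B^\circ$.
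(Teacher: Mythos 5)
Your strategy --- contract $e$, compare bicolored-cycle counts, then graft the excised pair $v,v'$ back onto a hypothetical better graph via distinguished (marked) edges --- is exactly the paper's, and your accounting $C_0(G/e)=C_0(G)-k$ together with its reverse $C_0(G^{**})=C_0(G^*)+k$ is correct in all cases $k=1,2$.

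The one genuine gap is that you never confront what ``its set of bubbles'' means when the contraction disconnects things, which is precisely the point the paper singles out as requiring caution. When $k=1$ and the two faces of colors $\{c_1,c_2\}$ incident to $v$ and $v'$ coincide, the contraction splits $B$ into two bubbles $B_L\sqcup B_R$, and $G/e$ itself may be disconnected; the maximization class must then be taken as the union of the connected class with all products of two connected classes obtained by distributing the remaining bubbles between a component containing $B_L$ and one containing $B_R$. Your construction does survive this case --- reinserting $v,v'$ with the $k+1$ parallel edges bridges the two distinguished edges and hence the two components, so $G^{**}$ is connected even when $G^*$ is not --- but you neither define the class this way nor verify connectivity of $G^{**}$, and you assert $G^{**}\in\cG_{n_1,\dotsc,n_N}(B_1,\dotsc,B_N)$ (which requires connectedness) without comment. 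This is not pedantry: in the proof of Theorem \ref{thm:1Planar} the lemma is invoked exactly in a situation where the connected contraction is beaten only by a \emph{disconnected} competitor, so a version of the lemma whose maximization ranges over connected graphs only would not suffice. A smaller point: ``subdivide each of these edges by inserting two new vertices'' must be read as cutting each of the $3-k$ distinguished edges and reattaching the resulting half-edges to a single new pair $v,v'$ consistently with bipartiteness; a literal subdivision of each edge by two vertices would violate the coloring constraint.
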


A word of caution is in order. Contractions can disconnect bubbles in some cases. Then the maximizing of the number of bicolored cycles of $G/e$ has to be done for graphs with possibly several connected components, as shown in the proof below.

\begin{proof}
In all cases described below, we assume that the edge $e$ is incident to a bubble of type $B_1$.

The first case corresponds to the situation of \eqref{2DipoleMove}. The contraction turns $B_1$ into a bubble $B$ (still connected and which may be or not of one of the other types $B_i$). In fact, we will need to retain more information: in order to undo the contraction, we need to remember the location of $v$ and $v'$. We denote $B^{(e_c)}$ the bubble $B$ with the edge $e_c$ marked,
\begin{equation} \label{2DipoleMoveMarked}
B_1 = \begin{array}{c} \includegraphics[scale=.4]{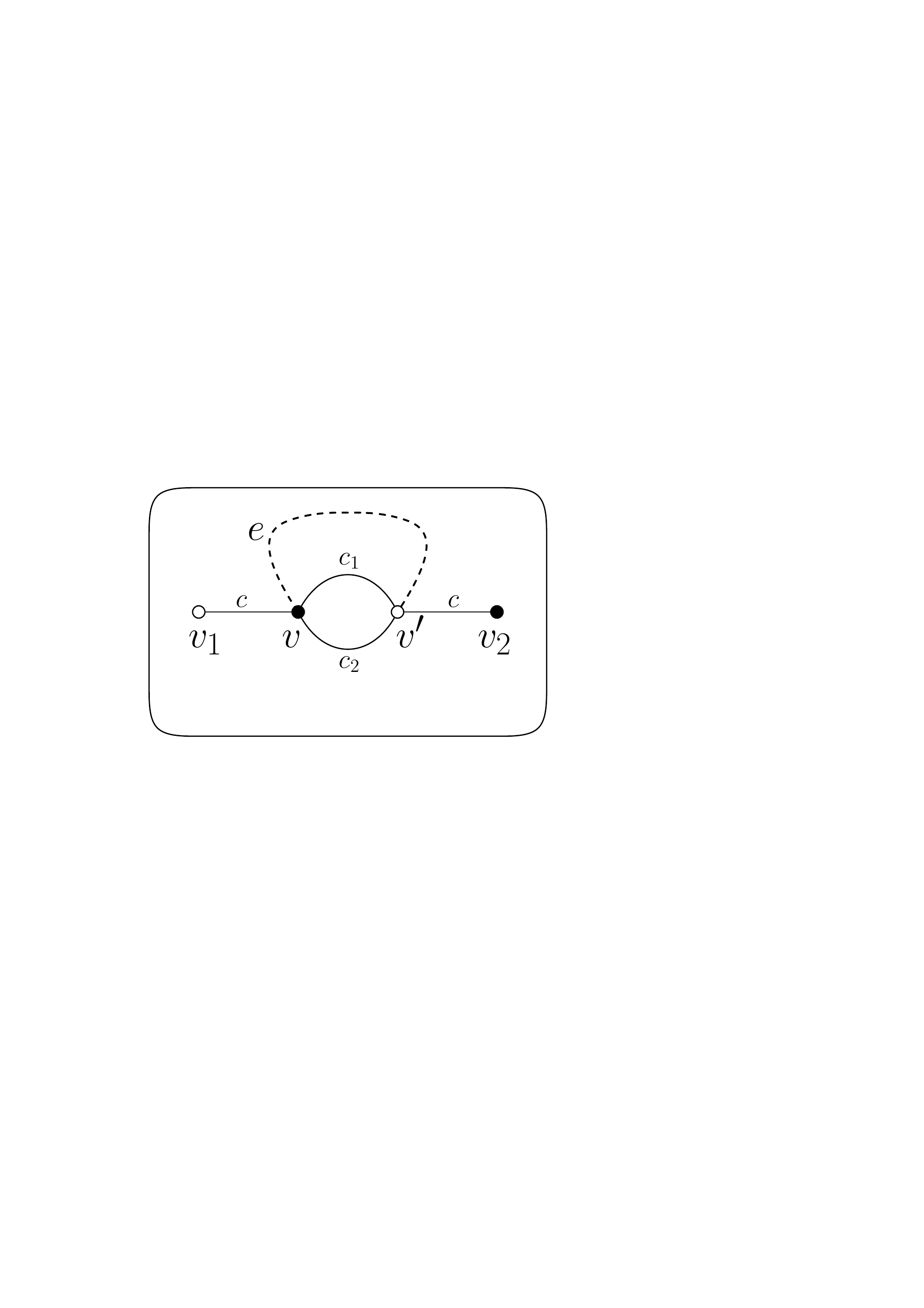} \end{array} \quad \leftrightarrow \quad B^{(e_c)} = \begin{array}{c} \includegraphics[scale=.4]{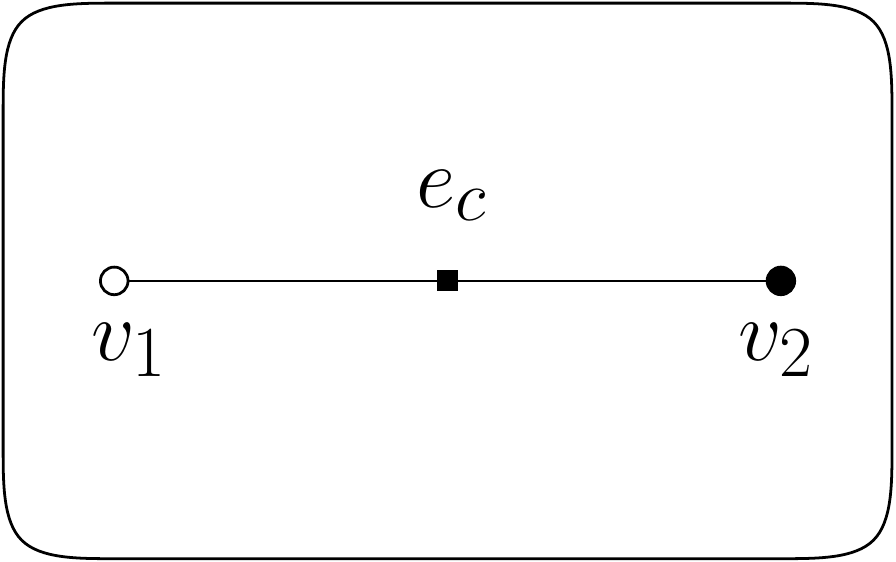} \end{array}
\end{equation}
Thus $G/e$ is connected and belongs to
\begin{equation}
\cG_{\text{case 1}} = \cG_{1, n_1-1, \dotsc, n_N}(B^{(e_c)}, B_1, \dotsc, B_N).
\end{equation}
Moreover, the contraction removes exactly one bicolored cycle with colors $\{0, c_1\}$ and one with colors $\{0, c_2\}$. The number of bicolored cycles with colors $\{0, c\}$ is unaffected, hence
\begin{equation} \label{BicoloredCyclesContractionCase1}
C_0(G/e) = C_0(G) - 2 \qquad \text{in case 1}.
\end{equation}

The second case corresponds to $e$ parallel to a single edge $e_c$ of color $c$ in $B_1$ and such that the contraction turns $B_1$ into a (connected) bubble $B$,
\begin{equation} \label{1DipoleMoveMarkedCase2}
B_1 = \begin{array}{c} \includegraphics[scale=.4]{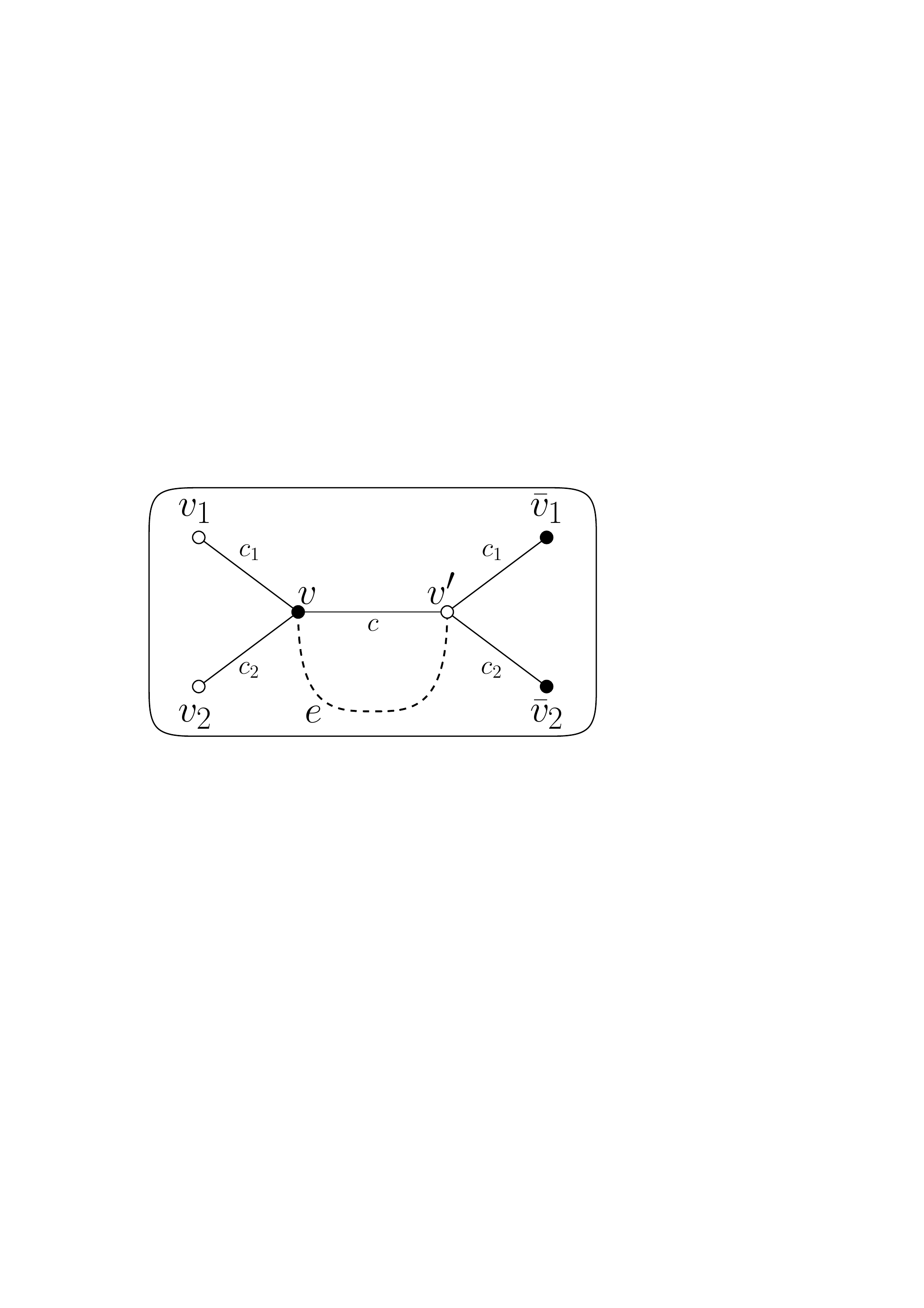} \end{array} \quad \leftrightarrow \quad B^{(e_{c_1}, e_{c_2})} = \begin{array}{c} \includegraphics[scale=.4]{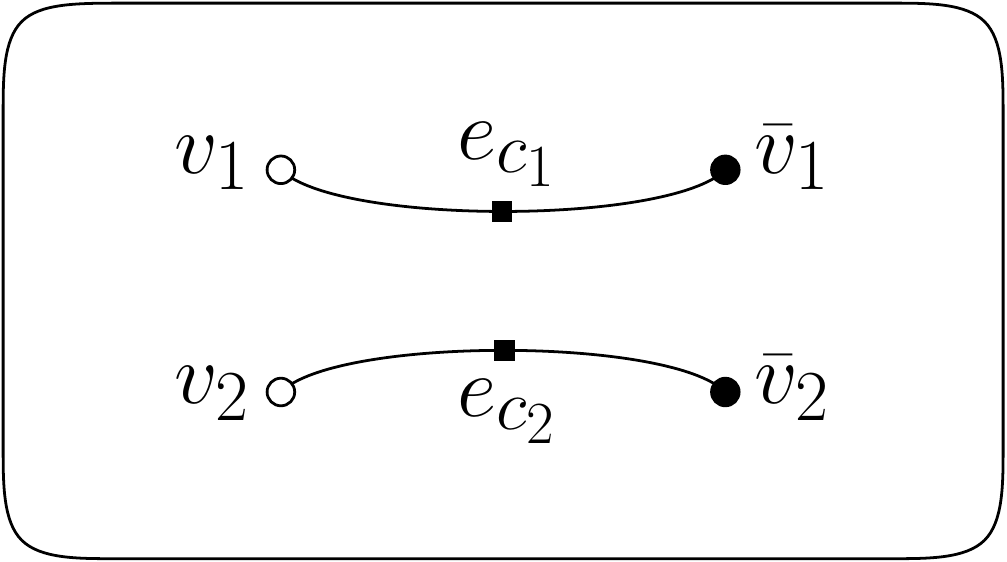} \end{array}
\end{equation}
This situation is for instance that of \eqref{1DipoleMove}. It can also be obtained if $B_1$ is non-planar and it is the same bicolored cycle of colors $\{c_1, c_2\}$ which is incident on both $v$ and $v'$, i.e. $f_{c_1 c_2} = f'_{c_1 c_2}$. The bubble $B_1$ is turned into a bubble $B^{(e_{c_1}, e_{c_2})}$ with the edges $e_{c_1}, e_{c_2}$ marked, the latter being the remaining edges in the right hand side of \eqref{1DipoleMove}. Thus $G/e$ is connected and belongs to
\begin{equation}
\cG_{\text{case 2}} = \cG_{1, n_1-1, \dotsc, n_N}(B^{(e_{c_1}, e_{c_2})}, B_1, \dotsc, B_N).
\end{equation}
Moreover, the contraction removes exactly one bicolored cycle with colors $\{0, c\}$ and that is all, hence
\begin{equation} \label{BicoloredCyclesContractionCase2}
C_0(G/e) = C_0(G) - 1  \qquad \text{in case 2}.
\end{equation}

The third case is similar except that the contraction turns $B_1$ into a union of (connected) bubbles. It is the case if $B_1$ is planar and $f_{c_1 c_2} = f'_{c_1 c_2}$. Let us show that there can only be two connected components,
\begin{equation}
B_1 = \begin{array}{c} \includegraphics[scale=.4]{1DipoleCase2.pdf} \end{array} \quad \leftrightarrow \quad B_L^{(e_{c_1})} = \begin{array}{c} \includegraphics[scale=.4]{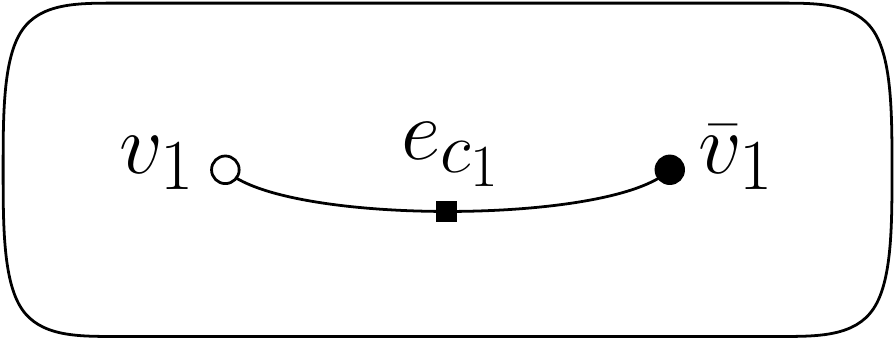} \end{array} \qquad B_R^{(e_{c_2})} = \begin{array}{c} \includegraphics[scale=.4]{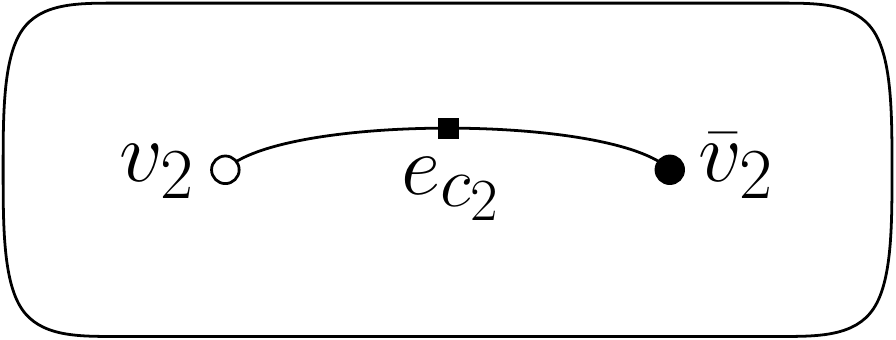} \end{array}
\end{equation}
Consider a vertex $\tilde{v} \neq v, v'$ in $B_1$. If there is a path from $\tilde{v}$ to $v_2$ (or $\bar{v}_2$) which does not go through $v$ and $v'$, then all the edges of this path are unaffected by the contraction and $\tilde{v}$ will be in the same component as $v_2$ denoted $B_R$. If however all paths from $\tilde{v}$ to $v_2$ go through $v$ or $v'$, it means that there is a path from $\tilde{v}$ to $v_1$ which does not go through $v$ and $v'$. Then $\tilde{v}$ is in the same component as $v_1$ denoted $B_L$.

The same argument applies to $G$ itself and shows that $G/e$ is either connected or has two connected components, $G_L$ which contains $B_L$ and $G_R$ which contains $B_R$. Both situations in fact occur when $e$ is fixed but all the other edges of color 0 of $G$ are changed so that $G$ visits all of $\cG_{n_1, \dotsc, n_N}(B_1, \dotsc, B_N)$ except for $e$ being fixed. When $G/e$ is connected, it belongs to 
\begin{equation}
\cG_{\text{connected}} = \cG_{1,1,n_1-1, \dotsc, n_N}(B_L^{(e_{c_1})}, B_R^{(e_{c_2})}, B_1, \dotsc, B_N).
\end{equation}
When it is not, the connected components $G_L$ and $G_R$ can contain any other bubble from the set of allowed bubbles $B_1, \dotsc, B_N$, as long as the total number of bubbles of type $B_i$ is $n_i$, for $i=2, \dotsc, N$ and $n_1-1$ for $i=1$. Therefore we introduce $n_{1}^L, \dotsc, n_{N}^L$ and $n_1^R, \dotsc, n_N^R$, such that $0 \leq n_i^L, n_i^R \leq n_i$ and $n_i^L + n_i^R = n_i$ for $i=2, \dotsc, N$ and $n_1^L + n_1^R = n_1-1$. Then $G/e$ belongs to
\begin{equation}
\cG_{\text{disconnected}} = \bigcup_{\substack{n_1^L, \dotsc, n_N^L\\ n_1^R, \dotsc, n_N^R}} \cG_{1, n_1^L, \dotsc, n_N^L}(B_L^{(e_{c_1})}, B_1, \dotsc, B_N) \times \cG_{1, n_1^R, \dotsc, n_N^R}(B_R^{(e_{c_2})}, B_1, \dotsc, B_N).
\end{equation}
Thus the space for $G/e$ is
\begin{equation} \label{ContractionSpaceCase3}
\cG_{\text{case 3}} = \cG_{\text{connected}} \cup \cG_{\text{disconnected}}.
\end{equation}
In the disconnected case, the total number of bicolored cycles is $C_0(G/e) = C_0(G_L) + C_0(G_R)$. In both cases, exactly one bicolored cycle is lost from $G$ to $G/e$, with colors $\{0, c\}$, hence
\begin{equation} \label{BicoloredCyclesContractionCase3}
C_0(G/e) = C_0(G) - 1  \qquad \text{in case 3}.
\end{equation}

For each case $i=1, 2, 3$, assume that $G/e$ does not maximize the number of bicolored cycles in $\cG_{\text{case $i$}}$. Then there exists $\tilde{G}' \in \cG_{\text{case $i$}}$ such that $C_0(\tilde{G}') > C_0(G/e)$. Notice that in case 3, $\tilde{G}'$ may be connected or disconnected independently of $G/e$. The graph $\tilde{G}'$ contains either the bubble $B^{(e_c)}$, or $B^{(e_{c_1}, e_{c_2})}$, or the bubbles $B_L^{(e_{c_1})}, B_R^{(e_{c_2})}$, which in all three cases have marked edges. These edges can be used to ``undo'' the contraction, that is reinsert the vertices $v, v'$ and the appropriate edges between them together with the edge $e$ of color 0. This gives a graph $\tilde{G}$ such that
\begin{equation}
\tilde{G}/e = \tilde{G}'.
\end{equation}
The bubbles with marked edges now reform the original bubble $B_1\subset \tilde{G}$ so $\tilde{G}\in \cG_{n_1, \dotsc, n_N}(B_1, \dotsc, B_N)$.

The last key point is that the relation between the numbers of bicolored cycles of $\tilde{G}'$ and $\tilde{G}$ is the same as between $G$ and $G/e$ for each case, namely Equations \eqref{BicoloredCyclesContractionCase1}, \eqref{BicoloredCyclesContractionCase2}, \eqref{BicoloredCyclesContractionCase3}. For instance in case 1, $C_0(\tilde{G}) = C_0(\tilde{G}') + 2 > C_0(G/e) + 2 = C_0(G)$. In all cases, one gets $C_0(\tilde{G}) > C_0(G)$.
\end{proof}

\section{Colored graphs with planar bubbles} \label{sec:PlanarBubbles}

\subsection{One-CBB triangulations} \label{sec:1CBB}

A particular set of triangulations consists of those with a single CBB. They generalize unicellular maps to higher dimensions. In terms of simplices, they are formed by a perfect matching, or a pairing, of its boundary $(d-1)$-simplices which identifies them two by two to form a closed space. In terms of colored graphs, they have a single bubble $B$, and are obtained by adding edges of color 0 between its black and white vertices following a perfect matching.

\begin{definition} [Pairing of a bubble]
A pairing (or perfect matching) $\pi$ of a CBB is a partition of its boundary simplices in bipartite pairs, which defines a unique closed colored triangulation. In terms of dual colored graphs, a pairing is an element of $\cG_1(B)$, where the pairs are the black and white vertices connected by edges of color 0. We will denote those pairs $\{v, \pi(v)\}$ for vertices $v$ of $B$ (and of course $\pi^2(v) = v$).
\end{definition}

If $B$ is a bubble with $2V$ vertices, then there are $V!$ pairings. $\cG^{\max}_1(B)$ is the set of pairings of $B$ which maximize the number of bicolored cycles. In two dimensions, a CBB is a $2p$-gon and pairings correspond to identifications of the boundary edges two by two. The Harer-Zagier polynomial gives the number of pairings which form a surface of genus $g$. For $g=0$, this is the Catalan number of order $p$ obviously.

\begin{lemma} \label{lemma:qEdges}
If $B$ contains two vertices $v, \bar{v}$ connected by $q>d/2$ edges, then all graphs in $\cG^{\max}_1(B)$ have an edge of color $0$ between $v$ and $\bar{v}$, i.e. $\pi(v) = \bar{v}$ for $\pi\in\cG^{\max}_1(B)$ or graphically
\begin{equation}
\text{$B$ contains}\qquad \begin{array}{c} \includegraphics[scale=.4]{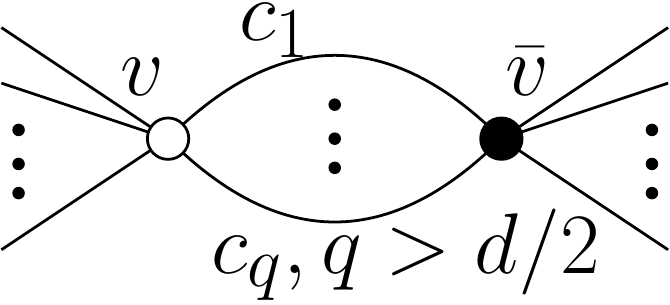} \end{array} \qquad \Rightarrow \qquad \text{$G\in\cG^{\max}_1(B)$ contains}\qquad \begin{array}{c} \includegraphics[scale=.4]{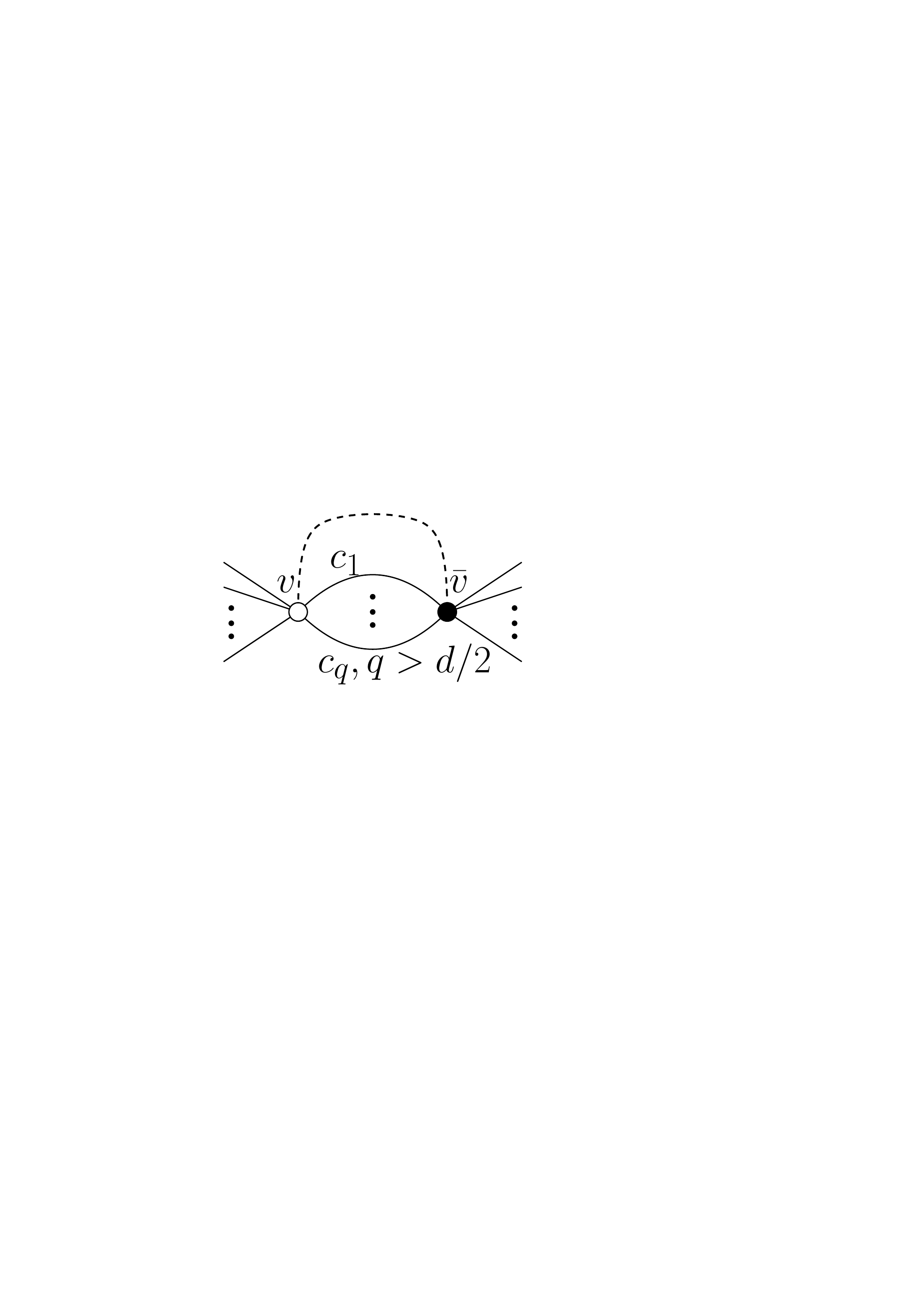} \end{array}
\end{equation}
\end{lemma}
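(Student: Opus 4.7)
The plan is to argue by contradiction using a single edge flip, relying on Lemma~\ref{lemma:flip} and a direct count of shared bicolored cycles. Suppose $G \in \cG^{\max}_1(B)$ corresponds to a pairing $\pi$ with $\pi(v) \neq \bar{v}$. Since $v$ is white and $\bar{v}$ is black, $\pi(v)$ and $\pi(\bar{v})$ are vertices of opposite colors, so the two edges of color $0$,
\begin{equation*}
e_1 = \{v, \pi(v)\}, \qquad e_2 = \{\bar{v}, \pi(\bar{v})\},
\end{equation*}
are distinct and share no endpoint with each other. Let $G'$ be the graph obtained by flipping $e_1$ and $e_2$. Because $B$ is connected as a bubble and a flip only modifies color-$0$ edges, $G'$ remains connected and belongs to $\cG_1(B)$.

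The main step is the lower bound $|I_G(e_1, e_2)| \geq q$. Let $c_1, \dots, c_q$ be the colors of the $q$ parallel edges joining $v$ to $\bar{v}$ in $B$. Fix any $c \in \{c_1, \dots, c_q\}$ and follow the bicolored cycle of colors $\{0, c\}$ passing through $e_1$: starting at $v$ it traverses $e_1$ to $\pi(v)$ and must then leave $v$ along its incident edge of color $c$, which by assumption is the edge $\{v, \bar{v}\}$. Arriving at $\bar{v}$, the next color-$0$ edge on the cycle is precisely $e_2$. Hence the same $\{0, c\}$-cycle runs along both $e_1$ and $e_2$, i.e. $c \in I_G(e_1, e_2)$. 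This holds for every $c \in \{c_1, \dots, c_q\}$, so $|I_G(e_1, e_2)| \geq q > d/2$.

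Applying Lemma~\ref{lemma:flip},
\begin{equation*}
C_0(G') = C_0(G) - d + 2|I_G(e_1, e_2)| \geq C_0(G) - d + 2q > C_0(G),
\end{equation*}
which contradicts $G \in \cG^{\max}_1(B)$. Therefore $\pi(v) = \bar{v}$ for every pairing realizing the maximum. (The degenerate case where $B$ has only the two vertices $v, \bar{v}$ forces $\pi(v) = \bar{v}$ trivially, so no separate treatment is needed.)

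I do not anticipate any real obstacle: once one observes that each color $c$ lying in a $\{v, \bar{v}\}$-edge of $B$ contributes to $I_G(e_1, e_2)$, the numerical inequality $q > d/2$ closes the argument via the flip lemma. The only subtlety worth checking is connectedness of $G'$, which is immediate from the connectedness of the underlying bubble $B$.
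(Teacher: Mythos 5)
Your proof is correct and follows essentially the same route as the paper: assume $\pi(v)\neq\bar v$, observe that each of the $q$ parallel colors forces the corresponding $\{0,c\}$-cycle to run along both color-$0$ edges at $v$ and $\bar v$, so $|I_G(e_1,e_2)|\geq q>d/2$, and conclude by the flip lemma. The only difference is that you spell out the cycle-tracing argument and the connectedness of $G'$ (via $B$ being a connected spanning subgraph), which the paper leaves implicit.
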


\begin{proof}
Consider $G\in\cG_1(B)$ such that there are two distinct edges $e, e'$ of color 0 incident on $v$ and $\bar{v}$, and consider the graph $\tilde{G}$ with $e, e'$ flipped. The number of colors such that it is the same bicolored cycle which goes along $e$ and $e'$ in $G$ is $|I_G(e, e')| \geq q >d/2$. Therefore, Lemma \ref{lemma:flip} gives the variation of the number of bicolored cycles as $C_0(\tilde{G}) = C_0(G) - d + 2|I_G(e, e')| > C_0(G)$.
\end{proof}

This can be used to prove the following simple (and well-known) fact.

\begin{corollary}
If $B$ is a melonic bubble then $\cG^{\max}_1(B)$ has a single pairing and 
\begin{equation}
C_1(B) = \frac{(d-1)}{2} V(B) + 1
\end{equation}
where $V(B)$ is the total number of vertices of $B$.
\end{corollary}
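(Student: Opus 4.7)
I would proceed by induction on the number of vertices $V=V(B)$, using Lemma \ref{lemma:qEdges} to strip off one melonic insertion at a time. The base case $V=2$ is immediate: the bubble has two vertices joined by edges of all $d$ colors, the only possible pairing adds a single color-$0$ edge between them, and the resulting graph has exactly $d$ length-two bicolored cycles, one of colors $\{0,c\}$ for each $c\in\{1,\dotsc,d\}$. This matches $\frac{d-1}{2}\cdot 2+1=d$ and establishes both the formula and uniqueness.

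For the inductive step, assume $V\geq 4$ and that the statement holds for all melonic bubbles of smaller size. Since the insertion tree of $B$ is finite it has a leaf, which corresponds to a pair $(v,\bar v)$ of vertices of $B$ connected by the full bundle of $d-1$ parallel edges of colors $\{1,\dotsc,d\}\setminus\{c_0\}$ introduced by that insertion. Because the relevant regime is $d\geq 3$ (so that $q=d-1>d/2$), Lemma \ref{lemma:qEdges} forces $\pi(v)=\bar v$ for every $\pi\in\cG^{\max}_1(B)$. Let $e$ denote the resulting color-$0$ edge between $v$ and $\bar v$.

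Now contract $e$. The result $G/e$ is a pairing of the smaller melonic bubble $B'$ obtained by undoing that insertion, so $V(B')=V-2$. A direct count shows the contraction destroys \emph{exactly} $d-1$ bicolored cycles: for each $c'\neq c_0$ the length-two cycle of colors $\{0,c'\}$ traversing $e$ and the parallel color-$c'$ edge $v\bar v$ disappears, whereas the cycle of colors $\{0,c_0\}$ through $e$ is merely shortened (there is no parallel color-$c_0$ edge). Hence $C_0(G)=C_0(G/e)+(d-1)$, and the inductive hypothesis gives $C_0(G/e)\leq \frac{d-1}{2}(V-2)+1$, so that
\begin{equation*}
C_0(G)\leq \frac{d-1}{2}(V-2)+1+(d-1)=\frac{d-1}{2}V+1.
\end{equation*}
Equality forces $G/e$ to be the unique maximizer in $\cG^{\max}_1(B')$, which together with $\pi(v)=\bar v$ reconstructs $G$ uniquely; conversely, starting from the unique maximizing pairing of $B'$ and reinserting the melonic pair together with the color-$0$ edge $e$ yields a pairing of $B$ that saturates the bound. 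Hence $\cG^{\max}_1(B)$ is a singleton and $C_1(B)=\frac{d-1}{2}V+1$.

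\textbf{Main obstacle.} The argument is essentially bookkeeping once Lemma \ref{lemma:qEdges} is in hand, and the only point requiring real care is the cycle count under contraction: one must confirm that every non-trivial melonic bubble admits a pair $(v,\bar v)$ with $d-1$ parallel edges (an insertion-tree leaf), and that contracting $e$ kills \emph{precisely} $d-1$ cycles, neither more (which would require additional edges between $v$ and $\bar v$) nor fewer (which would require the color-$c_0$ cycle through $e$ to already have length two). Both facts follow from the structure of melonic insertions.
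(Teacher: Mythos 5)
Your proposal is correct and follows essentially the same route as the paper: induct on $V(B)$, apply Lemma \ref{lemma:qEdges} with $q=d-1$ to a final (leaf) melonic pair to force $\pi(v)=\bar v$, undo the insertion, and observe that exactly $d-1$ bicolored cycles are lost, giving $C_1(B)=C_1(B')+(d-1)$ down to the base case $C_1(B^{(1)})=d$. Your explicit remark that $d\geq 3$ is needed for $q=d-1>d/2$ is a point the paper leaves implicit.
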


\begin{proof}
We recall that a melonic bubble $B$ is made from recursive melonic insertions, i.e. insertions of vertices connected by $d-1$ edges as shown in \eqref{MelonicInsertion}, and starting from the 2-vertex bubble (Figure \ref{fig:2VertexBubble}). There exists a sequence of bubbles $B^{(1)}, \dotsc, B^{(V(B)/2-1)}, B^{(V(B)/2)}$ such that $B^{(V(B)/2)} = B$ and $B^{(1)}$ is the 2-vertex bubble, $B^{(k)}$ has $2k$ vertices and $B^{(k+1)}$ is obtained from $B^{(k)}$ by a melonic insertion.

Let us consider the two vertices $v$ and $\bar{v}$ and the $d-1$ parallel edges between them which have been added to $B^{(V(B)/2 - 1)}$ to get $B$. Lemma \ref{lemma:qEdges} applies directly with $q=d-1$. This fixes $\pi(v) = \bar{v}$ and thus the restriction of a pairing $\pi\in\cG^{\max}_1(B)$ to $v$ and $\bar{v}$. We can thus ``undo'' the melonic insertion \eqref{MelonicInsertion} and consider $B^{(V(B)/2-1)}$. One obviously has
\begin{equation}
C_1(B^{(V(B)/2-1)}) = C_1(B) - (d-1).
\end{equation}
This can be continued as an induction, with $C_1(B^{(k)}) = C_1(B^{(k+1)}) - (d-1)$, down to $k=1$. One then arrives at $B^{(1)}$ which has a single pairing, with $d$ bicolored cycles, $C_1(B^{(1)}) = d$.
\end{proof}

Identifying the subset $\cG^{\max}_1(B) \subset \cG_1(B)$ for an arbitrary bubble $B$ is a tremendously difficult matter and success has been obtained, sometimes only partially, only in limited cases. They include: 
\begin{itemize}
\item ``almost melonic'' bubbles, where instead of inserting $(d-1)$ parallel edges in \eqref{MelonicInsertion}, one inserts two vertices connected by $d-1>q>d/2$ parallel edges recursively, so that Lemma \ref{lemma:qEdges} applies at each step.
\item in even dimensions the case of ``necklaces'', i.e. bubbles made of a single cycle whose vertices are connected by exactly $d/2$ parallel edges. It is easy to see that the counting of bicolored cycles $C_0(G)$ is then equivalent to $d/2$ independent copies of the two-dimensional case. 
\item some trees of necklaces, that is a mix of the two cases above, \cite{UnitaryIntegrals, MelonoPlanar},
\item four-dimensional bubbles with exactly one cycle with colors $\{1,2\}$ and one of colors $\{3,4\}$. The numbers $C_1(B)$ of pairings maximizing the numbers of bicolored cycles are meander numbers \cite{MeandricBubbles}.
\end{itemize}
Importantly, none of these cases exist for $d=3$, and only the case of melonic bubble is known.

\subsection{Planar bubbles and the maximal 2-cut property} \label{sec:Max2Cut}

The most important property of planar bubbles in three dimensions that we will use is an elementary property about the lengths of their bicolored cycles.

\begin{lemma} \label{lemma:FacesPlanarBubble}
A planar bubble without bicolored cycles of length 2 has at least six bicolored cycles of length 4.
\end{lemma}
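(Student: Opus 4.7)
The plan is to use Euler's formula for the planar map canonically associated to the bubble via the embedding recalled in Corollaries \ref{cor:2D} and \ref{cor:PlanarBubbles}, and then combine it with a simple counting of edges through face degrees.

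Concretely, let $B$ be a planar bubble and let $M$ be its canonical embedding as a planar map; the vertices and edges of $M$ are those of $B$, and the faces of $M$ are in bijection with the bicolored cycles of $B$, with the degree of a face equal to the length of the corresponding bicolored cycle. Denote by $V$, $E$, $F$ the numbers of vertices, edges and faces of $M$. Since $B$ is a colored graph with three colors, every vertex has degree exactly $3$, so $2E = 3V$, hence $E = 3V/2$. Euler's formula then gives $F = 2 + V/2$.

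Next I would introduce $F_k$ for the number of faces of $M$ of degree $k$. By hypothesis there are no bicolored cycles of length $2$, so $F_2 = 0$; moreover face degrees are automatically even since each face alternates two colors. Counting edges through face degrees,
\begin{equation*}
3V \;=\; 2E \;=\; \sum_{k \geq 4} k\, F_k \;=\; 4 F_4 + \sum_{k \geq 6} k\, F_k \;\geq\; 4 F_4 + 6\bigl(F - F_4\bigr) \;=\; 6F - 2F_4.
\end{equation*}
Substituting $F = 2 + V/2$ yields $2 F_4 \geq 6F - 3V = 12$, hence $F_4 \geq 6$, which is precisely the claim.

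There is essentially no obstacle: the only subtle point is to make sure that the embedding used is the canonical one so that faces correspond bijectively to bicolored cycles with matching lengths, which is exactly the content of the corollaries already established. Everything else is a one-line application of Euler's formula combined with the $3$-regularity of bubbles.
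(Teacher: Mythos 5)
Your proof is correct and follows essentially the same route as the paper: identify the bubble with its canonical planar embedding via Corollaries \ref{cor:2D} and \ref{cor:PlanarBubbles}, use $3$-regularity to get $E = 3V/2$, and combine Euler's formula with the face-degree count $\sum_k k F_k = 2E$ to conclude $F_4 \geq 6$ when $F_2 = 0$. No gaps.
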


\begin{proof}
A bubble in three dimensions is a colored graph dual to the boundary surface of a CBB. We can thus use Corollaries \ref{cor:2D} and \ref{cor:PlanarBubbles} to identify a planar bubble with its canonical embedding which is a planar map, such that the bicolored cycles of the bubble are the faces of the map. The lemma is then a classical reasoning based on Euler's formula for planar maps $F(B) - E(B) + V(B) = 2$, where we use $F(B)$ to denote the total number of faces of the map for the three pairs of colors $\{a, b\}$ with $1\leq a<b\leq 3$. First, since $B$ is a colored graph whose vertices have degree three, $E(B) = 3 V(B)/2$, hence
\begin{equation}
V(B) = 2 F(B) - 4.
\end{equation}
Then, we count the edges using the faces. Since the latter are bicolored, each edge, say of color $c$, lies on the boundary of exactly two faces, one of colors $\{c, c_1\}$ and one of color $\{c, c_2\}$, such that $\{c, c_1, c_2\} = \{1, 2, 3\}$. Therefore
\begin{equation}
\sum_{i \geq 1} 2i F^{(2i)}(B) = 2E(B) = 3V(B).
\end{equation}
Here $F^{(2i)}(B)$ is the number of faces of degree $2i$. We can combine the two previous equations, while writing $F(B) = \sum_{i\geq 1} F^{(2i)}(B)$, to get $\sum_{i\geq 1} (6 - 2i) F^{(2i)}(B) = 12$. The coefficients change sign for $i\geq 3$, therefore
\begin{equation}
2 F^{(2)}(B) + F^{(4)}(B) = 6 + \sum_{i\geq 3} (3-i) F^{(2i)}(B) \geq 6.
\end{equation}
In particular if there are no faces of degree 2, then $F^{(4)}(B) \geq 6$.
\end{proof}

\subsubsection{The maximal 2-cut property}

The idea of our main theorems is that to maximize the number bicolored cycles, one needs to glue bubbles using 2-edge-cuts. To make this more precise, we need the following definitions.

\begin{definition} [$k$-edge-cut incident on a bubble]
An $k$-edge-cut incident on a bubble $B$ or on some vertices of a bubble in a colored graph $G$ is a $k$-edge-cut formed by edges of color 0 which all have one end in $B$ and the other not in $B$.
\end{definition}

Let $\mathcal{E}(G;B)$ be the set of edges of color 0 of $G$ which have one end in $B$ and the other not in $B$.
There is a unique partition of $\mathcal{E}(G; B)$ into edge-cuts incident on $B$. Indeed, removing all edges of $\mathcal{E}(G; B)$ from $G$ disconnects $G$, since only the edges of color 0 which connect two vertices of $B$ are left incident on $B$. This turns $G$ into $B$, decorated with edges of color 0 between some of its vertices, together with $L$ connected components $G_1, \dotsc, G_L$. The set of edges of color 0 which connects $B$ to $G_l$ in $G$ is $\mathcal{E}_l(G;B) \subset \mathcal{E}(G; B)$ and
\begin{equation}
\mathcal{E}(G; B) = \bigcup_{l=1}^L \mathcal{E}_l(G; B),
\end{equation}
which is a disjoint union. If $k_l$ is the number of edges in $\mathcal{E}_l(G; B)$, then those edges form a $k_l$-edge-cut incident on $B$.

Our main theorem is that in order to maximize the number of bicolored cycles, a planar bubble can only be incident to 2-edge-cuts positioned in a particular way. We call this the {\bf maximal 2-cut property}.

\begin{definition} [Maximal 2-cut property]
Let $G\in \cG_{n_1, \dotsc, n_N}(B_1, \dotsc, B_N)$ and a bubble $B\subset G$. We say that $B\subset G$ satisfies the maximal 2-cut property if there exists a pairing $\pi \in \cG^{\max}_1(B)$ such that for any pair of vertices $\{v, \pi(v)\}$, $v\in B$, there is 
\begin{itemize}
\item either an edge of color 0 between them,
\item or two edges of color 0 forming a 2-edge-cut incident on $B$ (i.e. all $\mathcal{E}_l(G; B)$ have size two).
\end{itemize}
This is illustrated here,
\begin{equation}
\begin{array}{c} \includegraphics[scale=.5]{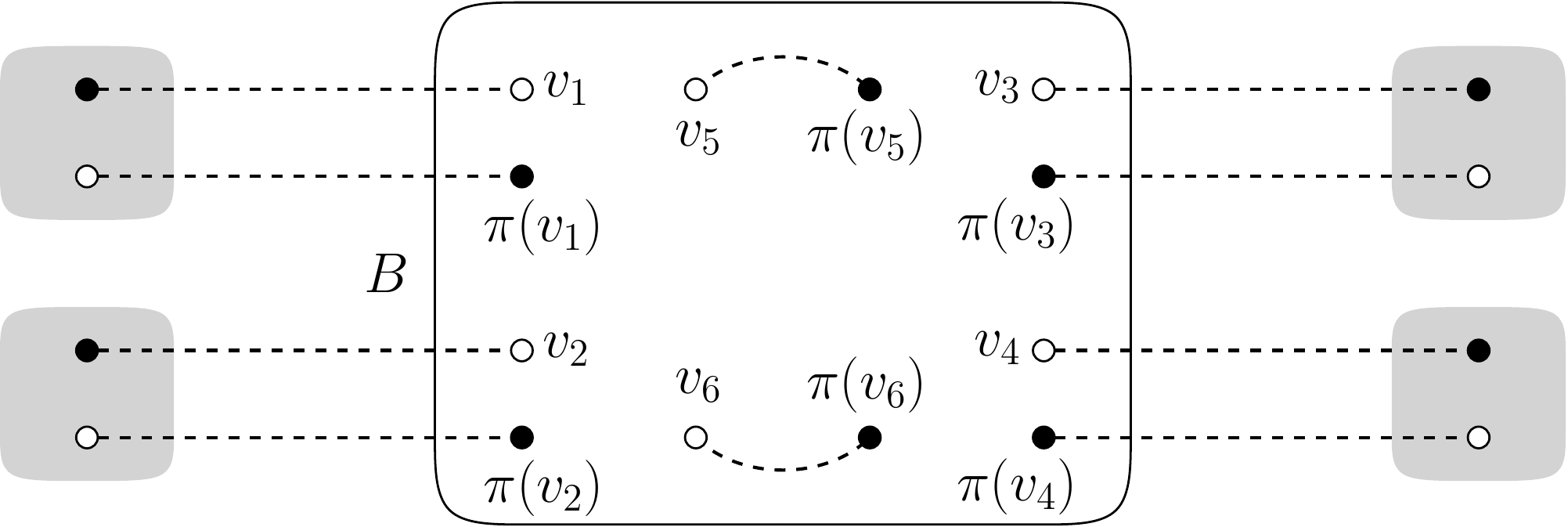} \end{array}
\end{equation}
\end{definition}

\subsubsection{Edge-cuts incident on a planar bubble}

\begin{theorem} \label{thm:1Planar}
If $B_i$ is planar for some $i\in\{1, \dotsc, N\}$ and $G\in\cG^{\max}_{n_1, \dotsc, n_N}(B_1, \dotsc, B_N)$, then the $n_i$ copies of $B_i\subset G$ satisfy the maximal 2-cut property.
\end{theorem}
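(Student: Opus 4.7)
The plan is to prove Theorem \ref{thm:1Planar} by strong induction on $V(G)$, the total number of vertices of $G$, with a planar copy $B_i \subset G$ fixed. The base case $V(G) = V(B_i)$ has $G$ a pairing of $B_i$ alone, and the maximal 2-cut property reduces directly to the definition of $\cG^{\max}_1(B_i)$.

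For the induction step, I would apply Lemma \ref{lemma:FacesPlanarBubble}: the planar bubble $B_i$ has either a 2-dipole or at least six length-4 bicolored cycles. Treat the 2-dipole case first. Let $\{v, \bar v\} \subset B_i$ be a 2-dipole with colors $c_1, c_2$, and let $e, e'$ denote the color-0 edges incident to $v$ and $\bar v$ in $G$. Since $\{c_1, c_2\} \subset I_G(e, e')$, Lemma \ref{lemma:flip} gives $C_0(G') \geq C_0(G) + 1$ whenever the flip of $e, e'$ leaves $G$ connected; maximality therefore forces either $e = e'$ (a direct color-0 edge between $v$ and $\bar v$) or $\{e, e'\}$ to form a 2-edge-cut. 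In the first subcase, contract $e$: by Lemma \ref{lemma:NotMax} the graph $G/e$ remains in $\cG^{\max}$ for its bubble set, by Proposition \ref{prop:Contraction} the contracted bubble is planar with $V(B_i) - 2$ vertices, and the induction hypothesis applies to $G/e$. Lemma \ref{lemma:qEdges} with $q = 2 > d/2$ then guarantees that any pairing in $\cG^{\max}_1(B_i)$ must pair $v$ with $\bar v$ directly, so extending the inductively supplied pairing by $\{v, \bar v\}$ remains in $\cG^{\max}_1(B_i)$. In the 2-edge-cut subcase, the pair $\{v, \bar v\}$ already satisfies option (2) of the definition; I would replace the far component of $G \setminus B_i$ by its 2-vertex boundary bubble using Propositions \ref{prop:BdryBubble} and \ref{prop:BdryCycles}, yielding a smaller graph still in $\cG^{\max}$ for its bubble set, to which the induction hypothesis applies.

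The main obstacle is the case where $B_i$ has no 2-dipole (the octahedron bubble being the canonical example): here no edge of $B_i$ can be contracted by Lemma \ref{lemma:NotMax}, and the induction must go through a different route. I would proceed by first ruling out $k$-edge-cuts incident on $B_i$ for all even $k \geq 4$. Proposition \ref{prop:4EdgeCuts} handles $k = 4$; for $k \geq 6$, one uses Proposition \ref{prop:BdryBubble} to restrict the boundary bubble of the cut component (for $k = 6$, melonic, a disjoint union of melonic bubbles, or $K_{3,3}$), and applies Lemma \ref{lemma:flip} to two cut edges $e, e'$ whose endpoints in the boundary bubble are joined by $q \geq 2$ parallel edges, giving $|I_G(e, e')| \geq q > d/2$ by Proposition \ref{prop:BdryCycles} and a strict increase of $C_0$ after the flip (which cannot disconnect $G$ since $k - 2 \geq 1$ cut edges remain). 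Once only 2-edge-cuts remain, one must argue that the induced pairing on $B_i$ necessarily creates some length-2 bicolored cycle of colors $\{0, c\}$ in $G$, which triggers a case-2 or case-3 contraction of Lemma \ref{lemma:NotMax}; the at least six length-4 faces supplied by Lemma \ref{lemma:FacesPlanarBubble} provide the combinatorial room needed for this step. The delicate point is the $K_{3,3}$ boundary bubble, which has no 2-dipoles and hence eludes the one-shot flip argument: handling it will likely require a two-step flip or a more global structural contradiction, and this is where the bulk of the combinatorial work lives.
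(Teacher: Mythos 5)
Your treatment of the 2-dipole, 2-edge-cut and 4-edge-cut cases matches the paper's in substance, but the heart of the theorem is the 2-dipole-free case, and there your plan has two genuine gaps. First, the proposed elimination of $(k\geq 6)$-edge-cuts relies on finding two cut edges $e,e'$ whose endpoints are joined by $q\geq 2$ parallel edges in the boundary bubble of the cut component; such a 2-dipole need not exist. For $k=6$ the boundary bubble can be $K_{3,3}$ (which you flag), but the problem is not confined to that one graph: for $k=8$ the boundary bubble can be the octahedron (cube) bubble, and for larger $k$ any 2-dipole-free bubble, planar or not, so the one-shot flip gives nothing and the step ``rule out all large edge-cuts first'' is unsupported. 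Second, even granting that only 2-edge-cuts and direct color-0 edges remain, your claim that a non-maximal induced pairing necessarily creates a length-2 bicolored cycle of colors $\{0,c\}$ is unjustified and fails in general: on a 2-dipole-free bubble a pairing can match only non-adjacent vertices (e.g.\ an antipodal matching of the cube), producing no length-2 cycle and hence nothing for Lemma \ref{lemma:NotMax} to contract.

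The paper closes exactly this gap by a different mechanism. It fixes a face of degree 4 of $B$ itself (guaranteed by Lemma \ref{lemma:FacesPlanarBubble} in the 2-dipole-free case), with vertices $\bar v_1, v_2, \bar v_3, v_4$, and performs the double flip $G\to G_{\mid}\to G_{\parallel}$ on the color-0 edges at these four vertices; the point of doing the two flips in succession is the inequality $C_0(G_{\parallel})\geq C_0(G)$, which a single flip does not give. The double flip installs color-0 edges across the two color-1 edges of the face, creating length-2 bicolored cycles that can then be contracted (the ``reference case''), reducing the vertex count while keeping the contracted bubble planar and still violating the maximal 2-cut property, so the induction hypothesis and Lemma \ref{lemma:NotMax} apply. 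A case analysis over which of the four edges lie in which edge-cuts shows that this reduction, or Proposition \ref{prop:4EdgeCuts}, always applies; the boundary-bubble/$K_{3,3}$ argument you anticipate appears only in one residual sub-case (all four edges in a single 6-edge-cut) and is applied to the component containing $B$, where the presence of the degree-4 face excludes $K_{3,3}$ outright. As a minor further point, in your 2-edge-cut sub-case replacing the far component by its boundary bubble does not obviously preserve maximality for the new bubble set; the paper instead flips the cut, disconnects $G$ into $G_1$ and $G_2$ with $C_0(G)=C_0(G_1)+C_0(G_2)-3$, and applies the induction to the component containing $B$.
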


This theorem was previously known to hold when all bubbles $B_1, \dotsc ,B_N$ are melonic \cite{Uncoloring}, and when there is a single type of bubble which is the octahedron \cite{Octahedra}. A method detailed in \cite{SigmaReview} can be used to extend it to more bubbles of the form $\partial H$ when $H$ is a subgraph of some $G\in \cG^{\max}_{n_1, \dotsc, n_N}(B_1, \dotsc, B_N)$. This however does not bring genuinely new cases because the graphs maximizing the number of edges with $\partial H$ form a subset of $\cG^{\max}_{n_1, \dotsc, n_N}(B_1, \dotsc, B_N)$.

Theorem \ref{thm:1Planar} is a thus a far reaching generalization of the existing results since it imposes the single constraint on one bubble to be homeomorphic to the 3-ball. Among all existing results, it only leaves out the case where all bubbles are $K_{3,3}$, \eqref{6VertexBubbles3d}, which has the topology of the torus, so no bubble homeomorphic to the ball. This case was studied in \cite{StuffedWalshMaps} with a conclusion slightly less restrictive than the maximal 2-cut property of Theorem \ref{thm:1Planar}. In fact, from the result for $K_{3,3}$ and the way the Theorem works in the proof below, we can conjecture that a weaker version holds for non-spherical bubbles (notice that their gluings would not be manifolds).

\begin{proof}
We proceed by induction on the total number of vertices of $G$. First notice that the theorem is true for all sets of 1-bubble graphs $\cG^{\max}_1(B)$ (even for non-planar bubbles) by definition of the maximal 2-cut property. There is a single colored graph with two vertices and it has a single bubble (Figure \ref{fig:2VertexBubble}). With four vertices, $G$ can have a single bubble, Figure \ref{fig:4VertexBubble3d}, or two 2-vertex bubbles which can only be connected in one way (where the theorem is true). So the theorem is true for graphs with up to four vertices.

Assume the theorem holds for all sets $\cG_{n'_1, \dotsc, n'_{N'}}(B_1', \dots, B'_{N'})$ with $\sum_{j=1}^{N'} n'_j V(B'_j) < V$ vertices. Consider now $G\in \cG_{n_1, \dotsc, n_N}(B_1, \dotsc, B_N)$, with $\sum_{j=1}^N n_j V(B_j) = V$ vertices, and $B\subset G$ a planar bubble not satisfying the maximal 2-cut property. It means that 
\begin{enumerate}[label=(\alph*)]
\item\label{enum:edge-cut} either there is a $k$-edge-cut incident on $B$ with $k\geq 4$, 
\item\label{enum:pairing} or a pairing $\pi \in \cG_1(B)\setminus \cG_1^{\max}(B)$ such that the vertices of pairs $\{v, \pi(v)\}$ are either connected by edges of color 0 or incident to 2-edge-cuts.
\end{enumerate}

We consider the following cases.
\begin{description}[wide]
\item[2-edge-cut] Assume there is a 2-edge-cut incident on $B$ with edges of color 0 $e, e'$,
\begin{equation} \label{2CutGraph}
G = \begin{array}{c} \includegraphics[scale=.45]{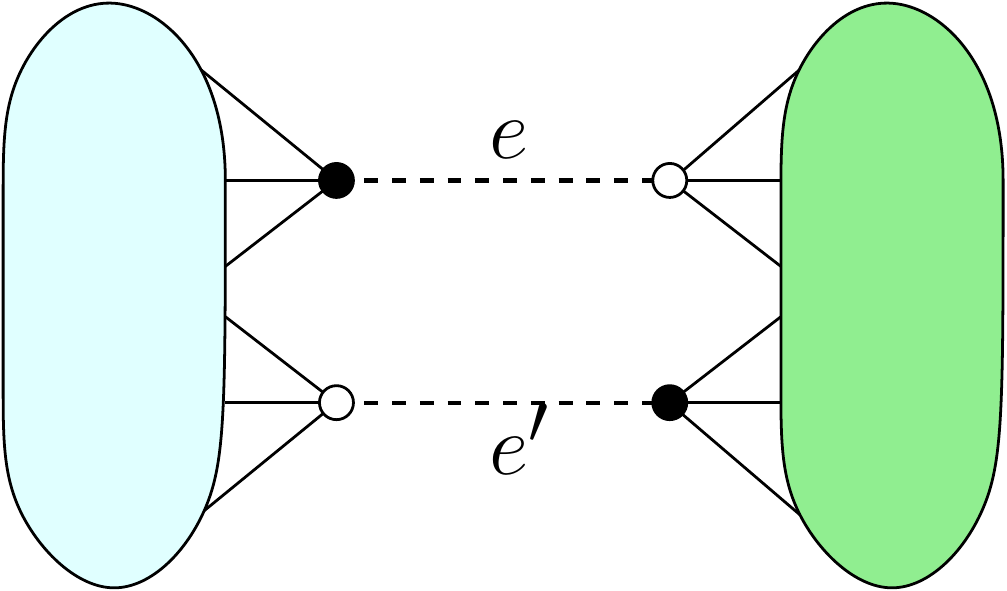} \end{array}
\end{equation}
Then the three bicolored cycles of colors $\{0,1\}, \{0, 2\}, \{0, 3\}$ which go along $e$ also go along $e'$. Therefore, flipping $e$ and $e'$ turns $G$ into two connected graphs $G_1, G_2$ with two new edges $e_1$ in $G_1$, $e_2$ in $G_2$
\begin{equation} \label{2CutFlipped}
G_1 = \begin{array}{c} \includegraphics[scale=.45]{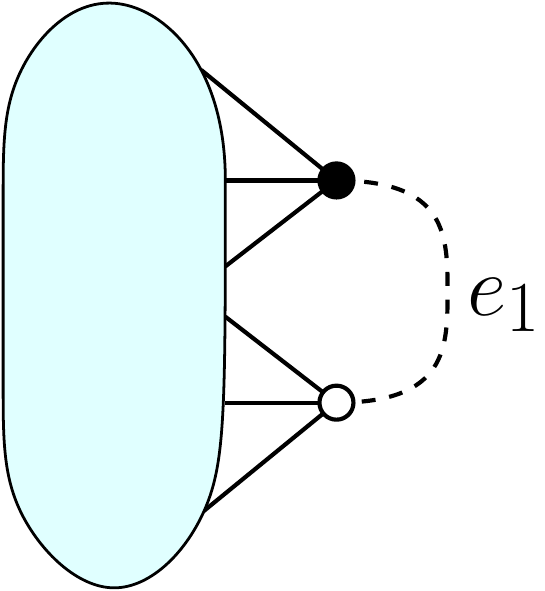} \end{array} \qquad G_2 = \begin{array}{c} \includegraphics[scale=.45]{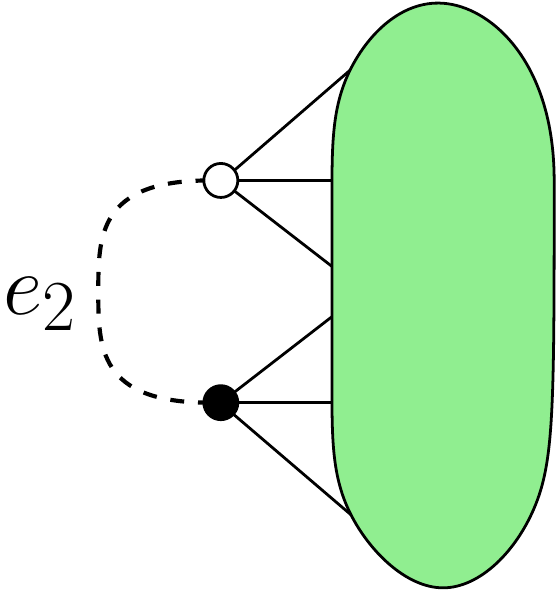} \end{array}
\end{equation}
and satisfying 
\begin{equation}
C_0(G) = C_0(G_1) + C_0(G_2) - 3.
\end{equation}
Assume $B$ is in $G_1$. It still does not satisfy the maximal 2-cut property.
\begin{itemize}
\item Indeed, if it was incident to a $(k\geq 4)$-edge-cut in $G$, it still is in $G_1$.
\item If there were only 2-edge-cuts and edges of color 0 connecting its vertices in $G$ which formed a pairing $\pi\in\cG_1(B)$ not in $\cG^{\max}_1(B)$, then the edges $e, e'$ are incident to vertices $v, v'$ such that $v'=\pi(v)$. Therefore, since the flip connects directly $v$ to $\pi(v)$, the pairing $\pi$ is unchanged in $G_1$.
\end{itemize}
Since $G_1$ has fewer than $V$ vertices and $B\subset G_1$ does not satisfy the maximal 2-cut property, we know from the induction hypothesis that there is a graph $\tilde{G}_1$ with the same bubbles as $G_1$ but where $B$ satisfies the maximal 2-cut property and $C_0(\tilde{G}_1)>C_0(G_1)$. A flip can then be performed between $e_2$ in $G_2$ and any edge of $\tilde{G}_1$ to form a connected graph $\tilde{G}$ with
\begin{equation}
C_0(\tilde{G}) = C_0(\tilde{G}_1) + C_0(G_2) - 3 > C(G).
\end{equation}
This implies that $G\not\in \cG^{\max}_{n_1, \dotsc, n_N}(B_1, \dotsc ,B_N)$.

\item[2-dipole] Assume that $B$ has a 2-dipole, i.e. two parallel edges, say of color 1 and 2 between $v$ and $\bar{v}$. From the 2-edge-cut case above, we know the edges of color 0 incident to $v$ and $\bar{v}$ in $G$ do not form a 2-edge-cut. We are left with two cases.

\begin{itemize}
\item First, suppose there is an edge $e$ of color 0 between $v$ and $v'$
\begin{equation}
G = \begin{array}{c} \includegraphics[scale=.4]{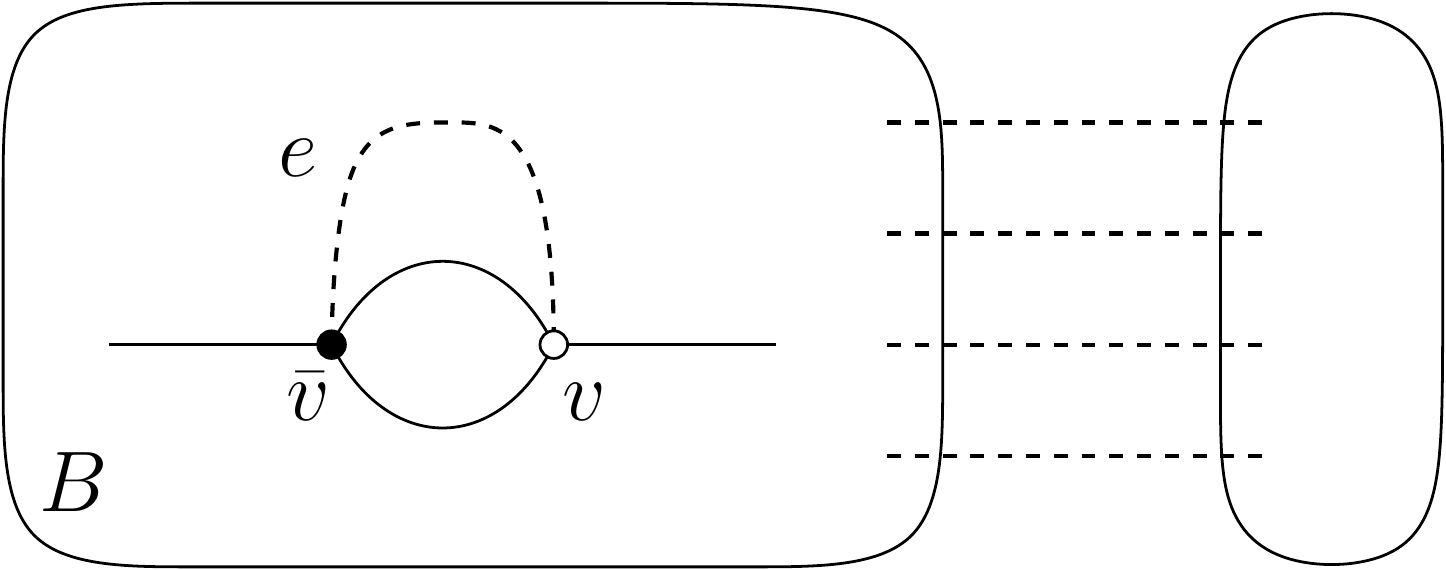} \end{array}
\end{equation}
We are either in situation \ref{enum:edge-cut}, i.e. a $k$-edge-cut for $k\geq 4$ incident on other vertices of $B$ in $G$, or in a refinement of situation \ref{enum:pairing} with a pairing $\pi\in\cG_1(B)$ not in $\cG^{\max}_1(B)$ due to other edges of color 0 and/or 2-edge-cuts incident on $B$. Indeed, $e$ is part of any pairing of $\cG^{\max}_1(B)$ as shown in Lemma \ref{lemma:qEdges}, so that the fact that $\pi\not\in \cG^{\max}_1(B)$ is due to the edges of color 0 incident on the other vertices of $B$.

Consider the contraction $G/e$, in which $B$ is turned into a bubble $B'$. In situation \ref{enum:edge-cut}, $B'\subset G/e$ still has the same incident $k$-edge-cut. In situation \ref{enum:pairing}, $B'$ now has the pairing $\pi'=\pi/e$ which can also be seen as the restriction of $\pi$ to the vertices of $B'$, hence $\pi'\not\in \cG^{\max}_1(B')$ as discussed above. Since $G/e$ has two vertices less than $G$ and $B'\subset G/e$, still planar, does not satisfy the maximal 2-cut property, we can apply the induction hypothesis. It implies that $G/e$ does not maximize the number of bicolored cycles and it can be concluded from Lemma \ref{lemma:NotMax} that $G\not\in\cG^{\max}_{n_1, \dotsc, n_N}(B_1, \dotsc, B_N)$.

\item Suppose instead that there are two different edges $e, e'$ incident to the vertices $v, v'$ of the dipole. Clearly $\{1, 2\} \subset I_G(e, e')$ so that $|I_G(e, e')| \geq 2$. Since $e, e'$ do not form a 2-edge-cut, the flip produces a connected graph $\tilde{G}$. According to Lemma \ref{lemma:flip}, $C_0(\tilde{G})> C_0(G)$, hence $G\not\in \cG^{\max}_{n_1, \dotsc, n_N}(B_1, \dotsc, B_N)$.
\end{itemize}

\item[4-edge-cut] If there is a 4-edge-cut incident on $B$, Proposition \ref{prop:4EdgeCuts} directly applies and shows that $G$ does not have the maximal number of bicolored cycles, $G\not\in\cG^{\max}_{n_1, \dotsc, n_N}(B_1, \dotsc, B_N)$.

\item[$k$-edge-cut, $k\geq 6$] Since $B$ does not have a 2-dipole, it cannot be a melonic bubble. And since it is planar and all planar bubbles with six or less vertices are melonic, $B$ has at least eight vertices.

$B$, being planar and without 2-dipoles, has a face of degree 4, as shown in Lemma \ref{lemma:FacesPlanarBubble}. Denote its vertices $\bar{v}_1, v_2, \bar{v}_3, v_4$. We set the colors of the face to be $\{1,2\}$ with edges of color 1 $e_{12}^{(1)}$ between $\bar{v}_1$, $v_2$ and $e^{(1)}_{34}$ between $\bar{v}_3$, $v_4$, and edges of color 2 $e^{(2)}_{23}$ between $v_2$, $\bar{v}_3$ and $e^{(2)}_{14}$ between $v_4$, $\bar{v}_1$:
\begin{equation} \label{FaceDegree4}
\begin{array}{c} \includegraphics[scale=.5]{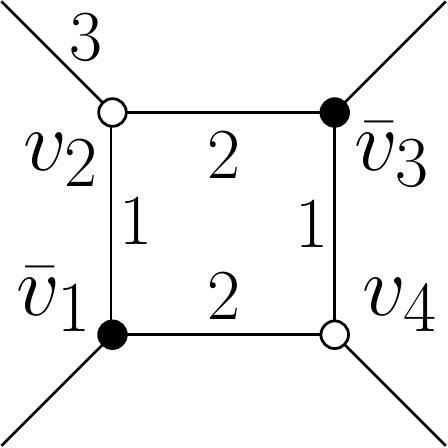} \end{array} \quad \subset B.
\end{equation}

Moreover, as $B$ has no incident 2-edge-cuts or 4-edge-cuts, it is connected to the rest of $G$ by some $k_\alpha$-edge-cuts with $k_\alpha\geq 6$ for all $\alpha$. We will thus study all the cases which depend on whether the edges of color 0 incident to $\bar{v}_1, v_2, \bar{v}_3, v_4$ belong to some of those edge-cuts.

\begin{itemize}
\item If there is an edge of color 0 connecting any two of $\bar{v}_1, v_2, \bar{v}_3, v_4$ while the two other vertices have different edges of color 0, then we can flip these two edges and gain bicolored cycles. Indeed, say $\bar{v}_1$ is connected to $v_2$ by an edge of color 0 and $e_3, e_4$ are two distinct edges incident to $\bar{v}_3, v_4$,
\begin{equation} \label{OneEdgeFaceDegree4}
G = \begin{array}{c} \includegraphics[scale=.4]{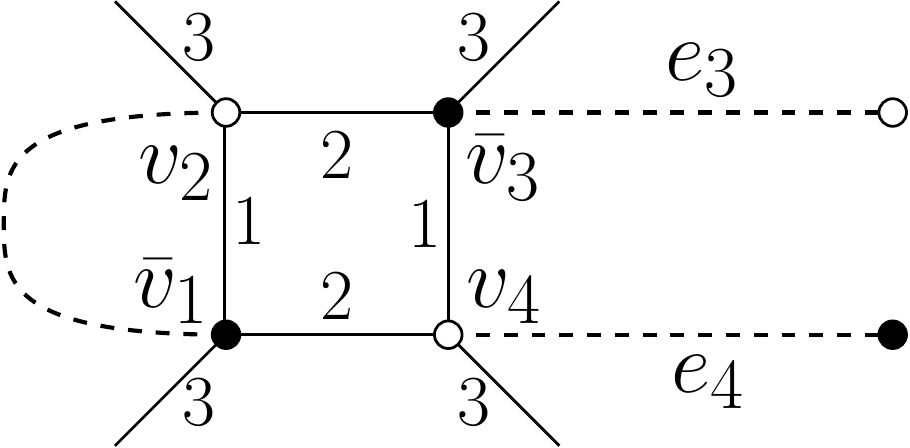} \end{array} \quad \to \quad G' = \begin{array}{c} \includegraphics[scale=.4]{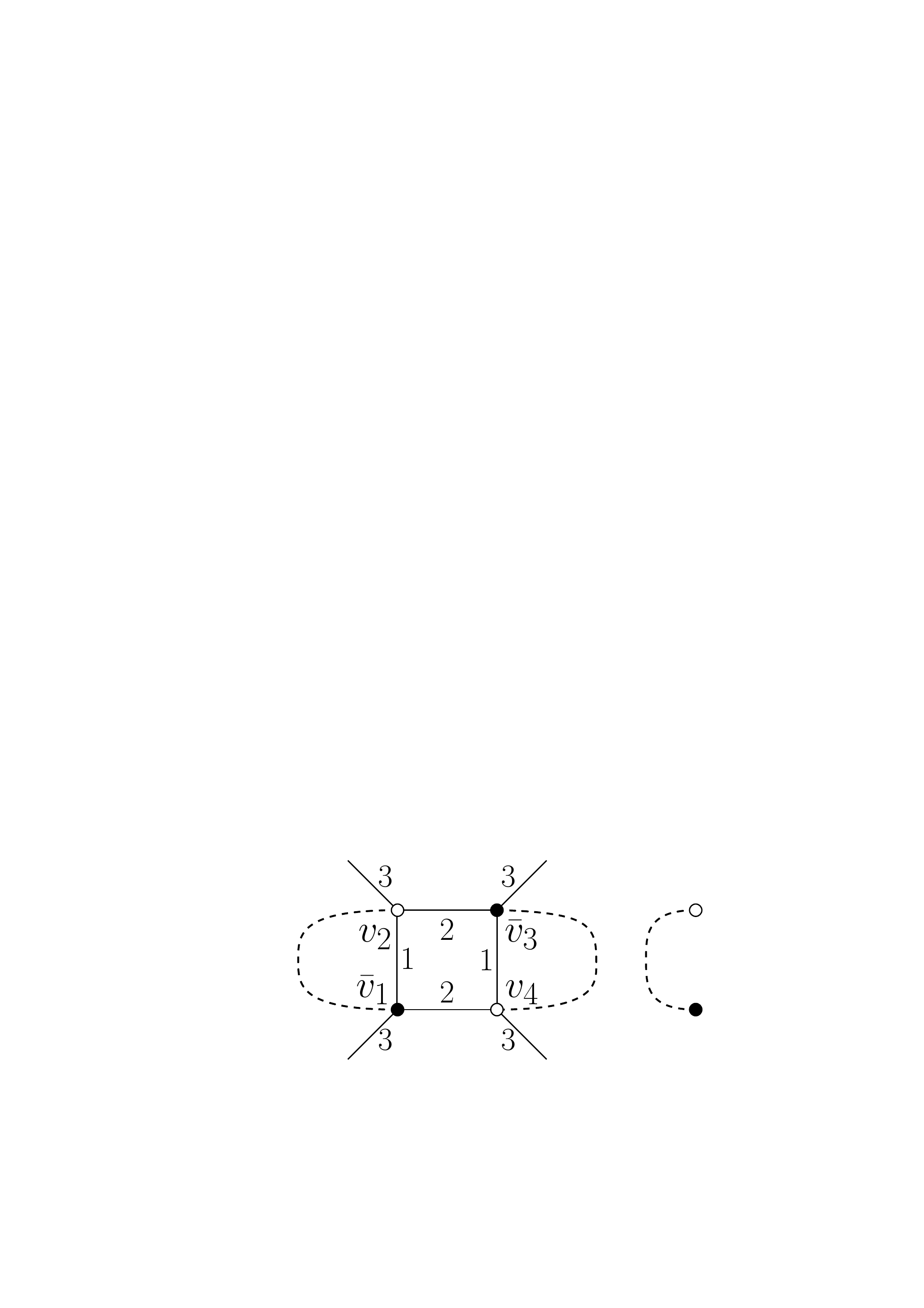} \end{array}
\end{equation}
It is the same bicolored cycle of colors $\{0, c\}$ going along $e_3$ and $e_4$ for $c=1, 2$, so $\{1, 2\} \subset I_G(e_3, e_4)$. According to Lemma \ref{lemma:flip} with $|I_G(e_3, e_4)| \geq 2$, $G'$ after flipping $e_3$ with $e_4$ has
\begin{equation}
G_0(G') \geq C_0(G) + 1
\end{equation}
hence has more bicolored cycles than $G$.

\item {\bf Reference case.} The next case we study will be encountered several times in the remaining of the proof. We will refer to this situation as the reference case. If there is an edge $e_{12}$ of color 0 between $\bar{v}_1$ and $v_2$ and another one $e_{34}$ between $\bar{v}_3$ and $v_4$, then we know there are at least six more vertices and a $(k\geq 6)$-edge-cut incident on $B$ connecting it to a subgraph $H$. There is a face $f_{23}$ of colors $\{2, 3\}$ along the edge of color 2 $e^{(2)}_{23}$ which connects $v_2$ to $\bar{v}_3$, and also a face $f'_{23}$ of colors $\{2, 3\}$ along the edge $e^{(2)}_{14}$ of color 2 which connects $v_4$ to $\bar{v}_1$. We have to distinguish the cases where those two faces of colors $\{2,3\}$ are actually a single face or two different ones. This follows a similar structure to the cases 2 and 3 of the proof of Lemma \ref{lemma:NotMax}.

Let us first assume that they are two different faces, i.e. $f_{23} \neq f'_{23}$, 
\begin{equation} \label{FaceDegree4Case2}
G = \begin{array}{c} \includegraphics[scale=.4]{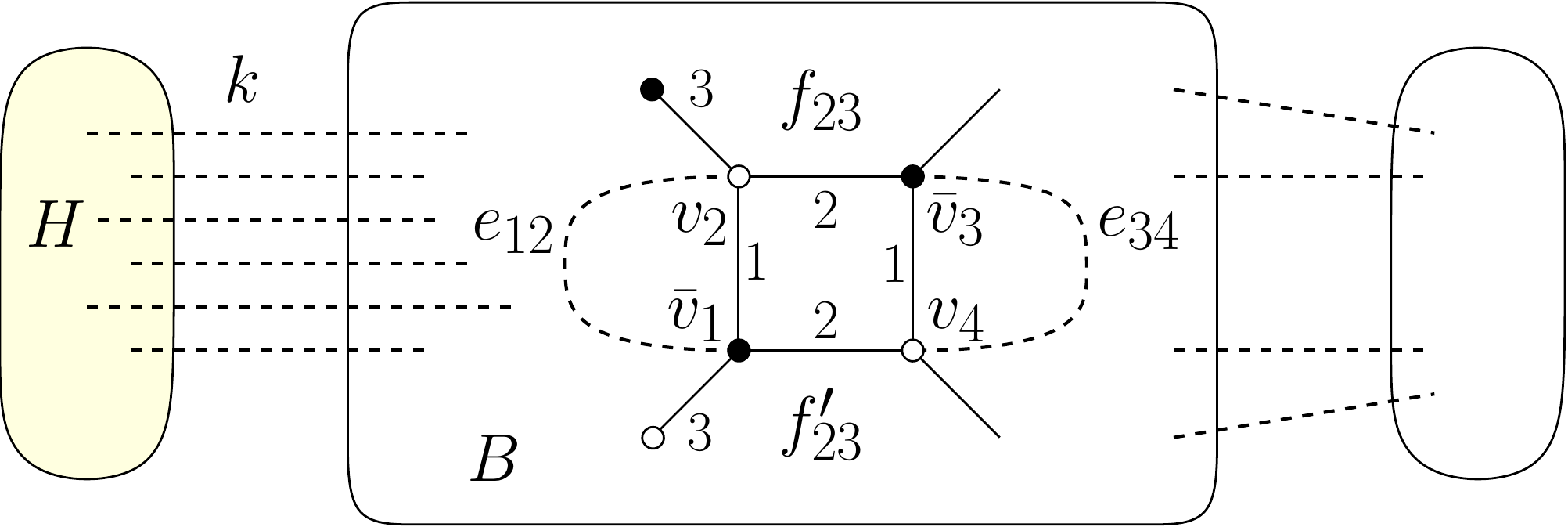} \end{array}
\end{equation}
As the case 2 of the proof of Lemma \ref{lemma:NotMax}, contracting $e_{12}$ does not disconnect $B$, which becomes a bubble $B'\subset G/e_{12}$ and we also know from Proposition \ref{prop:Contraction} that $B'$ is planar. The $(k\geq 6)$-edge-cut incident on $B$ is unaffected by the contraction and is thus still incident to $B'$ in $G/e_{12}$,
\begin{equation} \label{FaceDegree4Case2Contraction}
G/e_{12} = \begin{array}{c} \includegraphics[scale=.4]{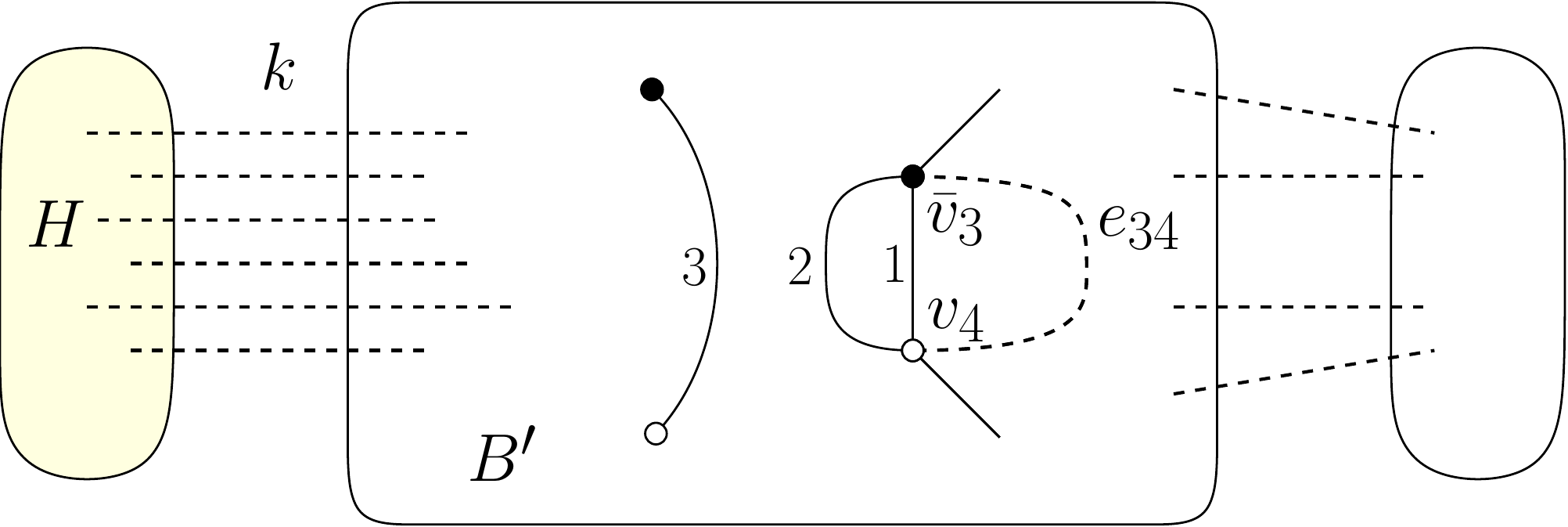} \end{array}
\end{equation}
$G/e_{12}$ has two vertices less than $G$, and $B'$ planar in $G/e_{12}$ is incident on a $(k\geq 6)$-edge-cut and therefore does not satisfy the maximal 2-cut property. From the induction hypothesis, $G/e_{12}$ does not maximize the number of bicolored cycles for its set of bubbles. We can thus conclude from Lemma \ref{lemma:NotMax} that $G\not\in \cG^{\max}_{n_1, \dotsc, n_N}(B_1, \dotsc, B_N)$.

We now consider the same situation with the major difference that the faces $f_{23}$ and $f'_{23}$ are a single one, i.e. $f_{23} = f'_{23}$. This implies that the contraction of $e_{12}$ splits $B$ into two planar bubbles $B_L, B_R$ as in the case 3 of the proof of Lemma \ref{lemma:NotMax}. We can assume that $B$ is of type $B_1$ to have the same notation as in the proof of Lemma \ref{lemma:NotMax}. The vertices $\bar{v}_1, v_2$ play the roles of $v, v'$ and $e_{12}$ of $e$. The graph $G/e_{12}$ then lives in the space $\cG_{\text{case 3}} = \cG_{\text{connected}} \cup \cG_{\text{disconnected}}$ described in \eqref{ContractionSpaceCase3} and it may be disconnected.

Assume first that it is disconnected. It has a connected component $G_L$ which contains $B_L$ and another one $G_R$ which contains $B_R$. If some edges of a $(k\geq 6)$-edge-cut incident on $B$ have vertices in $B_L$ while others have vertices in $B_R$, then $G/e_{12}$ would be connected. Therefore, $G/e_{12}$ being disconnected implies that there is a $(k\geq 6)$-edge-cut incident to $B_L$ (or $B_R$). Then the induction hypothesis establishes that $G/e_{12}$ cannot maximize the number of bicolored cycles in $\cG_{\text{case 3}}$ since we can find a graph $\tilde{G}_L$ with the same bubbles as $G_L$ and more bicolored cycles. We conclude from Lemma \ref{lemma:NotMax} that $G\not\in\cG^{\max}_{n_1, \dotsc, n_N}(B_1, \dotsc, B_N)$.

We now consider the case where $G/e_{12}$ is connected. Assume it maximizes the number of bicolored cycles in the connected part of $\cG_{\text{case 3}}$, i.e. $\cG^{\max}_{1, 1, n_1 -1, \dotsc, n_N}(B_L, B_R, B_1, \dotsc, B_N)$. From our induction hypothesis, we know that $B_L$ and $B_R$ satisfy the maximal 2-cut property. It implies that there is a pair of edges $\{e, e'\}$ forming a 2-edge-cut incident on $B_L$ such that all paths from $B_L$ to $B_R$ go along $e$ or $e'$. Flipping $e$ with $e'$ produces a disconnected graph $\tilde{G}'\in\cG_{\text{disconnected}}$ whose number of bicolored cycles is
\begin{equation}
C_0(\tilde{G}') = C_0(G/e_{12}) +3,
\end{equation}
since $I_{G/e_{12}}(e, e') = \{1, 2, 3\}$ and using Lemma \ref{lemma:flip}. This shows that maximizing the number of bicolored cycles in $\cG_{\text{case 3}}$, i.e. when contracting $e_{12}$ splits $B$ into two connected components, implies that the graph also must have two connected components. We conclude again from Lemma \ref{lemma:NotMax} that $G\not\in\cG^{\max}_{n_1, \dotsc, n_N}(B_1, \dotsc, B_N)$.
\end{itemize}

Only remain to be explored the cases where the edges of color 0 $e_1, e_2, e_3, e_4$ incident on $\bar{v}_1, v_2, \bar{v}_3, v_4$ are all different,
\begin{equation}
G = \begin{array}{c} \includegraphics[scale=.4]{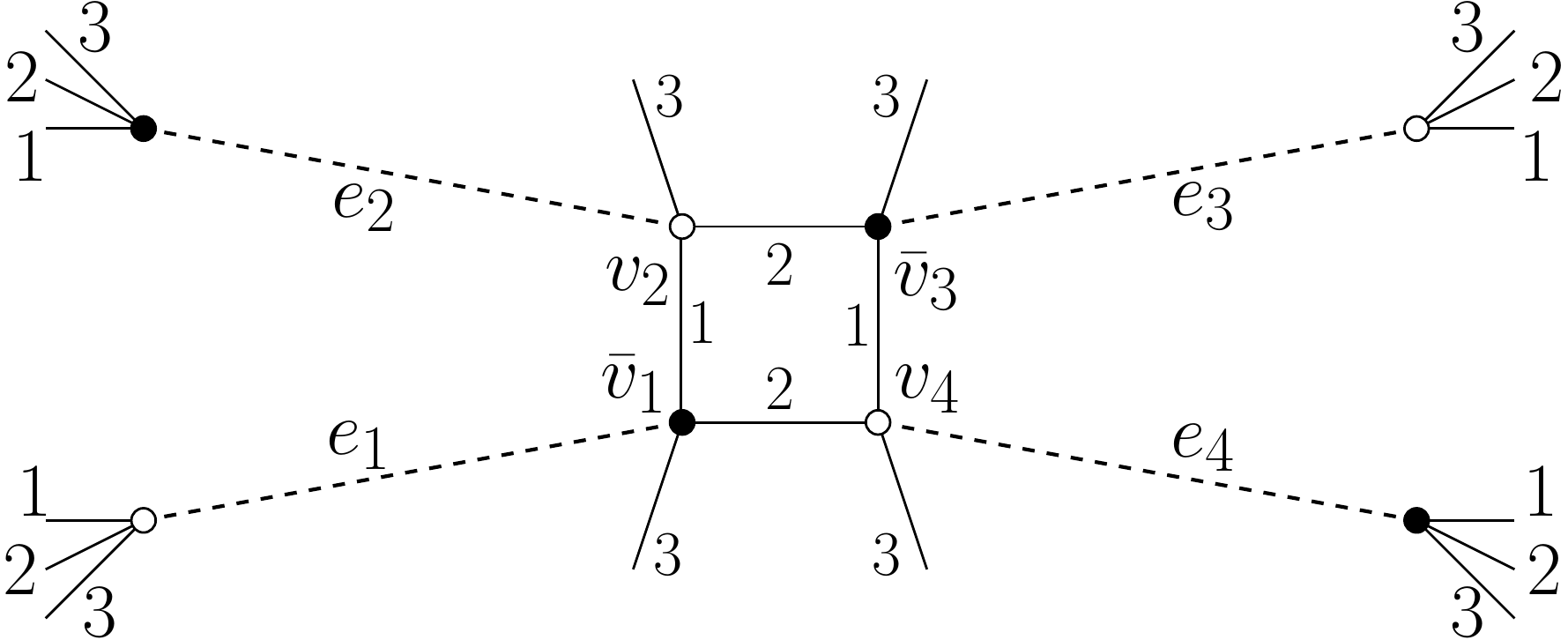} \end{array}
\end{equation}
For each $i=1, 2, 3, 4$, the edge $e_i$ either connects to another vertex of $B$, different of $\bar{v}_1, v_2, \bar{v}_3, v_4$, or is part of a $(k\geq 6)$-edge-cut incident on $B$. This gives sixteen possibilities to study, many of which are equivalent. In most cases, the strategy is to find some flips which bring us back to the reference case studied above. This is easily done since we can for instance flip $e_1$ with $e_2$ and $e_3$ with $e_4$, or else $e_1$ with $e_4$ and $e_2$ with $e_3$. 
For instance, if $e_3$ and $e_4$ are flipped first 
\begin{equation} \label{OneFlip}
G_{\mid} = \begin{array}{c} \includegraphics[scale=.4]{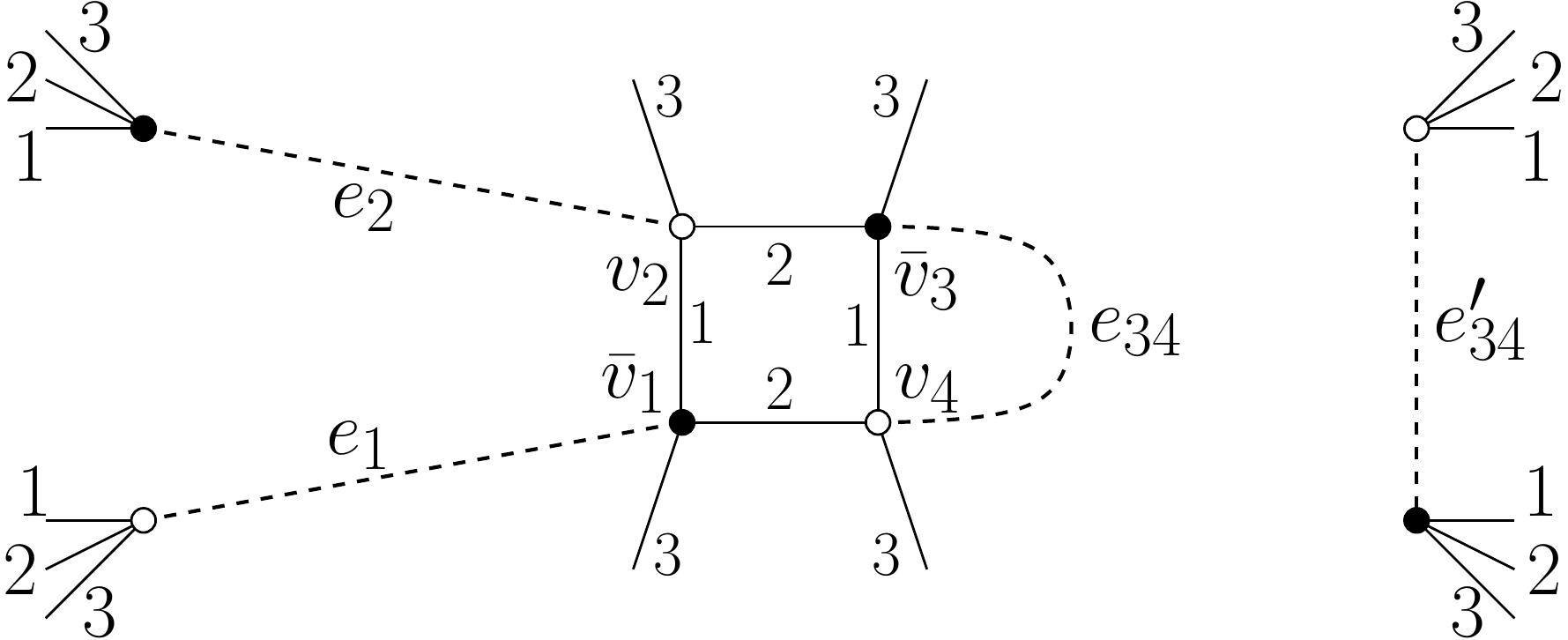} \end{array} 
\end{equation}
and then $e_1$ with $e_2$,
\begin{equation} \label{TwoFlips}
G_{\parallel} = \begin{array}{c} \includegraphics[scale=.4]{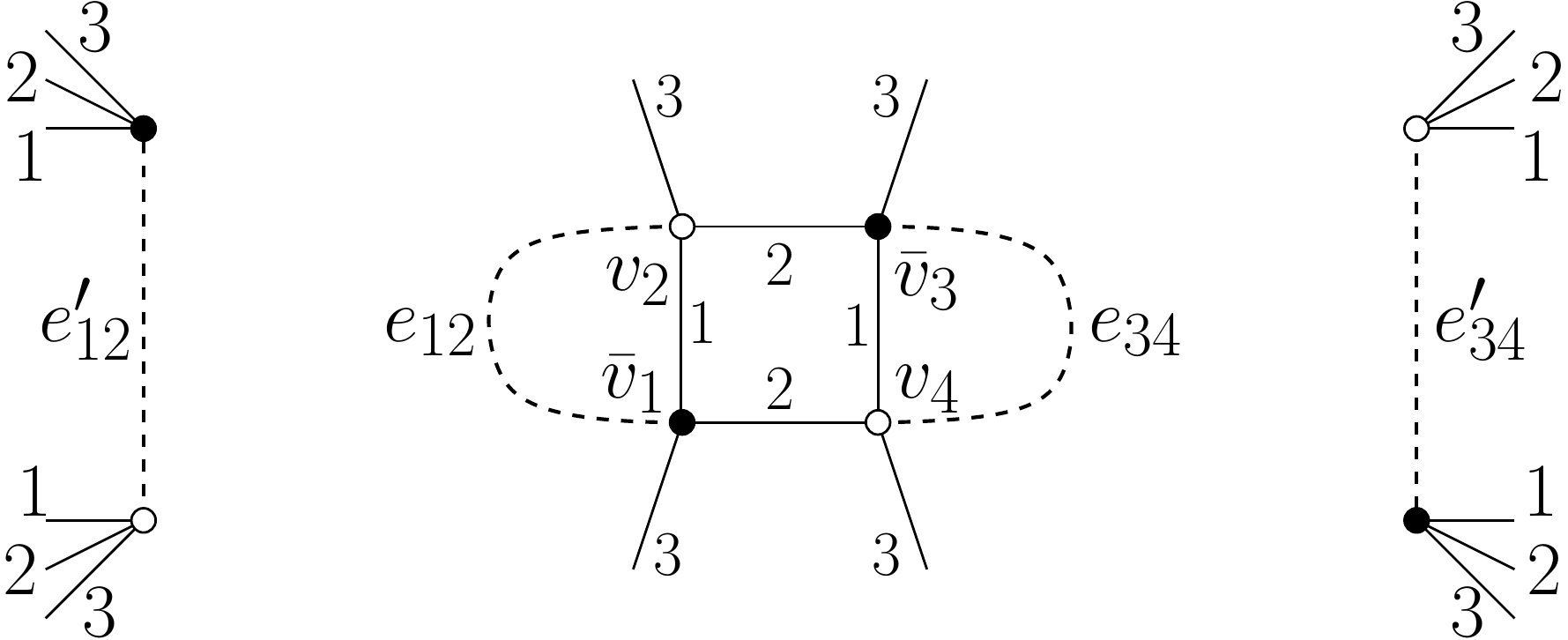} \end{array}
\end{equation}
Both $G_{\mid}$ and $G_{\parallel}$ are connected since $e_1, e_2, e_3, e_4$ do not form any 2-edge-cuts or 4-edge-cuts. Let us study the variation of the number of bicolored cycles through the flip sequence and show that overall it never decreases the number of bicolored cycles. Indeed, $\{1\}\subset I_G(e_3, e_4)$ so that after flipping $e_3$ with $e_4$, $G_{\mid}$ gains one bicolored cycle with colors $\{0,1\}$ but it may lose one bicolored cycle with colors $\{0,2\}$ and one with colors $\{0,3\}$. Therefore
\begin{equation}
C_0(G_{\mid}) \geq C_0(G) -1,
\end{equation}
see Lemma \ref{lemma:flip}. So this flip alone can decrease the number of bicolored cycles. Then however in $G_{\mid}$, we have $\{1, 2\}\subset I_{G_{\mid}}(e_1, e_2)$ so that after flipping $e_1$ with $e_2$, one gets $G_{\parallel}$ with one more bicolored cycle with colors $\{0,1\}$ and one more with colors $\{0,2\}$. Therefore $C_0(G_{\parallel}) \geq C_0(G_{\mid}) +1$, see Lemma \ref{lemma:flip}, hence
\begin{equation} \label{TwoFlipsBound}
C_0(G_{\parallel}) \geq C_0(G).
\end{equation}

A similar sequence of flips leads to
\begin{equation} 
G_{=} = \begin{array}{c} \includegraphics[scale=.4]{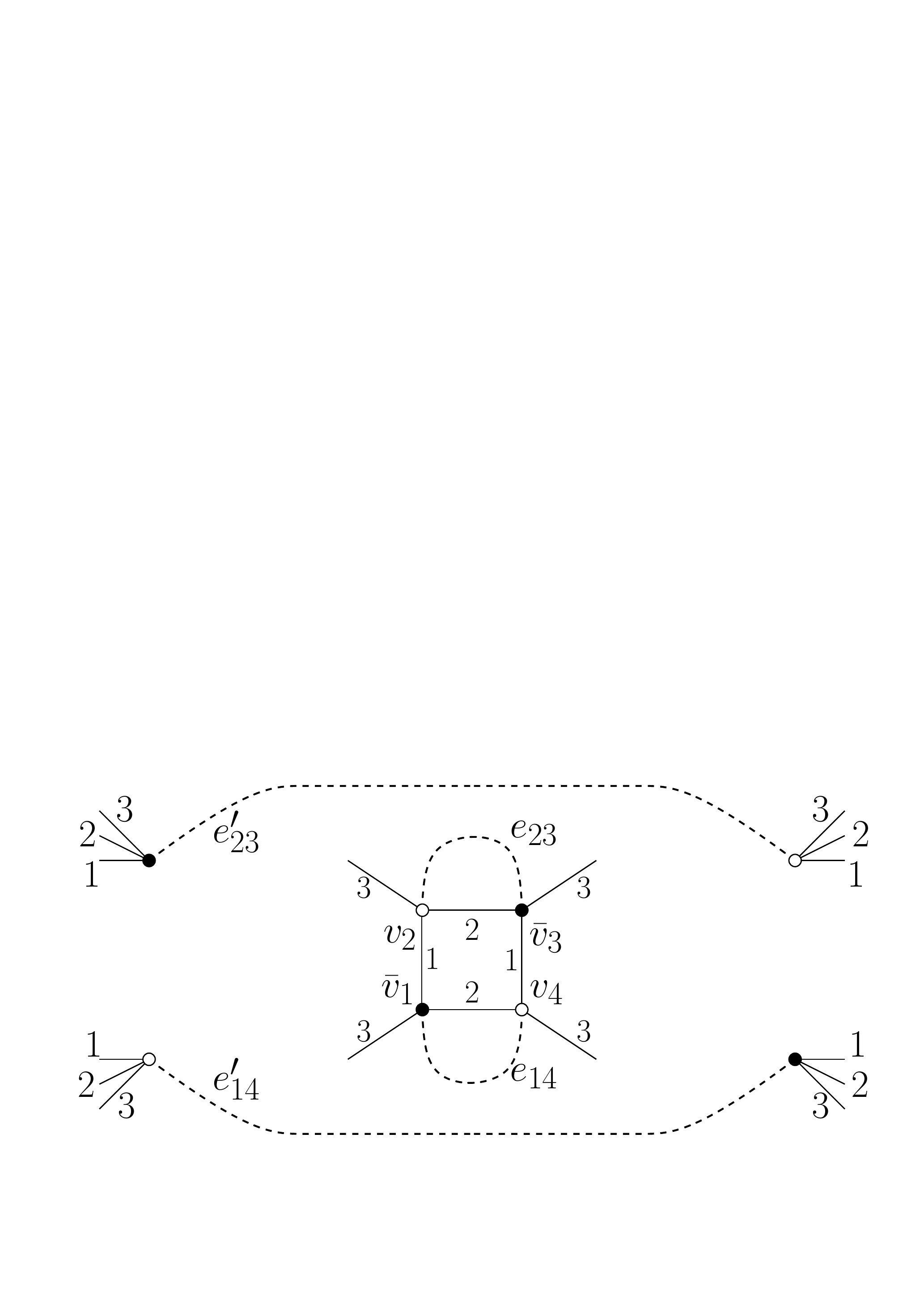} \end{array}
\end{equation}
which satisfies the same bound, $C_0(G_{=}) \geq C_0(G)$.

Obviously, when the inequality is strict for $G_{=}$ or $G_{\parallel}$, this is over. But we also need to account for the possibility of the equality.
\begin{itemize}
\item If $e_1, e_2, e_3, e_4$ are all connected to other vertices of $B$, then they are not part of any of the existing $(k\geq 6)$-edge-cuts incident on $B$,
\begin{equation} \label{FaceDegree4Case3}
G = \begin{array}{c} \includegraphics[scale=.4]{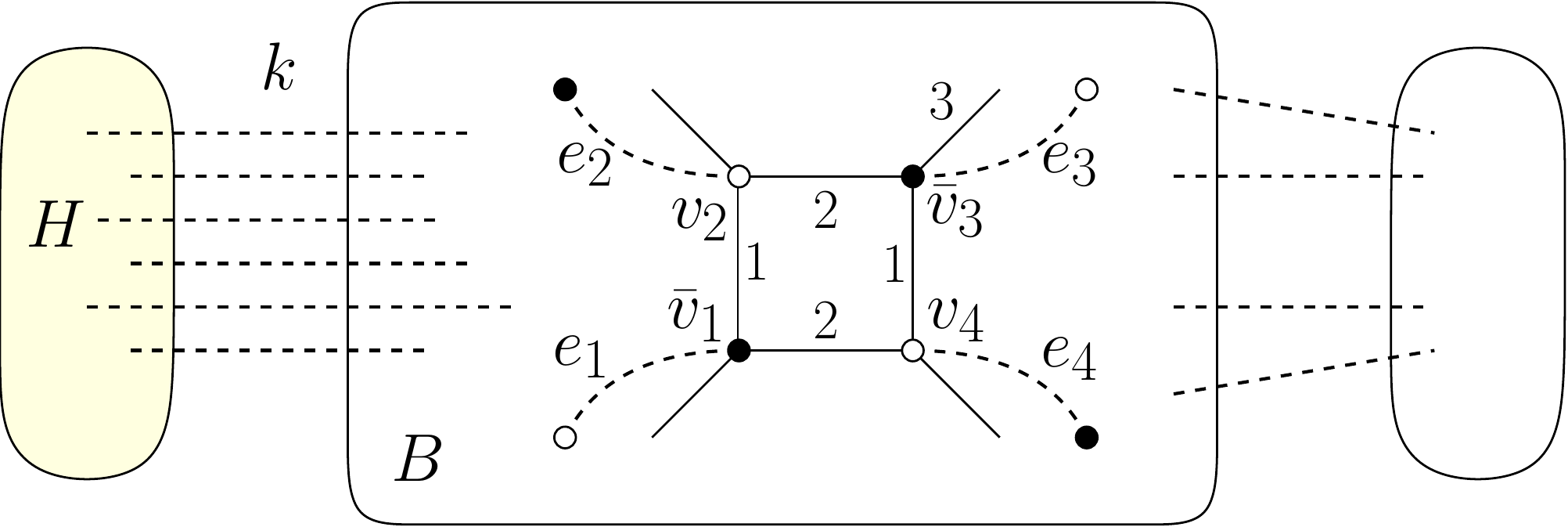} \end{array}
\end{equation}
Therefore, after both flips, $G_{\parallel}$ is exactly of the type described in the reference case.

\item All remaining situations have at least one edge among $e_1, e_2, e_3, e_4$ part of an edge-cut. Consider the case where the edge which belongs in an edge-cut incident on $B$, say $e_1$, has as neighbor in the face of degree 4 a vertex, $v_2$ or $v_4$, which is connected to a vertex in $B$ (via $e_2$ or $e_4$). The other two edges incident on the face of degree 4 can be connected to any vertex of $G\setminus H$. For instance,
\begin{equation} \label{FaceDegree4OneEdgeCut}
G = \begin{array}{c} \includegraphics[scale=.4]{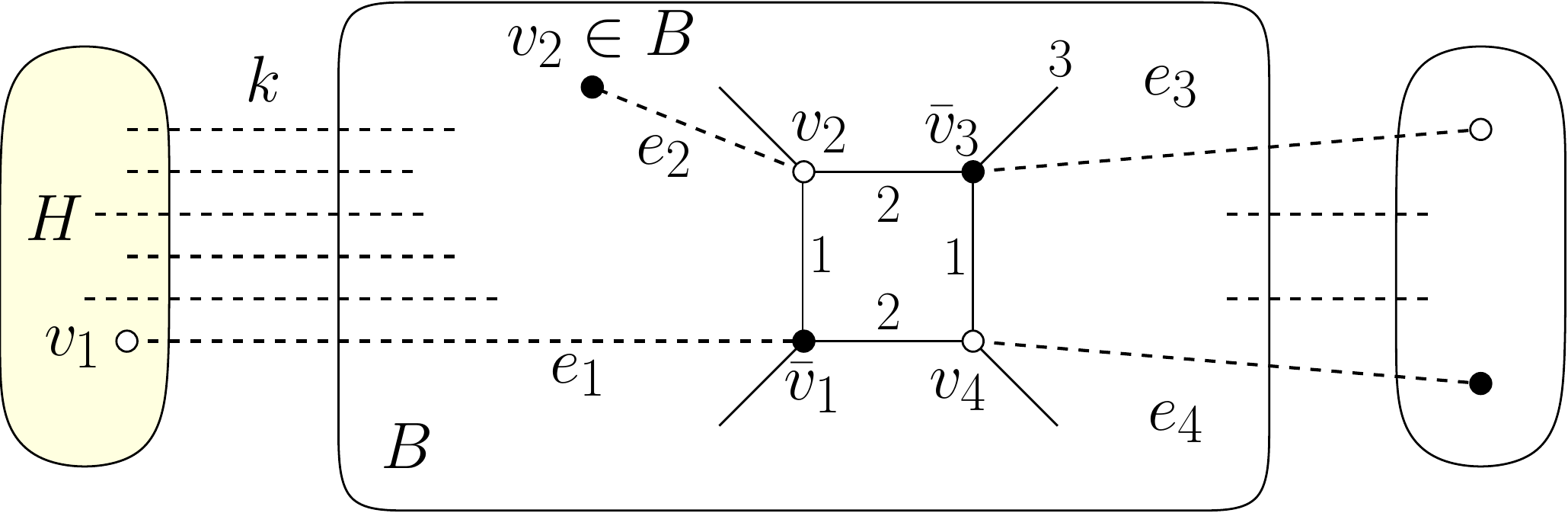} \end{array}
\end{equation}
The flip between $e_1$ and $e_2$ destroys $e_1$. Instead, one now has a new edge $e'_{12}$ connecting $B$ to $H$. In the above example, the graph $G_{\parallel}$ thus looks like
\begin{equation} \label{FaceDegree4OneEdgeCutFlipped}
G_{\parallel} = \begin{array}{c} \includegraphics[scale=.4]{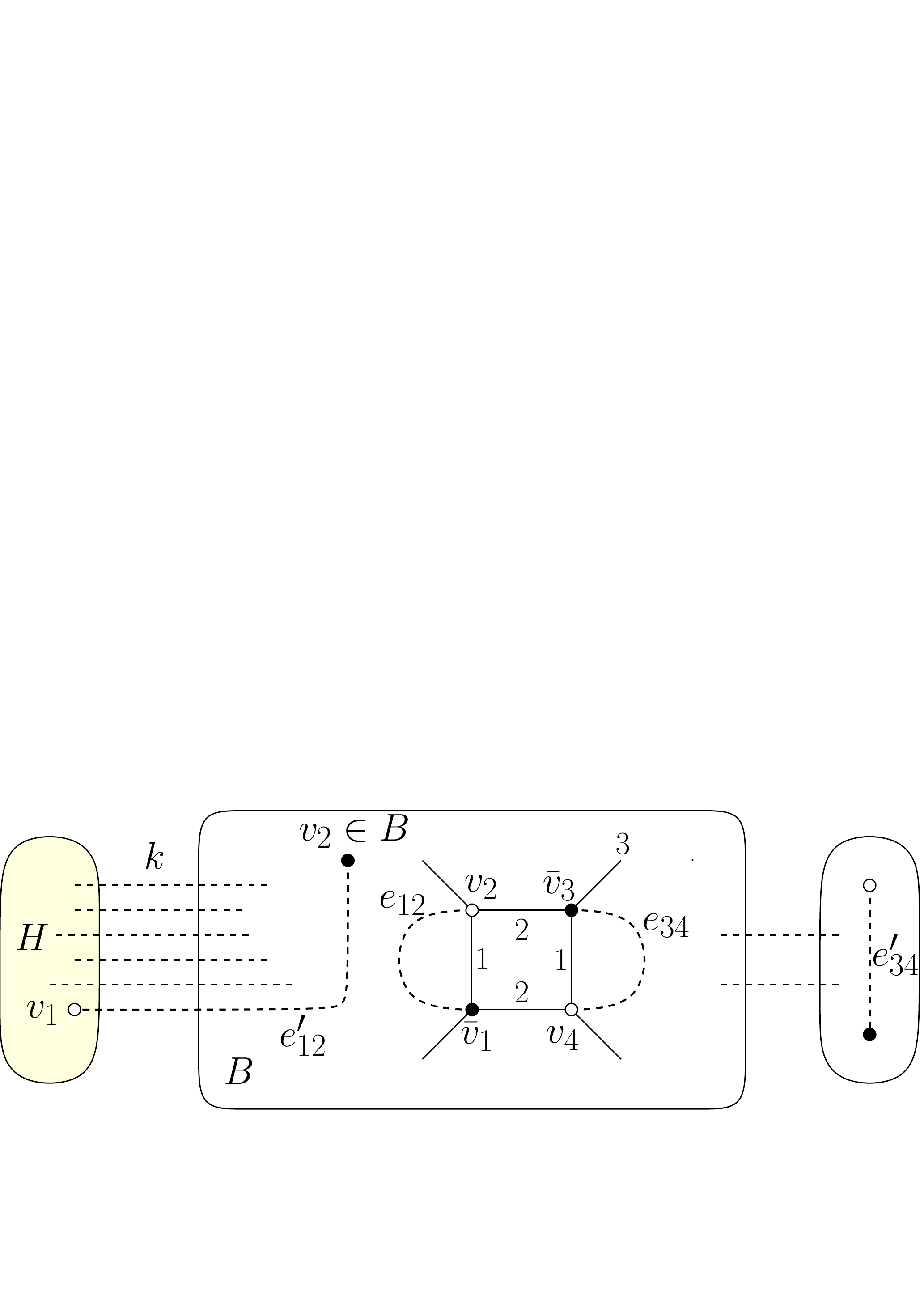} \end{array}
\end{equation}
which is exactly the situation described in the reference case.

\item Consider the cases where two edges among $e_1, e_2, e_3, e_4$ are part of edge-cuts incident on $B$ while the other two are connected to $B$. If they are not part of the same edge-cut, then this fits into the case studied just above, in \eqref{FaceDegree4OneEdgeCut}, \eqref{FaceDegree4OneEdgeCutFlipped}. Indeed, say, without loss of generality, that $e_1$ is part of such a cut, then either $e_2$ or $e_4$ is connected to $B$ while the two other edges are not part of the same edge-cut. We can use $G_{\parallel}$ as in \eqref{FaceDegree4OneEdgeCutFlipped} if this is $e_2$ or $G_=$ if this is $e_4$ which is connected to $B$.

However, if two edges are part of the same edge-cut, it can be one of two cases,
\begin{equation}
G = \begin{array}{c} \includegraphics[scale=.4]{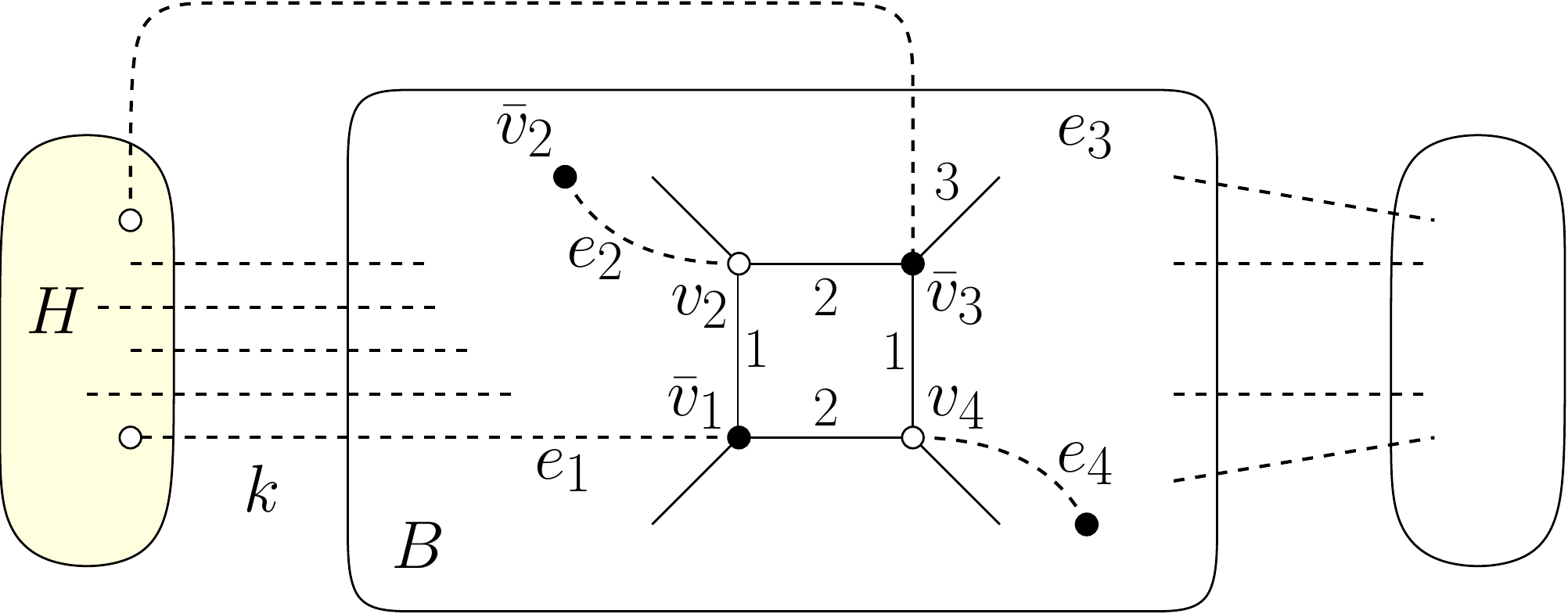} \end{array}
\end{equation}
which is pretty much like \eqref{FaceDegree4OneEdgeCut}, \eqref{FaceDegree4OneEdgeCutFlipped} for both flips between $e_1$ and $e_2$ and between $e_3$ and $e_4$, and $G_{\parallel}$ is exactly like in the reference case, or
\begin{equation} \label{FaceDegree4TwoEdgesOneCut}
G = \begin{array}{c} \includegraphics[scale=.4]{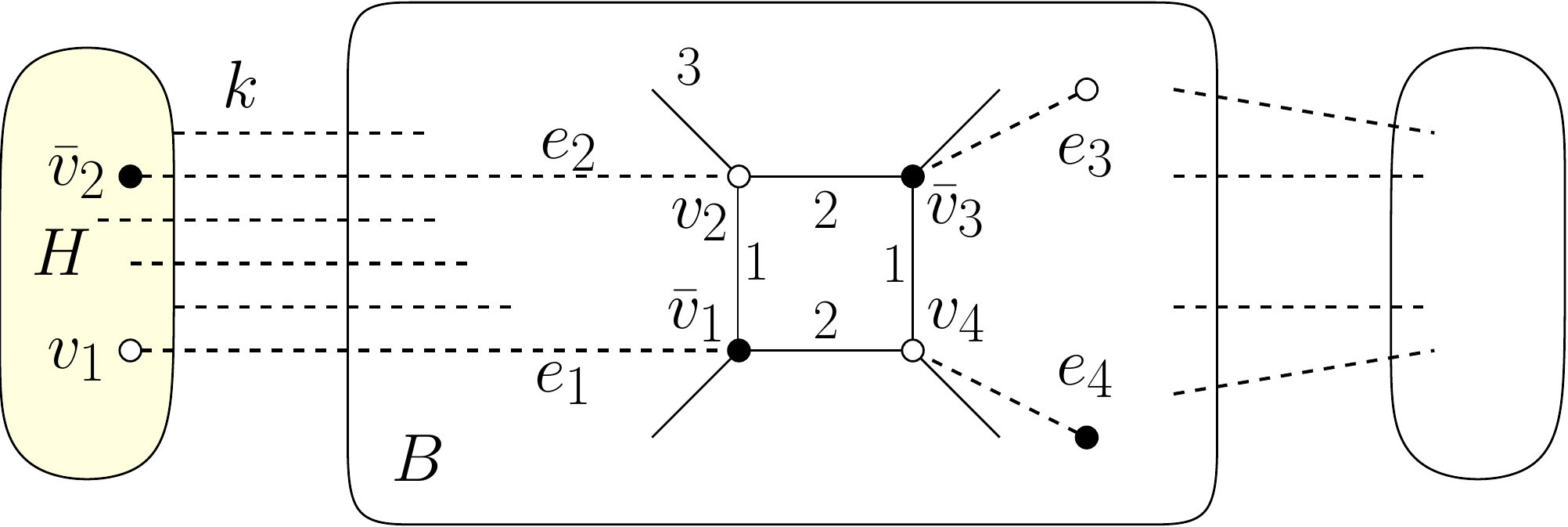} \end{array}
\end{equation}
After the flip of $e_1$ with $e_2$, $H$ is only connected to $B$ via $k-2$ edges. If $k\geq 8$, the graph $G_{\parallel}$ is once again described by the reference case. If $k=6$, $B$ is incident to a 4-edge-cut, and thus cannot maximize the number of bicolored cycles according to Proposition \ref{prop:4EdgeCuts}.

\item The next cases are as follows: among $e_1, e_2, e_3, e_4$, only one edge, say $e_2$ is connected to another vertex of $B$ while the three others $e_1, e_3, e_4$ are part of edge-cuts. They can belong in a single edge-cut incident to $B$, or two different edge-cuts, or three different edge-cuts. If they belong to three different edge-cuts, this is a situation already described in \eqref{FaceDegree4OneEdgeCut}. If they belong to two different edge-cuts, the only situation not described by \eqref{FaceDegree4OneEdgeCut} is when the two edges which are part of the same cut are incident to opposite vertices of the face, like
\begin{equation} \label{FaceDegree4ThreeEdgesTwoCuts}
G = \begin{array}{c} \includegraphics[scale=.4]{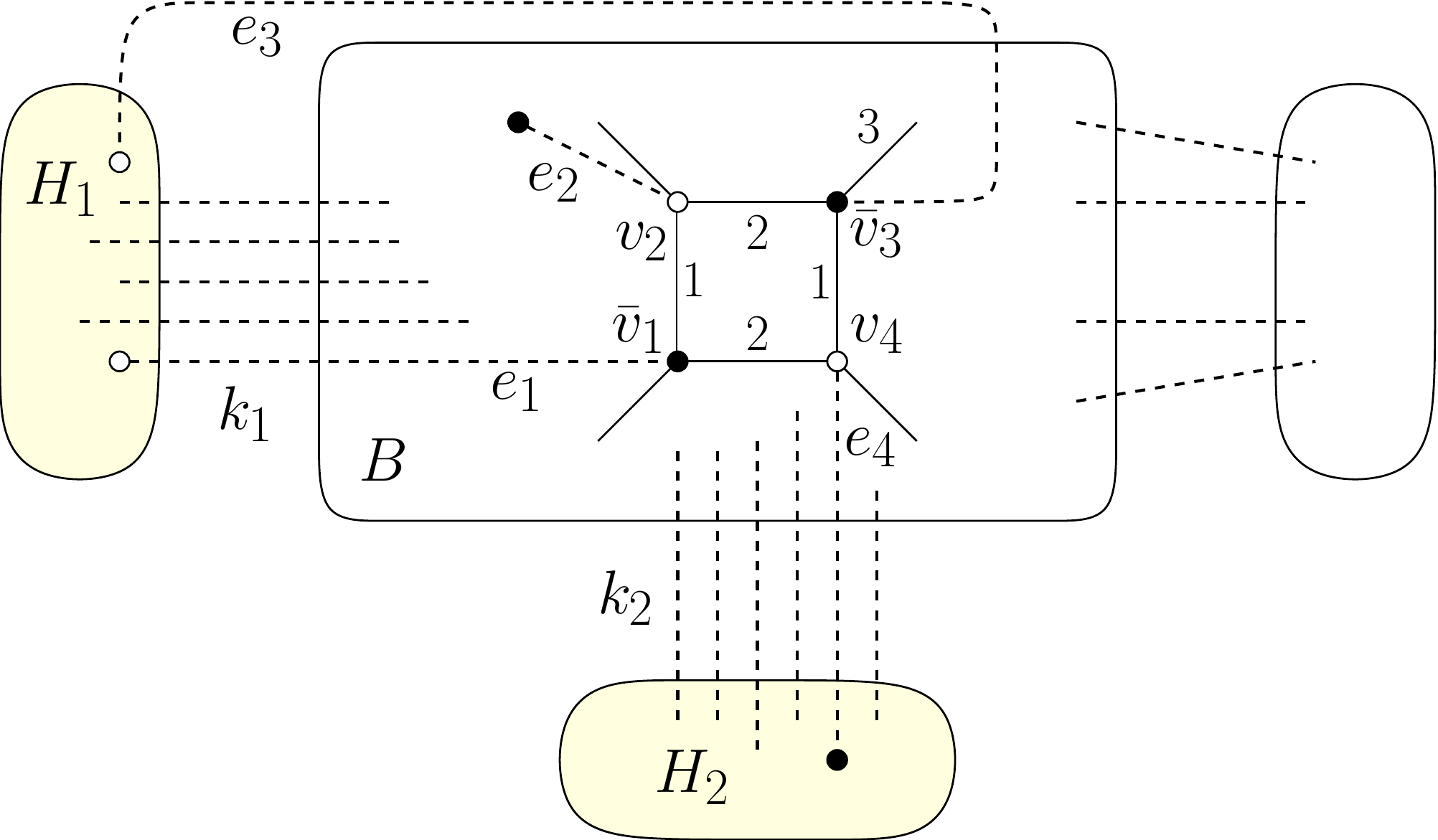} \end{array}
\end{equation}
Indeed, if they are incident to adjacent vertices of the face, like $e_3$ and $e_4$ in the same edge-cut, then $e_1$ and $e_2$ satisfy the hypothesis used in \eqref{FaceDegree4OneEdgeCut} (up to exchanging the roles of $e_1$ and $e_2$). Flipping the edges in \eqref{FaceDegree4ThreeEdgesTwoCuts} produces the same situation for both $G_{\parallel}$ or $G_{=}$. For instance
\begin{equation}
G_{\parallel} = \begin{array}{c} \includegraphics[scale=.4]{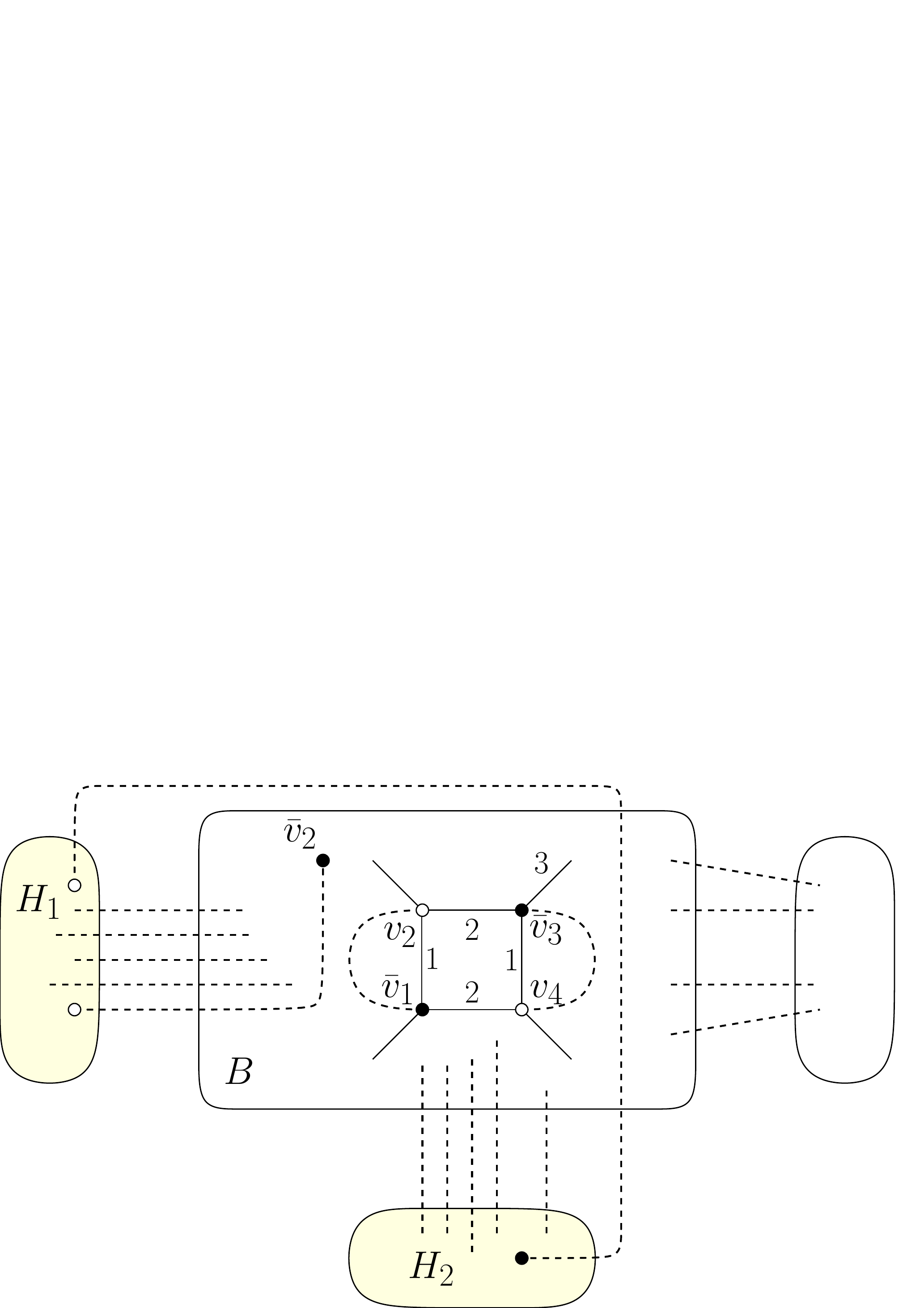} \end{array}
\end{equation}
Clearly $B$ is incident to a $(k_1+k_2-2 \geq 10)$-edge-cut in $G_{\parallel}$.

If $e_1, e_3, e_4$ belong to a single $k$-edge-cut incident to $B$, it is easy to see that $G_{\parallel}$ has a $(k-2)$-edge-cut incident to $B$. For $k\geq 8$, we can apply the induction hypothesis and for $k=6$, we can conclude using Proposition \ref{prop:4EdgeCuts}.

\item The last set of situations is when $e_1, e_2, e_3, e_4$ are all part of edge-cuts incident to $B$. Again, we have to distinguish several cases, depending on which edge belong to which edge-cut with which other edges. If they belong to four different edge-cuts with $k_1, k_2, k_3, k_4$ edges, it is easy to see that in $G_{\parallel}$, $B$ is still incident to a $(k_1+k_2-2\geq 10)$-edge-cut, and a $(k_3+k_4-2\geq 10)$-edge-cut, so we can conclude using the induction hypothesis and Lemma \ref{lemma:NotMax}.

If $e_1, e_2, e_3, e_4$ belong to three different edge-cuts incident on $B$, then two edges must belong to the same edge-cut. They have vertices either on adjacent vertices of the face, like
\begin{equation}
G = \begin{array}{c} \includegraphics[scale=.4]{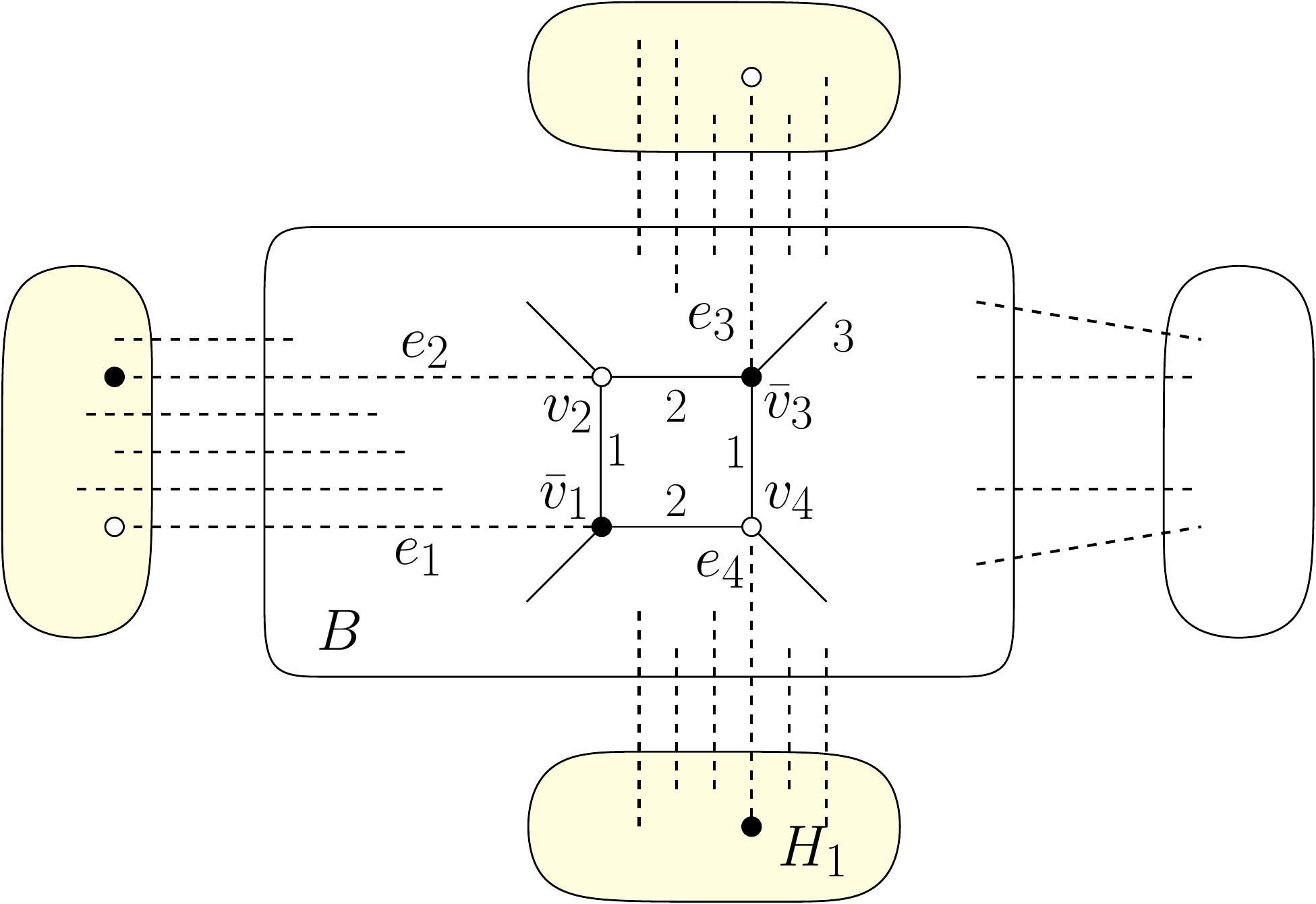} \end{array}
\end{equation}
or on opposite vertices of the face, like
\begin{equation}
G = \begin{array}{c} \includegraphics[scale=.4]{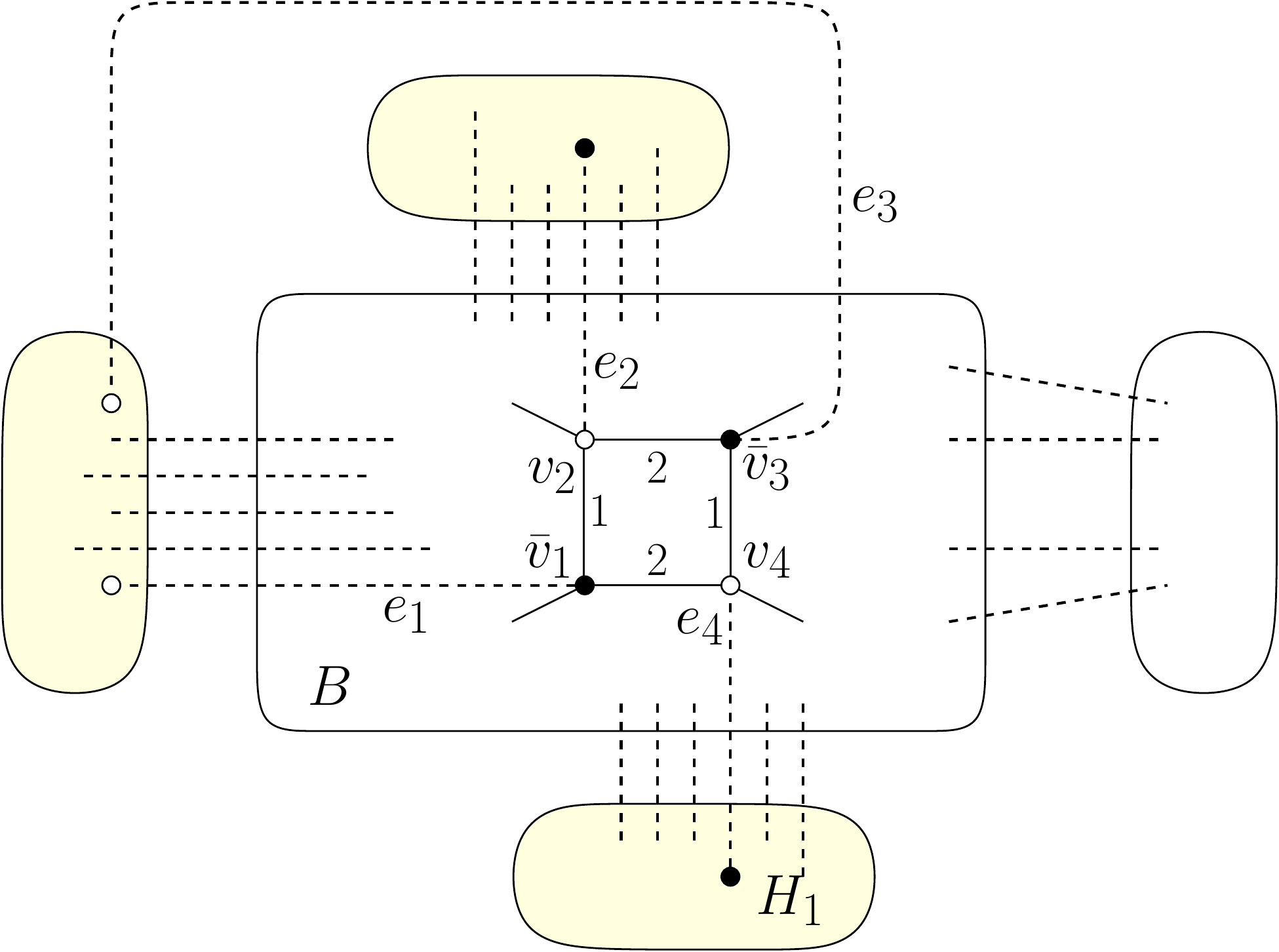} \end{array}
\end{equation}
The first case is similar to some previously seen, like \eqref{FaceDegree4TwoEdgesOneCut}. In the second case, flipping the edges gives
\begin{equation}
G_{=} = \begin{array}{c} \includegraphics[scale=.4]{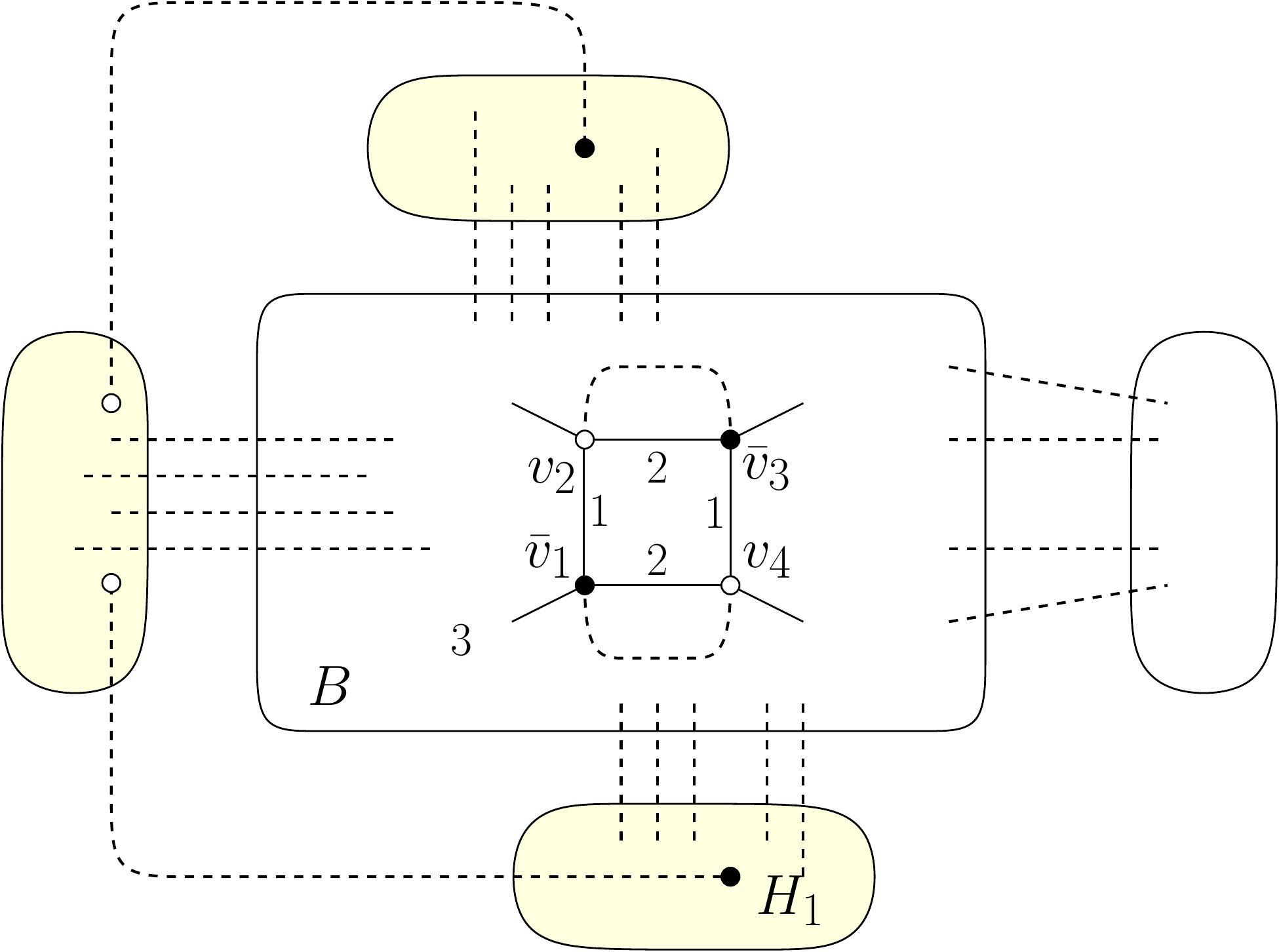} \end{array}
\end{equation}
to which the reference case applies once again.

If $e_1, e_2, e_3, e_4$ belong to two different edge-cuts incident on $B$, they are three possibilities but only two inequivalent ones. The edges which belong to a common edge-cut can be incident on vertices which are adjacent or opposite. The adjacent case is straightforward, similar to ``twice'' \eqref{FaceDegree4TwoEdgesOneCut}. The opposite case looks like
\begin{equation}
G = \begin{array}{c} \includegraphics[scale=.4]{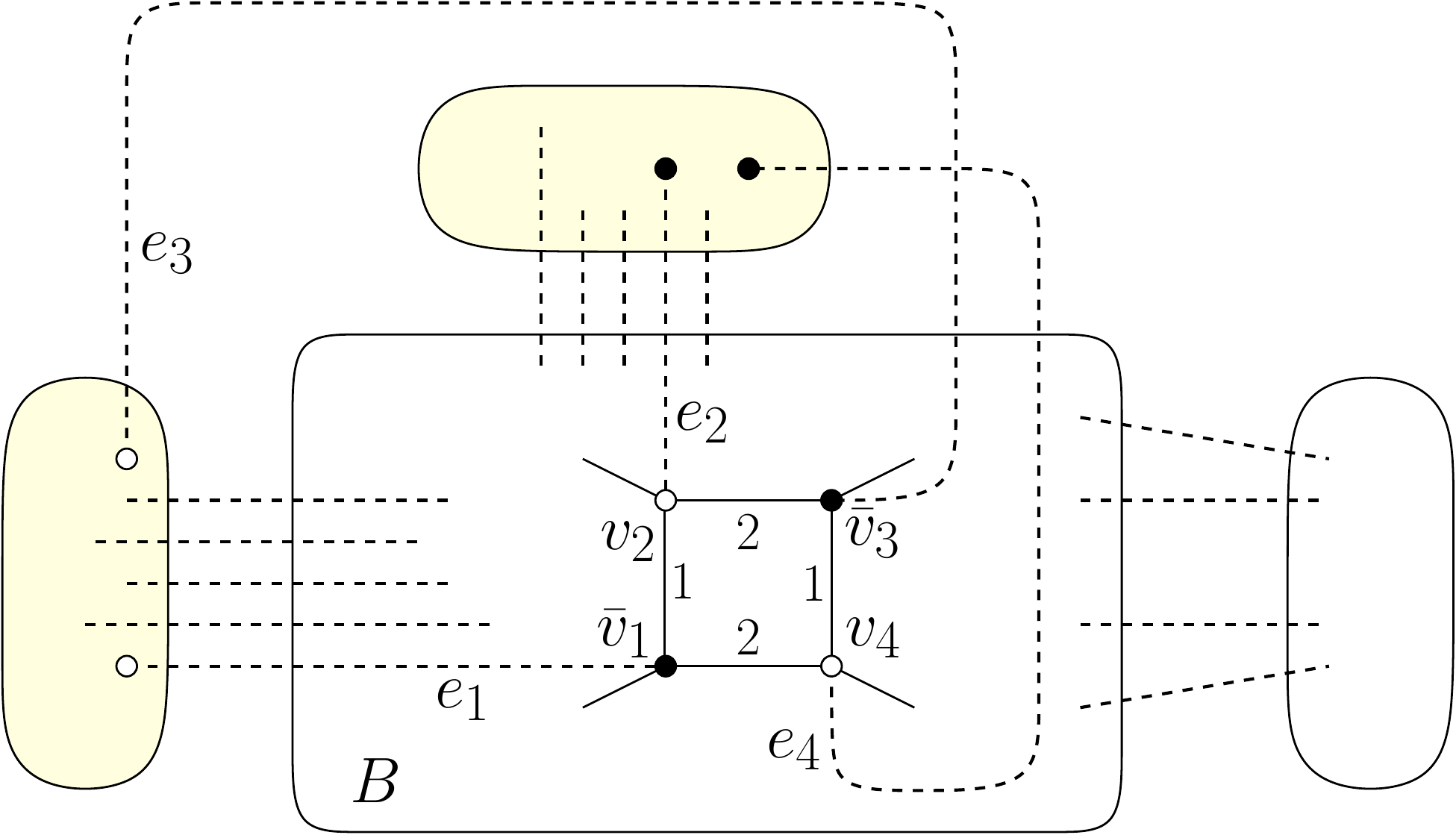} \end{array}
\end{equation}
and after the flips
\begin{equation}
G_= = \begin{array}{c} \includegraphics[scale=.4]{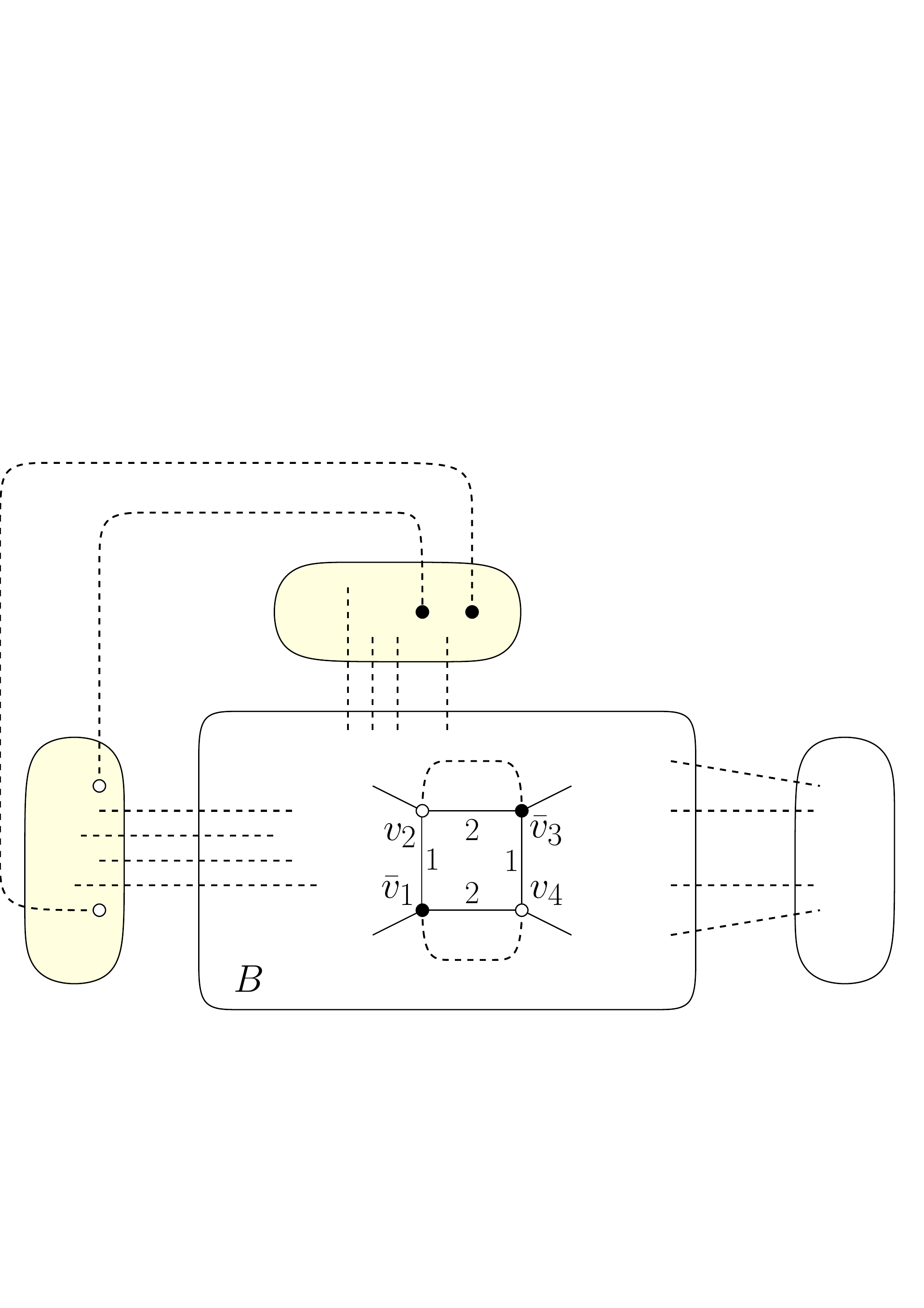} \end{array}
\end{equation}
which can also be treated with the reference case.

If $e_1, e_2, e_3, e_4$ are part of a single $(k\geq 8)$-edge-cut incident on $B$, 
\begin{equation}
\begin{aligned}
&G = \begin{array}{c} \includegraphics[scale=.4]{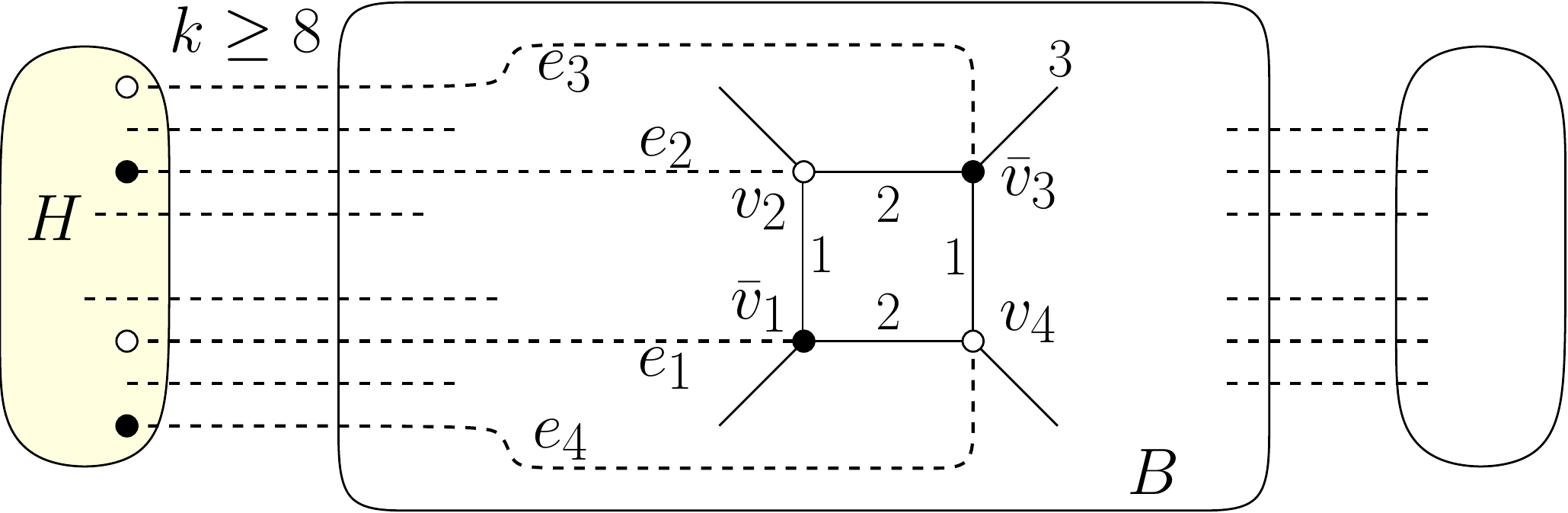} \end{array}\\
\Rightarrow\qquad &G_= = \begin{array}{c} \includegraphics[scale=.4]{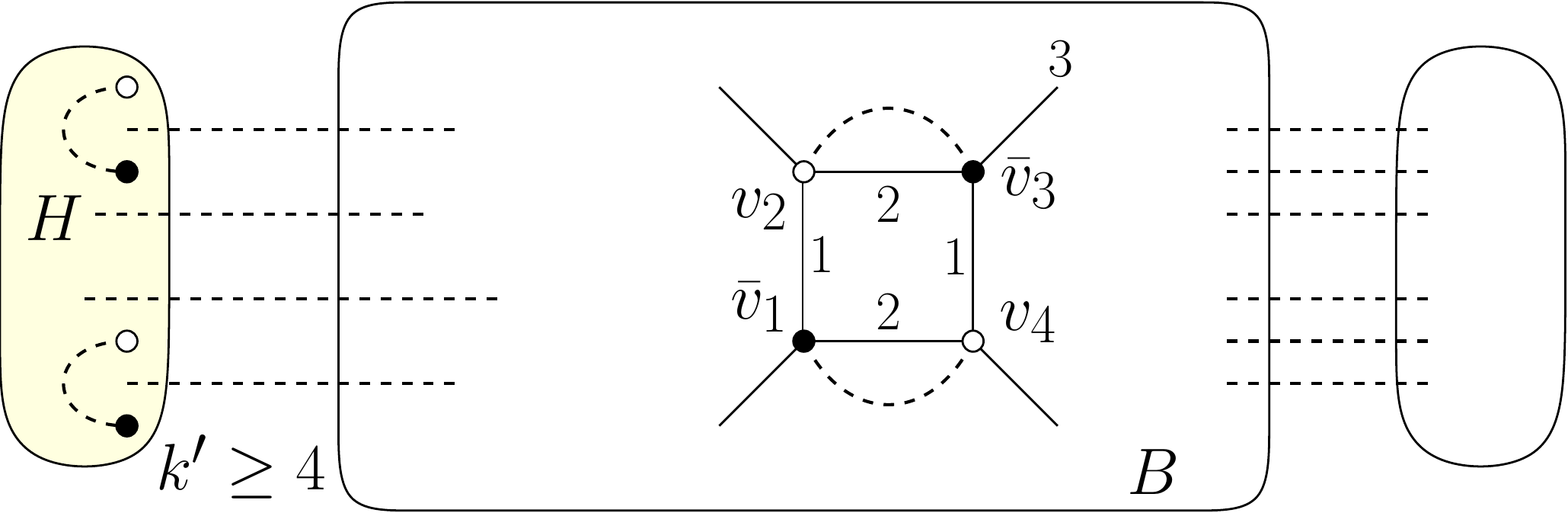} \end{array}
\end{aligned}
\end{equation}
it is easy to check that $G_{=}$ still has a $(k'\geq 4)$-edge-cut incident on $B$, so that the reference case or Proposition \ref{prop:4EdgeCuts} applies.

\item The last case to study, which is not treatable directly by either Proposition \ref{prop:4EdgeCuts} or the reference case, is when the four edges $e_1, e_2, e_3, e_4$ are part of a 6-edge-cut incident to $B$, because it becomes a 2-edge-cut in $G_{=}$ and $G_{\parallel}$. When removing the six edges of the cut, we have two connected components, both with six free vertices, and we denote $K$ the one which contains $B$. The boundary bubble $\partial K$ of $K$ is a bubble or a disjoint union of bubbles with six vertices total. All such possible objects are melonic bubbles or union of melonic bubbles, except one which as a graph is the complete bipartite graph $K_{3,3}$. Its canonical embedding, Corollary \ref{cor:2D}, as a combinatorial map with colored edges, which we still denote $K_{3,3}$, is homeomorphic to a torus. It has one face with colors $\{a, b\}$ for $1\leq a<b\leq 3$, so three faces total, and all of them of degree 6. The key point is that all four vertices $v_1, \bar{v}_2, v_3, \bar{v}_4$ of $G$ belong to $\partial K$, hence so do the edges of color 1 and 2 between them. Therefore, the face of colors $\{1,2\}$ and degree 4 is a submap of $\partial K$. This is impossible for $K_{3, 3}$ and this means that $\partial K$ cannot be $K_{3,3}$,
\begin{equation}
\partial K = \partial\quad \begin{array}{c} \includegraphics[scale=.4]{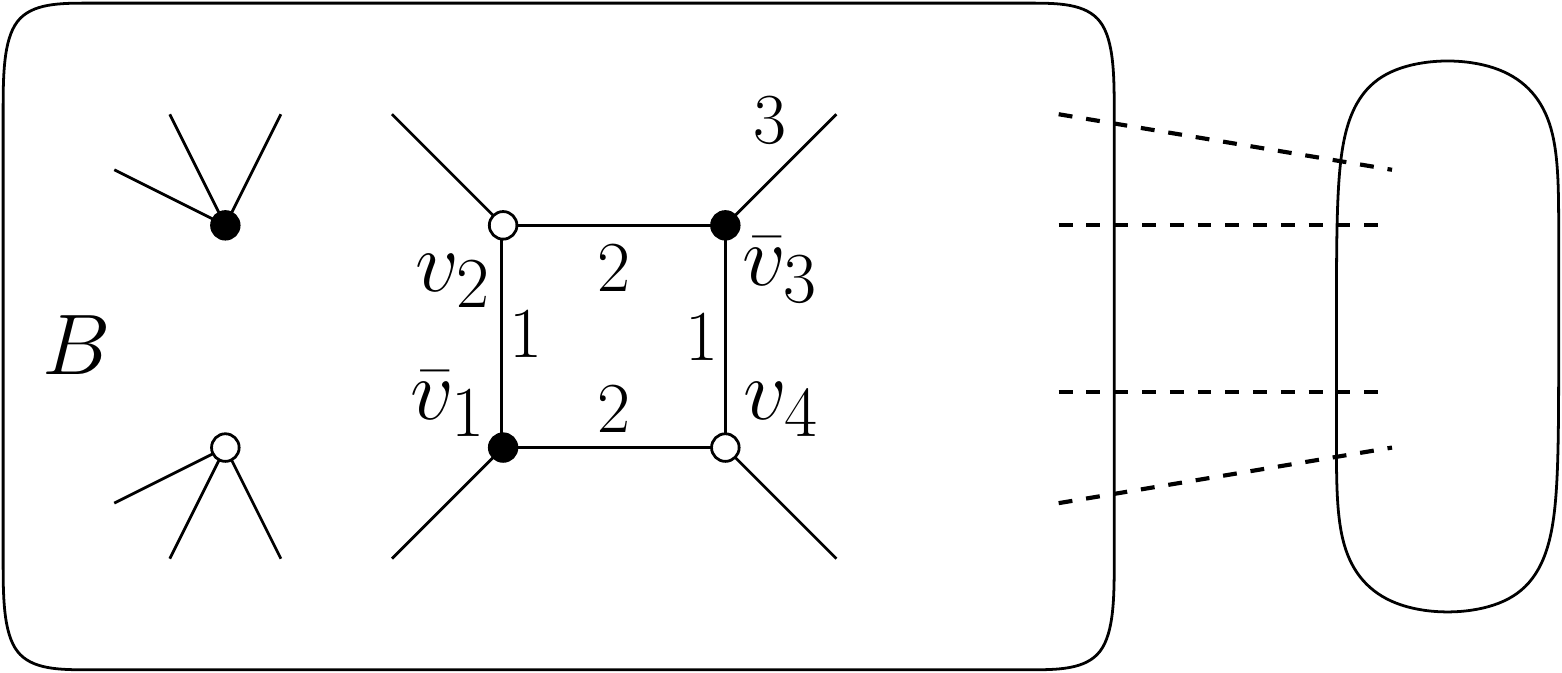} \end{array} \quad \neq \quad \begin{array}{c} \includegraphics[scale=.4]{6VertexBubble3dK33.pdf} \end{array} = K_{3, 3}.
\end{equation}
$\partial K$ is thus melonic or a union of melonic bubbles with six vertices total. It is a simple exercize to show that in order to maximize the number of bicolored cycles, $\partial K$ can only have incident 2-edge-cuts.
\end{itemize}
\end{description}
\end{proof} 

\subsubsection{Edge-cuts incident on an arbitrary bubble when all other bubbles are planar}

\begin{theorem} \label{thm:OnlyPlanar}
Let $\cG_{n_1, \dotsc, n_N}(B_1, \dotsc, B_N; B)$ (resp. $\cG^{\max}_{n_1, \dotsc, n_N}(B_1, \dotsc, B_N; B)$) be the set of connected colored graphs with $n_i$ bubbles $B_i$, $i=1, \dotsc, N$, and one marked bubble $B$ (resp. which maximize the total number of bicolored cycles). If the bubbles $B_1, \dotsc, B_N$ are planar, then $G\in \cG^{\max}_{n_1, \dotsc, n_N}(B_1, \dotsc, B_N; B)$ if and only if all its bubbles satisfy the maximal 2-cut property. The maximal number of bicolored cycles is
\begin{equation}
C_{n_1 \dotsb n_N}(B_1, \dotsc, B_N; B) = C_1(B) + \sum_{i=1}^N n_i \Bigl(C_1(B_i) - 3\Bigr)
\end{equation}
where we recall that $C_1(B_i)$ is the maximal number of bicolored cycles for 1-bubble graphs in $\cG^{\max}_1(B_i)$.
\end{theorem}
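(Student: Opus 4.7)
The plan is to proceed by induction on $n := \sum_i n_i$, treating the case where $B$ is planar on the same footing as the non-planar case. The base case $n = 0$ is immediate: $G$ is simply a pairing of $B$, so $\cG^{\max}$ coincides with $\cG_1^{\max}(B)$, one has $C_0(G) = C_1(B)$, and the maximal 2-cut property holds vacuously.

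For the inductive step $n \geq 1$, I would pick any planar bubble $B_1 \subset G$ with $n_1 \geq 1$. Theorem~\ref{thm:1Planar} immediately gives that $B_1$ satisfies the maximal 2-cut property via some $\pi \in \cG_1^{\max}(B_1)$; let $k_2$ be the number of 2-edge-cuts incident on $B_1$. Connectedness of $G$ together with the presence of $B$ outside $B_1$ forces $k_2 \geq 1$. For each such 2-edge-cut $\{e, e'\}$, the subgraph on the opposite side has only two free vertices, so by Proposition~\ref{prop:BdryBubble} its boundary bubble must be the 2-vertex bubble with all three colors; Proposition~\ref{prop:BdryCycles} then yields $I_G(e, e') = \{1, 2, 3\}$, and by Lemma~\ref{lemma:flip} the flip disconnects $G$ while gaining $3$ bicolored cycles. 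Performing all $k_2$ flips thus decomposes $G$ into an isolated $G_{B_1} \in \cG_1^{\max}(B_1)$ and connected arms $H_1, \dotsc, H_{k_2}$, with
\[
C_0(G) = C_1(B_1) - 3 k_2 + \sum_{j=1}^{k_2} C_0(H_j).
\]

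Exactly one arm, say $H_1$, contains $B$; the others contain only planar bubbles. To apply the induction hypothesis to an arm $H_j$ with $j \geq 2$, I would arbitrarily single out one of its planar bubbles as a new marked $B'$; the induction goes through since each arm has strictly fewer planar bubbles than $G$, and the value of the max formula is insensitive to which planar bubble is marked (marking $B_k$ contributes $C_1(B_k)$ instead of the unmarked $C_1(B_k) - 3$, an excess of exactly $3$ per arm which cancels the $-3k_2$ in the decomposition). Summing the inductive bounds and using $\sum_j n_i^{(j)} = n_i - \delta_{i,1}$, elementary algebra then gives $C_0(G) \leq C_1(B) + \sum_i n_i (C_1(B_i) - 3)$, with equality if and only if every arm independently achieves its own maximum, i.e. (by induction) if and only if every bubble of every arm satisfies the maximal 2-cut property in $H_j$.

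The iff then closes by observing that reversing the excision preserves the maximal 2-cut property: the reconstruction converts, in each $H_j$, exactly one direct color-$0$ edge into a 2-edge-cut to the reinserted $B_1$, and both realize an admissible pair in the maximal pairing of any bubble involving those vertices. Hence all bubbles of $G$ satisfy the property, giving the forward direction, while achievability of the bound follows by performing the reverse excision on maximal arms, which exist by induction. The main technical subtlety I anticipate is the bookkeeping around the arms not containing $B$, specifically verifying that marking an arbitrary planar bubble is consistent with the algebra; this reduces to the identity $C_1(B_k) = (C_1(B_k) - 3) + 3$, after which Theorem~\ref{thm:OnlyPlanar} really is just Theorem~\ref{thm:1Planar} iterated one planar bubble at a time.
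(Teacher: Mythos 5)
Your proposal is correct and follows essentially the same route as the paper: induct on the number of bubbles, apply Theorem~\ref{thm:1Planar} to a planar bubble to get only 2-edge-cuts, flip each cut to excise that bubble at a cost of $3$ bicolored cycles per cut, and apply the induction hypothesis to the resulting components. Your handling of the arms that do not contain $B$ (marking an arbitrary planar bubble and checking the $+3$/$-3$ cancellation) is a detail the paper's proof leaves implicit, and it is resolved correctly.
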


This Theorem has to be compared with Gurau's central limit theorem for large random tensors in \cite{Universality}. Although the latter has a different range of applicability, it does intersect our area of interest, due to the fact that some tensor integrals with unitary symmetry are generating functions of colored triangulations with prescribed CBBs \cite{Uncoloring}. In this context, Gurau's universality theorem \cite{Universality} is the same as Theorem \ref{thm:OnlyPlanar} except that the bubbles $B_1, \dotsc, B_N$ are melonic instead of planar ($B$ is still arbitrary -- this is the purpose of both theorems). Clearly melonicity is a much stronger constraint than planarity, which makes Theorem \ref{thm:OnlyPlanar} more powerful, as far as $d=3$ is concerned (\cite{Universality} holds for all $d\geq 3$).

\begin{proof}
We proceed by induction on the number of bubbles. If $B$ is the only bubble, there is nothing to prove as $G\in\cG^{\max}_1(B)$ must by definition satisfy the maximal 2-cut property. With only $n_1=1$, the set is $\cG_1(B_1; B)$ and its graphs only have $B$ and $B_1$ which must be connected. From Theorem \ref{thm:1Planar}, $B_1\subset G$ which is planar must satisfy the maximal 2-cut property for $G$ to be in $\cG^{\max}_1(B_1; B)$. The two bubbles are thus connected by a 2-edge-cut with edges of color 0. Flipping these two edges, we disconnect $G$ into two graphs $G_1\in \cG^{\max}_1(B_1)$ (from Theorem \ref{thm:1Planar}) and $\tilde{G}\in \cG_1(B)$ and $C_0(G) = C_0(G_1) + C_0(\tilde{G}) - 3$. It implies that the pairings defined by $\tilde{G}$ must maximize the number of bicolored cycles, i.e. $\tilde{G}\in\cG^{\max}_1(B)$. Therefore both $B_1$ and $B$ must satisfy the maximal 2-cut property and
\begin{equation}
C_{n_1}(B_1; B) = C_1(B) + C_1(B_1)-3
\end{equation}
for $n_1=1$.

Assume that the theorem is proved for $\sum_{i=1}^N n_i$ bubbles, including $B$, and consider $G\in \cG_{n_1, \dotsc, n_N}(B_1, \dotsc, B_N; B)$. Consider a copy of $B_1$ in $G$. Since it is planar, Theorem \ref{thm:1Planar} applies and we know that $B_1$ is connected to the rest of $G$ by a set of $L$ 2-edge-cuts which are parts of a pairing in $\cG^{\max}_1(B_1)$. We then flip the edges of each pair. This creates $L+1$ connected components $G_0, G_1, \dotsc, G_L$, $G_0\in \cG_1(B_1)$ is the 1-bubble graph which contains $B_1$. The number of bicolored cycles is
\begin{equation}
C_0(G) = C_0(G_0) + \sum_{l=1}^L \Bigl(C_0(G_l) - 3\Bigr).
\end{equation}
All connected components are made of planar bubbles except for one of them which contains $B$, and they all have fewer bubbles than $G$. From the induction hypothesis, all bubbles in those graphs satisfy the maximal 2-cut property.
\end{proof}

\subsection{Topology} \label{sec:Topology}

In order to study the topology of the pseudo-manifolds represented by elements of $\cG^{\max}_1(B)$, we need to extend some of the notions we already introduced as we have only needed them in particular circumstances so far. Those notions (bubbles and topological moves) are familiar to the colored graph community. 

\begin{definition} [$p$-bubbles]
If $G\in\cG_{n_1, \dotsc, n_N}(B_1, \dotsc, B_N)$ is a colored graph with colors $\{0, 1, \dotsc, d\}$, and $P=\{c_1, \dotsc, c_p\} \subset \{0, \dotsc, d\}$ is a subset of colors, we denote $G(P)$ the subgraph with colors $\{c_1, \dotsc, c_p\}$ only, i.e. obtained by removing all complementary colors. A $P$-bubble, or $p$-bubble with colors $\{c_1, \dotsc, c_p\}$ is a connected component of $G(P)$.
\end{definition}

The bubbles we have considered so far are thus the $\{1, \dotsc, d\}$-bubbles, or $d$-bubbles with all colors except 0. The $0$-bubbles, $1$-bubbles of color $\{c\}$ and $2$-bubbles of colors $\{a, b\}$ are respectively vertices, edges of color $\{c\}$ and bicolored cycles with colors $\{a, b\}$ of $G$. A $p$-bubble with colors $\{c_1, \dotsc, c_p\}$ is thus dual to a $(d-k)$-simplex with the same colors, see Theorem \ref{thm:ColoredGraphs}.

The next thing we need is to make sure that we can perform moves in $\cG^{\max}_1(B)$ which do not change the topology. There is a set of topological moves, analogous to Pachner moves for colored triangulations, known as the dipole moves. Here we will not need the theorem that two colored graphs represent the same PL-manifold if and only if there are related by a finite sequence of dipole moves. Rather, we will just need to know that those moves are topological.

\begin{theorem} [Dipole moves] \label{thm:Dipoles}
Let $G$ be a colored graph with colors $\{0, \dotsc, d\}$. Let $H = \{a_1, \dotsc, a_h\}$ be a subset of colors, and denote $P = \{0, \dotsc, d\} \setminus H= \{c_1, \dotsc, c_p\}$ the complement. An $h$-dipole is a set of $h$ edges with colors in $H$ connecting the same two vertices $v, \bar{v}$ and such that $v$ and $\bar{v}$ belong to two different $P$-bubbles $B(P)_L$ and $B(P)_R$. The $h$-dipole removal and its inverse are 
\begin{equation}
\begin{array}{c} \includegraphics[scale=.4]{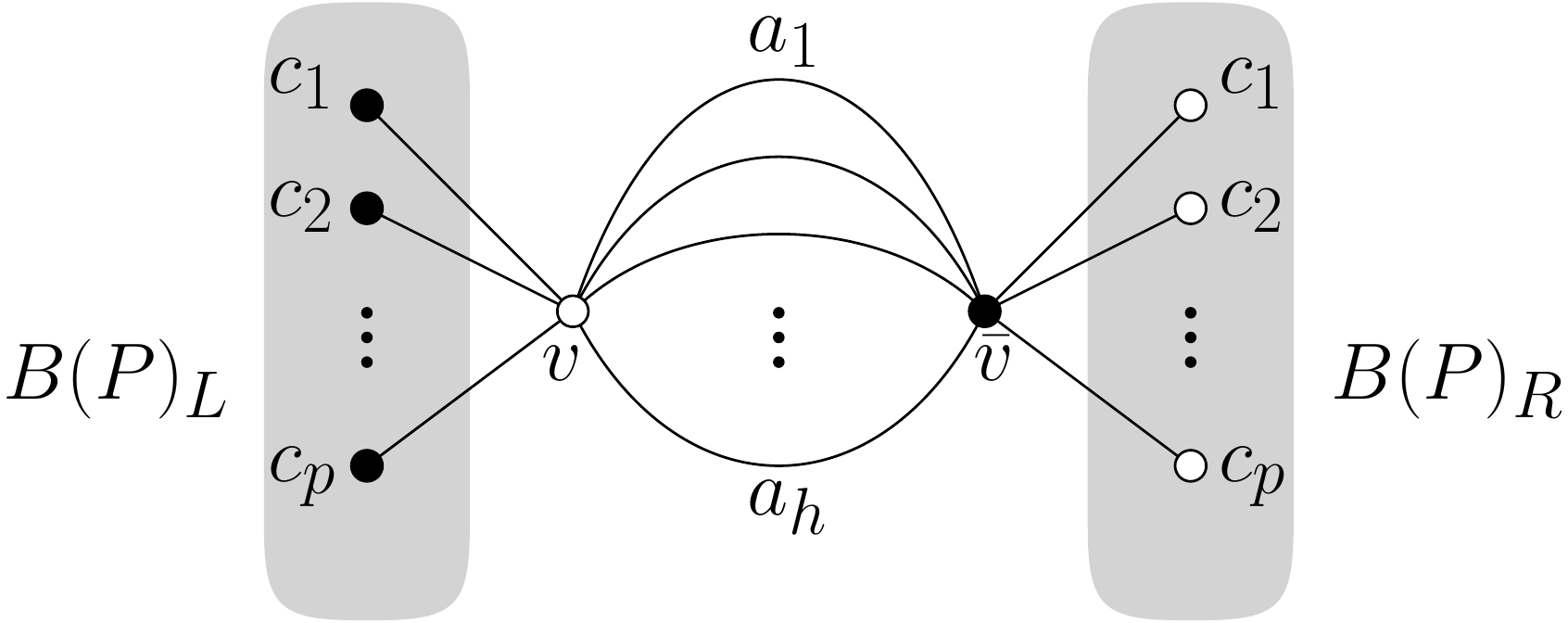} \end{array} \quad \leftrightarrow \quad \begin{array}{c} \includegraphics[scale=.4]{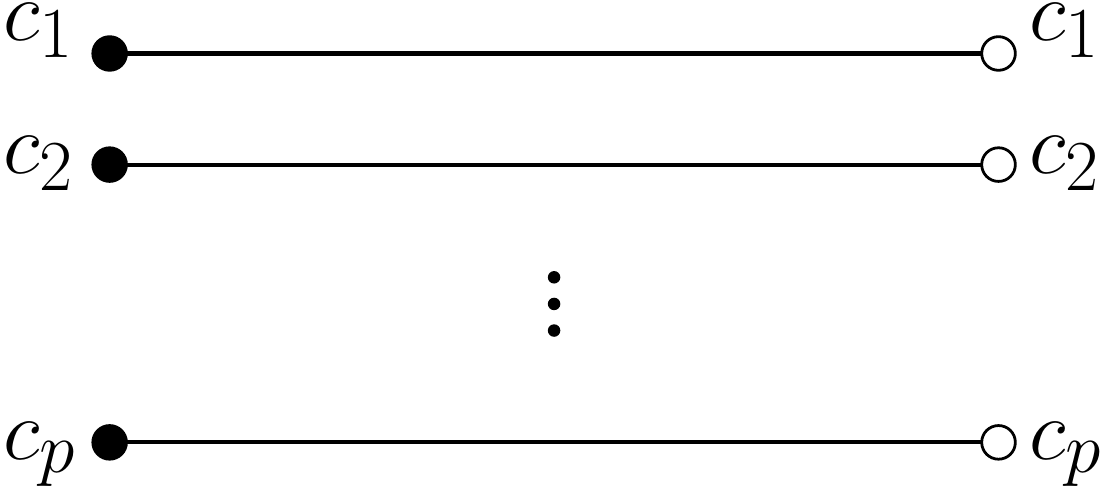} \end{array}
\end{equation}
They are topological moves if at least one of the $P$-bubbles $B(P)_L$ or $B(P)_R$ is a ball.
\end{theorem}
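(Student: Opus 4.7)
The plan is to reinterpret the dipole removal as a local surgery inside a PL-ball neighborhood of the dipole in $T$, and to use the ball hypothesis to furnish this neighborhood so that the surgery can be upgraded to a PL-homeomorphism of the global triangulation.

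First I would identify the local subcomplex. Let $\sigma, \bar{\sigma}$ be the two $d$-simplices dual to $v, \bar{v}$, glued along their $h$ $(d-1)$-subsimplices of colors in $H$. These $h$ common faces together form the closed star, in $\partial \sigma$, of the $(d-h)$-subsimplex with color label $H$; since $\partial \sigma \cong S^{d-1}$ and stars of simplices in PL-spheres are PL-balls, this star is a $(d-1)$-ball. Hence $K = \sigma \cup \bar{\sigma}$ is two $d$-balls glued along a $(d-1)$-ball in their boundaries, so itself a PL $d$-ball. Its boundary $\partial K \cong S^{d-1}$ splits canonically as $\partial K = S_L \cup S_R$, with $S_L, S_R$ the $P$-colored ``caps'' of $\sigma, \bar{\sigma}$, each a $(d-1)$-ball meeting along a common $(d-2)$-sphere. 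At the topological level, the dipole removal is the surgery that deletes $\sigma, \bar{\sigma}$ and identifies $S_L$ with $S_R$ via the unique color-preserving map, producing $T'$.

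Next I would produce a PL-ball neighborhood of $K$. Suppose $B(P)_L$ is a ball (the case of $B(P)_R$ is symmetric). Let $\tau_L \subset \sigma$ be the $(h-1)$-subsimplex with color label $P$. By Theorem \ref{thm:ColoredGraphs}, the closed star $N_L = \mathrm{star}_T(\tau_L)$ is the subcomplex dual to $B(P)_L$, and the hypothesis is equivalent to $\mathrm{link}(\tau_L) \cong S^{d-h}$, so that $N_L \cong \tau_L * S^{d-h}$ is a PL $d$-ball. The $h$ shared faces of $K$ do not contain $\tau_L$, so they sit as a $(d-1)$-ball on $\partial N_L$; attaching $\bar{\sigma}$ across them produces a PL $d$-ball $M := N_L \cup \bar{\sigma}$ containing $K$. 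All the combinatorial data modified by the dipole removal is confined to $M$.

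The crucial step is to show that the restriction of the surgery to $M$ is a PL-homeomorphism onto its image $M' \subset T'$ fixing $\partial M$. In the join picture $N_L = \tau_L * S^{d-h}$, the simplex $\sigma$ appears as $\tau_L * w$ for a top-dimensional simplex $w$ of the link; the attached $\bar{\sigma}$ supplies a ``parallel'' $(h-1)$-simplex $\tau_R$ with $\partial \tau_R = \partial \tau_L$, and is locally described by $\tau_R * w$. The dipole removal collapses the $(h-1)$-sphere $\tau_L \cup \tau_R \subset \partial K$ to a single $(h-1)$-simplex inside $M$. Because $\mathrm{link}(\tau_L)$ is a PL-sphere, any two parallel $(h-1)$-simplices with common boundary sitting inside a PL-ball admit a PL-isotopy identifying one with the other fixing the boundary; applied to $\tau_L, \tau_R$ and joined with the appropriate link data, this yields a PL-homeomorphism $M \to M'$ fixing $\partial M$. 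Extending by the identity on $T \setminus M$ delivers the desired PL-homeomorphism $T \cong T'$.

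The hardest part will be making this ``collapse of $\tau_L \cup \tau_R$'' inside $M$ fully rigorous and compatible with the colored identification $S_L \sim S_R$ prescribed by the graph operation. This is the classical content of dipole moves in crystallization theory (Pezzana; Ferri--Gagliardi), and the sphericity of $\mathrm{link}(\tau_L)$ provided by the ball hypothesis is essential: without it the link is only a pseudo-manifold, no such PL-isotopy exists, and the dipole removal can genuinely alter the PL-homeomorphism type of $|T|$.
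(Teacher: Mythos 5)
There is no in-paper proof to compare against: Theorem~\ref{thm:Dipoles} is one of the classical topological statements the paper deliberately leaves unproved, deferring to the crystallization literature \cite{DipoleMoves}. Your sketch follows exactly that classical route (localize the dipole inside the star of the $(h-1)$-simplex dual to the ball-like $P$-bubble, and realize the removal as a surgery on a PL $d$-ball fixing its boundary), and the overall architecture is sound: the identification of the $h$ shared faces with the closed star of the $H$-labelled $(d-h)$-face of $\partial\sigma$, the translation of ``$B(P)_L$ is a ball'' into $\mathrm{link}(\tau_L)\cong S^{d-h}$ via the third item of Theorem~\ref{thm:ColoredGraphs}, and the confinement of all modified gluings to $M=N_L\cup\bar\sigma$ are all correct.

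Two points would need real work before this is a proof. First, colored triangulations are pseudo-manifolds but not simplicial complexes, so $N_L$ need not be embedded and $\bar\sigma$ may meet $N_L$ in more than the $h$ shared faces (e.g.\ a neighbour of $\bar\sigma$ across a $P$-coloured face may coincide with a simplex of $N_L$, or with $\sigma$ itself); the assertion that $M$ is a PL $d$-ball and that the surgery is supported in its interior rel $\partial M$ is exactly where these degeneracies must be controlled, and it is why the standard proofs work combinatorially on the graph rather than on subspaces of $|T|$. Second, the final ``PL-isotopy collapsing $\tau_L\cup\tau_R$'' is the vaguest step and is also more complicated than necessary: once you know that $M$ and $M'=\overline{N_L\setminus\sigma}$ are both PL $d$-balls and that the dipole removal prescribes a PL homeomorphism $\partial M\to\partial M'$ (identity away from $K$, the canonical colour-preserving map $S_R\to S_L$ on the rest), Alexander's trick extends it over the balls, and no isotopy of parallel simplices is needed. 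With those two repairs your argument becomes the Ferri--Gagliardi proof; as written it is an accurate outline rather than a complete proof, which is consistent with the status the statement has in the paper.
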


We do not prove this theorem and refer to \cite{DipoleMoves}. Notice that the grey areas above represent the $P$-bubbles and there are typically other paths between them than through $v$ and $\bar{v}$, which involve the colors $a_1, \dotsc, a_h$.

The contraction $G/e$ of $G$ by an edge of color 0, see Definition \ref{def:Contraction}, can lead to a dipole removal if $e$ is part of a dipole. But contractions are more generic and may not preserve the topology. In fact, the situations given in Proposition \ref{prop:Contraction} where topology is preserved are just special cases of the above more generic result to $d=2$ (for the bubble, since the 2-bubbles are bicolored cycles and always represent a disc).

Dipole moves can be used to show that edge flips can sometimes be topological. We restrict to the three-dimensional case which we will be useful for us.

\begin{corollary} \label{cor:TopologicalFlip}
Let $G$ be a connected colored graph with colors $\{0,1 ,2,3\}$ and consider $v$ and $\bar{v}$ connected by an edge of color $c\in\{1,2,3\}$. If $v$ and $\bar{v}$ belong to two different 3-bubbles with colors $\{0,c_1, c_2\}$ where at least one of them is a ball, and $\{c_1, c_2\} = \{1, 2, 3\}\setminus\{c\}$, then flipping the two edges of color 0 incident to $v$ and $\bar{v}$ is topological
\begin{equation}
\begin{array}{c} \includegraphics[scale=.4]{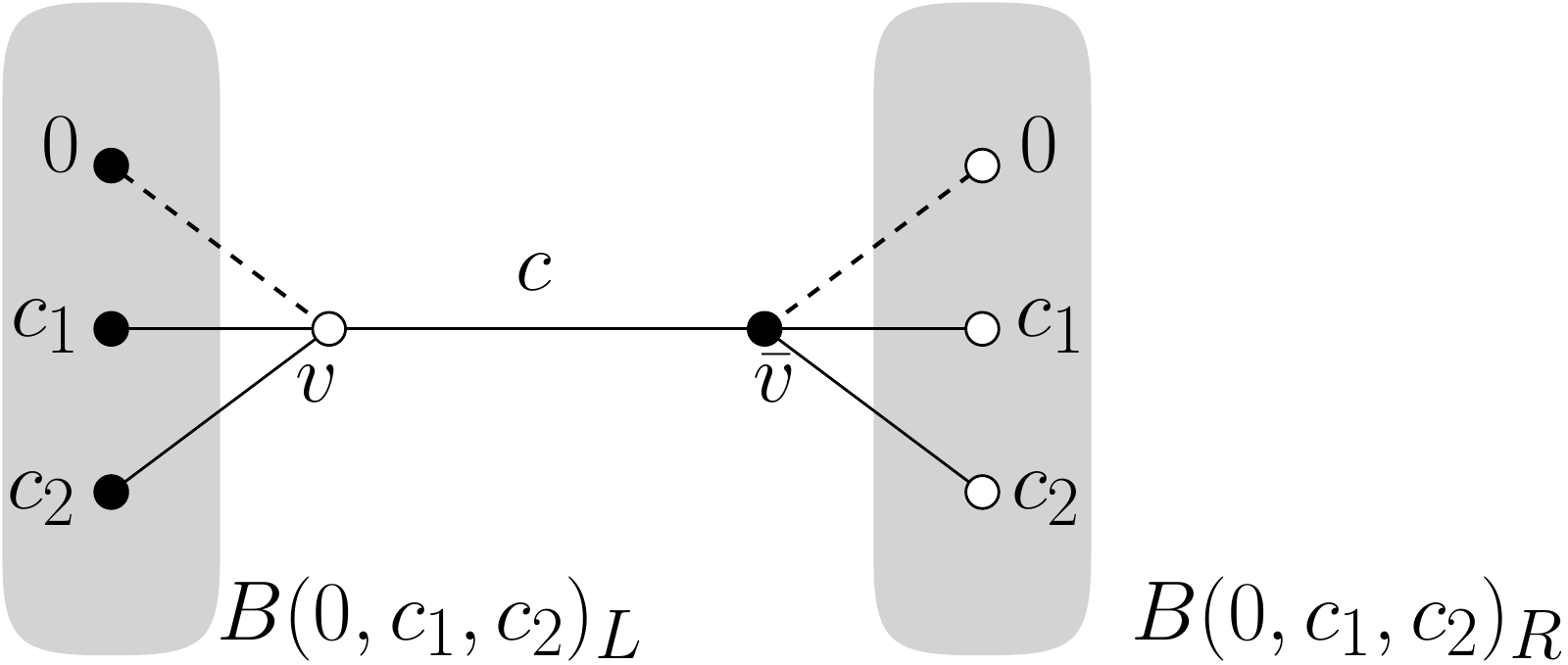} \end{array} \quad \leftrightarrow \quad \begin{array}{c} \includegraphics[scale=.4]{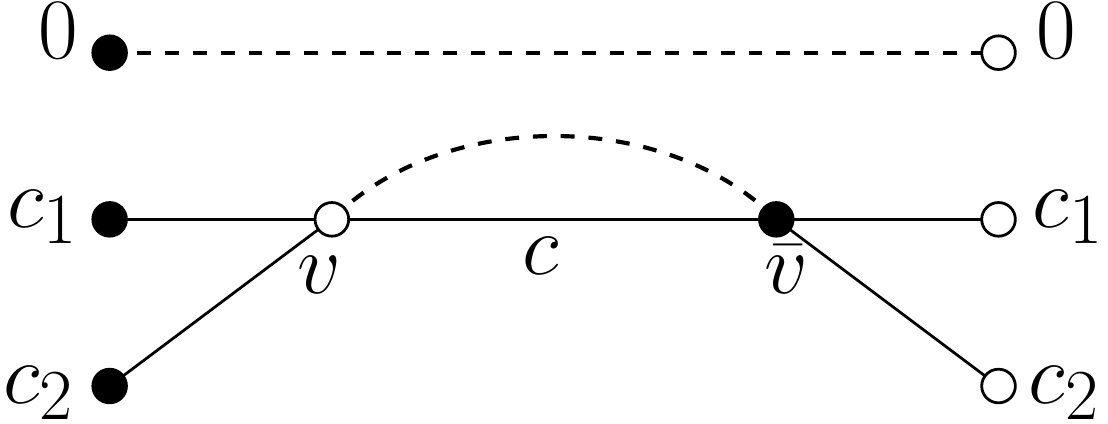} \end{array}
\end{equation}
\end{corollary}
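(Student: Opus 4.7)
The plan is to realise the flip as the composition of two dipole moves from Theorem \ref{thm:Dipoles}: a 1-dipole removal of color $c$ followed by a 2-dipole insertion of colors $\{0,c\}$. Because the flip affects only the color-$0$ adjacencies at $v$, $\bar{v}$, $w$, $\bar{w}$, it is natural to first strip off the color-$c$ edge (converting the problem into one purely about color-$0$ edges in an intermediate graph $G'$) and then to reinsert a thicker dipole containing both the color-$c$ and the new color-$0$ adjacencies at once.

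Concretely, I would first treat the edge of color $c$ joining $v$ and $\bar{v}$ in the left-hand graph as a 1-dipole with $H = \{c\}$ and complement $P_1 = \{0, c_1, c_2\}$. The hypothesis of the corollary is exactly that the two $P_1$-bubbles through $v$ and $\bar{v}$ are distinct and at least one is a ball, so Theorem \ref{thm:Dipoles} applies and this removal preserves the PL-homeomorphism type. The move deletes $v$, $\bar{v}$ and the edge of color $c$, and pairs up the color-$0$, $c_1$ and $c_2$ half-edges at $v$ with their counterparts at $\bar{v}$, producing in particular a color-$0$ edge between $w$ and $\bar{w}$ and new $c_1$- and $c_2$-edges between the former $c_1$- and $c_2$-neighbours of $v$ and $\bar{v}$. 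Call the resulting graph $G'$. Into $G'$ I then insert a 2-dipole with $H = \{0, c\}$ and complement $P_2 = \{c_1, c_2\}$ by placing two new vertices on the $c_1$- and $c_2$-edges just created and joining them by parallel edges of colors $0$ and $c$; a direct edge count shows that this reproduces exactly the right-hand configuration while leaving the new $(w, \bar{w})$ edge of color $0$ untouched. The ball hypothesis is automatic for this step because, by Corollary \ref{cor:2D}, every bicolored cycle, i.e.\ every 2-bubble, canonically embeds as a disc.

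The one nontrivial point is that the 2-dipole inserted in the second step is a genuine dipole, i.e.\ that the two $P_2$-bubbles around the new $v$ and $\bar{v}$ are distinct. This follows from the corollary's hypothesis by an inclusion argument: a $\{c_1, c_2\}$-path from $v$ to $\bar{v}$ in the right-hand graph would a fortiori be a $\{0, c_1, c_2\}$-path, contradicting the hypothesis that $v$ and $\bar{v}$ lie in different $\{0, c_1, c_2\}$-bubbles, and since neither the flip nor Step 1 changes any edge of color $c_1$ or $c_2$ at $v$ or $\bar{v}$, these bicolored cycles in the right-hand graph are literally the same as those in the left-hand graph. Running the same two moves in reverse, starting with the 2-dipole removal (automatically topological) and finishing with the 1-dipole insertion (topological by the same ball hypothesis), gives the $\leftrightarrow$ direction. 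The main obstacle is the bookkeeping of how the bicolored cycles merge and split under the two dipole moves, which the inclusion argument just given reduces to a one-line check.
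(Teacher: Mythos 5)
Your proof is correct and follows essentially the same route as the paper: a 1-dipole removal of the color-$c$ edge (legitimate by the hypothesis on the $\{0,c_1,c_2\}$-bubbles), followed by a 2-dipole insertion of colors $\{0,c\}$ on the $c_1$- and $c_2$-edges, with the distinctness of the two $\{c_1,c_2\}$-cycles deduced from their inclusion in the distinct $\{0,c_1,c_2\}$-bubbles. The paper's proof is a condensed version of exactly this argument, so there is nothing to add.
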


\begin{proof}
We start by performing a 1-dipole removal for the edge of color $c$ since we have all the assumptions to do so,
\begin{equation}
\begin{array}{c} \includegraphics[scale=.4]{1DipoleFlip.pdf} \end{array} \quad \leftrightarrow\quad \begin{array}{c} \includegraphics[scale=.4]{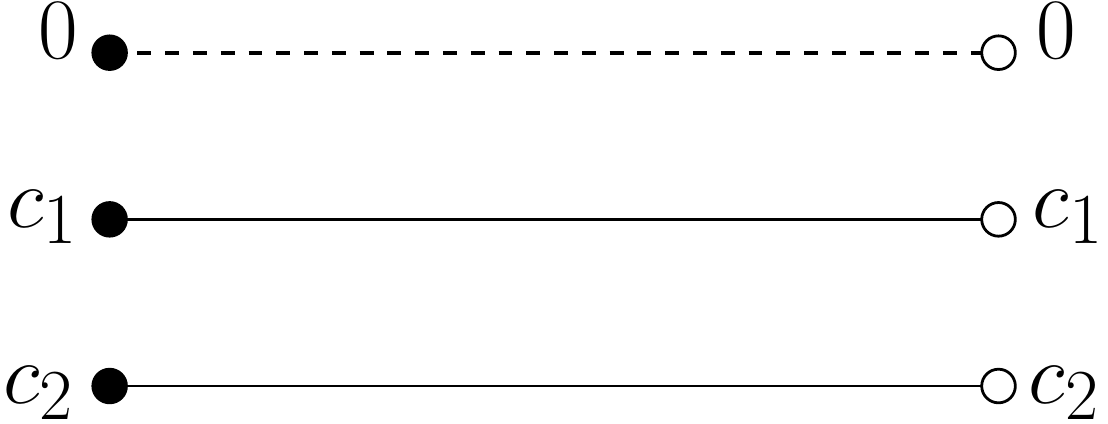} \end{array}
\end{equation}
Then we perform a 2-dipole insertion with colors $\{0,c\}$ on the edges of color $c_1$ and $c_2$. The new vertices can be named $v$ ad $\bar{v}$ again. The faces with colors $\{c_1, c_2\}$ incident to $v$ and $\bar{v}$ in the original graph and in the final graph are the same. Since they are distinct, the 2-dipole insertion preserves the topology. The overall result is a flip of edges of color 0.
\end{proof}

\begin{proposition} [Connected sums] \label{prop:ConnectedSums}
We recall that the connected sum of two $d$-manifolds $M$ and $N$ is a $d$-manifold obtained by removing a ball in $M$ and in $N$ and then identifying the two resulting $(d-1)$-spheres. For two colored graphs $G_1$, $G_2$, a connected sum $G_1\#_{\{v_1, v_2\}} G_2$ is obtained by removing a black vertex $v_1$ from $G_1$, a white vertex $v_2$ from $G_2$ and identifying the resulting hanging edges which have the same color,
\begin{equation}
\begin{array}{c} \includegraphics[scale=.4]{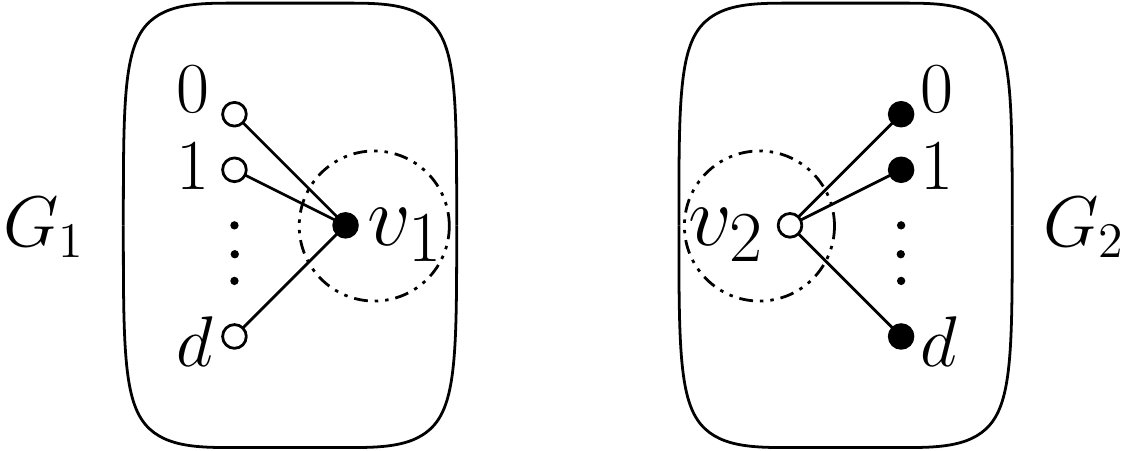} \end{array} \quad \to \quad G_1\#_{\{v_1, v_2\}} G_2 = \begin{array}{c} \includegraphics[scale=.4]{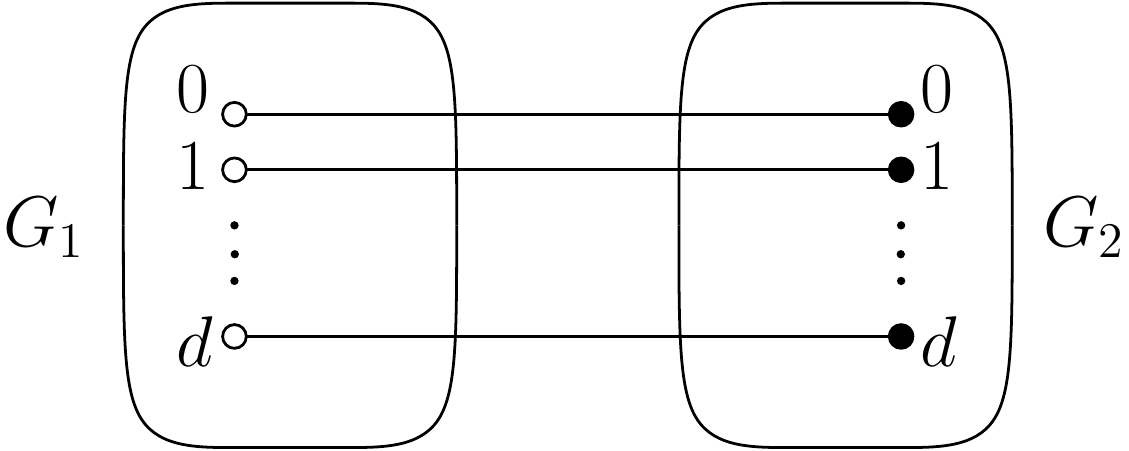} \end{array}
\end{equation}

If the connected sum is unique, then $G_1\#_{\{v_1, v_2\}} G_2$ is a colored graph for the connected sum of the manifolds represented by $G_1$ and $G_2$. 
\end{proposition}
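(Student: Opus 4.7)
The plan is to verify two things: first, that $G_1 \#_{\{v_1, v_2\}} G_2$ is a well-defined colored graph, and second, that the associated pseudo-manifold is a topological connected sum of those associated to $G_1$ and $G_2$.

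For the first point, I would check the axioms of a colored graph directly. After deleting $v_1$, the graph $G_1$ carries $d+1$ hanging half-edges, one of each color $c\in\{0,\dotsc,d\}$, and similarly for $G_2$. Identifying the hanging half-edges of matching colors pairwise produces $d+1$ new edges. Since $v_1$ is black, its color-$c$ neighbour in $G_1$ is white, and since $v_2$ is white, its color-$c$ neighbour in $G_2$ is black; hence every new edge joins a white to a black vertex. Every surviving vertex still has exactly one incident edge of each color, and the resulting graph is connected, bipartite and $(d+1)$-regular with properly colored edges, so it is a colored graph in the sense of Theorem \ref{thm:ColoredGraphs}.

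For the second point, I would translate the construction to the dual colored triangulations $T_i$ of $G_i$, representing pseudo-manifolds $M_i$. Each vertex $v_i$ corresponds to a $d$-simplex $\sigma_i\subset T_i$, and removing $v_i$ along with its $d+1$ incident edges corresponds to excising the open $d$-simplex $\mathring{\sigma}_i$ from $T_i$. The result is a triangulated space whose boundary is $\partial \sigma_i$, a colored $(d-1)$-sphere whose $(d-1)$-faces inherit the colors $0,1,\dotsc,d$, and whose dual colored graph has exactly the hanging half-edges obtained above. Identifying hanging half-edges of the same color is thus dual to identifying the two boundary $(d-1)$-simplices of matching color by the canonical colored attaching rule of Section \ref{sec:ColoredGraphs}. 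The identification $f\colon \partial \sigma_1 \to \partial \sigma_2$ obtained this way is a simplicial homeomorphism, because each matching pair of boundary $(d-1)$-simplices is glued by the unique map preserving the induced colorings of their lower-dimensional subsimplices, and these matchings are mutually consistent on the shared $(d-2)$-faces of $\partial \sigma_i$ (the same compatibility statement that underlies Proposition \ref{prop:CBB}). The topological space realised by $G_1 \#_{\{v_1, v_2\}} G_2$ is therefore
\begin{equation*}
(M_1 \setminus \mathring{\sigma}_1) \cup_f (M_2 \setminus \mathring{\sigma}_2),
\end{equation*}
which is by definition a connected sum of $M_1$ and $M_2$ along the balls $\sigma_i$. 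Bipartiteness of $G_i$ ensures the orientability of $M_i$, and the requirement that $v_1$ be black and $v_2$ white ensures that $f$ is orientation-reversing on the boundary spheres with their induced orientations, recovering the standard orientation-preserving connected sum. Under the stated uniqueness hypothesis, the resulting manifold depends only on $M_1$ and $M_2$, so it equals $M_1 \# M_2$.

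The main obstacle I foresee is the mutual consistency of the $d+1$ colored identifications of boundary $(d-1)$-simplices at the level of $(d-2)$-subsimplices, i.e.\ showing that the piecewise definitions combine into a single simplicial map on all of $\partial\sigma_1$. This is essentially the same compatibility statement that makes the colored gluing rule globally well-defined in Theorem \ref{thm:ColoredGraphs} and in the cone argument of Proposition \ref{prop:CBB}, so I would invoke that machinery to render the step routine rather than genuinely new. The only other delicate point is to be explicit about how the black/white prescription on $v_1,v_2$ matches the orientation conventions on $M_1$ and $M_2$ induced by the bipartition of $G_1$ and $G_2$; once the uniqueness hypothesis is invoked this reduces to a choice of sign and causes no further trouble.
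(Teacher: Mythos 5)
The paper states this proposition \emph{without} proof: it is explicitly listed among the classical results of the crystallization literature that are only cited (see the reference to \cite{ItalianSurvey} right after the statement), so there is no internal argument to compare yours against. Your proof is the standard one from that literature and is essentially sound: dualizing the deletion of $v_i$ to the excision of the closed $d$-simplex $\sigma_i$, reading the half-edge identification as the unique color-preserving simplicial homeomorphism $\partial\sigma_1\to\partial\sigma_2$, and checking compatibility of the $d+1$ facet identifications on shared $(d-2)$-faces via the same bookkeeping as in Theorem \ref{thm:ColoredGraphs} is exactly how this is done. Two assertions in your write-up deserve an explicit line of justification. First, you claim without argument that the resulting graph is connected. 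This is true, but it rests on the fact that deleting a vertex from a colored graph never disconnects it: if a component $A$ of $G_1\setminus v_1$ were attached to $v_1$ by a proper nonempty subset $S_A\subsetneq\{0,\dotsc,d\}$ of colors, then choosing $c\in S_A$ and $c'\notin S_A$, the subgraph of colors $\{c,c'\}$ is a disjoint union of cycles, each of which must cross the edge-cut separating $A$ from the rest an even number of times, whereas that cut contains exactly one edge with color in $\{c,c'\}$ --- a contradiction; hence $G_1\setminus v_1$ is connected and so is the sum. Second, for the excision to produce a connected sum you need each $\sigma_i$ to be an \emph{embedded} ball in $M_i$; this holds because its $d+1$ facets carry distinct colors and so cannot be identified with one another under the colored gluing rule, and more generally its subsimplices with distinct color labels are distinct simplices of $T_i$ by Theorem \ref{thm:ColoredGraphs}. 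With these two remarks added, your argument is a complete and correct substitute for the omitted proof.
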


We refer to \cite{ItalianSurvey} for multiple references to connected sums in the colored graph literature in topology. Notice that the sphere is a neutral element for the connected sum of manifolds (and obviously the connected sum with the sphere is unique).

\begin{lemma} \label{lemma:2CutConnected}
If $G$ is a connected colored graph with colors $\{0, \dotsc, d\}$ which has a 2-edge-cut formed by two edges $e, e'$ of color 0, see \eqref{2CutGraph}, then flipping $e$ with $e'$ gives two connected colored graphs $G_1, G_2$, see \eqref{2CutFlipped}, and $G$ is a connected sum of $G_1$ and $G_2$.
\end{lemma}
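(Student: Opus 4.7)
The plan is to prove the lemma in two steps. First, I would check that flipping $e$ with $e'$ yields two connected components $G_1$ and $G_2$. Second, I would show $G$ represents $|G_1| \# |G_2|$ by writing $G$ as a combinatorial connected sum in the sense of Proposition \ref{prop:ConnectedSums}, after a topologically trivial 3-dipole insertion inside $G_1$.

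For the first step, since $\{e, e'\}$ is a 2-edge-cut of color 0, removing $e, e'$ disconnects $G$ into exactly two connected components $H_L$ and $H_R$, where $H_L$ contains the free vertices $v_L$ (white) and $\bar v_L$ (black) and $H_R$ contains $v_R$ and $\bar v_R$. After the flip, $e = (v_L, v_R)$ and $e' = (\bar v_L, \bar v_R)$ are replaced by a color-0 edge $e_1 = (v_L, \bar v_L)$ internal to $H_L$ and $e_2 = (v_R, \bar v_R)$ internal to $H_R$. Therefore $G_1 := H_L \cup \{e_1\}$ and $G_2 := H_R \cup \{e_2\}$ are both connected.

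For the second step, I would subdivide $e_1$ by performing a 3-dipole insertion: introduce two new vertices $p$ (black) and $p'$ (white) connected by three edges of colors $1, 2, 3$, and replace $e_1$ by the two color-0 edges $v_L$--$p$ and $p'$--$\bar v_L$. The resulting graph $G_1''$ satisfies $|G_1''| = |G_1|$ by Theorem \ref{thm:Dipoles}: with $H = \{1, 2, 3\}$ and $P = \{0\}$, the $P$-bubbles of $p$ and $p'$ are the distinct color-0 edges $\{v_L, p\}$ and $\{p', \bar v_L\}$, each representing a 2-simplex, hence a ball.

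I would then form the combinatorial connected sum $\tilde G := G_1'' \#_{\{p, \bar v_R\}} G_2$ by removing the black vertex $p$ from $G_1''$ and the white vertex $\bar v_R$ from $G_2$ and identifying hanging edges by color. A direct check shows that $\tilde G$ is isomorphic to $G$ via the relabeling $p' \mapsto \bar v_R$: the identification of color-0 hanging edges produces the edge $v_L$--$v_R$ (i.e.\ $e$), the surviving color-0 edge $\bar v_L$--$p'$ in $G_1''$ becomes $\bar v_L$--$\bar v_R$ (i.e.\ $e'$), and the color-$c$ hanging edges produce $p'$--$z_c$ which become exactly the color-$c$ edges $\bar v_R$--$z_c$ of $G$, where $z_c$ denotes the color-$c$ neighbor of $\bar v_R$ in $H_R$. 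Applying Proposition \ref{prop:ConnectedSums} then yields $|G| = |\tilde G| = |G_1''| \# |G_2| = |G_1| \# |G_2|$.

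The main obstacle I expect is the combinatorial bookkeeping of the connected-sum identification; the crucial insight is that the 3-dipole insertion supplies precisely the missing color-$1,2,3$ hanging edges at the extra vertex $p'$ that the original 2-edge-cut (which only involves color-0 edges) cannot provide by itself.
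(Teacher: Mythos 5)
Your proposal is correct and follows essentially the same route as the paper: the paper also subdivides the new internal color-0 edge of $G_1$ by inserting a $d$-dipole with colors $\{1,\dotsc,d\}$ (topologically trivial) and then exhibits $G$ as the combinatorial connected sum of the modified $G_1$ with $G_2$ at one of the new vertices. Your version merely spells out the hanging-edge bookkeeping and the Theorem \ref{thm:Dipoles} justification in more detail (and writes the colors for $d=3$), but the argument is the same.
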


\begin{proof}
We simplify the proof which was used in \cite{Octahedra} in the context of octahedra. First add to $G_1$ a $d$-dipole with colors $\{1, \dotsc, d\}$ and choose $v_1$ and $v_2$ to perform the connected sum $\tilde{G}_1\#_{\{v_1, v_2\}} G_2$ as follows 
\begin{equation}
\tilde{G}_1 = \begin{array}{c} \includegraphics[scale=.45]{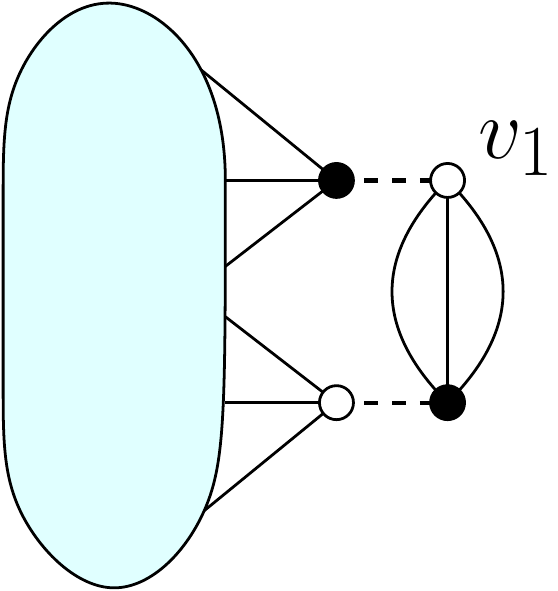} \end{array} \quad G_2 = \begin{array}{c} \includegraphics[scale=.45]{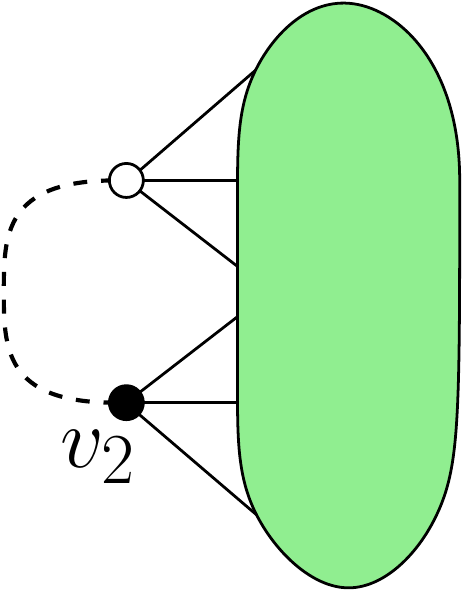} \end{array} \quad \to \quad \tilde{G}_1\#_{\{v_1, v_2\}} G_2 = \begin{array}{c} \includegraphics[scale=.45]{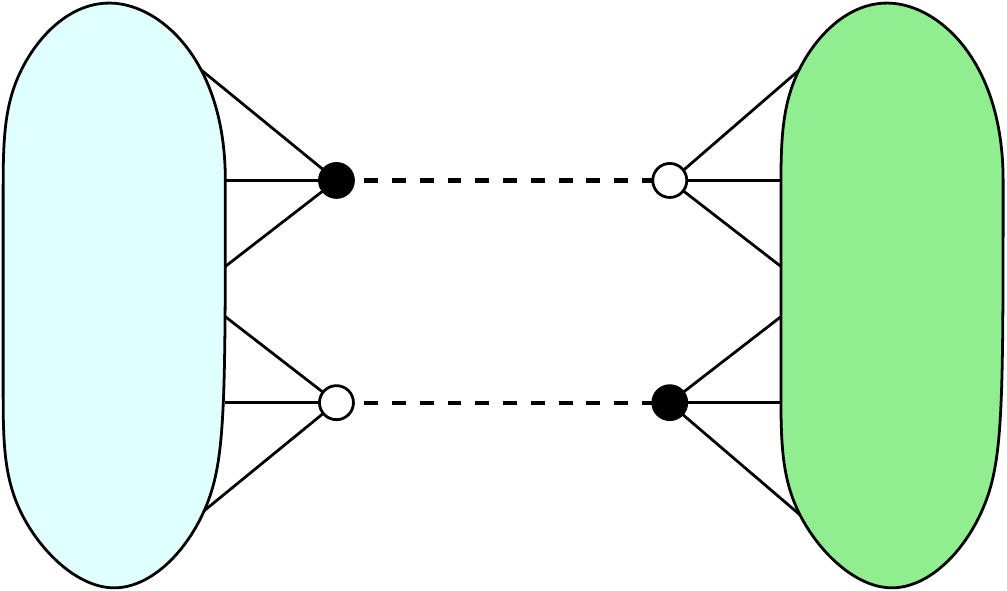} \end{array} = G
\end{equation}
Obviously $G_1$ and $\tilde{G}_1$ have the same topology. Therefore, if uniqueness holds, $G$ is indeed the connected sum of $G_1$ and $G_2$.
\end{proof}

\begin{theorem} \label{thm:Topology}
Let $\{B_1, \dotsc, B_N\}$ be a set of planar bubbles at $d=3$. Then $G\in \cG^{\max}_{n_1, \dotsc, n_N}(B_1, \dotsc, B_N)$ has the topology of the $3$-sphere.
\end{theorem}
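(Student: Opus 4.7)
The plan is to reduce the multi-bubble statement to a single-bubble statement via a connected-sum decomposition along 2-edge-cuts, and then prove the single-bubble case by induction on the number of vertices of the bubble using topology-preserving dipole moves. Since every $B_i$ is planar, Theorem \ref{thm:OnlyPlanar} guarantees that every bubble of $G$ satisfies the maximal 2-cut property. I would then split $G$ along its 2-edge-cuts of color 0, each split exhibiting $G$ as a connected sum by Lemma \ref{lemma:2CutConnected}. Iterating until no bubble-separating 2-edge-cut remains yields $G = G^{(1)} \# \cdots \# G^{(M)}$, where each $G^{(m)}$ is a connected 1-bubble graph with planar bubble $B_{j_m}$ and pairing in $\cG^{\max}_1(B_{j_m})$. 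By Proposition \ref{prop:ConnectedSums}, together with the neutrality of $S^3$ under connected sum, it suffices to show each $G^{(m)}$ is homeomorphic to $S^3$.

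For the single-bubble statement I would induct on $V(B)$. The base case $V(B)=2$ has a unique pairing whose graph visibly represents $S^3$. For the step, if $B$ contains a 2-dipole of colors $\{c_1,c_2\}$ between $v$ and $\bar v$, Lemma \ref{lemma:qEdges} applied with $q=2>d/2$ forces $\pi(v)=\bar v$, producing a 3-dipole of colors $\{0,c_1,c_2\}$ in $G$. Its two complementary 1-bubbles are single edges of the remaining color, which are balls, so Theorem \ref{thm:Dipoles} makes this 3-dipole removal topological. The reduced $G'$ has the planar bubble $B'$ obtained by deleting the 2-dipole, and a direct cycle count gives $C_0(G')=C_0(G)-2=C_1(B)-2=C_1(B')$, so the inherited pairing lies in $\cG^{\max}_1(B')$; the induction hypothesis yields $G'\cong S^3$, hence $G\cong S^3$.

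When $B$ has no 2-dipole, $V(B)\geq 8$ and by Lemma \ref{lemma:FacesPlanarBubble} there is a face of degree 4 with vertices $\bar v_1,v_2,\bar v_3,v_4$ and edges of colors 1 and 2. I would carry out a case analysis of the pairing around this face, parallel to the one in the proof of Theorem \ref{thm:1Planar} but specialised to the 1-bubble setting, to show that maximality forces opposite face edges to be paired by color 0, yielding a 2-dipole of colors $\{0,1\}$ in $G$. Its complementary 2-bubbles of colors $\{2,3\}$ are bicolored cycles of $B$, hence faces of its planar embedding, hence discs, so Theorem \ref{thm:Dipoles} makes this 2-dipole removal topological. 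The inherited pairing on the smaller planar bubble is again maximal by cycle counting, and induction concludes. If the given pairing does not already place a color-0 edge on the face, the idea is to use topological flips provided by Corollary \ref{cor:TopologicalFlip} to transform $G$ into such a configuration, exploiting the planarity of $B$ to secure the ball condition on the relevant 3-bubbles.

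The hardest step is this last no-2-dipole reduction: showing that in every maximal pairing of a planar $B$ with no 2-dipole one can locate a face of degree 4 on which either a 2-dipole is already present in $G$ or a topology-preserving flip produces one, while simultaneously preserving maximality of the reduced pairing. This requires the kind of detailed case split developed in the proof of Theorem \ref{thm:1Planar}, with the additional constraint that each step be a dipole or flip listed in Theorem \ref{thm:Dipoles} or Corollary \ref{cor:TopologicalFlip}, and a verification that the 3-bubbles entering Corollary \ref{cor:TopologicalFlip} are spheres, which is the combinatorial property of 1-CBB triangulations alluded to in the introduction.
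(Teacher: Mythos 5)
Your skeleton matches the paper's: reduce to the one-bubble case by splitting along 2-edge-cuts into connected sums of spheres, then induct using topology-preserving dipole moves, with the degree-4 face of Lemma \ref{lemma:FacesPlanarBubble} driving the hard case. Your 2-dipole step (forcing $\pi(v)=\bar v$ by Lemma \ref{lemma:qEdges} and removing the resulting 3-dipole) is a legitimate variant of what the paper does. However, two things are missing. First, your dichotomy ``$B$ has a 2-dipole / $B$ has no 2-dipole'' skips the intermediate case where $B$ has no 2-dipole but two faces share more than one edge. The paper disposes of this case separately (by writing $B=\partial A$ for two smaller planar bubbles joined by a single color-0 edge and invoking Theorem \ref{thm:1Planar} on $A$), and it must be eliminated before the degree-4-face analysis can run: that analysis repeatedly uses that no two faces of $B$ share more than one edge, e.g.\ to guarantee that the two $\{2,3\}$-faces adjacent to the degree-4 face are distinct, that the contractions keep the bubble connected, and that the edges contracted at the end really form 2-dipoles.

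Second, and more seriously, the crux of the no-2-dipole case is exactly the step you defer. Planarity of $B$ does \emph{not} ``secure the ball condition on the relevant 3-bubbles'' for Corollary \ref{cor:TopologicalFlip}: the 3-bubbles in question have colors $\{0,2,3\}$, so they depend on the pairing and not merely on $B$, and a priori they could be non-spherical or could fail to separate $\bar v_3$ from $v_4$. The paper's resolution is an Euler-characteristic count: writing Euler's relation for $G(0,2,3)$ and for $G_{\parallel}(0,2,3)$ and using maximality of $G$ (which forces $C_0(G_{\parallel})=C_0(G)$, hence the ``worst case'' $C_{02}(G_{\parallel})=C_{02}(G)$ and $C_{03}(G_{\parallel})=C_{03}(G)-2$), one deduces $K_{23}(G)=K_{23}(G_{\parallel})+1+\sum_\rho g^{(\rho)}(0,2,3)$, and since the flips can decrease the number of $\{0,2,3\}$-bubbles by at most one, all genera must vanish and $\bar v_3$, $v_4$ must lie in two distinct planar $\{0,2,3\}$-bubbles. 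That is precisely the hypothesis of Corollary \ref{cor:TopologicalFlip}. Without this counting argument (or a substitute), your induction step does not close.
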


Just like Theorem \ref{thm:1Planar}, it generalizes to all planar bubbles the same result previously obtained when all bubbles are melonic \cite{Melons} or octahedra \cite{Octahedra}.

It is more difficult than Theorem \ref{thm:1Planar} in a sense because contrary to the proof of Theorem \ref{thm:1Planar}, it requires a deeper investigation of the 1-bubble graphs $\cG^{\max}_1(B)$ (1-CBB triangulations in terms of tetrahedra) to prove that they are spheres. In particular, we actually use Theorem \ref{thm:1Planar} to prove that in one of several cases to analyze. 

\begin{proof}
If $G$ has up to six vertices, its bubbles $B_1, \dotsc, B_N$ are melonic. Then $G$ maximizing the number of bicolored cycles is melonic too (with the additional color 0) and melonic graphs are well-known to be spheres (in arbitrary dimensions) \cite{Melons}.

We then proceed to an induction on the total number of vertices of $G$. We will have several cases to investigate. 
\begin{description}[style=unboxed,wide]
\item[$G$ has several bubbles] The easiest case is when $G$ is made of at least two bubbles, i.e. $\sum_{i=1}^N n_i\geq 2$. Then the gluing of those bubbles is combinatorially described by Theorem \ref{thm:1Planar}. We can pick up any bubble from $G$ and find a 2-edge-cut incident to it. This is exactly the situation of Lemma \ref{lemma:2CutConnected}. From Theorem \ref{thm:1Planar} we know that $G_1$ and $G_2$ (using the notations of Lemma \ref{lemma:2CutConnected}) maximize the number of bicolored cycles given their bubbles. The induction hypothesis then tells us that they are spheres. Lemma \ref{lemma:2CutConnected} then states that $G$ is a connected sum of spheres, hence a sphere itself.

\item[$G$ has a single bubble with a 2-edge-cut] We can now focus on $N=1$, i.e. $G\in\cG^{\max}_1(B)$ is made of a single bubble $B$. Let us focus moreover on the case where $B$ has two faces, say with colors $\{1, 2\}$ and $\{1, 3\}$, which share more than one edge (of color 1 obviously). It means that $B$ contains a piece of the following form,
\begin{equation}
B = \begin{array}{c} \includegraphics[scale=.35]{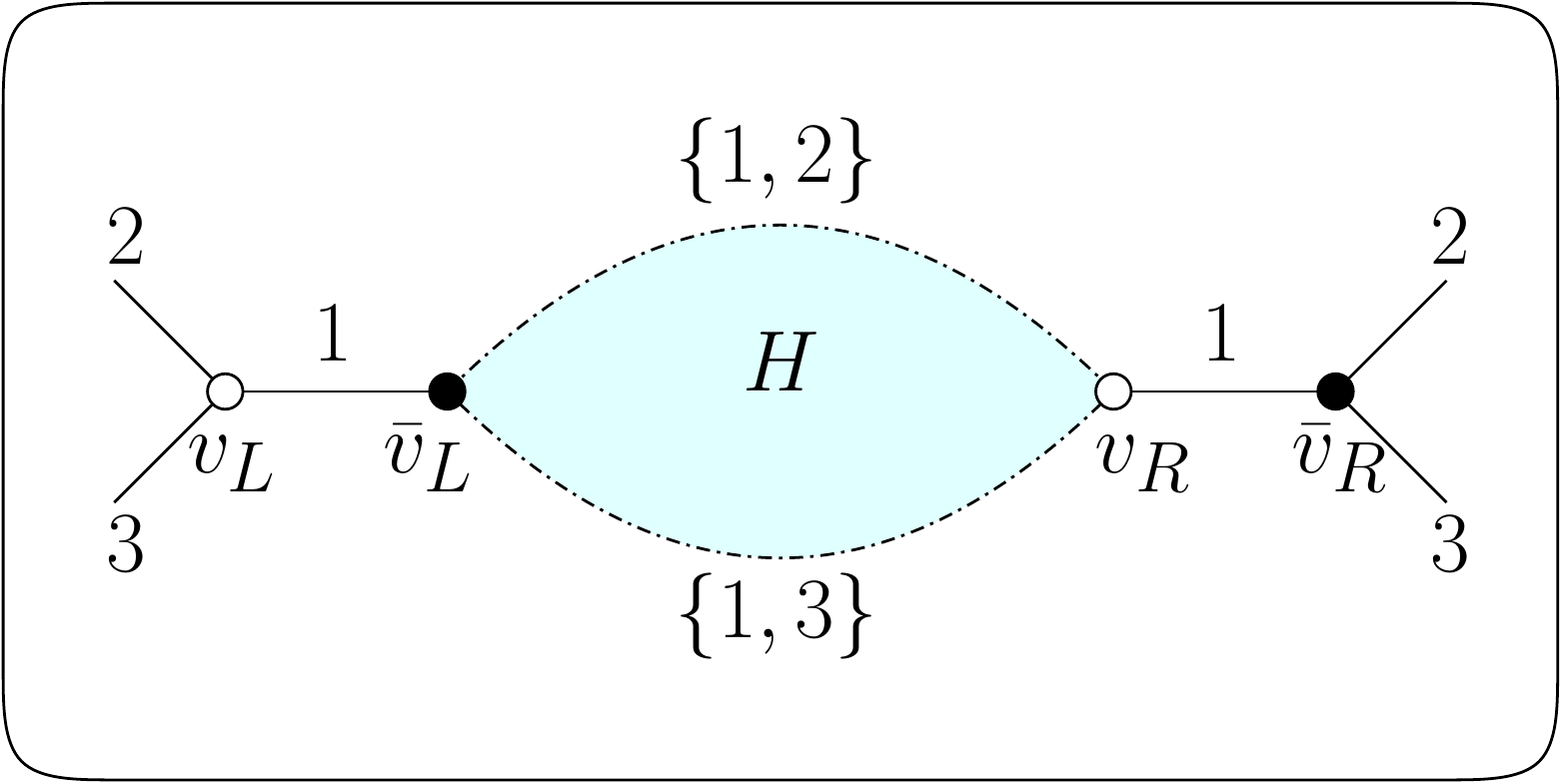} \end{array}
\end{equation}
where the dash-dotted lines represent paths of alternating colors $\{1, 2\}$ and $\{1, 3\}$. It can be obtained as the boundary bubble $B = \partial A$ where $A$ is made of two bubbles $B', B''$ defined as follows and connected by a single edge $e$ of color 0,
\begin{equation}
B' = \begin{array}{c} \includegraphics[scale=.28]{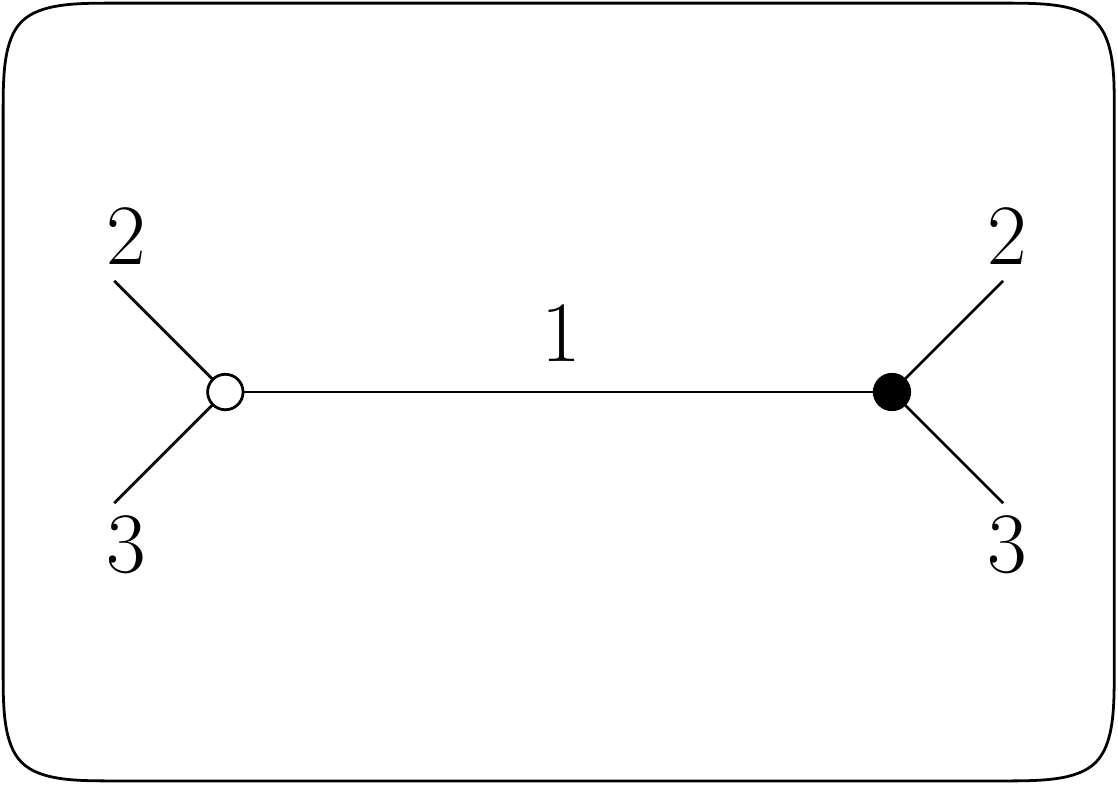} \end{array} \qquad \qquad B'' = \begin{array}{c} \includegraphics[scale=.28]{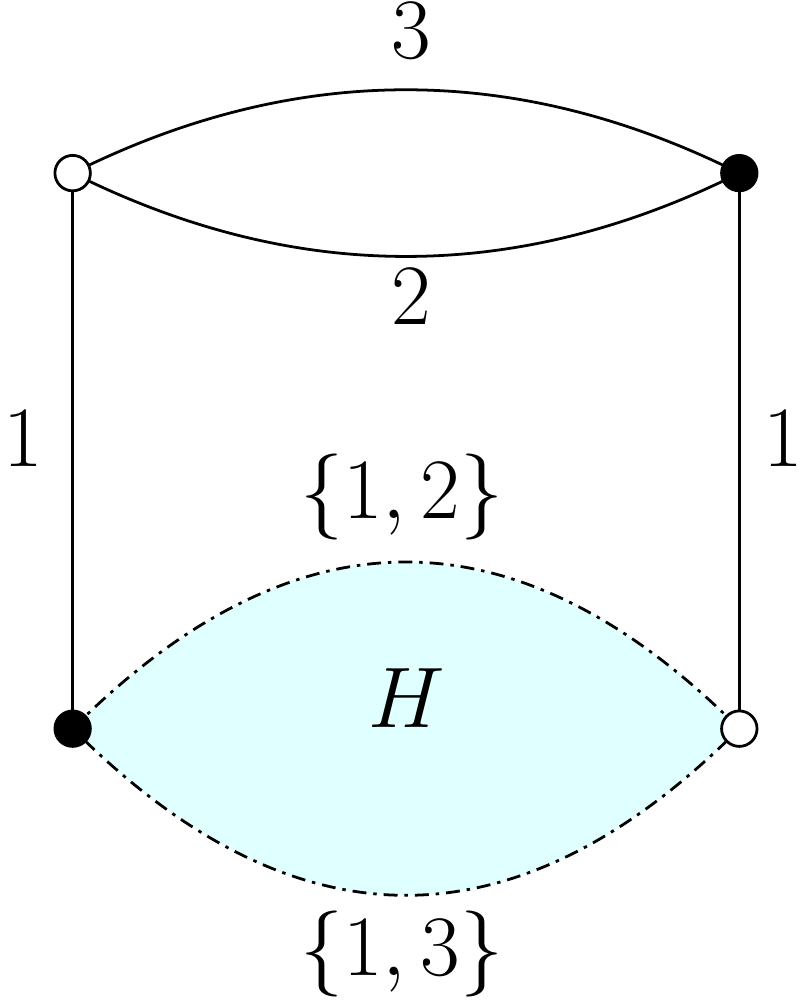} \end{array} \qquad \qquad A = \begin{array}{c} \includegraphics[scale=.28]{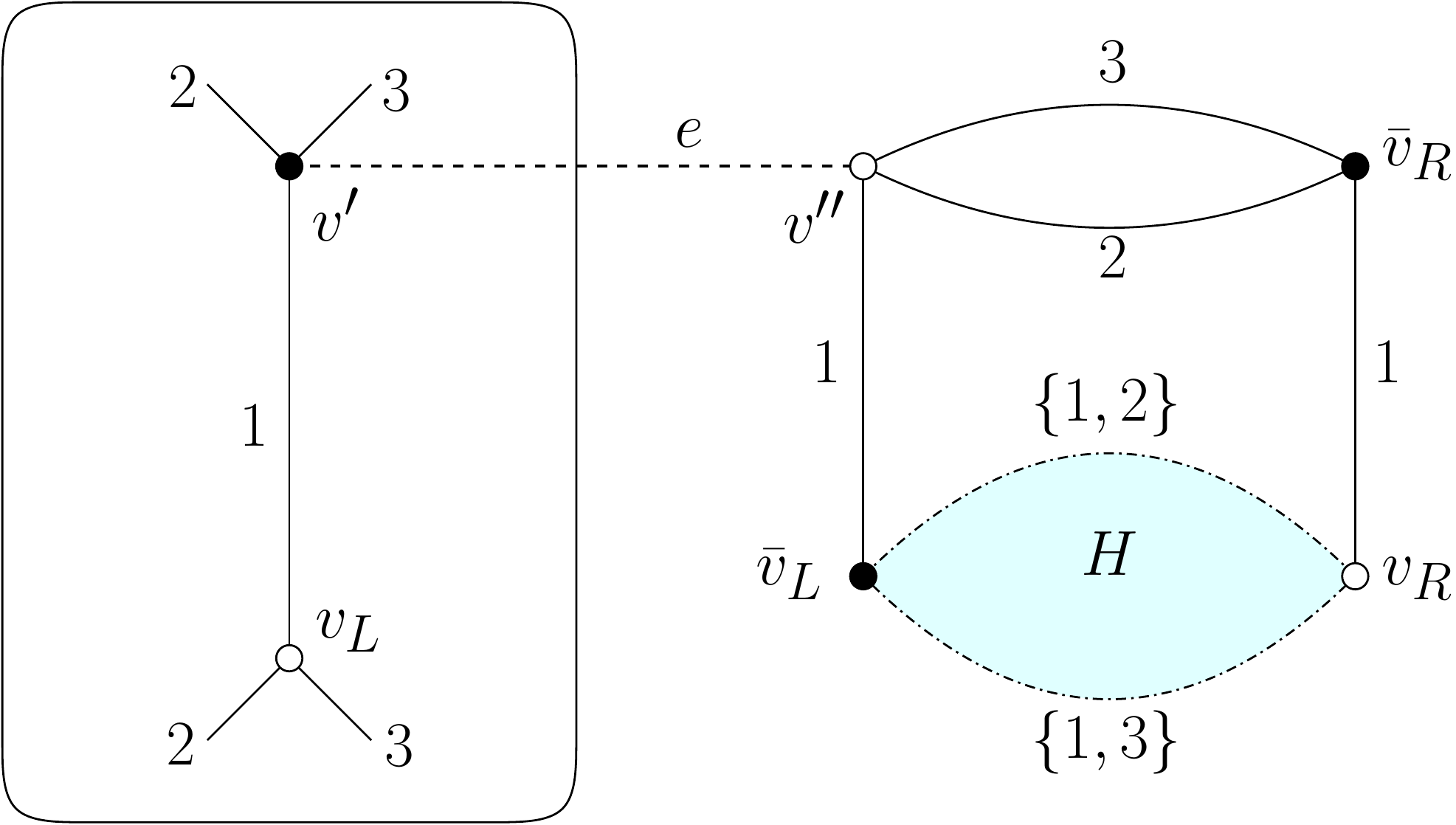} \end{array}
\end{equation}
The bubble $B$ is the boundary bubble, or equivalently, since there is a single edge of color 0, the contraction of $A$,
\begin{equation}
B = \partial A = A/e.
\end{equation}
Let us denote $\cG(A)$ the subset of $\cG_{1, 1}(B', B'')$ where $B'$ and $B''$ are precisely connected by an edge of color 0 between $v'$ and $v''$. There is a one-to-one correspondence between the vertices of $B$ and those of $A$ minus $v'$ and $v''$. By definition of the boundary bubble, and because $A$ does not have bicolored cycles with colors $\{0, c\}$, there is also a one-to-one correspondence between the bicolored cycles of a pairing $G\in \cG_1(B)$ and those of the same pairing equipped to $A$. Therefore the set $\cG_1(B)$ is equivalent to the set $\cG(A)$ of pairings on $A$ and this preserves the number of bicolored cycles with colors $\{0, c\}$ for $c\in\{1, 2, 3\}$.

The object $A$ is made of the two planar bubbles $B', B''$ connected by an edge. Maximizing the number of bicolored cycles thus follows Theorem \ref{thm:1Planar} for planar bubbles. It states that $\tilde{G}\in \cG^{\max}(A)$ has exactly two edges of color 0 connecting $B'$ and $B''$, one of them being set to be $e$, and that flipping these two edges produces two graphs $G'\in\cG^{\max}_1(B')$ and $G''\in\cG^{\max}_1(B'')$,
\begin{equation}
\tilde{G} = \begin{array}{c} \includegraphics[scale=.28]{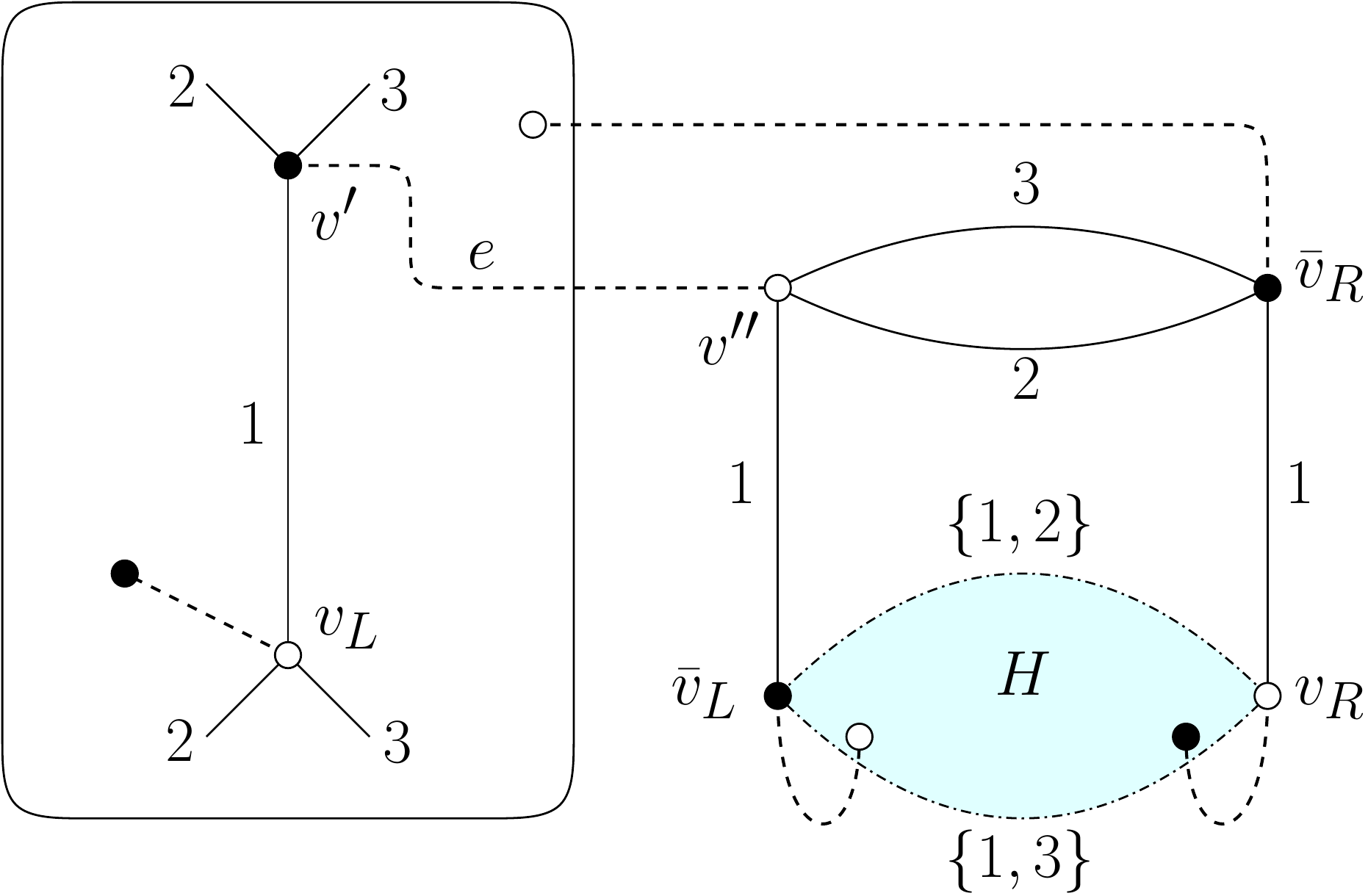} \end{array} \qquad G' = \begin{array}{c} \includegraphics[scale=.28]{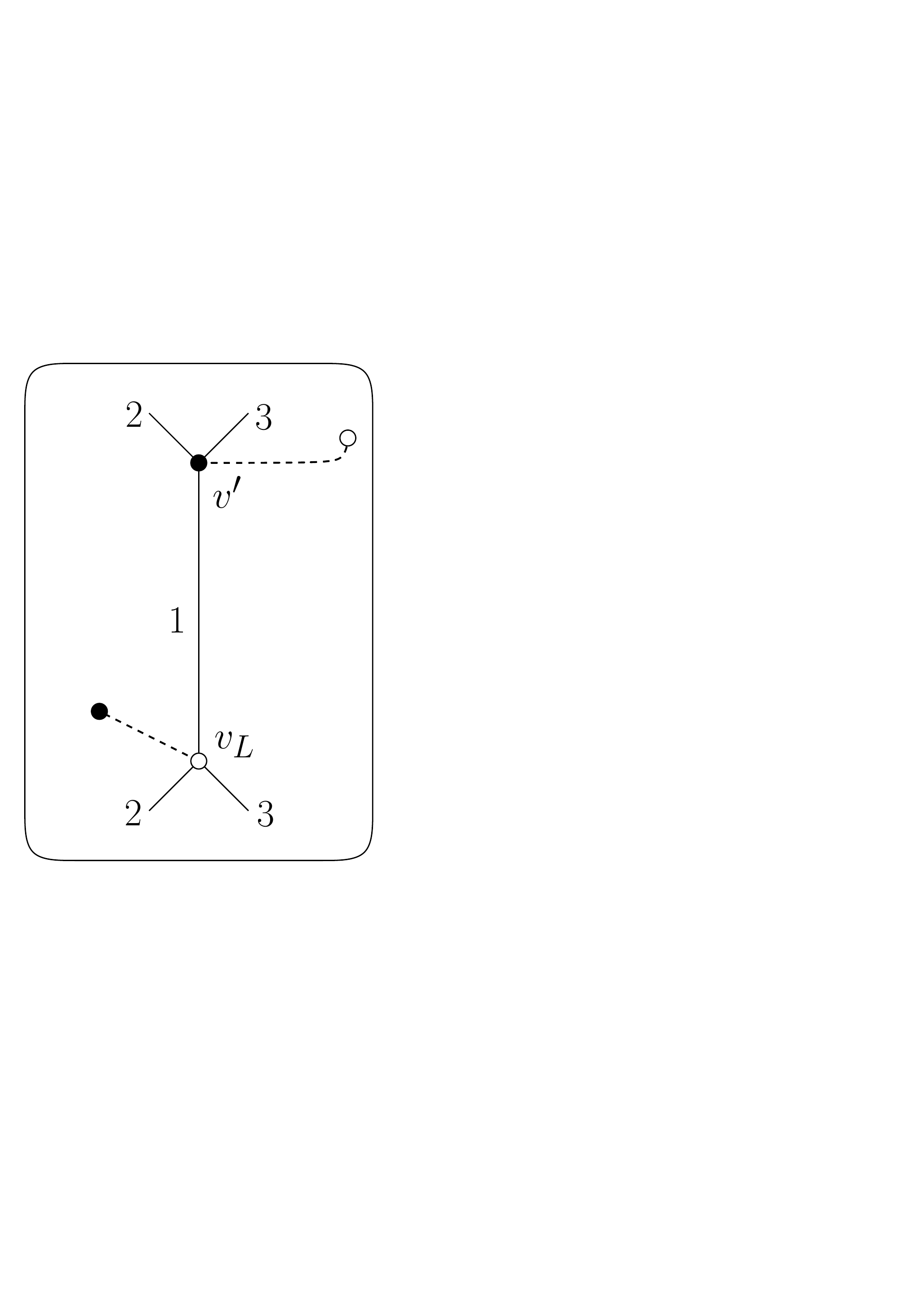} \end{array} \qquad G'' = \begin{array}{c} \includegraphics[scale=.28]{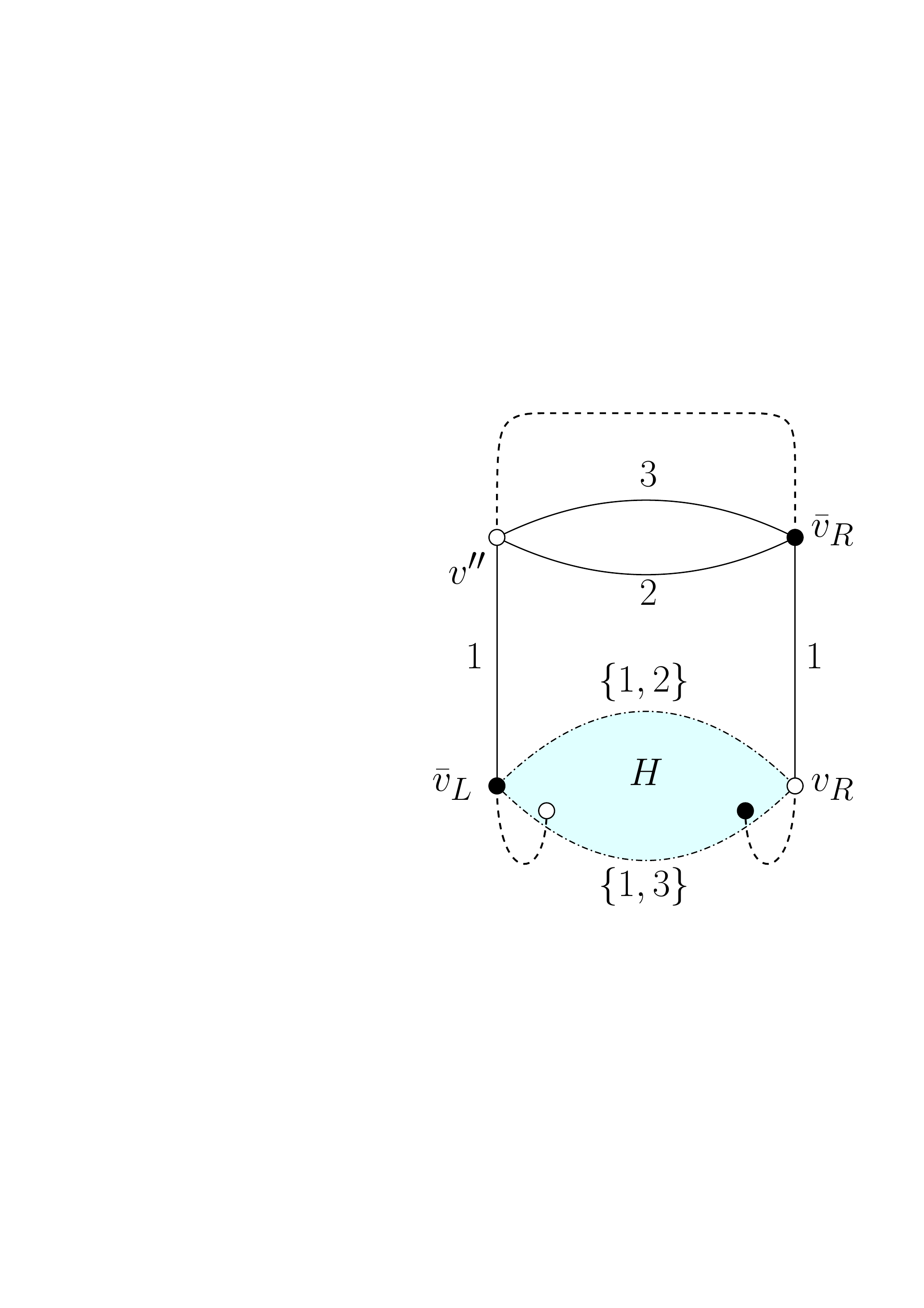} \end{array}
\end{equation}
We then get a graph made of the bubble $B$ and maximizing the number of bicolored cycles by contracting $e$,
\begin{equation}
G = \tilde{G}/e
\end{equation}
and all graphs $G\in\cG^{\max}_1(B)$ can be found in this form. We can finally track down the topology. From the induction hypothesis, $G'$ and $G''$ are spheres. From Lemma \ref{lemma:2CutConnected}, $\tilde{G}$ is thus a sphere (as a connected sum of spheres). Finally the contraction of $e$ in $\tilde{G}$ is a 1-dipole removal since the 3-bubbles incident to $v'$ and $v''$ are distinct (they are $B'$ and $B''$) and hence is topological. As a conclusion, $G$ is a sphere.

\item[$G$ has a single bubble with no 2-edge-cut, has a bicolored cycle of length 2] This narrows down the bubbles $B$ of interest to those with no two faces sharing more than one edge. In particular, $B$ does not have a 2-dipole, hence it has at least six faces of degree 4 from Lemma \ref{lemma:FacesPlanarBubble}.

Moreover, if $G$ has a bicolored cycle of length 2, i.e. an edge of color 0 connecting two adjacent vertices of $B$, then we can apply the induction as follows. The situation is exactly like in \eqref{1DipoleMove} with $f_{c_1 c_2} \neq f'_{c_1 c_2}$ or else it would share two edges with other faces of $B$. After the contraction, we have a bubble $B'$ and a graph $G'=G/e$. $B'$ is planar and $G'$ must be in $\cG^{\max}_1(B')$ from Lemma \ref{lemma:NotMax}. The induction hypothesis then tells us that $G'$ is a sphere. The edge $e$ in $G$ is in fact part of a 2-dipole and its contraction is the dipole removal, which preserves the topology so that $G$ also is a sphere.

We can thus assume that $G$ has no bicolored cycle of length 2. The final case to consider is the following.

\item[$G$ has no bicolored cycles of length 2 and a single bubble with no 2-edge-cut] Since $B$ has no 2-edge-cut, it does not have any faces of degree 2. Following Lemma \ref{lemma:FacesPlanarBubble}, it however has faces of degree 4. Let us consider one with, say, colors $\{1, 2\}$,
\begin{equation}
G = \begin{array}{c} \includegraphics[scale=.45]{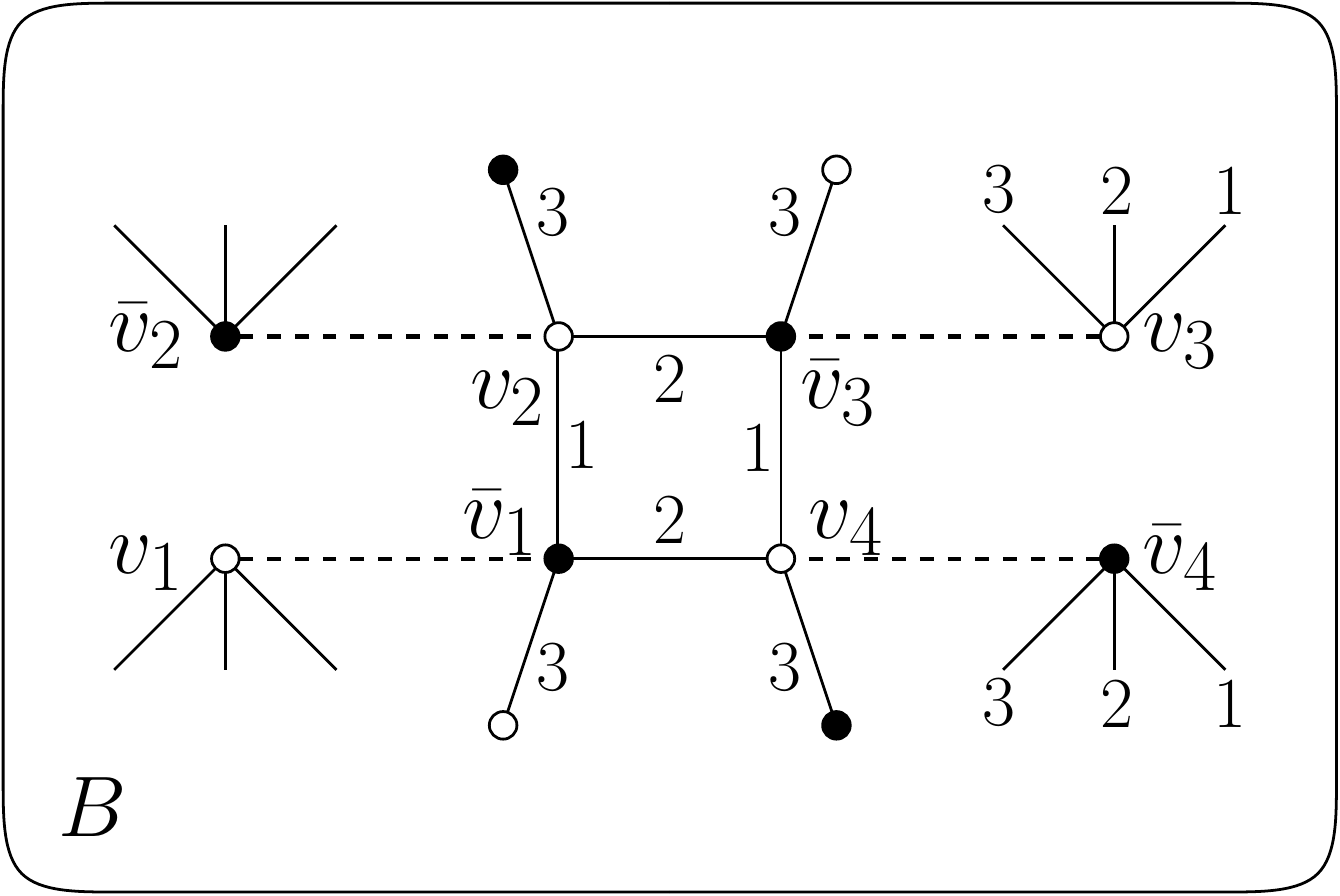} \end{array}
\end{equation}
In particular, the two faces with colors $\{2,3\}$ adjacent to the face of degree 4 are distinct, or else those two faces would share two edges (of color 2). Also, the edges of color 0 drawn above do not form bicolored cycles of length 2.

We perform the same set of moves as in \eqref{OneFlip}, \eqref{TwoFlips}: flip the edges of color 0 incident to $\bar{v}_3$ and $v_4$, then the edges between $\bar{v}_1$ and $v_2$
\begin{equation}
G_{\mid} = \begin{array}{c} \includegraphics[scale=.45]{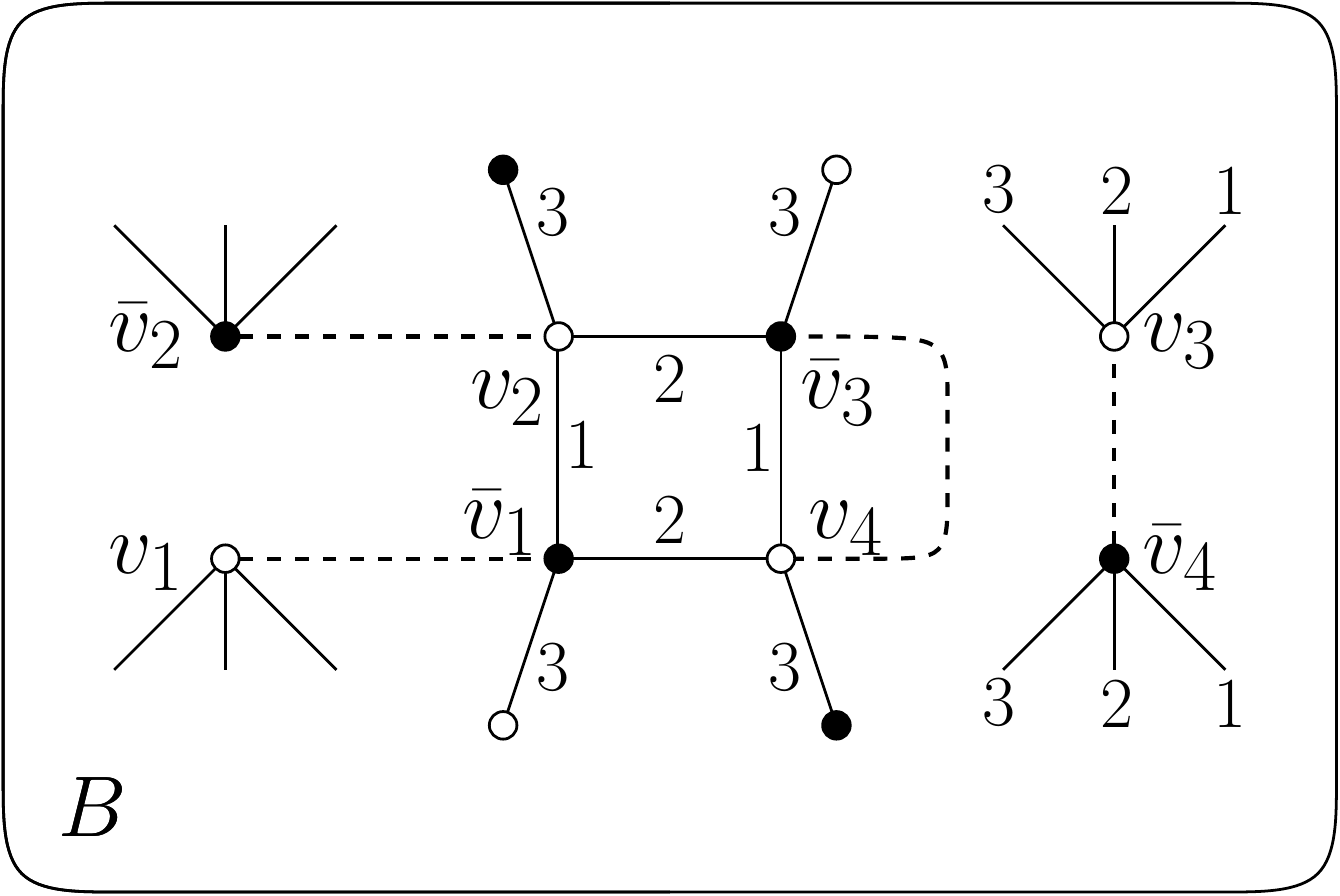} \end{array} \hspace{.5cm}\text{and}\hspace{.5cm}
G_{\parallel} = \begin{array}{c} \includegraphics[scale=.45]{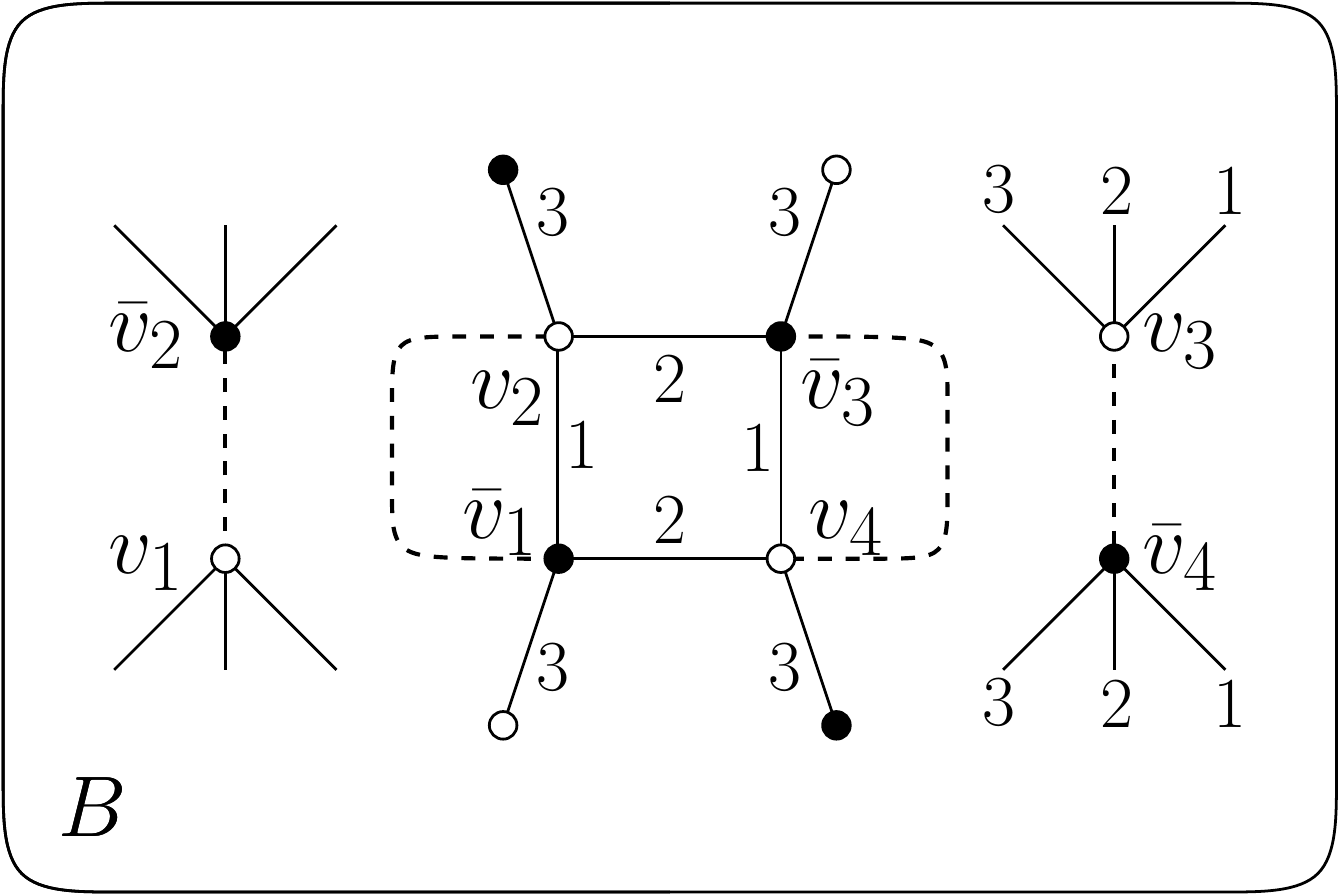} \end{array}
\end{equation}
The bound \eqref{TwoFlipsBound}, $C_0(G_{\parallel}) \geq C_0(G)$ applies, and since $G$ is assumed to maximize the number of bicolored cycles we find
\begin{equation} \label{CyclesGParallel}
C_0(G_{\parallel}) = C_0(G).
\end{equation}
We can then contract the edges of color 0, say like
\begin{equation}
G' = \begin{array}{c} \includegraphics[scale=.45]{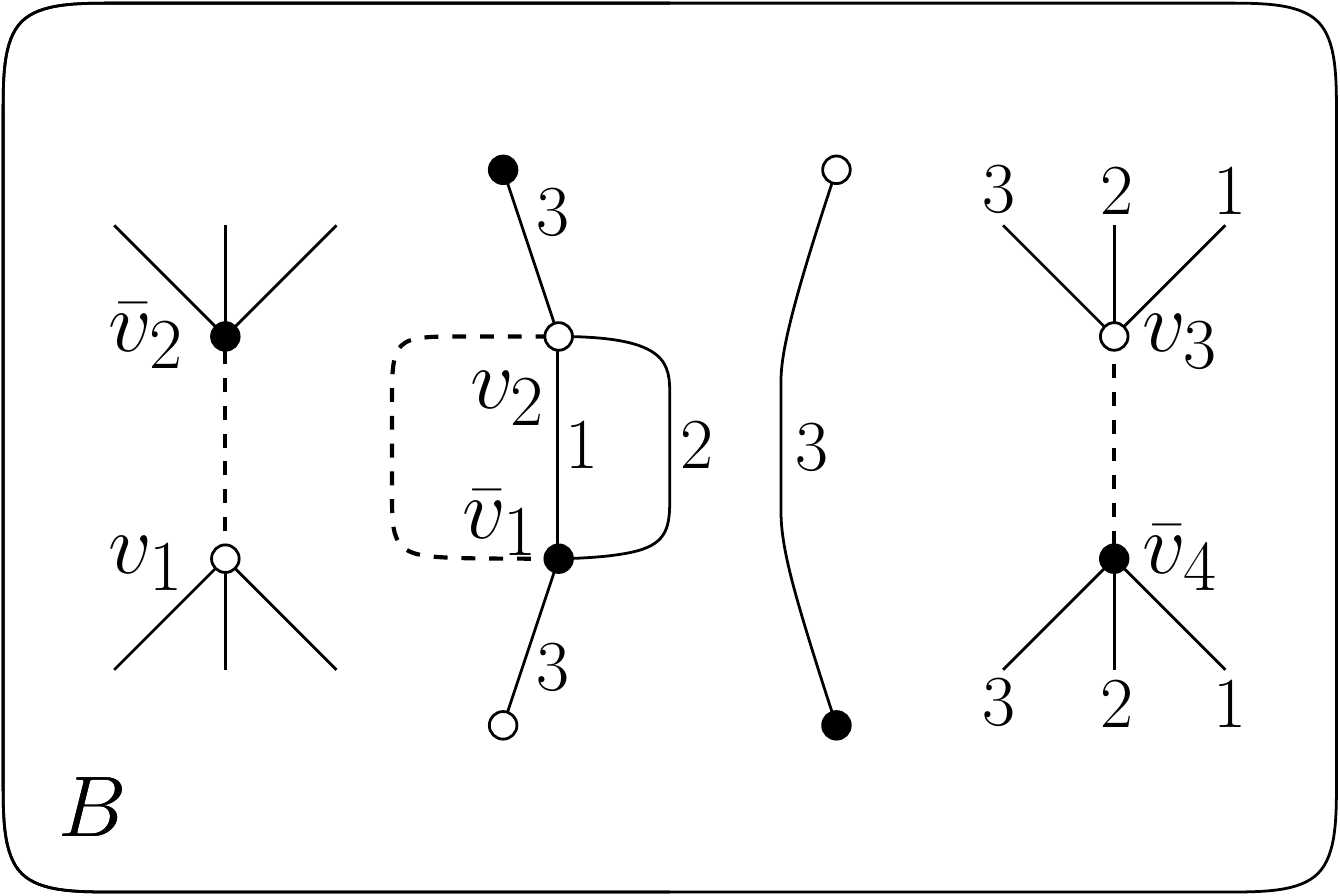} \end{array}
\hspace{.5cm}\text{then}\hspace{.5cm}
G'' = \begin{array}{c} \includegraphics[scale=.45]{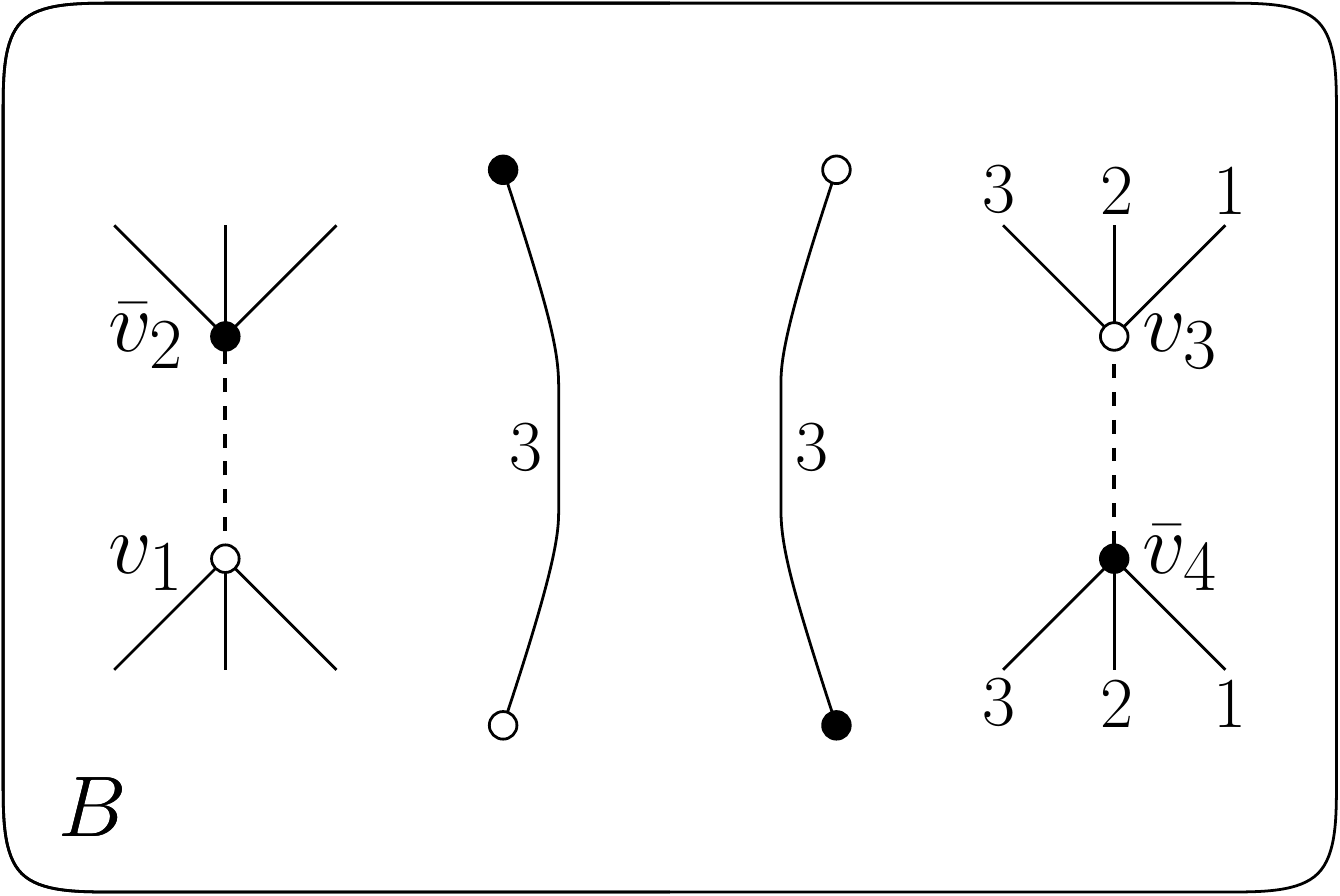} \end{array}
\end{equation}
Since $B$ has no faces which share more than one edge, those contractions turn $B$ into connected bubbles $B'\subset G'$ and $B''\subset G''$. The graph $G$ becomes $G'\in\cG^{\max}_1(B')$ and $G''\in\cG^{\max}_1(B'')$, see Lemma \ref{lemma:NotMax}. From the induction hypothesis, $G''$ is a sphere. It will thus be sufficient to show that $G$ and $G''$ are connected by a sequence of moves which preserves topology.

As is well known in colored graph theory, a colored graph encodes a 3-manifold if and only if all its 3-bubbles represent 3-balls (a 3-bubble being dual to a vertex of the triangulation, it means that the neighborhoods of all vertices are balls). $G''$ being a sphere, all its 3-bubbles represent balls which means that they are planar, in the sense of Corollaries \ref{cor:2D} and \ref{cor:PlanarBubbles}.

Let us focus on the 3-bubbles with colors $\{0, 2, 3\}$ in $G_{\parallel}$ and in $G''$, i.e. $G_{\parallel}(0,2,3)$ and $G''(0,2,3)$. Since the face of degree 4 in $G_{\parallel}$ separates two distinct faces of colors $\{2,3\}$, the move from $G_{\parallel}(0,2,3)$ to $G''(0,2,3)$ is topological -- this is twice Proposition \ref{prop:Contraction} for the bubbles with colors $\{0,2,3\}$ instead of $\{1,2,3\}$ and contracting the two edges of color 1 instead of 0. This implies that the number of connected components and their genera are unchanged. In particular, the 3-bubbles of $G_{\parallel}$ with colors $\{0,2,3\}$ are planar.

Let us denote $(G^{(\rho)}(0,2,3))_\rho$ the 3-bubbles, i.e. the connected components of $G(0,2,3)$, and $g^{(\rho)}(0,2,3)$ their genera, and similarly with $G_{\parallel}$. The total number of bicolored cycles with colors $\{0,2\}$ in $G(0,2,3)$ is the same as in $G$ itself, same for bicolored cycles with colors $\{0,3\}$ and $\{2,3\}$. The number of edges of $G(0,2,3)$ is $3V(B)/2$, $V(B)$ being the number of vertices of $B$. We denote $K_{23}(G)$ the number of 3-bubbles with colors $\{0,2,3\}$. Euler's relation for $G(0,2,3)$ thus reads
\begin{equation}
C_{02}(G) + C_{03}(G) + C_{23}(G) - V(B)/2 = 2K_{23}(G) - 2\sum_\rho g^{(\rho)}(0,2,3).
\end{equation}

The reason for the bound \eqref{TwoFlipsBound} is that the flips from $G$ to $G_{\parallel}$ produces two new bicolored cycles with colors $\{0,1\}$, but they may not increase the number of bicolored cycles with colors $\{0,2\}$ and may decrease by two the number of bicolored cycles with colors $\{0,3\}$. Since equality \eqref{CyclesGParallel} holds here, it means we are in the ``worst'' case, i.e. 
\begin{equation}
C_{02}(G_{\parallel}) = C_{02}(G) \qquad \text{and} \qquad C_{03}(G_{\parallel}) = C_{03}(G) -2.
\end{equation}
Euler's relation for $G_{\parallel}(0,2,3)$ then gives
\begin{equation}
C_{02}(G) + C_{03}(G) + C_{23}(G) - V(B)/2 = 2K_{23}(G_{\parallel}) + 2,
\end{equation}
since its 3-bubbles are planar. Therefore
\begin{equation}
K_{23}(G) = K_{23}(G_{\parallel}) + 1 + \sum_\rho g^{(\rho)}(0,2,3) > K_{23}(G_{\parallel}),
\end{equation}
i.e. the number of 3-bubbles with colors $\{0,2,3\}$ decreases from $G$ to $G_{\parallel}$. Obviously, only the 3-bubbles which contain $\bar{v}_1, v_2, \bar{v}_3, v_4$ are affected by the flips. Therefore, if the four of them are in the same 3-bubble with colors $\{0,2,3\}$ in $G$, then the number of those 3-bubbles cannot decrease. If $v_2, \bar{v}_3$ are in a different 3-bubble with colors $\{0,2,3\}$ than $\bar{v}_1, v_2$, then the number of those bubbles can decrease by at most one. We can thus only be in the latter case, i.e.
\begin{equation}
K_{23}(G) = K_{23}(G_{\parallel}) + 1,
\end{equation}
which also implies the 3-bubbles are planar,
\begin{equation}
g^{(\rho)}(0,2,3) = 0.
\end{equation}
These two equations provide the two conditions needed to apply Corollary \ref{cor:TopologicalFlip} and prove that the flip from $G$ to $G_{\mid}$ is topological: that $\bar{v}_3$ and $v_4$ lie in different 3-bubbles with colors $\{0,2,3\}$ and that at least one of these bubbles is planar.

Finally notice now that in $G_{\mid}$ the two edges of color 1 and 0 between $\bar{v}_3$ and $v_4$ form a 2-dipole, because they separate two distinct faces of colors $\{2, 3\}$ of $B$. This 2-dipole can thus be contracted to give
\begin{equation}
G_{\mid}' = \begin{array}{c} \includegraphics[scale=.45]{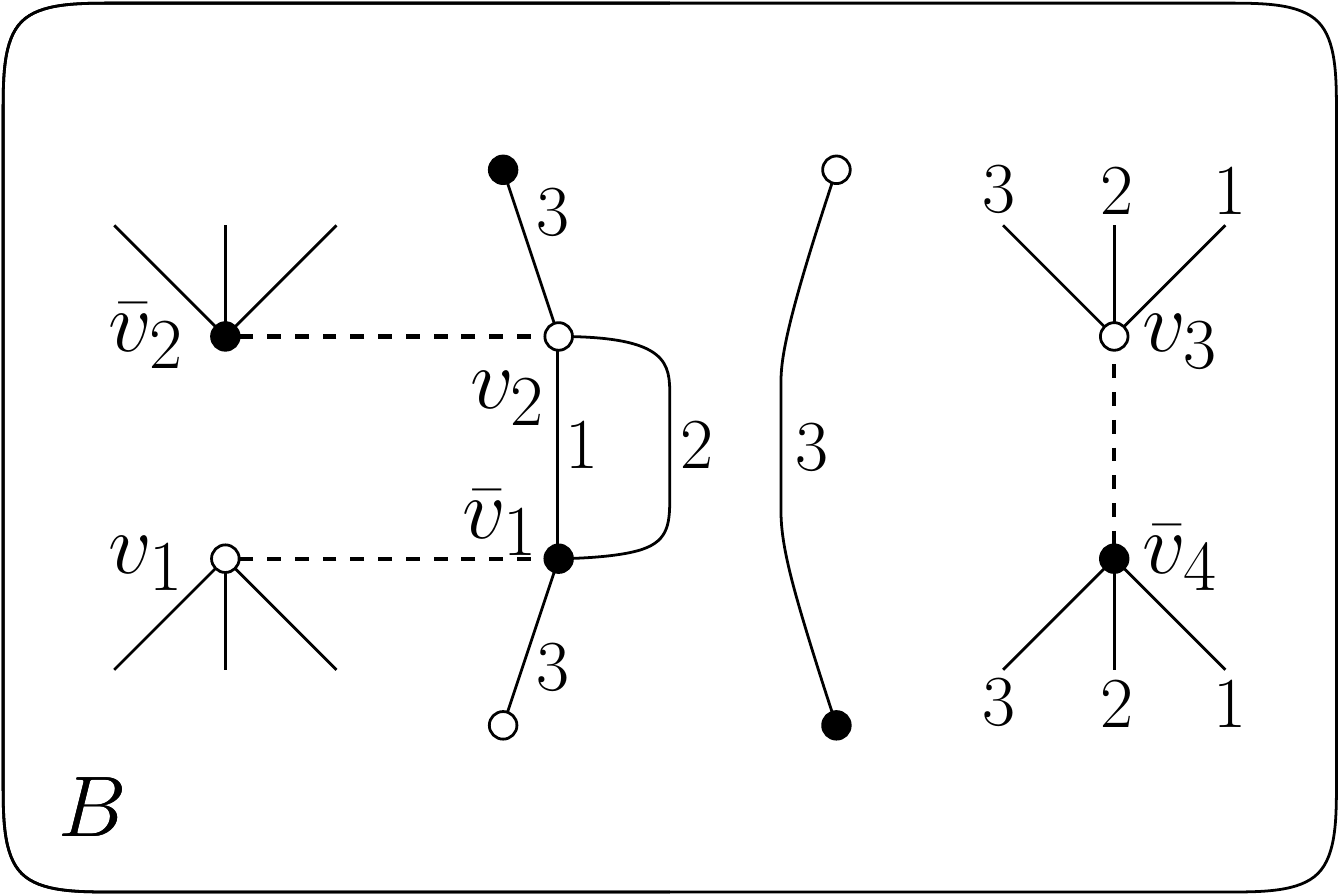} \end{array}
\end{equation}
The bicolored cycles with colors $\{0, 3\}$ incident on $\bar{v}_1$ and $v_2$ in $G_{\mid}$ are different, or else a bicolored cycle of colors $\{0, 3\}$ would be created from $G_{\mid}$ to $G_{\parallel}$ instead of destroyed. They are thus also different in $G'_{\mid}$. That implies that the edges of colors 1 and 2 between $\bar{v}_1$ and $v_2$ form a 2-dipole. Contracting this 2-dipole produces the graph $G''$. Since we have found a sequence of topological moves from $G$ to $G''$, we find that $G$ is a sphere.
\end{description}
\end{proof}

\section*{Acknowledgements}

This research was supported by the ANR MetACOnc project ANR-15-CE40-0014.


\end{document}